\documentclass[a4paper,11pt,reqno]{amsart} 
\usepackage[utf8]{inputenc}
\usepackage{amsfonts}
\usepackage{amssymb}
\usepackage{enumerate}
\usepackage{graphicx}
\usepackage{mathtools}
\usepackage{amsmath, amsthm}
\usepackage{amssymb,amsthm,amsfonts,amsbsy,latexsym}
\usepackage{dsfont}
\usepackage{times}

\linespread{1.095}
\usepackage{tikz}
\usetikzlibrary{calc,intersections,through,backgrounds,shapes.geometric}
\usetikzlibrary{graphs}

\usepackage{color}
\usepackage[T1]{fontenc}
\usepackage[sort,numbers]{natbib}
\usepackage{bbm}
\usepackage[a4paper, left = .85in, right = .85in, top = 1in, bottom = 1in]{geometry}
\usepackage{amsmath}
\usepackage{amsfonts}
\usepackage{amssymb}
\usepackage{bbm}
\usepackage{mathrsfs}
\usepackage[utf8]{inputenc}
\usepackage[english]{babel}
\usepackage{hyperref}
\usepackage{relsize}
\usepackage{appendix}
\usepackage{lipsum}
\usepackage{placeins}
\usepackage{todonotes}

\usepackage{caption}
\usepackage{subcaption}
\usepackage{color}

\usepackage[mode = buildnew]{standalone}

\usepackage{enumerate}
\newenvironment{enumeratei}{\begin{enumerate}[\upshape (i)]}{\end{enumerate}}
\newenvironment{enumeratea}{\begin{enumerate}[\upshape (a)]}{\end{enumerate}}

\newcommand\blfootnote[1]{%
  \begingroup
  \renewcommand\thefootnote{}\footnote{#1}%
  \addtocounter{footnote}{-1}%
  \endgroup
}

\makeatletter
  \@addtoreset{chapter}{part}
  \@addtoreset{@ppsaveapp}{part}
\makeatother

\usepackage{palatino}

\makeatletter
\newsavebox\myboxA
\newsavebox\myboxB
\newlength\mylenA

\newcommand*\xoverline[2][0.75]{%
    \sbox{\myboxA}{$\m@th#2$}%
    \setbox\myboxB\null
    \ht\myboxB=\ht\myboxA%
    \dp\myboxB=\dp\myboxA%
    \wd\myboxB=#1\wd\myboxA
    \sbox\myboxB{$\m@th\overline{\copy\myboxB}$}
    \setlength\mylenA{\the\wd\myboxA}
    \addtolength\mylenA{-\the\wd\myboxB}%
    \ifdim\wd\myboxB<\wd\myboxA%
       \rlap{\hskip 0.5\mylenA\usebox\myboxB}{\usebox\myboxA}%
    \else
        \hskip -0.5\mylenA\rlap{\usebox\myboxA}{\hskip 0.5\mylenA\usebox\myboxB}%
    \fi}
\makeatother

\usepackage{hyperref}
\hypersetup{
  colorlinks,
  linkcolor=black,
  citecolor=black,
  filecolor=black,
  urlcolor=black,
  pdftitle={},
  pdfauthor={S. Bhamidi, S. Dhara, R. van der Hofstad, S. Sen},
  pdfcreator={S. Dhara},
  pdfsubject={},
  pdfkeywords={}
}
\numberwithin{equation}{section}
\usepackage{cleveref}

\def\sss{\scriptscriptstyle}

\newcommand*{\vmbar}[2]{\skew{#1}{\bar}{#2}}

\newcommand*{\Mtilde}[1]{\skew{5}{\tilde}{#1}}
\newcommand{\prob}[1]{\ensuremath{\mathbbm{P}\left(#1\right)}}
\newcommand{\expt}[1]{\ensuremath{\mathbbm{E}\left[#1\right]}}
\newcommand{\var}[1]{\ensuremath{\mathrm{Var}\left(#1\right)}}
\newcommand{\refl}[1]{\ensuremath{\mathrm{refl}\left(#1\right)}}

\newcommand{\floor}[1]{\ensuremath{\left\lfloor #1 \right\rfloor}}
\newcommand{\ind}[1]{\ensuremath{\mathbbm{1}\left\{#1\right\}}}
\newcommand{\pto}{\ensuremath{\xrightarrow{\mathbbm{P}}}}
\newcommand{\dto}{\ensuremath{\xrightarrow{d}}}
\newcommand{\eqd}{\stackrel{d}{=}}
\newcommand{\surp}[1]{\ensuremath{\mathrm{SP}(#1)}}

\newcommand{\diam}{\ensuremath{\mathrm{diam}}}
\newcommand{\PR}{\ensuremath{\mathbbm{P}}}
\newcommand{\E}{\ensuremath{\mathbbm{E}}}
\newcommand{\R}{\ensuremath{\mathbb{R}}}
\newcommand{\N}{\ensuremath{\mathbb{N}}}
\newcommand{\1}{\ensuremath{\mathbbm{1}}}

\newcommand{\dst}{\ensuremath{\mathrm{d}}}
\newcommand*{\dGHP}{\ensuremath{\mathrm{d}_{\sss \mathrm{GHP}}}}

\newcommand{\dis}{\ensuremath{\mathrm{dis}}}
\newcommand{\sigmap}{\ensuremath{\sigma(\mathbf{p})}}

\newcommand{\shortarrow}{{\sss \downarrow}}

\newcommand{\bld}[1]{\boldsymbol{#1}}
\newcommand{\oP}{o_{\sss \PR}}
\newcommand{\OP}{O_{\sss \PR}}
\newcommand{\e}{\mathrm{e}}
\newcommand{\dif}{\mathrm{d}}

\newcommand{\bl}{\mathfrak{B}}

\newcommand{\xx}{\bld{x}}
\newcommand{\yy}{\bld{y}}

\newcommand{\rCM}{\mathrm{CM}}
\newcommand{\CM}{\mathrm{CM}_n(\bld{d})}
\newcommand{\CMP}{\mathrm{CM}_n(\bld{d},p_n)}

\newcommand{\Cli}{\mathscr{C}_i}

\newcommand{\Clb}{\mathscr{C}_b}
\newcommand{\pCli}{\mathscr{C}_i'}
\newcommand{\pClj}{\mathscr{C}_j'}
\newcommand{\Clbi}{\mathscr{C}_{b_i}}

\newcommand{\Wli}{\mathscr{W}_i}
\newcommand{\Wlj}{\mathscr{W}_j}

\newtheorem{thm}{Theorem}[section]

\newtheorem{prop}[thm]{Proposition}

\newtheorem*{ass*}{Assumption}
\newtheorem*{theorem*}{Theorem}

\newtheorem{theorem}{Theorem}[section]
\newtheorem{algo}{Algorithm}
 \newtheorem{lemma}[theorem]{Lemma}
\newtheorem{proposition}[theorem]{Proposition}

\newtheorem{assumption}{Assumption}
\newtheorem{remark}{Remark}
\newtheorem{fact}{Fact}
\newtheorem{defn}{Definition}

\theoremstyle{definition}

\usepackage{environ}
\NewEnviron{eq}{%
\begin{equation}\begin{split}
  \BODY
\end{split}\end{equation}
}

\numberwithin{equation}{section}

%
%
%
%


\newcommand{\set}[1]{\left\{#1\right\}}


\newcommand{\cA}{\mathcal{A}}\newcommand{\cB}{\mathcal{B}}\newcommand{\cC}{\mathcal{C}}
\newcommand{\cF}{\mathcal{F}}
\newcommand{\cG}{\mathcal{G}}
\newcommand{\cJ}{\mathcal{J}}\newcommand{\cL}{\mathcal{L}}

\newcommand{\cP}{\mathcal{P}}\newcommand{\cR}{\mathcal{R}}
\newcommand{\cS}{\mathcal{S}}\newcommand{\cT}{\mathcal{T}}
\newcommand{\cV}{\mathcal{V}}\newcommand{\cX}{\mathcal{X}}


\newcommand{\vY}{\mathbf{Y}}

\newcommand{\vl}{\mathbf{l}}

\newcommand{\vp}{\mathbf{p}}
\newcommand{\vt}{\mathbf{t}}


\newcommand{\mvU}{\boldsymbol{U}}


\newcommand{\fI}{\mathfrak{I}}

\newcommand{\fP}{\mathfrak{P}}


\newcommand{\bG}{\mathbb{G}}

\newcommand{\bR}{\mathbb{R}}
\newcommand{\bT}{\mathbb{T}}


\newcommand{\dA}{\mathfrak{G}} 


\newcommand{\rE}{\mathrm{E}}

\newcommand{\rV}{\mathrm{V}}


\newcommand{\sC}{\mathscr{C}}

\newcommand{\sS}{\mathscr{S}}

\newcommand{\sW}{\mathscr{W}}

\newcommand{\sP}{\mathfrak{P}}

\newcommand{\RC}{\mathrm{RC}}

\DeclareMathOperator{\pr}{\mathbbm{P}}

\DeclareMathOperator{\ord}{ord}

\DeclareMathOperator{\mass}{mass}
\DeclareMathOperator{\con}{con}

\newcommand{\Ep}{\E_{\vp}}

\newcommand{\icrt}{\mathscr{T}_{\sss(\infty)}^{\bld{\beta}}}
\newcommand{\tilicrt}{\mathscr{T}_{\sss(\infty)}^{\bld{\beta},\star}}

\DeclareMathOperator{\shape}{shape}



\begin{document}
\title[Critical heavy-tailed networks]{Universality for critical heavy-tailed network models: \\ Metric structure of maximal components}
\author[Bhamidi]{Shankar Bhamidi$^1$}
 \author[Dhara]{Souvik Dhara$^{2,3}$}
 \author[Hofstad]{Remco van der Hofstad$^4$}
 \author[Sen]{Sanchayan Sen$^5$}
\blfootnote{\emph{Emails:} 
 \href{mailto:bhamidi@email.unc.edu}{bhamidi@email.unc.edu},
 \href{mailto:sdhara@mit.edu}{s.dhara@mit.edu},
 \href{mailto:r.w.v.d.hofstad@tue.nl}{r.w.v.d.hofstad@tue.nl},
 \href{mailto:sanchayan.sen1@gmail.com}{sanchayan.sen1@gmail.com}} 
 \date{\today}
\blfootnote{$^1$Department of Statistics and Operations Research,  University of North Carolina}
\blfootnote{$^2$Department of Mathematics, Massachusetts Institute of Technology}
\blfootnote{$^3$Microsoft Research Lab -- New England}
\blfootnote{$^4$Department of Mathematics and Computer Science, Eindhoven University of Technology}
\blfootnote{$^5$Department of Mathematics, Indian Institute of Science}
\blfootnote{2010 \emph{Mathematics Subject Classification.} Primary: 60C05, 05C80.}
\blfootnote{\emph{Keywords and phrases}. Critical configuration model,  critical percolation, Gromov-weak convergence, heavy-tailed degrees, multiplicative coalescent, universality}
\blfootnote{\emph{Acknowledgment}. 
SB was partially supported by NSF grants DMS-1613072, DMS-1606839 and ARO grant W911NF-17-1-0010. SD,  RvdH, and SS was supported by the Netherlands Organisation for Scientific Research (NWO) through Gravitation Networks grant 024.002.003. 
In addition, RvdH was supported by VICI grant 639.033.806, and SS was supported by a CRM-ISM fellowship. 
SD and RvdH would like to thank UNC Chapel Hill for hospitality where part of this work was done.
A major part of this work was done when SD was a PhD student at Eindhoven University of Technology (TU/e), and SD thanks TU/e for supporting this work.}
\maketitle
\begin{abstract}
We study limits of the largest connected components (viewed as metric spaces) obtained by critical percolation on uniformly chosen graphs and configuration models with heavy-tailed degrees. 
For rank-one inhomogeneous random graphs, such results were derived by Bhamidi, van der Hofstad, Sen (2018)~\cite{BHS15}.
We develop general principles under which the identical scaling limits as in~\cite{BHS15} can be obtained.
Of independent interest,  we derive refined asymptotics for various susceptibility functions and the maximal diameter in the barely subcritical regime. 
\end{abstract}

\tableofcontents

\section{Introduction}

Over the last decades, applications arising from complex systems in different fields
have inspired a host of models for networks as well as models of dynamically evolving networks.
One of the major themes in the study of these models has been in the nature of the emergence of the giant component.
A classical example is the percolation process, where each edge of the network is independently kept with probability $p$, and deleted otherwise.
As $p$ increases from 0 to 1, the graph experiences a transition in the connectivity structure, i.e., there exists a "critical percolation value" $p_c$ such that for any $\varepsilon>0$ and $p< p_c(1-\varepsilon)$, the proportion of vertices in the largest component is asymptotically negligible, while for $p> p_c(1+\varepsilon)$, a unique giant component emerges containing an asymptotically positive proportion of vertices \cite{ABS04,F07,RGCN1,J09,JLR00}. 

Understanding the behavior at criticality is one of the key questions in statistical physics because the components exhibit unique and key features in the critical regime.
In the physics literature, the critical behavior of percolation relates to studying optimal paths in networks in the so-called strong disorder regime. 
A wide array of conjectures and heuristic deductions of the associated critical exponents can be found in \cite{braunstein2003optimal,braunstein2007optimal,CbAH02,Halvin05}. 
In a nutshell, these conjectures can be described as follows: 
\begin{quote} \it 
The intrinsic nature of the critical behavior does not depend on the exact description of the model, but only on moment conditions on the degree distribution. There are two major universality classes corresponding to the critical regime and the nature of emergence of the giant depending on whether the degree distribution has asymptotically finite third moment or infinite third moment.
For example, in case of power-law degree distributions (i.e., $\PR(D\geq x) \approx x^{-(\tau-1)}$ the precise nature of the approximation left implicit), the nature of the critical behavior depends only on the power-law degree exponent $\tau$: (a) For $\tau > 4$, the maximal component sizes are of the order $n^{2/3}$ in the critical regime, whilst typical distances in these maximal connected components scale like $n^{1/3}$; (b) For $\tau \in (3,4)$, the maximal component sizes are of the order $n^{(\tau-2)/(\tau-1)}$, whilst distances scale like $n^{(\tau-3)/(\tau - 1)}$.
\end{quote} 
The above conjectures have inspired a large and beautiful collection of works in probability theory.  
In a seminal work, Aldous \cite{A97} provided a detailed understanding for the vector of rescaled component sizes at criticality for Erd\H{o}s-R\'enyi random graphs, and the scaling limits for component sizes are now well understood under quite general setups in both finite third-moment \cite{BHL10,DHLS15,H13,Jo10,NP10b,NP10a,R12} and infinite third-moment \cite{BHL12,DHLS16,H13,Jo10} settings.
We refer the reader to \cite[Chapter 1]{Dha18}, \cite[Chapter 4]{Hof17} for detailed discussions about this topic.
A recent and emerging direction in this literature aims at understanding the critical component structures, and distances within these components from a very general perspective. 
This line of work was pioneered by Addario-Berry, Broutin and Goldschmidt~\cite{ABG09}, where the largest connected components were shown to converge when viewed as metric spaces (see below for exact definitions). 
Subsequently, \cite{BBSX14,BBW12,BS16} have explored the \emph{universality class} corresponding to \cite{ABG09}, showing that the universality in the finite third-moment setting holds not only with respect to functionals like component sizes, but also the entire metric structure.
On the other hand, in the infinite third-moment setting, a recent result \cite{BHS15} shows that the metric structure turns out to be fundamentally different.
 The results in \cite{BHS15} was obtained for one fundamental random graph model (rank-one model, closely related to the Chung-Lu \cite{CL02b,CL02} and Norros-Reittu model \cite{BDM06}) under the assumption that the weights follow a power-law distribution.
In this paper, we explore the universality class corresponding to the candidate limit law established in \cite{BHS15}.
Informally, the main contributions of this paper are as follows: \vspace{.3cm}

\noindent
$\rhd$ {\bf Universality theorem:} We establish sufficient conditions that imply convergence to the limits established in~\cite{BHS15}. 
This is described later in Theorem~\ref{thm:univesalty}. 
	Since we need to set up a number of constructs, a formal statement is deferred until all of these objects have been defined. 
	We refer to Theorem~\ref{thm:univesalty} as a \emph{universality theorem} because it identifies the domain of attraction of the limit laws in \cite{BHS15}. 
	Informally, the theorem implies that if a sequence of dynamic networks satisfies some entrance boundary conditions in the barely subcritical regime, and evolves approximately according to the multiplicative coalescent dynamics over the critical window, then the metric structure of the critical components are close to those for rank-one inhomogeneous random graphs.
Theorem~\ref{thm:univesalty} is similar in spirit to \cite[Theorem 3.4]{BBSX14}, but our result holds for the infinite third-moment degrees.
Technically, we do not need additional restrictions as in  \cite[Assumption 3.3]{BBSX14}, since we compare the metric structures in the Gromov-weak topology, instead of the Gromov-Hausdorff-Prokhorov topology.
The universality theorem holds under arguably optimal assumptions (see Remark~\ref{rem:about-assumption}). 
	
\vspace{.2cm}	

	\noindent
$\rhd$ {\bf Critical percolation on graphs with given degrees:} Our primary motivation was to analyze the critical regime for percolation on the uniform random graph model (and the closely associated configuration model) with a prescribed degree distribution that converges to a heavy-tailed degree distribution. 
	Limit laws for the metric structure of maximal components in the critical regime are described in Theorems~\ref{thm:main}~and~\ref{thm:main-simple}. 
	These results are proved under Assumption~\ref{assumption1}, which is the most general set of assumptions under which the component sizes were shown to converge in \cite{DHLS16} (see \cite[Section 2 and 3]{DHLS16} for the applicability and necessity of these assumptions).
	
%
%
	
	\vspace{.2cm}
	\noindent
$\rhd$ {\bf Barely subcritical regime:} In order to carry out the above analysis and in particular to apply the universality theorem for percolation on configuration models, we establish refined bounds for component sizes, various susceptibility functionals, and diameters of connected components in the barely subcritical regime of the configuration model which are of independent interest; these are described in Theorems \ref{thm:susceptibility} and \ref{thm:diam-max}.
\subsection{Organization of the paper}
In Section~\ref{sec:res}, we describe the configuration model and critical behavior of percolation, which is the main motivation of this paper, and then describe the main results relevant to this model. 
Section~\ref{sec:discussion} has a detailed discussion about the relevance of the results in this paper, some open problems, and an informal description of the proof ideas. 
We provide a full description of the limit objects and various notions of convergence of metric-space-valued random variables in Section~\ref{sec:definitions-full}. 
Section~\ref{sec:univ-thm} describes and proves the general universality result. 
Section~\ref{sec:entrance-bdd-proofs} proves results about the configuration model in the barely subcritical regime. 
Finally, Section~\ref{sec:proof-metric-mc} combines the above estimates with a coupling of the evolution of the configuration model through the critical percolation scaling window to finish the proof of Theorem \ref{thm:main}.

\section{Critical percolation on the configuration model} 
\label{sec:res}
In this section, we state our main results. 
In Section~\ref{sec:mspace-results}, we state the results about the metric structure of the largest critical percolation clusters of the configuration model. 
We defer full definitions of the limit objects as well as notions of convergence of measured metric spaces to Section~\ref{sec:definitions-full}.  
In Section~\ref{sec:mesoscopic-results}, we state the results about the barely subcritical regime, and we conclude this section with an overview of the proofs in Section~\ref{sec:overview}.

\subsection{Metric structure of the critical components} \label{sec:mspace-results}
\subsubsection*{The configuration model}  Consider $n$ vertices labeled by $[n]:=\{1,2,...,n\}$ and a non-increasing sequence of degrees $\boldsymbol{d} = ( d_i )_{i \in [n]}$ such that $\ell_n = \sum_{i \in [n]}d_i$ is even. For notational convenience, we suppress the dependence of the degree sequence on $n$. The configuration model on $n$ vertices having degree sequence $\boldsymbol{d}$ is constructed as follows \cite{B80,MR95}:
 \begin{itemize}
 \item[] Equip vertex $j$ with $d_{j}$ stubs, or \emph{half-edges}. Two half-edges create an edge once they are paired. Therefore, initially we have $\ell_n=\sum_{i \in [n]}d_i$ half-edges. Pick any one half-edge and pair it with a uniformly chosen half-edge from the remaining unpaired half-edges and keep repeating the above procedure until all the unpaired half-edges are exhausted. 
 \end{itemize}
  Let $\CM$ denote the graph constructed by the above procedure.
  Note that $\CM$ may contain self-loops or multiple edges. 
  Let $\mathrm{UM}_n(\bld{d})$ denote the graph chosen uniformly at random from the collection of all simple graphs with degree sequence $\boldsymbol{d}$.
  It can be shown that the conditional law $\mathrm{CM}_{n}(\boldsymbol{d})$, conditioned being simple, is same as $\mathrm{UM}_n(\bld{d})$ (see \cite[Proposition 7.15]{RGCN1}). 
 It was further shown in \cite{J09c} that, if the degree distribution satisfies a finite second-moment condition (a condition which will hold in the context of this paper), then the asymptotic probability of the graph being simple converges to a positive limit. 
  
Let us now describe the assumptions on the degree sequences. 
For $p>0$, define the metric space
\begin{equation}
\ell^p_{\shortarrow}=\Big\{(x_1,x_2,\dots)\in \R^\N_{+}: x_1\geq  x_2\geq   \dots , \ \sum_i x_i^p<\infty\Big\},  
\end{equation}with metric $d(\bld{x}, \bld{y})= \big( \sum_{i} |x_i-y_i|^p \big)^{1/p}$.
 Fix $\tau\in (3,4)$.  Throughout this paper we use the following functionals of $\tau$:
\begin{equation}\label{eqn:notation-const}
 \alpha= 1/(\tau-1),\qquad \rho=(\tau-2)/(\tau-1),\qquad \eta=(\tau-3)/(\tau-1).
\end{equation}
 \begin{assumption}[Degree sequence]\label{assumption1}
\normalfont  For each $n\geq 1$, let $\bld{d}=\boldsymbol{d}_n=(d_1,\dots,d_n)$ be a degree sequence ($d_i$'s may depend on $n$, but we suppress $n$ in the notation for clarity). 
We assume the following about $(\boldsymbol{d}_n)_{n\geq 1}$ as $n\to\infty$:
\begin{enumerate}[(i)] 
\item \label{assumption1-1} (\emph{High-degree vertices}) For each fixed $i\geq 1$, $ n^{-\alpha}d_i\to \theta_i,$
where $\boldsymbol{\theta}=(\theta_1,\theta_2,\dots)\in \ell^3_{\shortarrow}\setminus \ell^2_{\shortarrow}$. \vspace{.15cm}
\item \label{assumption1-2} (\emph{Moment assumptions}) 
Let $V_n$ be chosen uniformly from $[n]$ (independently of $\mathrm{CM}_n(\boldsymbol{d}))$, and $D_n = d_{V_n}$.
%
Then $D_n$ converges in distribution to some positive integer-valued random variable $D$, and 
\begin{equation}
 \frac{1}{n}\sum_{i\in [n]}d_i\to \mu := \E[D], \quad \frac{1}{n}\sum_{i\in [n]}d_i^2 \to \mu_2:=\E[D^2],\quad  \lim_{K\to\infty}\limsup_{n\to\infty}n^{-3\alpha} \sum_{i=K+1}^{n} d_i^3=0.
\end{equation}
\end{enumerate}
\end{assumption}   
\begin{remark} \normalfont
Assumption~\ref{assumption1} is identical to \cite[Assumption~1]{DHLS16}. 
We refer the reader to \cite[Sections~2~and~3]{DHLS16} for discussions about the relevance and necessity of these assumptions.
It was shown in \cite[Section~2]{DHLS16} that Assumption~\ref{assumption1} is satisfied in two key settings,  when (i) the degrees are taken to be an i.i.d.~sample from a power-law distribution, and (ii) the degrees are chosen according to the quantiles of a power-law distribution. 
The first setting has been considered in \cite{Jo10}, and the latter setting has been considered for the rank-one inhomogeneous random graphs in~\cite{BHS15,BHL12}.
\end{remark}
The component sizes of $\CM$ are known to undergo a phase transition \cite{JL09,MR95} depending on the parameter
\begin{equation}
 \nu_n = \frac{\sum_{i\in [n]}d_i(d_i-1)}{\sum_{i\in [n]}d_i} \to \nu =\frac{\expt{D(D-1)}}{\expt{D}}.
\end{equation} 
When $\nu>1$, $\CM$ is supercritical in the sense that there exists a unique \emph{giant} component with high probability, and when $\nu<1$, all the components have size $o(n)$ with high probability and $\CM$ is subcritical. In this paper, when considering percolation on $\CM$, we will always assume that
\begin{equation}\label{eq:super-crit-start}
 \nu>1, \text{ i.e. } \CM \text{ is supercritical.}
\end{equation}
 Percolation refers to deleting each edge of a graph independently with probability $1-p$.  
In the case of percolation on random graphs, the deletion of edges is also independent from the underlying graph.
 Let $\CMP$ and $\mathrm{UM}_n(\bld{d},p_n)$ denote the graphs obtained from percolation with probability $p_n$ on the graphs $\mathrm{CM}_n(\boldsymbol{d})$ and $\mathrm{UM}_n(\bld{d})$, respectively. 
 For $p_n\to p$, it was shown in \cite{J09} that the critical point for the phase transition of the component sizes is $p=1/\nu$. 
 The critical window for percolation was studied in \cite{DHLS16,DHLS15} to obtain the asymptotics of the largest component sizes and their surplus edges. 
In the infinite third-moment setting, $\CMP$ lies in the critical window when, for some $\lambda\in\R$,
 \begin{equation}\label{eq:critical-window-defn}
  p_n = p_n(\lambda) = \frac{1}{\nu_n}+\frac{\lambda}{n^{\eta}}+o(n^{-\eta}).
 \end{equation}
We now explain the precise meaning of convergence of components as metric spaces.
Let $\mathscr{C}_{\sss (i)}^p(\lambda)$ denote the $i$-th largest component of $\rCM_n(\bld{d},p_n(\lambda))$. 
A measured metric space is a metric space equipped with a measure on the associated Borel sigma-algebra.
Each component $\mathscr{C}$ can be viewed as a measured metric space  with (i) the metric being the graph distance where each edge has length one; (ii) the measure being proportional to the counting measure, i.e., for any $A\subset \mathscr{C}$, the measure of $A$ is given by $\mu_{\sss \mathrm{ct},i}(A) = |A|/|\mathscr{C}_{\sss (i)}^p(\lambda)|$, where $|A|$ denotes the cardinality of $A$. 
For a generic measured metric space $M = (M,\mathrm{d},\mu)$ and $a>0$, $aM$ denotes the measured metric space $(M,a\mathrm{d},\mu)$. 
We write $\sS_*$ for the space of all measured metric spaces equipped with the Gromov-weak topology (see Section~\ref{sec:defn:GHP-weak}) and let $\sS_*^{\N}$ denote the corresponding product space with the accompanying product topology. 
For each $n\geq 1$, view $\big( n^{-\eta}\mathscr{C}_{\sss (i)}^p(\lambda) \big)_{i\geq 1} $ as an object in $\sS_*^{\N}$ by appending an infinite sequence of empty metric spaces after enumerating the components in $\mathrm{CM}_n(\bld{d},p_n(\lambda))$. 
The main results for critical percolation on the configuration model are as follows:
\begin{theorem}\label{thm:main}Consider $\mathrm{CM}_n(\bld{d},p_n(\lambda))$ satisfying {\rm Assumption~\ref{assumption1}}, \eqref{eq:super-crit-start} and \eqref{eq:critical-window-defn} for some $\lambda\in\R$. There exists a sequence of random measured metric spaces $(\mathscr{M}_i(\lambda))_{i\geq 1}$ such that on $\mathscr{S}_*^\N$, as $n\to\infty$,
\begin{equation}\label{eq:thm:main}
 \big( n^{-\eta}\mathscr{C}_{\sss (i)}^p(\lambda) \big)_{i\geq 1} \dto  \big(\mathscr{M}_i(\lambda)\big)_{i\geq 1}.
\end{equation} 
\end{theorem}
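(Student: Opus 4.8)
The plan is to deduce Theorem~\ref{thm:main} from the general universality theorem (Theorem~\ref{thm:univesalty}): it suffices to verify, for the percolated configuration model regarded as a network evolving in the parameter $\lambda$, that (a) it satisfies the entrance boundary conditions at a barely subcritical time $p_n(\lambda_0)$ with $\lambda_0=\lambda_0(n)\to-\infty$ slowly, and (b) across the critical window it evolves approximately as a multiplicative coalescent on the barely subcritical components (the ``blobs''). The three named inputs of the paper then enter as follows.

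\emph{Step 1 (barely subcritical entrance data).} For a slowly diverging $\lambda_0(n)\to-\infty$, the graph $\mathrm{CM}_n(\bld{d},p_n(\lambda_0))$ is barely subcritical. The size convergence of \cite{DHLS16}, combined with the susceptibility asymptotics of Theorem~\ref{thm:susceptibility} for the functionals $\sum_i|\mathscr{C}_{\sss(i)}^p(\lambda_0)|^2$ and $\sum_i|\mathscr{C}_{\sss(i)}^p(\lambda_0)|^3$, and the maximal-diameter bound of Theorem~\ref{thm:diam-max}, should show that, after rescaling distances by $n^{-\eta}$ and sizes by $n^{-\rho}$, the rescaled component sizes, their quadratic and cubic susceptibilities, and $\max_i\diam(\mathscr{C}_{\sss(i)}^p(\lambda_0))$ all converge to the values dictated by the heavy-tailed entrance boundary associated with $\bld{\theta}\in\ell^3_{\shortarrow}\setminus\ell^2_{\shortarrow}$; in particular the blobs become metrically negligible at scale $n^{\eta}$. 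This is exactly the package of estimates required by the entrance hypothesis of Theorem~\ref{thm:univesalty}.

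\emph{Step 2 (coupling through the window).} I would realise percolation dynamically — put i.i.d.\ uniform marks on the edges of $\mathrm{CM}_n(\bld{d})$ and reveal, at ``time'' $\lambda$, those with mark at most $p_n(\lambda)$; equivalently, couple the percolated graph with a thinned configuration model and keep track, for each current component, of its number of unpaired half-edges. As $\lambda$ increases from $\lambda_0$, each newly revealed edge either adds a surplus inside a component or merges two components $i,j$ with probability proportional to the product of their open half-edge counts. Using the second-moment control in Assumption~\ref{assumption1}, the size convergence of \cite{DHLS16}, and the susceptibility bounds of Theorem~\ref{thm:susceptibility} to dominate the error terms, one should show that, after rescaling, the open half-edge counts are proportional to the component masses up to a uniform $\oP(1)$ relative error, and that the total number of surplus edges accumulated over $[\lambda_0,\lambda]$ is tight. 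Together with Step~1 this identifies the limiting evolution of the rescaled sizes and blob-level metric data as the multiplicative coalescent started from the heavy-tailed entrance boundary, which is the second hypothesis of Theorem~\ref{thm:univesalty}.

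\emph{Step 3 and the main obstacle.} Applying Theorem~\ref{thm:univesalty} to the verified family then yields the random measured metric spaces $(\mathscr{M}_i(\lambda))_{i\ge1}$ and the convergence~\eqref{eq:thm:main} in $\mathscr{S}_*^{\N}$, with these limits coinciding in law with those of \cite{BHS15}. I expect the main difficulty to be Step~2: one must control the coupling error uniformly over the (now diverging) critical window $[\lambda_0(n),\lambda]$, and reconcile the two competing notions of ``weight'' — the open half-edge counts, where in the heavy-tailed regime a handful of hub vertices dominate each large component, against the component sizes that drive the multiplicative coalescent. This is precisely where the refined barely subcritical estimates are indispensable: Theorem~\ref{thm:susceptibility} rules out that a few atypically heavy barely subcritical components distort the merging rates or the error bookkeeping, and Theorem~\ref{thm:diam-max} guarantees the blobs contribute nothing to the scaling limit beyond their masses, so that only the multiplicative-coalescent merging structure survives at scale $n^{\eta}$.
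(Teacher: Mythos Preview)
Your high-level architecture is right and matches the paper: barely subcritical estimates (Theorems~\ref{thm:susceptibility} and~\ref{thm:diam-max}) feed the blob data, a dynamic construction carries the process across the window, and the universality theorem (Theorem~\ref{thm:univesalty}) produces the limit. But there is a genuine gap in how you invoke Theorem~\ref{thm:univesalty}.

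Theorem~\ref{thm:univesalty} is \emph{not} an abstract statement with hypotheses ``entrance boundary conditions'' and ``evolves approximately as a multiplicative coalescent''. It is a concrete statement about a specific object: the super-graph $\tilde{\mathcal{G}}_m^{\mathrm{bl}}(\mathbf{p},a)$ whose superstructure is (a component of) the Norros--Reittu graph $\mathrm{NR}_n(\bld{x},q)$, under Assumptions~\ref{assm:BHS15} and~\ref{assm:blob-diameter}. Your Step~3, ``applying Theorem~\ref{thm:univesalty} to the verified family'', therefore does not parse: the percolated configuration model over $[\lambda_0,\lambda]$ is only \emph{approximately} a multiplicative coalescent (half-edges are depleted by $2$ at each pairing), so its superstructure is not literally an $\mathrm{NR}$ graph and Theorem~\ref{thm:univesalty} does not apply to it directly.

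What the paper actually does, and what your proposal is missing, is an explicit intermediate construction and a structural comparison. One introduces a \emph{modified} process $\bar{\mathcal{G}}_n(t)$ (Algorithm~\ref{algo:modified-MC}) by attaching independent Poisson clocks to pairs of open half-edges without killing them; this makes the blob-merging an \emph{exact} multiplicative coalescent, so the superstructure of $\bar{\mathcal{G}}_n(t_c(\lambda))$ is exactly an $\mathrm{NR}$ graph with parameters \eqref{eq:parameters-inhom}. Theorem~\ref{thm:univesalty} is then applied to $\bar{\mathcal{G}}_n$, yielding Theorem~\ref{thm:mspace-limit-modified}. The remaining work (Sections~\ref{sec:struct-compare}--\ref{sec:proof-thm1}) is to compare $\mathscr{C}_{\sss(i)}(\lambda)$ and $\bar{\mathscr{C}}_{\sss(i)}(\lambda)$: one shows $\mathscr{C}_{\sss(i)}(\lambda)\subset\bar{\mathscr{C}}_{\sss(i)}(\lambda)$ with high probability, that the mass of the complement is $\oP(n^{\rho})$, and crucially that all surplus edges of $\bar{\mathscr{C}}_{\sss(i)}(\lambda)$ already lie in $\mathscr{C}_{\sss(i)}(\lambda)$ (Propositions~\ref{thm:comp-functional-original} and~\ref{thm:surp-superstruc-modi}), so shortest paths between vertices of $\mathscr{C}_{\sss(i)}(\lambda)$ agree in the two graphs. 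Only then does the Gromov-weak limit transfer. Your ``main obstacle'' paragraph gestures at the right difficulty but frames it as uniform error control in a coupling rather than this sandwich-and-compare argument; in particular you do not isolate the surplus-edge matching, which is the step that actually makes the metric comparison work.
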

\begin{theorem}\label{thm:main-simple}
Under {\rm Assumption~\ref{assumption1}}, \eqref{eq:super-crit-start} and \eqref{eq:critical-window-defn} for some $\lambda\in\R$, the convergence in \eqref{eq:thm:main} also holds for the components of $\mathrm{UM}_n(\bld{d},p_n(\lambda))$, with the identical limiting object.
\end{theorem}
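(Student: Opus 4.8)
The plan is to obtain Theorem~\ref{thm:main-simple} from Theorem~\ref{thm:main}, using that $\mathrm{UM}_n(\bld{d})$ has the law of $\CM$ conditioned to be simple and that, under Assumption~\ref{assumption1}, this conditioning event has asymptotically positive probability. Since deleting edges preserves simplicity, $\mathrm{UM}_n(\bld{d},p_n(\lambda))\eqd\mathrm{CM}_n(\bld{d},p_n(\lambda))\mid\cE_n$, where $\cE_n=\{\CM\text{ is simple}\}$ is measurable with respect to the pre-percolation graph; and since $n^{-1}\sum_{i\in[n]}d_i^2\to\mu_2<\infty$ by Assumption~\ref{assumption1}\eqref{assumption1-2}, \cite{J09c} gives $\PR(\cE_n)\to c$ for some $c>0$. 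Writing $X_n=\big(n^{-\eta}\mathscr{C}_{\sss (i)}^p(\lambda)\big)_{i\geq1}$ and $X=\big(\mathscr{M}_i(\lambda)\big)_{i\geq1}$, it therefore suffices to prove, for every bounded continuous $f\colon\sS_*^{\N}\to\R$, that $\E[f(X_n)\mid\cE_n]\to\E[f(X)]$; combined with Theorem~\ref{thm:main} (which already yields $\E[f(X_n)]\to\E[f(X)]$), this is precisely the statement that the rescaled maximal clusters are asymptotically independent of $\cE_n$.

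Two facts drive the argument. First, the numbers of self-loops and of multiple edges of $\CM$ are tight (a first-moment bound using $n^{-1}\sum_{i\in[n]}d_i^2\to\mu_2$ suffices; cf.~\cite{J09c}). Second, these defects are \emph{metrically invisible}: deleting a self-loop changes no graph distance, and collapsing a bundle of parallel edges to a single copy leaves its endpoints adjacent, so neither operation alters the vertex set of any component or any within-component distance, and hence neither alters $X_n$. From the decomposition $\E[f(X_n)]=\PR(\cE_n)\E[f(X_n)\mid\cE_n]+\PR(\cE_n^c)\E[f(X_n)\mid\cE_n^c]$ together with $\PR(\cE_n)\to c>0$ it is enough to show $\E[f(X_n)\mid\cE_n^c]\to\E[f(X)]$, from which the required $\E[f(X_n)\mid\cE_n]\to\E[f(X)]$ follows. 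On $\cE_n^c$ we condition on the (tight) list $\cJ$ of defect edges together with their endpoints, erase those edges, and couple the percolation of the remaining edges: the erased-and-percolated graph is metrically identical on every component to $\mathrm{CM}_n(\bld{d},p_n(\lambda))$, while — up to the statuses of $O(|\cJ|)$ edges, which can be coupled without affecting the maximal clusters (see below) — conditionally on $\cJ$ it is a percolated configuration model on a degree sequence $\bld{d}^{\cJ}$ differing from $\bld{d}$ in $O(|\cJ|)$ coordinates, each by $O(|\cJ|)$, and hence still satisfying Assumption~\ref{assumption1} with the same limiting constants and the same critical window (the changes to $\nu_n$ are $o(n^{-\eta})$). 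Theorem~\ref{thm:main} applied to $\bld{d}^{\cJ}$ gives $\E[f(X_n)\mid\cJ]\to\E[f(X)]$ for each realisation of $\cJ$, and averaging over $\cJ$ — using tightness of the number of defects and $\|f\|_\infty<\infty$ to pass to the limit inside the expectation — yields $\E[f(X_n)\mid\cE_n^c]\to\E[f(X)]$.

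The principal obstacle is the residual constraint that conditioning on $\cJ$ imposes on the remaining pairing: it is not a \emph{plain} configuration model, because the conditioning forbids pairings that would re-create a self-loop or a parallel edge meeting $\cJ$ — a restriction on $O(|\cJ|)$ designated pairs of half-edges. One must argue that this purely local restriction is asymptotically irrelevant for the law of the rescaled maximal clusters (equivalently, that Theorem~\ref{thm:main} is insensitive to forbidding $O(1)$ specified half-edge pairings), and, relatedly, that the percolation can be coupled across the erasure so that the $\OP(1)$ affected edges whp neither merge two maximal clusters nor act as a macroscopic shortcut within one — here one uses that the maximal clusters carry an asymptotically vanishing fraction of the half-edges and only $\OP(1)$ surplus edges. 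This is exactly the transfer mechanism used for the component \emph{sizes} in~\cite{DHLS16}, now applied to Gromov-weakly continuous functionals, and the reasoning there carries over.
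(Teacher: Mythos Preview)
There is a genuine gap in your argument. When you condition on $\cJ$ being the \emph{complete} list of defect edges of $\CM$, the remaining pairing is not a uniform matching with merely $O(|\cJ|)$ forbidden pairs: declaring $\cJ$ exhaustive forces the matching of the leftover half-edges to produce no self-loops or multiple edges whatsoever (else $\cJ$ would miss a defect). Thus, conditionally on $\cJ$, the erased graph has the law of $\mathrm{CM}_n(\bld{d}^{\cJ})$ \emph{conditioned to be simple} (and to avoid parallels with the retained copy of each multi-edge bundle) --- essentially $\mathrm{UM}_n(\bld{d}^{\cJ})$. Invoking Theorem~\ref{thm:main} for this object is circular: passing from $\mathrm{CM}$ to $\mathrm{UM}$ is precisely what you are trying to prove. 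Your last paragraph names ``the principal obstacle'' but misjudges its scale: the residual constraint is not a local restriction on $O(|\cJ|)$ designated half-edge pairs but the full simplicity constraint on the entire remaining matching (the self-loop exclusions alone already number $\sum_i\binom{d_i^{\cJ}}{2}=\Theta(n)$). The appeal to~\cite{DHLS16} does not rescue this, since the transfer there is not carried out by conditioning on defects.

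The paper avoids the circularity by never conditioning on defect locations. One runs a size-biased breadth-first exploration of $\mathrm{CM}_n(\bld{d},p_n(\lambda))$ (conditionally on $\bld{d}^p$) and shows: (i) for large $T$, the $K$ largest components are whp discovered within the first $Tn^{\rho}$ steps, so $X_n$ is (up to vanishing error) $\mathscr{F}_{Tn^{\rho}}$-measurable; and (ii) whp no vertex discovered in those first $Tn^{\rho}$ steps is incident to a self-loop or multi-edge in the underlying $\CM$ --- a first-moment bound using that the explored vertices are size-biased by $\bld{d}^p$ and that $n^{-3\alpha}\sum_{i>K}d_i^3\to 0$ from Assumption~\ref{assumption1}\eqref{assumption1-2}. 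On the intersection of these events, $\{\CM\text{ simple}\}$ coincides with the simplicity of the configuration model induced on the \emph{unexplored} vertices; the latter, conditionally on $\mathscr{F}_{Tn^{\rho}}$, is a fresh configuration model on a degree sequence close enough to $\bld{d}$ that $\PR(\text{simple}\mid\mathscr{F}_{Tn^{\rho}})-\PR(\CM\text{ simple})\pto 0$ by~\cite{J09c}. This gives $\E[f(X_n)\ind{\CM\text{ simple}}]=\E[f(X_n)]\,\PR(\CM\text{ simple})+o(1)$ directly.
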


\begin{remark}
The limiting objects are precisely described in Section~\ref{sec:descp-limit}.
\end{remark}

\begin{remark}
The notion of convergence in Theorems~\ref{thm:main}~and~\ref{thm:main-simple} implies weak convergence of a wide array of continuous functionals with respect to the Gromov-weak topology. 
For example, it implies the joint convergence of the distances between an arbitrary (but fixed) number of uniformly (and independently) chosen vertices in the $i$-th largest component of $\mathrm{CM}_n(\bld{d},p_n(\lambda))$ or $\mathrm{UM}_n(\bld{d},p_n(\lambda))$.
\end{remark}
\begin{remark}
The conclusion of Theorem~\ref{thm:main} holds if the measure $\mu_{\sss ct , i }$ on $\mathscr{C}_{\sss (i)}^p$ is replaced by more general measures.
Indeed, define the probability measure $\mu_{w,i}:= \sum_{k\in A} w_k/ \sum_{k\in \mathscr{C}_{\sss (i)}^p} w_k$ for $A\subset \mathscr{C}_{\sss (i)}^p$. 
To prove analogous results as Theorems~\ref{thm:main}~and~\ref{thm:main-simple} with $\mu_{w,i}$'s, we require $w_i$'s to satisfy some regularity conditions (see Assumption~\ref{assumption-w} below).
The reason will be discussed in Remark~\ref{rem:switch-measure}.
\end{remark}

\begin{remark}The results above can be extended to the case $\PR(D_n\geq x)\sim L(x)x^{-(\tau-1)}$, where $L(\cdot)$ is a slowly-varying function. 
The scaling limits would be the same, however the scaling exponents will be different as observed in \cite{DHLS16}.
In particular, the width of the scaling window now turns out to be $n^{-\eta}L_1(n)^{2}$ (for some slowly varying $L_1(\cdot)$) instead of~$n^{-\eta}$, and results identical to Theorem~\ref{thm:main} can be obtained by scaling the distances by~$n^{\eta}L_1(n)^{-2}$.
\end{remark}

\subsection{Mesoscopic properties of the critical clusters: barely subcritical regime} \label{sec:mesoscopic-results}
One of the main ingredients in the proof of Theorem~\ref{thm:main} is a refined analysis of various susceptibility functions in the \emph{barely subcritical} regime (see \eqref{defn:barely-subcrit} below for a definition) for the percolation process.
The barely subcritical and supercritical regimes correspond to regimes that are just below or above the critical window. 
For the percolation process under Assumption~\ref{assumption1}, barely subcritical (supercritical) behavior is observed for $p$ satisfying $n^{\eta}(p-p_n(0))\to-\infty$ ($n^{\eta}(p-p_n(0))\to\infty$), where $p_n(0)$ is defined in \eqref{eq:critical-window-defn} for $\lambda = 0$.
These behaviors are well understood for Erd\H{o}s-R\'enyi random graphs  \cite[Section 23]{JKLP93}, \cite{Bol01,JLR00} and configuration models in the  Erd\H{o}s-R\'enyi universality class~\cite{HM12,KS08,R12}. 
For barely supercritical configuration models in the heavy-tailed setting,  the size of the emerging giant component was obtained in~\cite{HJL16}.
We provide a detailed picture about the component sizes and susceptibility functions in the subcritical regime below.

We will prove general statements about the susceptibility functions applicable not just to percolation on the configuration model, but rather to any barely subcritical configuration model.
Since percolation on a configuration model yields a configuration model \cite{F07,J09}, the above yields susceptibility functions for percolation on configuration model as a special case.
 To set this up we need a little more notation, where each vertex in the network is associated with both degree and weight, satisfying the following assumptions:

\begin{assumption}[Barely subcritical degree sequence]
	\label{assumption-w}
\normalfont 
Let $\boldsymbol{d}'=(d_1',\dots,d_n')$ be a degree sequence and let $w_{\sss (\cdot )}:[n]\mapsto\R$ be a non-negative weight function such that the following conditions hold:
\begin{enumerate}[(i)] 
\item Assumption~\ref{assumption1} holds for $\bld{d}'$ with some $\bld{c}\in \ell^3_{\shortarrow}\setminus \ell^2_{\shortarrow}$, and 
\begin{equation}
 \lim_{n\to\infty}\frac{1}{n}\sum_{i\in [n]}d_i'=\mu_{d},\quad \lim_{n\to\infty}\frac{1}{n}\sum_{i\in [n]}w_i=\mu_{w},\quad \lim_{n\to\infty}\frac{1}{n}\sum_{i\in [n]}d_i'w_i=\mu_{d,w}.
 \end{equation}
 \item $\max_{i\in [n]}w_i = O(n^{\alpha})$, $\sum_{i\in [n]} w_i^2 =O(n)$ and  
  $ \max\Big\{ \sum_{i\in [n]} w_i^3, \sum_{i\in [n]}d_i'^2 w_i,\sum_{i\in [n]}d_i' w_i^2 \Big\} = O(n^{3\alpha}). $ \vspace{.15cm}
 \item (\emph{Barely subcritical regime}) The configuration model is at the barely subcritical regime, i.e., there exists $0<\delta<\eta$ and $\lambda_0>0$ such that
\begin{equation}\label{defn:barely-subcrit}
 \nu_n'=\frac{\sum_{i\in [n]}d_i'(d_i'-1)}{\sum_{i\in [n]}d_i'}=1-\lambda_0 n^{-\delta}+o(n^{-\delta}).
\end{equation}
\end{enumerate}
\end{assumption} 
 Let $\sC'(j)$ denote the connected component of $\rCM_n(\bld{d}')$ containing vertex $j$, and define 
\begin{eq}\label{defn:pCli}
\pCli = 
\begin{cases}
\sC'(i), &\quad \text{ if } i \leq j,\  \forall j \in \sC'(i), \\
\varnothing, &\quad \text{ otherwise},
\end{cases}
\end{eq}
and $\Wli=\sum_{k\in\pCli}w_k$. 
Define the weight-based susceptibility functions as
\begin{equation}\label{defn:susc-w}
 s_r^\star = \frac{1}{n}\sum_{i\geq 1}\Wli^r \quad \text{for } r\geq 1, \qquad
 s_{pr}^\star = \frac{1}{n}\sum_{i\geq 1} \Wli\times |\pCli|.
\end{equation}
The definition in \eqref{defn:pCli} takes care of the double counting in the definition of susceptibility functions.
Also, define the weighted distance-based susceptibility as 
\begin{equation}\label{defn:susc-dist}
\mathcal{D}_n^\star=\frac{1}{n}\sum_{i,j\in [n]}w_iw_j\mathrm{d}(i,j)\ind{i,j \text{ are in the same connected component}},
\end{equation} where $\mathrm{d}$ denotes the graph distance. 
The goal of the next result is to show that the component sizes and the susceptibility functions defined in \eqref{defn:susc-w} and \eqref{defn:susc-dist} satisfy asymptotic conditions such as the entrance boundary conditions for the multiplicative coalescent~\cite{AL98}:
\begin{theorem}[Susceptibility functions]\label{thm:susceptibility} Under {\rm Assumption~\ref{assumption-w}}, as $n\to\infty$, 
 \begin{eq}
  n^{-\delta}s_2^\star\pto \frac{\mu_{d,w}^2}{\mu_d\lambda_0}, &\quad n^{-\delta}s_{pr}^\star \pto \frac{\mu_{d,w}}{\lambda_0}, \quad n^{-(\alpha+\delta)}\Wlj\pto \frac{\mu_{d,w}}{\mu_d\lambda_0}c_j, \\
  n^{-3\alpha-3\delta+1}s_3^\star&\pto \bigg(\frac{\mu_{d,w}}{\mu_d\lambda_0}\bigg)^3\sum_{i=1}^{\infty}c_i^3, \quad n^{-2\delta}\mathcal{D}_n^\star\pto \frac{\mu_{d,w}^2}{\mu_d\lambda_0^2}. 
 \end{eq}
\end{theorem}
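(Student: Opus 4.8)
The five limits will all be extracted from one underlying picture: the breadth-first exploration of $\rCM_n(\bld{d}')$ started from a vertex $v$ with $d_v'$ active half-edges performs a random walk whose increments are, to leading order, i.i.d.\ copies of $D^\star-2$, where $D^\star$ is the size-biased degree; this walk has negative drift $-(1-\nu_n')=-(\lambda_0+o(1))n^{-\delta}$, is absorbed after $(1+\oP(1))d_v'/(1-\nu_n')$ steps, and at each step attaches a fresh vertex of degree $\approx D^\star$ whose weight has conditional mean $\mu_{d,w}/\mu_d$ (a half-edge of $u$ is matched with probability $\propto d_u'$ and $\sum_u d_u'w_u/\sum_u d_u'\to\mu_{d,w}/\mu_d$). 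I would first record the identities
\begin{equation*}
  s_{pr}^\star=\frac1n\sum_{v\in[n]}\mathscr{W}(\mathscr{C}(v)),\qquad
  s_2^\star=\frac1n\sum_{v\in[n]}w_v\,\mathscr{W}(\mathscr{C}(v)),\qquad
  \mathcal{D}_n^\star=\frac1n\sum_{v\in[n]}w_v\!\!\sum_{u\in\mathscr{C}(v)}\!\!w_u\,\mathrm{d}(u,v),
\end{equation*}
where $\mathscr{C}(v)$ is the component of $v$ and $\mathscr{W}(\mathscr{C}(v))=\sum_{u\in\mathscr{C}(v)}w_u$, together with $ns_r^\star=\sum_{i\ge1}\mathscr{W}_{\sss(i)}^r$.

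Running the exploration and using that the expected weight at graph distance $\ell$ from $v$ is $\approx \tfrac{\mu_{d,w}}{\mu_d}d_v'(\nu_n')^{\ell-1}$ with $\sum_{\ell\ge1}\ell(\nu_n')^{\ell-1}=(1-\nu_n')^{-2}$, a first-moment computation yields, up to errors controlled by Assumption~\ref{assumption-w},
\begin{equation*}
  \expt{\mathscr{W}(\mathscr{C}(v))}\approx w_v+\frac{\mu_{d,w}}{\mu_d}\cdot\frac{d_v'}{1-\nu_n'},\qquad
  \expt{\sum_{u\in\mathscr{C}(v)}w_u\,\mathrm{d}(u,v)}\approx \frac{\mu_{d,w}}{\mu_d}\cdot\frac{d_v'}{(1-\nu_n')^2}.
\end{equation*}
Averaging over $v$ and inserting $\tfrac1n\sum_vd_v'\to\mu_d$, $\tfrac1n\sum_vw_v\to\mu_w$, $\tfrac1n\sum_vw_vd_v'\to\mu_{d,w}$ and $1-\nu_n'\sim\lambda_0n^{-\delta}$ produces the claimed limits for $\expt{s_{pr}^\star}$, $\expt{s_2^\star}$ and $\expt{\mathcal{D}_n^\star}$. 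The $\pto$ statements then follow from a second-moment bound: the fluctuation is dominated by the few largest components, and each contributes $\oP(n^\delta)$ to $s_{pr}^\star,s_2^\star$ and $\oP(n^{2\delta})$ to $\mathcal{D}_n^\star$ — precisely because $\delta<\eta=1-2\alpha$ makes $n^{2\alpha+2\delta-1}=\oP(n^\delta)$ and (with an extra diameter factor $\OP(n^\delta\log n)$) $n^{2\alpha+3\delta-1}\log n=\oP(n^{2\delta})$ — while the remaining bulk contribution is a sum of weakly dependent, essentially bounded terms and concentrates around its mean.

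For $\mathscr{W}_{\sss(j)}$ I would analyze the components of the high-degree vertices. Fix $K$; I claim that whp the components $\mathscr{C}(1),\dots,\mathscr{C}(K)$ of the $K$ largest-degree vertices are distinct, are precisely the $K$ largest components, and satisfy $n^{-(\alpha+\delta)}\mathscr{W}_{\sss(j)}\pto\tfrac{\mu_{d,w}}{\mu_d\lambda_0}c_j$. Distinctness holds because exploring from vertex $i$ touches $\OP(d_i'/(1-\nu_n'))=\OP(n^{\alpha+\delta})$ half-edges, so the chance that one of the $\asymp c_jn^\alpha$ half-edges of a vertex $j\neq i$ is hit is $\OP(n^{2\alpha+\delta-1})=\oP(1)$, again using $\delta<\eta$. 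Concentration of a single hub cluster follows by running the walk from $d_j'\asymp c_jn^\alpha$: it is absorbed at time $(1+\oP(1))c_jn^{\alpha+\delta}/\lambda_0$, its fluctuations being of smaller order even accounting for the heavy-tailed increments (whose largest value is $\OP(n^\alpha)=\oP(n^{\alpha+\delta})$), and the collected weight is $(1+\oP(1))\tfrac{\mu_{d,w}}{\mu_d}$ times the cluster size by a law of large numbers over the $\asymp n^{\alpha+\delta}$ size-biased vertices attached. That no other component is comparable, together with the ordering claim, follows from a first-moment tail bound on $|\mathscr{C}(v)|$ and $c_j\downarrow0$. Finally, for $s_3^\star$, writing $n^{-3\alpha-3\delta+1}s_3^\star=\tfrac1n\sum_{i\ge1}\big(n^{-(\alpha+\delta)}\mathscr{W}_{\sss(i)}\big)^3$, the first $K$ terms converge to $\big(\tfrac{\mu_{d,w}}{\mu_d\lambda_0}\big)^3\sum_{i\le K}c_i^3$ by the hub analysis, while the tail is controlled via $\mathscr{W}_{\sss(i)}\le(1+\oP(1))\tfrac{\mu_{d,w}}{\mu_d}|\mathscr{C}'_{\sss(i)}|$, $|\mathscr{C}'_{\sss(i)}|\approx d_i'/(1-\nu_n')$ and the condition $\lim_K\limsup_nn^{-3\alpha}\sum_{i>K}(d_i')^3=0$ of Assumption~\ref{assumption1}, giving $n^{-3\alpha-3\delta+1}\sum_{i>K}\mathscr{W}_{\sss(i)}^3\to0$ uniformly in $n$ as $K\to\infty$; letting $K\to\infty$ produces $\sum_{i=1}^\infty c_i^3$.

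\textbf{Main obstacle.} The delicate point is the coexistence of two regimes: the limits for $s_2^\star,s_{pr}^\star,\mathcal{D}_n^\star$ need the hub components to be asymptotically negligible, whereas $\mathscr{W}_{\sss(j)}$ and $s_3^\star$ are carried entirely by them, and all of the bookkeeping must be uniform in the truncation level $K$ even though $\bld{c}\in\ell^3_{\shortarrow}\setminus\ell^2_{\shortarrow}$ (so $\sum_jc_j^2=\infty$ while $\sum_jc_j^3<\infty$). Making both sides work at once is exactly what forces, and uses sharply, the hypothesis $\delta<\eta$; the technically hardest ingredients are the tail bounds on cluster sizes and diameters that underlie both the hub analysis and the second-moment estimates, in the presence of the heavy-tailed, $n$-dependent degree increments.
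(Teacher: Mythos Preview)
Your outline captures the right architecture --- first moments by (size-biased) branching/path-counting, concentration by a second moment, hub clusters by an exploration walk, and $s_3^\star$ by a top-$K$ plus tail split --- and for $\mathscr{W}_{\sss(j)}$ your walk argument is essentially what the paper does (their Lemmas~\ref{lem:exploration::subcritical-j}--\ref{lem:hdeg-hweight}). Two places, however, need more than you have written.

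\textbf{Variance of $s_2^\star$, $s_{pr}^\star$, $\mathcal{D}_n^\star$.} ``The remaining bulk contribution is a sum of weakly dependent, essentially bounded terms'' is not an argument; the terms $w_v\mathscr{W}(\mathscr{C}(v))$ are neither bounded nor obviously weakly dependent. The paper's mechanism is different and is the standard one: write $\E[(s_2^\star)^2]$ via two independent size-biased vertices $U_n^*,V_n^*$, condition on $\mathscr{C}'(V_n^*)$, and use that the complement is again a configuration model with criticality parameter $\tilde\nu_n<1$ (the one-line inequality $\sum_{i\in\mathscr{C}}d_i'(d_i'-2)\ge-2$), so the same first-moment upper bound applies to $\mathscr{W}(U_n^*)$ on the complement. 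This gives $\var{s_2^\star}=o(n^{2\delta})$ cleanly, and an extra factor of $l$ under the path sums handles $\mathcal{D}_n^\star$. The paper also needs $\E[(\mathscr{W}(V_n^*))^3]=o(n^{1+2\delta})$ for the diagonal term $\tfrac1{n^2}\E[\sum_i\mathscr{W}_{\sss(i)}^4]$, obtained by path-counting over the tree shapes in Figure~\ref{fig:paths3}; your sketch does not address this.

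\textbf{The tail of $s_3^\star$.} This is where there is a genuine gap. You write $\mathscr{W}_{\sss(i)}\le(1+\oP(1))\tfrac{\mu_{d,w}}{\mu_d}|\mathscr{C}'_{\sss(i)}|$ and $|\mathscr{C}'_{\sss(i)}|\approx d_i'/(1-\nu_n')$, then invoke $n^{-3\alpha}\sum_{i>K}(d_i')^3\to0$. Neither approximation is available uniformly in $i>K$: the first needs a law of large numbers inside each component (only good for components of size $\asymp n^{\alpha+\delta}$), and the second identifies the $i$-th largest component with the component of vertex $i$, which you only know for fixed $i$. The paper avoids this entirely. It deletes vertices $1,\dots,K$, observes that in the remaining configuration model
\[
\nu_n^K\le \nu_n'-C_1 n^{-\delta}\sum_{i\le K}c_i^2,
\]
and applies the second-moment upper bound \eqref{W-moment2-ub} (valid for any subcritical CM) to get $\tfrac1n\E\big[\sum_i(\mathscr{W}_{\sss(i)}^K)^3\big]\le C n^{3(\alpha+\delta)-1}/(1+C_1\sum_{i\le K}c_i^2)$. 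Since $\bld c\notin\ell^2$, the denominator diverges with $K$, while trivially $\sum_{i>K}\mathscr{W}_{\sss(i)}^3\le\sum_i(\mathscr{W}_{\sss(i)}^K)^3$. Note that it is $\sum c_i^2=\infty$, not the third-moment condition, that drives the tail to zero --- your route uses the wrong hypothesis and would not close. This same deletion estimate is also what the paper uses to show $\mathscr{W}(j)=\mathscr{W}_{\sss(j)}$ whp, replacing your direct half-edge-collision bound.
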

 For a connected graph $G$, $\Delta(G)$ denotes the diameter of the graph, and for any arbitrary graph~$G$, $\Delta_{\max}(G):=\max \Delta(\mathscr{C})$, where the maximum is taken over all connected components ${\mathscr{C}\subset G}$. 
 We simply write $\Delta_{\max}$ for $\Delta_{\max}(\rCM_n(\bld{d}'))$. 
The asymptotics of $\Delta_{\max}$ is derived below:

\begin{theorem}[Maximum diameter]\label{thm:diam-max} 
Under {\rm Assumption~\ref{assumption-w}}, as $n\to\infty$, $\PR(\Delta_{\max}>n^{\delta}(\log(n))^2)\to 0$.
\end{theorem}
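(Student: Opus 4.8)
The plan is a first-moment estimate on self-avoiding paths. The elementary input is that if $\Delta_{\max}\ge k$ then some component contains two vertices at distance $\ge k$, hence a diametral geodesic of length $\ge k$, and any sub-path of length $k$ of such a geodesic is a self-avoiding path; so $\PR(\Delta_{\max}\ge k)\le\E[\#\{\text{self-avoiding paths of length }k\text{ in }\rCM_n(\bld{d}')\}]$, and it suffices to show this is $o(1)$ for $k=6n^\delta\log n$ (the off-by-one needed to match the strict inequality in the statement is immaterial).

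First I would compute the first moment. For distinct $v_0,\dots,v_k$, the probability that $v_0\text{-}v_1\text{-}\cdots\text{-}v_k$ is a path in $\rCM_n(\bld{d}')$ equals $\big(d'_{v_0}d'_{v_k}\prod_{j=1}^{k-1}d'_{v_j}(d'_{v_j}-1)\big)\prod_{j=0}^{k-1}\big(\sum_i d'_i-2j-1\big)^{-1}$. Summing over all ordered $(k+1)$-tuples of distinct vertices, dropping the distinctness constraint (which can only increase the sum), and using $k=O(n^\delta\log n)=o(\sum_i d'_i)$ so that each denominator is $(1+o(1))\sum_i d'_i$, one obtains, with $\sum_i d'_i(d'_i-1)=\nu_n'\sum_i d'_i$ and $\sum_i d'_i=\Theta(n)$,
\[
\E[\#\{\text{SAP of length }k\}]\ \le\ \Big(\textstyle\sum_i d'_i\Big)(\nu_n')^{k-1}(1+o(1))\ =\ O(n)\cdot(\nu_n')^{k}.
\]
This is also exactly the bound one gets by running the half-edge breadth-first exploration from each vertex and taking the first moment of the exploration frontier; the exploration picture makes transparent that the heavy tail is harmless at this level, since it inflates the variance of the frontier but not its mean, which is governed only by $\nu_n'$.

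Now I would choose $k$. Since $\nu_n'=1-\lambda_0 n^{-\delta}+o(n^{-\delta})\le 1-\tfrac12\lambda_0 n^{-\delta}$ for large $n$, taking $k=6n^\delta\log n$ gives $O(n)\cdot(\nu_n')^{k}\le Cn\exp(-\tfrac12\lambda_0 n^{-\delta}\cdot 6n^\delta\log n)=Cn^{1-3\lambda_0}$, which is $o(1)$ as soon as $\lambda_0>1/3$ -- in that regime one in fact obtains $\Delta_{\max}\le 3n^\delta\log n$ whp. The constant $6$ is deliberately generous, leaving room for the $o(\cdot)$ terms.

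The main obstacle is covering the full range $\lambda_0>0$: for small $\lambda_0$ the bare first moment $O(n)(\nu_n')^{6n^\delta\log n}=O(n^{1-6\lambda_0})$ no longer tends to $0$, and one must sharpen the per-vertex survival estimate. The way to do this is to exploit that the offspring law driving the exploration -- the size-biased degree minus one -- has a power-law tail of index $\tau-2\in(1,2)$, truncated at $d'_1=O(n^\alpha)$: for a subcritical Galton--Watson process with offspring mean $\nu_n'$ and such a (truncated) heavy tail, the probability of surviving $k$ generations is not merely $O((\nu_n')^k)$ but $O\big((1-\nu_n')^{1/(\tau-3)}(\nu_n')^k\big)$, an improvement by a polynomial factor $(1-\nu_n')^{1/(\tau-3)}=\Theta(n^{-\delta/(\tau-3)})$, and feeding this into the union bound over the $n$ vertices closes the estimate. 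The technical work is therefore (a) proving this refined extinction asymptotic for the truncated heavy-tailed subcritical branching mechanism, and (b) transferring it to the genuine configuration-model exploration -- controlling, uniformly over $k\le 6n^\delta\log n$ and over all starting vertices (including the $O(1)$ hubs of degree $\Theta(n^\alpha)$), the depletion of half-edges and the contributions of self-loops and multiple edges; here one uses the a priori fact that every component, hence every explored subgraph, carries $o(\sum_i d'_i)$ half-edges, e.g.\ via the bound on $s_2^\star$ from Theorem~\ref{thm:susceptibility}.
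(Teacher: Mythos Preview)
Your approach is exactly the paper's approach: a first-moment bound on self-avoiding paths via the standard path-counting estimate $\E[P_l]\le d_i'(\nu_n')^{l-1}$. The paper phrases it per vertex (if $\Delta(\mathscr{C}'(i))>6n^{\delta}\log n$ then some self-avoiding path of length $\ge 3n^{\delta}\log n$ starts at $i$), then sums $\sum_{l\ge 3n^{\delta}\log n}\E[P_l]$ and takes a union bound over $i\in[n]$; your global SAP count is the same computation reorganised.

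You have also put your finger on a genuine slip that is present in the paper's own proof. The paper's final inequality
\[
C\,d_i'\,n\,(\nu_n')^{3n^{\delta}\log n}\ \le\ C\,\frac{d_i'}{n^2}
\]
requires $(\nu_n')^{3n^{\delta}\log n}\le Cn^{-3}$, i.e.\ roughly $3\lambda_0\ge 3$, so it fails for $\lambda_0<1$. This is precisely the gap you identified. (In the paper's only application, Section~\ref{sec:proof-metric-mc}, the degree sequence is $\bld{d}(t_n)$ and the computation after \eqref{nu-calc-1}--\eqref{nu-calc-2} gives $\lambda_0=\nu>1$ by \eqref{eq:super-crit-start}, so the argument happens to go through there.)

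That said, your proposed remedy---a refined survival estimate for truncated heavy-tailed subcritical Galton--Watson trees, together with a transfer argument to the configuration model---is much more than the situation calls for. The elementary fix is to let the constant depend on $\lambda_0$: taking $k=Cn^{\delta}\log n$ with any $C>2/\lambda_0$, the same first-moment bound yields
\[
\PR(\Delta_{\max}>Cn^{\delta}\log n)\ \le\ O(n)\,(\nu_n')^{\,Cn^{\delta}\log n/2}\ \le\ O\big(n^{1-C\lambda_0/4}\big)\ =\ o(1),
\]
which establishes $\Delta_{\max}=O_{\PR}(n^{\delta}\log n)$. This is all that is ever used downstream (see the proof of Theorem~\ref{thm:mspace-limit-modified}); the specific constant $6$ in the statement is not essential, and is in fact too small for general $\lambda_0>0$.
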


\begin{remark}\normalfont 
By taking $w_i=1$ for all $i\in [n]$, implies that $\Wli=|\Cli|$, and thus Theorem~\ref{thm:susceptibility} hold also for the usual susceptibility functions defined in terms of the component sizes (cf. \cite{J09b}).  
In the proof of Theorem~\ref{thm:main}, we will require a more general weight function, where $w_i$ is taken to be the number of half-edges deleted from vertex $i$ due to percolation.
\end{remark}

\begin{remark}\normalfont 
Unlike Theorem~\ref{thm:main}, Theorems~\ref{thm:susceptibility}~and~\ref{thm:diam-max} yield  statements about convergence in probability to constants. 
So, under Assumption~\ref{assumption-w}, one can use the fact from~\cite{J09c} that $\liminf_{n\to\infty}\PR(\mathrm{CM}_{n}(\bld{d}')\text{is simple})>0$, and thus it immediately follows that the results in Theorems~\ref{thm:susceptibility}~and~\ref{thm:diam-max} hold for $\mathrm{UM}_n(\bld{d}')$. 
\end{remark}

\subsection{Overview of the proof}\label{sec:overview}
We now summarize the key ideas of the proofs at a heuristic level.
\subsubsection*{Universality theorem} As discussed earlier, we first prove a universality theorem (Theorem~\ref{thm:univesalty}) which roughly states that if one replaces the vertices in a rank-one inhomogeneous random graph by \emph{small} metric spaces (called blobs), then the limiting metric space structure remains identical. 
The characterization of \emph{blobs} leads to some asymptotic negligibility conditions, formally stated in Assumption~\ref{assm:blob-diameter}, which simply says that the diameter of the individual blobs must be negligible compared to the typical distances in the whole graph. 
However, the typical distance can be cumulatively affected by the blobs, hence we get a different scaling factor for distances in Theorem~\ref{thm:univesalty} than in  Theorem~\ref{thm:BHS15}.

\subsubsection*{Mesoscopic or Blob-level analysis}
Percolation on $\CM$ can be viewed as a dynamic process by associating i.i.d.~uniform$[0,1]$ weights $U_e$ to each edge $e$, and keeping $e$ if $U_e\leq p$. 
The parameter $p\in [0,1]$ can be interpreted as time. 
Now for $p_n = p_n(\lambda_n)$, for some $\lambda_n\to-\infty$, $\mathrm{CM}_n(\bld{d},p_n(\lambda_n))$ lies in the barely subcritical regime and the estimates for different functionals can be obtained using Theorem~\ref{thm:susceptibility}.
We regard the components of $\mathrm{CM}_n(\bld{d},p_n(\lambda_n))$ as the blobs. 
Under the current scaling, the blobs shrink to zero, and the edges appearing in the dynamic process between the interval $[p_n(\lambda_n),p_n(\lambda)]$ connecting the blobs give rise to the macroscopic structure of the largest components of $\mathrm{CM}_n(\bld{d},p_n(\lambda))$.
However, the effects of the blobs on the limiting structure are reflected via different functionals, which is the reason for referring to the properties of the blobs as \emph{mesoscopic} properties.
\subsubsection*{Coupling to the multiplicative coalescent}
Finally, the goal is to understand the macroscopic structure formed between blobs within the time interval $[p_n(\lambda_n),p_n(\lambda)]$.
The merging dynamics of the components between $[p_n(\lambda_n),p_n(\lambda)]$ can be heuristically described as follows:
Let $p_0$ be a time when an edge appears. 
Then the two half-edges corresponding to the new edge are chosen uniformly at random from the open half-edges (half-edges deleted due to percolation) of $\mathrm{CM}_n(\bld{d},p_0-)$.
Therefore, if $(\mathcal{O}_i(p))_{i\geq 1}$ denotes the vector of open half-edges in distinct components at time $p$, then the clusters corresponding to $\mathcal{O}_i(p)$ and $\mathcal{O}_j(p)$ merge at rate proportional to $\mathcal{O}_i(p)\times \mathcal{O}_j(p)$  and creates a new cluster with $\mathcal{O}_i(p)+ \mathcal{O}_j(p)-2$ open half-edges.
Thus, the elements of the vector $(\mathcal{O}_i(p))_{i\geq 1}$, seen as masses, merge \emph{approximately} as the multiplicative coalescent (see Definition~\ref{defn:mul-coalescent}), in the sense that the dynamics experience a depletion of half-edges in the components. 
Now, we can run a parallel process where the paired half-edges are replaced with new \emph{dummy} open half-edges to the corresponding vertices \cite{BBSX14,DHLS16}.
The dynamics in the latter process gives rise to an exact multiplicative coalescent and due to this fact, the modified graph $\bar{\mathcal{G}}_n$ can be shown to be distributed as a rank-one inhomogeneous random graph with the blobs being the mesoscopic components at time $p_n(\lambda_n)$. 
Now, the graph $\bar{\mathcal{G}}_n$ becomes the candidate for applying our universality theorem (see Theorem~\ref{thm:mspace-limit-modified}).
\subsubsection*{Structural comparison}
Finally, we perform a structural comparison between $\mathrm{CM}_n(\bld{d},p_n(\lambda))$ and $\bar{\mathcal{G}}_n$ to conclude Theorem~\ref{thm:main}. 
Let us consider the largest component $\mathscr{C}_{\sss (1)}^p$ (respectively $\bar{\mathscr{C}}_{\sss (1)}^p$) of $\mathrm{CM}_n(\bld{d},p_n(\lambda))$ (respectively $\bar{\mathcal{G}}_n$). 
By the above coupling (with dummy half-edges being added), $\mathscr{C}_{\sss (1)}^p\subset\bar{\mathscr{C}}_{\sss (1)}^p$ and we know the asymptotic metric structure of $\bar{\mathscr{C}}_{\sss (1)}^p$.
Now, the idea is to show that (a) $|\bar{\mathscr{C}}_{\sss (1)}^p\setminus \mathscr{C}_{\sss (1)}^p|= o(|\mathscr{C}_{\sss (1)}^p|)$ with high probability implying that the part of $\bar{\mathscr{C}}_{\sss (1)}^p$ outside $\mathscr{C}_{\sss (1)}^p$ is insignificant, (b) for any pair of vertices $u,v\in \mathscr{C}_{\sss (1)}^p$, the shortest path between them in $\mathscr{C}_{\sss (1)}^p$ and $\bar{\mathscr{C}}_{\sss (1)}^p$ are identical. 
These two properties conclude the proof of Theorem~\ref{thm:main} under the Gromov-weak topology.

\section{Discussion}\label{sec:discussion}

\subsubsection*{Optimality of assumptions and Gromov-weak topology}
As mentioned in the introduction, our goal is not only to consider critical percolation, but to explore the universality class for the scaling limits in~\cite{BHS15} in the same spirit as it was done in \cite{BBSX14} for the Erd\H{o}s-R\'enyi universality class. 
Our universality theorem (Theorem~\ref{thm:univesalty}) holds under optimal assumptions, and does not require additional restrictions such as \cite[Assumption 3.3]{BBSX14}.
However, it is worthwhile noting that the universality theorem (and consequently Theorem~\ref{thm:main}) holds with respect to the Gromov-weak topology instead of the stronger  Gromov-Hausdorff-Prokhorov (GHP) topology. 
This is not a restriction that we impose, but in fact there is a conceptual barrier. 
If the convergence in Theorem~\ref{thm:main} would hold in the GHP-topology only under Assumption~\ref{assumption1}, then the limiting metric space would be compact for any $\bld{\theta}\in \ell^3_{\shortarrow}\setminus \ell^2_{\shortarrow}$, but additional restrictions are needed for the compactness of the limiting metric space and simply assuming $\bld{\theta}\in \ell^3_{\shortarrow}\setminus \ell^2_{\shortarrow}$ does not suffice.
See \cite[Section 7]{AMP04}, for an explicit conjecture  about the compactness of such metric spaces by Aldous, Miermont and Pitman. 
In a follow-up work~\cite{BDHS18}, we extend the scaling limit results in the GHP-topology by establishing the so-called global lower mass bound property \cite[Theorem 6.1]{ALW16}, which ensures that the components have sufficient mass everywhere and thus  forbids the existence of long, thin paths, when the total mass of the component converges. 
However, one needs additional technical conditions in~\cite{BDHS18} on top of Assumption~\ref{assumption1} to prove the global lower mass bound property.

\subsubsection*{Extensions, recent developments and open problems}
\begin{enumeratei}
\item The universality theorem is applicable to dynamically evolving random networks with heavy-tailed degrees, which evolve (approximately) as the multiplicative coalescent over the critical window, and satisfy some nice properties such as Theorem~\ref{thm:susceptibility} in the barely subcritical regime. 
For this reason, we believe that the universality theorem and the methods of this paper are applicable to many known inhomogeneous random graph models with suitable kernels \cite{BJR07}, as well as Bohman-Frieze processes which satisfy different initial conditions so that one gets a heavy tailed-degree distribution at criticality. 
We leave these as interesting open problems. 

\item In a recent work, Broutin, Duquesne and Wang~\cite{BDW18} obtained structural limit laws for rank-one inhomogeneous random graphs which evolve as general multiplicative coalescent processes over the critical window. 
This framework unifies the scaling limits for the heavy-tailed and non heavy-tailed cases in terms of a single limit law.
It will be interesting to prove a universality theorem for the limit laws in \cite{BDW18}.
\item Recently, Conchon-Kerjan and Goldschmidt~\cite{CG17} derived the scaling limit of the maximal components at criticality for $\CM$ when the degrees form an  i.i.d.~sample from a power-law distribution with $\tau\in (3,4)$. 
The properties of the corresponding limiting object was investigated in a recent preprint by Goldschmidt,  Haas, and S{\'{e}}nizergues~\cite{GHS18}. 
The scaling limits in the i.i.d.~setting has a completely different description of the limiting object compared to the one in this paper. 
It will be interesting to explore the connections between the results in the above paper and the current work. 

\item It turns out that the study of the component structures corresponding to critical percolation plays a crucial role in the study of the metric structure of the minimal spanning tree (MST) \cite{ABGM13}.
In fact, a detailed understanding of the metric structures in the critical window obtained in \cite{ABG09} played a pivotal role in the proofs of \cite{ABGM13}.
The connections to the MST-problem such as those outlined in \cite{ABGM13} suggest that the scaling limit results in this paper will be useful in the study of metric structures for the MST for graphs with given degrees in the heavy-tailed regime. 
However, the MST problem in this regime is an open question. 
\end{enumeratei}

\section{Convergence of metric spaces, discrete structures and limit objects}
\label{sec:definitions-full}
The aim of this section is to define the proper notion of convergence relevant to this paper (Section~\ref{sec:defn:GHP-weak}),
set up discrete structures required in the statement and in the proof of the universality result in Theorem~\ref{thm:univesalty} (Sections~\ref{defn:super-graph}, \ref{sec:tree-space}, \ref{sec:p-trees}), and describe limit objects that arise in Theorem~\ref{thm:main} (Sections~\ref{sec:inhom-cont-tree} and~\ref{sec:descp-limit}). 

\subsection{Gromov-weak topology}
\label{sec:defn:GHP-weak}
A complete separable measured metric space (denoted by $(X, \mathrm{d} , \mu)$) is a complete, separable metric space $(X,\mathrm{d})$ with an associated probability measure $\mu$ on the Borel sigma algebra $\cB(X)$.
The Gromov-weak topology is defined on $\mathscr{S}_0$, the space of 
all complete and separable measured metric spaces  (see \cite{GPW09,G07}, \cite[Section 2.1.2]{BHS15}).
 The notion is formulated based on the philosophy of finite-dimensional convergence.
 Two measured metric spaces $(X_1,\dst_1,\mu_1)$, $(X_2,\dst_2,\mu_2)$ are considered to be equivalent if there exists an isometry  $\psi:\mathrm{support}(\mu_1)\mapsto \mathrm{support}(\mu_2)$ such that $\mu_2=\mu_1 \circ \psi^{-1}$. 
 Let $\mathscr{S}_*$ be the space of all equivalence classes of $\mathscr{S}_0$. 
 We (slightly) abuse the notation by not distinguishing between a metric space and its corresponding equivalence class. 
 Fix $l\geq 2$ and $(X,\dst,\mu)\in \mathscr{S}_*$. 
 Given any collection of points $\mathbf{x}=(x_1,\dots,x_l)\in X^l$, define $\mathrm{D}(\mathbf{x}):=(\dst(x_i,x_j))_{ i,j\in [l]}$ to be the matrix of pairwise distances of the points in $\mathbf{x}$. A function $\Phi: \mathscr{S}_*\mapsto \R$ is called a polynomial if there exists a bounded continuous function $\phi:\R^{l^2}\mapsto \R$ such that 
\begin{equation}\label{defn:polynomials}
 \Phi((X,\dst,\mu))=\int \phi(\mathrm{D}(\mathbf{x}))d\mu^{\sss\otimes l},
\end{equation}
where $\mu^{\sss\otimes l}$ denotes the $l$-fold product measure. A sequence $\{(X_n,\dst_n,\mu_n)\}_{n\geq 1}\subset \mathscr{S}_*$ is said to converge to $(X,\dst,\mu)\in \mathscr{S}_*$ if and only if $\Phi((X_n,\dst_n,\mu_n))\to \Phi((X,\dst,\mu))$ for all polynomials $\Phi$ on~$\mathscr{S}_*$. By \cite[Theorem 1]{GPW09},  $\mathscr{S}_*$ is a Polish space under the Gromov-weak topology.

\subsection{Super graphs}\label{defn:super-graph} 
Our \emph{super graphs} consist of three main ingredients: 1) A collection of metric spaces called \emph{blobs}; 2) A graphical superstructure determining the connections between the blobs; 3) Connection points or junction points at each blob. In more detail,  super graphs contain the following structures (see Figure~\ref{fig:supergraph}):
\begin{figure}
    \centering
    \includegraphics[scale = 1.4]{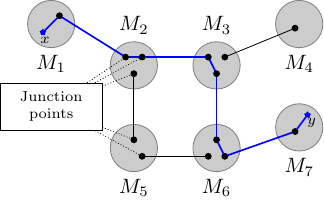}
    \caption{Construction of supergraphs described Section~\ref{defn:super-graph}. The blue line represents the shortest  path between $x$ and $y$.}
    \label{fig:supergraph}
\end{figure}
 \begin{enumerate}[(a)]
 \item \label{defn:blob:supstrct:2} {\bf Blobs}: A collection $\{(M_i,\dst_i,\mu_i)\}_{i\in [m]}$ of  connected, compact measured metric spaces.
  \item {\bf Superstructure}: A (random) graph $\mathcal{G}$ with vertex set $[m]$. The graph has a weight sequence $\mathbf{p}=(p_i)_{i\in [m]}$ associated to the vertex set $[m]$. We regard $M_i$ as the $i$-th vertex of $\mathcal{G}$. 
  \item {\bf Junction points}:  An independent collection of random points $\mathbf{X}:=(X_{i,j}:i,j\in [m])$ such that $X_{i,j}\sim \mu_i$ for all $i,j$. Further, $\mathbf{X}$ is independent of~$\mathcal{G}$.
 \end{enumerate} 
 Using these three ingredients, define a metric space $(\bar{M},\bar{\dst}, \bar{\mu})=\Gamma(\mathcal{G},\mathbf{p}, \mathbf{M},\mathbf{X})$, with $\bar{M}= \sqcup_{i\in [m]}M_i$  (the disjoint union of the $M_i$'s) by putting an edge of \emph{length} one between the pair of points
$\{(X_{i,j},X_{j,i}): (i,j) \text{ is an edge of }\mathcal{G}\}.$
 The distance metric $\bar{\dst}$ is the natural metric obtained from the graph distance and the inter-blob distance on a path. More precisely, for any $x,y\in \bar{M}$ with $x\in M_{j_1}$ and $y\in M_{j_2}$,
 \begin{equation}\label{defn:distance-metric-blob}
  \bar{\mathrm{d}}(x,y)= \inf \Big\{k+\dst_{j_1}(x,X_{j_1,i_1})+\sum_{l=1}^{k-1} \dst_{i_l}(X_{i_l,i_{l-1}}, X_{i_l,i_{l+1}})+\dst_{j_2}(X_{j_2,i_{k-1}},y)\Big\},
 \end{equation}where the infimum is taken over all paths $(i_1,\dots,i_{k-1})$ in $\mathcal{G}$ and all $k\geq 1$, and $i_0= j_1$ and $i_{k} = j_2$.
  The measure $\bar{\mu}$ is given by  $\bar{\mu}(A):=\sum_{i\in [m]}p_i\mu_i(A \cap M_i)$, for any measurable subset $A$ of $\bar{M}$. Note that there is a one-to-one correspondence between the components of $\mathcal{G}$ and $\Gamma(\mathcal{G},\mathbf{p},\mathbf{M},\mathbf{X})$ as the blobs are connected. 
  \subsection{Space of trees with edge lengths, leaf weights, root-to-leaf measures, and blobs}\label{sec:tree-space}
In the proof of the main results we need the following spaces built on top of the space of discrete trees. 
The first space $T_{\sss IJ}$ was formulated in \cite{AP99,AP00a} where it was used to study trees spanning a finite number of random points sampled from an inhomogeneous continuum random tree (as described in the next section).

\subsubsection{The space \texorpdfstring{$T_{\sss IJ}$}{TEXT}} Fix $I\geq 0$ and $J\geq 1$. Let $T_{\sss IJ}$ be the space of trees with each element $\vt\in T_{\sss IJ}$ having the following properties:
\begin{enumeratea}
	\item There are exactly $J$ leaves labeled $1+, \ldots, J+$, and the tree is rooted at the labeled vertex $0+$.
	\item There may be extra labeled vertices (called hubs) with  labels in $\set{1,\ldots, I}$. (It is possible that only some, and not all, labels in $\set{1,\ldots, I}$ are used.)
	\item Every edge $e$ has a strictly positive edge length $l_e$.
\end{enumeratea}
A tree $\vt\in T_{\sss IJ}$ can be viewed as being composed of two parts: 
(1) $\shape(\vt)$ describing the shape of the tree (including the labels of leaves and hubs) but ignoring edge lengths. The set of all possible shapes $T_{\sss IJ}^{\sss \shape}$ is obviously finite for fixed $I, J$.
(2) The edge lengths $\vl(\vt):= (l_e:e\in \vt)$. 
We will consider the product topology on $T_{\sss IJ}$ consisting of the discrete topology on $T_{\sss IJ}^{\sss \shape}$ and the product topology on $\bR^{\sss \mathrm{E}(\vt)}$, where $\mathrm{E}(\vt)$ is the number of edges of~$\vt$.

\subsubsection{The space \texorpdfstring{$T_{\sss IJ}^*$}{TEXT}} \label{sec:space-of-trees}
Along with the three attributes above in $T_{\sss IJ}$, the trees in $T_{\sss IJ}^*$ have two additional properties. Let $\cL(\vt):= \set{1+, \ldots, J+}$ denote the collection of leaves in~$\vt$. 
Then every leaf $v \in \cL(\vt) $ has the following attributes:

\begin{enumeratea}
	\item[(d)] {\bf Leaf weights:} A strictly positive number $A(v)$. 
	\item[(e)] {\bf Root-to-leaf measures:} A probability measure $\nu_{\vt,v}$ on the path $[0+,v]$ connecting the root and the leaf $v$. 
\end{enumeratea}
For each $v \in \cL(\vt) $, the path $[0+,v]$  can be viewed as a compact measured metric space with the measure being  $\nu_{\vt,v}$.
Let $\cX$ denote the space of compact measured metric spaces endowed with the Gromov-Hausdorff-Prokhorov topology (see \cite[Section~2.1.1]{BHS15}).
In addition to the topology on $T_{\sss IJ}$, the space $T_{\sss IJ}^*$ with the additional two attributes inherits the product topology on $\bR^{\sss J}$ due to leaf weights and $\mathcal{X}^{\sss J}$ due to the paths $[0+,v]$ endowed with $\nu_{\vt,v}$ for each $v \in \cL(\vt) $.
For consistency, we add a conventional state $\partial$ to the spaces $T_{\sss IJ}$ and $T_{\sss IJ}^*$. 
Its use will be made clear in Section~\ref{sec:univ-thm}.  
  
   For all instances in this paper, the shape of a tree $\mathrm{shape}(\mathbf{t})$ will be viewed as a subgraph of a graph with $m$ vertices. In that case, the tree will be assumed to inherit the vertex labels from the original graph. We will often write $\mathbf{t}\in T^{*m}_{\sss IJ}$ to emphasize the fact that the vertices of $\mathbf{t}$ are labeled from a subset of $[m]$.

 \subsubsection{The space \texorpdfstring{$\overline{T}^{*m}_{\sss IJ}$}{TEXT}}
 \label{sec:T-space-extended} We enrich the space $T^{*m}_{\sss IJ}$ with some additional elements to accommodate the blobs. 
 Consider $\mathbf{t}\in T_{\sss IJ}^{*m}$ and construct $\bar{\vt}$ as follows:
Let $(M_i,\dst_i,\mu_i)_{i\in [m]}$ be a collection of blobs and $\mathbf{X}=(X_{ij}:i,j\in [m])$ be the collection of junction points as defined in Section~\ref{defn:super-graph}. Construct the metric space $\bar{\vt}$ with elements in $\bar{M}(\mathbf{t})= \sqcup_{i \in \mathbf{t}}M_i$, by putting an edge of \emph{length} one between the pair of vertices $ \{(X_{i,j},X_{j,i}): (i,j) \text{ is an edge of }\mathbf{t}\}.$ 
The distance metric is given by~\eqref{defn:distance-metric-blob}.
 The path from the leaf $v$ to the root $0+$ now contains blobs. 
 Replace the root-to-leaf measure by $\bar{\nu}_{\vt,v}(A):= \sum_{i\in [0+,v]}\nu_{\vt,v}(i)\mu_i(M_i\cap A)$ for $A\subset \sqcup_{i\in [0+,v]} M_i$, where $\nu_{\vt,v}$ is the root-to-leaf measure on $[0+,v]$ for $\mathbf{t}$. 
 Notice that $T_{\sss IJ}^{*m}$ can be viewed as a subset of $\overline{T}_{\sss IJ}^{*m}$. 
 In the proof of the universality theorem in Section~\ref{sec:univ-thm}, the blobs will be a fixed collection and, therefore, any $\vt\in T_{\sss IJ}^{*m}$ corresponds to a unique $\bar{\vt}\in\overline{T}_{\sss IJ}^{*m}$. 
  
%
%
%
%

\subsection{\texorpdfstring{$\mathbf{p}$}{TEXT}-trees}\label{sec:p-trees}
For fixed $m \geq 1$, write $\bT_m$ and $\bT_m^{\sss \ord}$ for the collection of all rooted trees with vertex set $[m]$ and rooted ordered trees with vertex set $[m]$ respectively. 
An ordered rooted tree is a rooted tree where children of each individual are assigned an order.
We define a random tree model called $\vp$-trees \cite{CP99,P01}, and their corresponding limits, the so-called inhomogeneous continuum random trees, which play a key role in describing the limiting metric spaces.   
Fix $m \geq 1$, and a probability mass function $\vp = (p_i)_{i\in [m]}$ with $p_i > 0$ for all $i\in [m]$.
 A $\vp$-tree is a random tree in $\bT_m$, with law as follows: For any fixed $\vt \in \bT_m$ and $v\in \vt$, write $d_v(\vt)$ for the number of children of $v$ in the tree $\vt$. Then the law of the $\vp$-tree, denoted by $\pr_{\text{tree}}$, is defined as
\begin{equation}
\label{eqn:p-tree-def}
	\pr_{\text{tree}}(\vt) = \pr_{\text{tree}}(\vt; \vp) = \prod_{v\in [m]} p_v^{d_v(\vt)}, \quad \vt \in \bT_m.
\end{equation}
Note that a normalizing constant is not required in \eqref{eqn:p-tree-def} to make it a probability distribution (see  \cite[Lemma 1]{CP99}). 
Generating a random $\vp$-tree $\mathscr{T}\sim \pr_{\text{tree}}$ and then assigning a uniform random order on the children of every vertex $v\in \mathscr{T}$ gives a random element with law $\pr_{\ord}(\cdot ; \vp)$ given by
\begin{equation}
\label{eqn:ordered-p-tree-def}
	\pr_{\ord}(\vt) = \pr_{\ord}(\vt; \vp) = \prod_{v\in [m]} \frac{p_v^{d_v(\vt)}}{(d_v(\vt)) !}, \quad \vt \in \bT_m^{\ord}.
\end{equation}

\subsubsection{The birthday construction of \texorpdfstring{$\vp$}{TEXT}-trees} \label{sec:p-tree-birthday}
We now describe a construction of $\vp$-trees, formulated in~\cite{CP99}, that is relevant to this work.  
Let $\vY:=(Y_0, Y_1, \ldots)$ be a sequence of i.i.d.~random variables with distribution~$\vp$. 
Let $R_0=0$ and for $l\geq 1$, let $R_l$ denote the $l$-th repeat time, i.e., $R_l=\min\big\{k>R_{l-1}: Y_k\in\{Y_0,\hdots,Y_{k-1}\}\big\}.$
Now consider the directed graph formed via the edges
$
\cT(\vY):= \set{(Y_{j-1}, Y_j): Y_j\notin \set{Y_0, \ldots, Y_{j-1}}, j\geq 1}.$
This gives a tree which we view as rooted at $Y_0$. 
The following striking result was shown in \cite{CP99}:
\begin{thm}[{\cite[Lemma~1 and Theorem~2]{CP99}}]
	\label{thm:pit-cam-birthday}
	The random tree $\cT(\vY)$, viewed as an element in $\bT_m$, is distributed as a $\vp$-tree with distribution \eqref{eqn:p-tree-def} independently of $Y_{R_1-1}, Y_{R_2-1}, \ldots$ which are i.i.d.~with distribution~$\vp$.
\end{thm}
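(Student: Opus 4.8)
The plan is to read off both the tree $\cT(\vY)$ and the sequence $(Y_{R_l-1})_{l\ge1}$ from the i.i.d.\ sequence $\vY$ by revealing it entry by entry, organizing the reveal into \emph{excursions} delimited by the successive first appearances of vertices. Write $v_1=Y_0$; for $k\ge2$ let $v_k$ be the $k$-th distinct value to appear, discovered at time $\tau_k$ (so $\tau_1=0$); and put $V_k=\{v_1,\dots,v_k\}$, $q_k=\sum_{v\in V_k}p_v$, and $\pi=(v_1,\dots,v_m)$ (the discovery order). Between times $\tau_k$ and $\tau_{k+1}$ the reveal produces a geometric number $j_k\ge0$ of entries lying in $V_k$, followed by the new value $v_{k+1}$. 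By the definition of $\cT(\vY)$, the parent $c_k$ of $v_{k+1}$ is the entry revealed just before it --- so $c_k=v_k$ when $j_k=0$, and $c_k$ is the last of the $j_k$ entries of $V_k$ when $j_k\ge1$ --- while the repeats inside excursion $k$ are exactly those $j_k$ entries, so excursion $k$ contributes the recorded values $v_k,Y_{\tau_k+1},\dots,Y_{\tau_k+j_k-1}$ (none when $j_k=0$). After time $\tau_m$ every further entry is a repeat, producing the recorded value $v_m$ at time $\tau_m+1$ and then the i.i.d.\ draws $Y_{\tau_m+1},Y_{\tau_m+2},\dots$.

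The key step is a per-excursion computation. Conditionally on the reveal up to time $\tau_k$ --- which fixes $V_k$ and the current position $v_k$ --- the later entries are simply i.i.d.\ $\vp$, so for $a_1,\dots,a_j\in V_k$ and $w\notin V_k$ the conditional probability that excursion $k$ has length $j$, old entries $(a_1,\dots,a_j)$, and new vertex $w$ equals $p_{a_1}\cdots p_{a_j}\,p_w$, with no normalizing constants. Summing over the excursion length and the intermediate old entries $a_1,\dots,a_{j-1}\in V_k$ (using $\sum_{j\ge1}q_k^{j-1}=(1-q_k)^{-1}$) and multiplying these excursion contributions over $k=1,\dots,m-1$, together with the root factor $p_{v_1}$, yields a closed formula for $\PR(\cT(\vY)=\vt,\ \text{discovery order}=\pi)$ of the form $p_{v_1}\prod_{k=1}^{m-1}p_{c_k}p_{v_{k+1}}(1-q_k)^{-1}$, up to an extra factor $(1-q_{k-1})/p_{v_k}$ from each excursion in which $c_k=v_k$.

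It remains to sum this over the admissible discovery orders: for a fixed $\vt\in\bT_m$ these are exactly the linear extensions of the rooted tree $\vt$ (every non-root vertex is discovered after its parent, and the last vertex of any such order is necessarily a leaf of $\vt$), and each occurs with positive probability. The claim is that $\sum_\pi\PR(\cT(\vY)=\vt,\ \text{discovery order}=\pi)=\prod_{v\in[m]}p_v^{d_v(\vt)}$, which by~\eqref{eqn:p-tree-def} is $\pr_{\text{tree}}(\vt)$. Finally, rerunning the same computation but holding the first $L$ recorded values equal to a prescribed $\vz=(z_1,\dots,z_L)$ instead of summing over them multiplies every term by $\prod_{l\le L}p_{z_l}$, since each recorded value is one of the i.i.d.\ entries of $\vY$ over which we were summing and fixing it just selects the corresponding $p$-factor; letting $L\to\infty$ (and using the transparent structure of the post-$\tau_m$ recorded values) this yields simultaneously that $\cT(\vY)$ is a $\vp$-tree, that $(Y_{R_l-1})_{l\ge1}$ is i.i.d.\ $\vp$, and that the two are independent.

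\emph{The main obstacle is the summation over $\pi$.} The formula for $\PR(\cT(\vY)=\vt,\ \text{discovery order}=\pi)$ is not symmetric in $\pi$, because an excursion with $j_k=0$ and an excursion with $j_k\ge1$ whose last $V_k$-entry happens to equal $v_k$ both realize $c_k=v_k$; this asymmetry is what produces the extra factors $(1-q_{k-1})/p_{v_k}$, and the denominators $1-q_k$ are compensated only after the sum over all orders. I would prove the identity $\sum_\pi\PR(\cT(\vY)=\vt,\ \text{discovery order}=\pi)=\prod_v p_v^{d_v(\vt)}$ by induction on $m$: conditioning on the last-discovered vertex $u$ (a leaf of $\vt$), using $1-q_{m-1}=p_u$ to strip off the final excursion, and applying the inductive hypothesis to $\vt$ with $u$ removed and the weights $(p_v)_{v\ne u}$ suitably renormalized; alternatively, treating the $(p_v)$ as indeterminates, the identity follows from a telescoping (Abel-type) summation.
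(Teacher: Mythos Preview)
The paper does not give a proof of this statement; it is quoted directly from Camarri--Pitman~\cite{CP99}. So I evaluate your argument on its own merits. The excursion decomposition and the per-excursion probabilities are correct, and your closed formula for $\PR(\cT(\vY)=\vt,\ \text{order}=\pi)$ (including the extra factor $(1-q_{k-1})/p_{v_k}$ when $c_k=v_k$) is right. Two real gaps remain.

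\emph{The induction does not close.} After stripping the leaf $u=v_m$, the contributions for $k\le m-2$ are computed with the \emph{same} weights $(p_v)_{v\ne u}$, since $u\notin V_k$ and hence the $q_k$'s are unchanged. But these weights sum to $1-p_u\ne 1$, so the inductive hypothesis (which lives on the simplex) does not apply. Renormalising to $p_v'=p_v/(1-p_u)$ turns $1-q_k$ into $(1-p_u-q_k)/(1-p_u)$, which is \emph{not} what appears in your formula. This is not a cosmetic issue: the identity $\sum_\pi f(\pi)=\prod_v p_v^{d_v}$ is genuinely false off the simplex (try the star on three leaves with all weights equal to $0.1$ and root weight $0.5$), so no straightforward reduction is available. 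The ``Abel-type summation'' is too vague to assess.

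\emph{The independence step is not what you claim.} Fixing $\vz$ does not multiply each term of your $\pi$-sum by $\prod_l p_{z_l}$: the first entry $v_k$ of each nonempty block is already determined by $\pi$, not a summed variable, and compatibility of a given $\vz$ with a given $\pi$ is a nontrivial constraint. What is true --- and this is the efficient route --- is the pointwise identity $\prod_{j\ge 0}p_{Y_j}=\prod_v p_v^{d_v(\vt)}\cdot\prod_{l\ge 1}p_{z_l}$, obtained by classifying each index $j$ according to whether $Y_{j+1}$ is new (then $Y_j$ is a parent, contributing to $\prod p_{c_k}=\prod_v p_v^{d_v}$) or old (then $Y_j=z_l$ for the appropriate $l$). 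To finish you must show that the map $\vY\mapsto(\vt,\vz)$ is a \emph{bijection}; equivalently, that $(\vt,\vz)$ determines $\pi$ uniquely. This is the real content, and it follows from a greedy reconstruction: the path from $\rho$ to $z_1$ gives $v_1,\dots,v_{k_1}$; the first block ends at the first $z_l\notin V_{k_1}$; the path from $V_{k_1}$ to that $z_l$ gives the next stretch of $\pi$; and so on. With the bijection in hand the product identity yields the theorem at once, and the summation over $\pi$ --- your main obstacle --- is never needed.
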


 \begin{remark}
 	\label{rem:p-tree-dist} The independence between the sequence $Y_{R_1-1}, Y_{R_2-1}, \ldots$ and the constructed $\vp$-tree $\cT(\vY)$ is truly remarkable. 
In particular,  let $\cT_r\subset \cT(\vY)$ denote the subtree with vertex set $\set{Y_0, Y_1, \ldots, Y_{R_r-1}}$, i.e., the tree constructed in the first $R_r$ steps. 
Further take $\tilde{\mathbf{Y}}=(\tilde{Y_1}, \ldots, \tilde{Y_r}) $ to be an i.i.d.~sample from $\vp$ and then construct the subtree $\mathcal{S}_r$ spanned by $\tilde{\mathbf{Y}}$.
Then the above result (formalized as \cite[Corollary 3]{CP99}) implies that 
%
	\begin{equation}
	\label{eqn:p-tree-joint-sample}
		(\tilde{Y_1}, \tilde{Y_2},\ldots, \tilde{Y_r}; \cS_r)\stackrel{d}{=} (Y_{R_1-1}, Y_{R_2-1}, \ldots, Y_{R_r -1}; \cT_r).
	\end{equation}
	We will use this fact in Section~\ref{sec:univ-thm} to complete the proof of the universality theorem.
 \end{remark}

\subsubsection{Tilted \texorpdfstring{$\mathbf{p}$}{TEXT}-trees and connected components of \texorpdfstring{$\mathrm{NR}_n(\bld{x},t)$}{TEXT}} 
Consider the vertex set $[n]$ and assign weight $x_i$ to vertex $i$. 
Now, connect each pair of vertices $i, j$ ($i\neq j$) independently with probability $
		q_{ij}:= 1-\exp(- tx_i x_j).$
The resulting random graph, denoted by $\mathrm{NR}_n(\bld{x},t)$, is known as the Norros-Reittu model or the Poisson graph process \cite{RGCN1}.
For a connected component $\cC\subseteq \mathrm{NR}_n(\bld{x},t)$, let $\mass(\cC):= \sum_{i\in \cC} x_i$ and, for any $t\geq 0$, let  $(\cC_i(t))_{i\geq 1}$ denote the components in decreasing order of their mass sizes. 
In this section, we describe results from \cite{BSW14} that give a method of constructing  connected components of $\mathrm{NR}_n(\bld{x},t)$, conditionally on the vertices of the components. 
This construction involves tilted versions of $\vp$-trees introduced in Section \ref{sec:p-trees}. 
Since these trees are parametrized via a driving probability mass function (pmf) $\vp$, it will be easy to parametrize various random graph constructions in terms of pmfs as opposed to the vertex weights~$\bld{x}$. Proposition~\ref{prop:generate-nr-given-partition} will relate vertex weights to pmfs.

Fix $n\geq 1$ and $\cV\subset [n]$, and write $\bG_{\cV}^{\con}$ for the space of all simple connected graphs with vertex set~$\cV$.
For fixed $a > 0$, and probability mass function $\vp = (p_v)_{ v \in \cV}$, define probability distributions $\pr_{\con}(\cdot; \vp, a, \cV)$ on $\bG_{\cV}^{\con}$ as follows: For $i,j \in \cV$, denote
\begin{equation}
\label{eqn:qij-def-vp}
	q_{ij}:= 1-\exp(-a p_i p_j).
\end{equation}
Then, for $G \in \bG_{\cV}^{\con},$
\begin{equation}
	\label{eqn:pr-con-vp-a-cV-def}
	\pr_{\con}(G; \vp, a, \cV): = \frac{1}{Z(\vp,a)} \prod_{(i,j)\in E(G)} q_{ij} \prod_{(i,j)\notin E(G)} (1-q_{ij}), 
\end{equation}
where $Z(\vp,a)$ is the normalizing constant.
Now let $\cV^{\sss(i)} $ be the vertex set of $\cC_i(t)$ for $i \geq 1$, and note that $(\cV^{\sss(i)})_{i\geq 1}$ denotes a random finite partition of the vertex set $[n]$. 
The next proposition yields a construction of the random (connected) graphs $(\cC_{i}(t))_{i\geq 1}$:

\begin{prop}[{\protect{\cite[Proposition 6.1]{BSW14}}}]
	\label{prop:generate-nr-given-partition}
	Given the partition $(\cV^{\sss(i)})_{i\geq 1}$, define,  for $i\geq 1$,
	\begin{equation}\label{eq:p-n-a-NR}
		\vp_n^{\sss(i)} := \left( \frac{x_v}{\sum_{v \in \cV^{\sss(i)}}x_v } : v \in \cV^{\sss(i)} \right), \quad  a_n^{\sss(i)}:= t\bigg(\sum_{v\in \cV_{\sss(i)}} x_v\bigg)^2.
	\end{equation}
	For each fixed $i \geq 1$, let $G_i \in  \bG_{\cV^{\sss(i)}}^{\con}$ be a connected simple graph with vertex set $\cV^{\sss(i)}$. Then
	\begin{equation}
		\pr\left(\cC_i(t) = G_i, \;\; \forall i \geq 1\ \big|\ (\cV^{\sss(i)})_{i\geq 1} \right) = \prod_{i\geq 1} \pr_{\con}( G_i; \vp_n^{\sss(i)}, a_n^{\sss(i)}, \cV^{\sss(i)}).
	\end{equation}
\end{prop}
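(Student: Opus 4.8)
The plan is to condition on the full edge-set of $\mathrm{NR}_n(\bld{x},t)$ and compute the conditional law directly, exploiting the product structure of the Norros--Reittu model. Recall that in $\mathrm{NR}_n(\bld{x},t)$ each pair $\{i,j\}$ is present independently with probability $q_{ij}=1-\exp(-tx_ix_j)$. First I would fix a candidate realization: a partition $(\cV^{\sss(i)})_{i\geq 1}$ of $[n]$ together with, for each block, a connected simple graph $G_i$ on vertex set $\cV^{\sss(i)}$. The event $\{\cC_i(t)=G_i\ \forall i\}$ is precisely the event that (a) every edge of each $G_i$ is present, (b) every non-edge within each block $\cV^{\sss(i)}$ is absent, and (c) every edge between two distinct blocks is absent. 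Since edges are independent, the probability of this event factorizes as
\begin{equation}
\prod_{i\geq 1}\Bigg(\prod_{(u,v)\in E(G_i)}q_{uv}\prod_{\substack{(u,v)\notin E(G_i)\\ u,v\in\cV^{\sss(i)}}}(1-q_{uv})\Bigg)\cdot\prod_{i<j}\prod_{\substack{u\in\cV^{\sss(i)}\\ v\in\cV^{\sss(j)}}}(1-q_{uv}).
\end{equation}

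Next I would divide by the probability of the conditioning event $\{(\cV^{\sss(i)})_{i\geq 1}\ \text{is the vertex partition into components}\}$. That event is the disjoint union, over all choices of connected graphs $H_i$ on the blocks, of the events described above; hence its probability is
\begin{equation}
\Bigg(\prod_{i\geq 1}\sum_{H_i\in\bG_{\cV^{\sss(i)}}^{\con}}\prod_{(u,v)\in E(H_i)}q_{uv}\prod_{\substack{(u,v)\notin E(H_i)\\ u,v\in\cV^{\sss(i)}}}(1-q_{uv})\Bigg)\cdot\prod_{i<j}\prod_{\substack{u\in\cV^{\sss(i)}\\ v\in\cV^{\sss(j)}}}(1-q_{uv}).
\end{equation}
Taking the ratio, the inter-block factors $\prod_{i<j}\prod_{u,v}(1-q_{uv})$ cancel exactly, and the expression splits as a product over $i\geq 1$ of
\begin{equation}
\frac{\prod_{(u,v)\in E(G_i)}q_{uv}\prod_{(u,v)\notin E(G_i)}(1-q_{uv})}{\sum_{H_i}\prod_{(u,v)\in E(H_i)}q_{uv}\prod_{(u,v)\notin E(H_i)}(1-q_{uv})},
\end{equation}
which is exactly $\pr_{\con}(G_i;\cdot)$ of the form \eqref{eqn:pr-con-vp-a-cV-def}, with the normalizing constant $Z$ identified as the denominator.

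It remains to check that the within-block edge probabilities $q_{uv}=1-\exp(-tx_ux_v)$ coincide with \eqref{eqn:qij-def-vp} under the reparametrization \eqref{eq:p-n-a-NR}. This is the short computation $a_n^{\sss(i)}p_u p_v = t\big(\sum_{v\in\cV^{\sss(i)}}x_v\big)^2\cdot\frac{x_u}{\sum x_v}\cdot\frac{x_v}{\sum x_v}=tx_ux_v$, so indeed $1-\exp(-a_n^{\sss(i)}p_up_v)=q_{uv}$, and since $\pr_{\con}$ in \eqref{eqn:pr-con-vp-a-cV-def} depends on $\vp$ and $a$ only through the products $ap_up_v$, the conditional law of $G_i$ is precisely $\pr_{\con}(G_i;\vp_n^{\sss(i)},a_n^{\sss(i)},\cV^{\sss(i)})$. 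I do not expect any genuine obstacle here; the only points requiring a little care are the bookkeeping of which pairs are edges/non-edges/inter-block, making sure the infinite products over $i\geq 1$ are really finite products (only finitely many blocks have size $>1$, and singletons contribute trivial factor $1$), and verifying the cancellation of the inter-block terms is legitimate — all of which are routine. This argument is essentially the one in \cite[Proposition 6.1]{BSW14}, which may simply be cited.
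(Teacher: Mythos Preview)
Your argument is correct and is exactly the standard proof: use edge-independence to factorize, cancel the inter-block absence terms in numerator and denominator, and check that $a_n^{\sss(i)}p_u p_v = t x_u x_v$ so the within-block edge probabilities match \eqref{eqn:qij-def-vp}. The paper itself gives no proof at all---it simply cites \cite[Proposition~6.1]{BSW14}---so there is nothing to compare against beyond noting that what you wrote is precisely the argument underlying that citation.
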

\noindent Proposition~\ref{prop:generate-nr-given-partition} yields the following construction of $\mathrm{NR}_n(\bld{x},t)$:
\begin{algo}\normalfont
The random graph $\mathrm{NR}_n(\bld{x},t)$ can be generated in two stages:
\begin{enumeratei}
	\item[(S0)] Generate the random partition $(\cV^{\sss(i)})_{i\geq 1}$ of the vertices into different components.
\item[(S1)] Conditionally on the partition, generate the internal structure of each component following the law of $\pr_{\con}(\cdot ; \vp^{\sss(i)}, a^{\sss(i)}, \cV^{\sss(i)})$, independently across different components.
\end{enumeratei}
\end{algo}
\noindent Let us now describe an algorithm to generate such  connected components using distribution~\eqref{eqn:pr-con-vp-a-cV-def}. To ease notation, let $\cV = [m]$ for some $m\geq 1$ and fix a probability mass function $\vp$ on $[m]$ and a constant $a>0$ and write $\pr_{\con}(\cdot):= \pr_{\con}(\cdot;\vp,a,[m])$ on $\bG_m^{\con}:= \bG_{[m]}^{\con}$. 
To generate a sample $G$ from $\pr_{\con}$, one needs to first generate a $\vp$-tree (with suitable tilt). 
The rest of the edges of $G$ are \emph{surplus edges}, which are generated by connecting the leaves to one of the vertices in their path to the root. Let us now describe this process formally.
As a matter of convention, we view ordered rooted trees via their planar embedding using the associated ordering to determine the relative locations of siblings of an individual. 
We think of the left most sibling as the "oldest". 
Further, in a depth-first exploration, we explore the tree from left to right. 
Now given a planar rooted tree $\vt\in \bT_m$, let $\rho$ denote the root, and for every vertex $v\in [m]$, let $[\rho,v]$ denote the path connecting $\rho$ to $v$ in the tree. Given this path and a vertex $i\in [\rho,v]$, write $\RC(i,[\rho,v])$ for the set of all children of $i$ that fall to the right of $[\rho,v]$. 
Define $\fP(v,\vt):= \cup_{i\in [m]} \RC(i,[\rho,v]).$
	In the terminology of \cite{ABG09,BHS15}, $\fP(v,\vt)$ denotes the set of endpoints of all \emph{permitted edges} emanating from $v$. 
	Define
	\begin{equation}
	\label{eqn:amv-def}
		\dA_{\sss(m)}(v):= \sum_{i\in [\rho,v]} \sum_{j\in [m]} p_j \ind{j\in \RC(i,[\rho,v])}.
	\end{equation}
Let $(v(1), v(2), \ldots, v(m))$ denote the order of the vertices in the depth-first exploration of the tree~$\vt$. 
Let $y^*(0)=0$ and $y^*(i) = y^*(i-1) + p_{v(i)}$ and define
	\begin{equation}
	\label{eqn:ant-def-new}
		A_{\sss(m)}(u) = \dA_{\sss(m)}( v(i)),\ \text{ for } u\in (y^*(i-1), y^*(i)],\quad\text{and}\quad \bar{A}_{\sss(m)}(\cdot):= a A_{\sss(m)}(\cdot),
	\end{equation}where $a$ is defined in \eqref{eqn:qij-def-vp}.
Define the function
		\begin{equation}
		\label{eqn:Lambda-tree}
			\Lambda_{\sss(m)}(\vt) := a\sum_{v\in [m]} p_v \dA_{\sss(m)}(v).
		\end{equation}
Finally, let $ \mathrm{E}(\vt)$ denote the set of edges of $\vt$, $\mathscr{T}_m^{\mathbf{p}}$ the $\vp$-tree defined in \eqref{eqn:ordered-p-tree-def}, $\fP(\vt)=\cup_{v\in [m]}\fP(v,\vt)$, and define the function  $L : \bT_m^{\ord} \to \bR_+$ by
\begin{equation}
\label{eqn:ltpi-def}
	\displaystyle L(\vt)=\displaystyle L_{\sss (m)}(\vt):= \prod_{(k,\ell)\in \mathrm{E}(\vt)} \left[\frac{\exp(a p_k p_{\ell})- 1}{ap_k p_{\ell}} \right] \exp\bigg(\sum_{(k,\ell) \in \sP(\vt)} a p_k p_{\ell}\bigg),
\end{equation} for $\vt \in \bT_m^{\ord}$.
Recall the (ordered) $\vp$-tree distribution from \eqref{eqn:ordered-p-tree-def}. 
Let $\mathscr{T}^{\vp}_m$ be a sample from $\pr_{\ord}$.
Using $L(\cdot)$ to tilt this distribution results in the distribution
\begin{equation}
	\label{eqn:tilt-ord-dist-def}
	\pr_{\ord}^\star( \vt) := \pr_{\ord}(\vt) \cdot \frac{L(\vt)}{\E_{\ord}[ L(\mathscr{T}^{\vp}_m)]}, \qquad \vt \in \bT_m^{\ord}.
\end{equation}

In the algorithm below, all the objects depend on the tree $\vt$, but we often suppress this dependence to ease notation. 


\begin{algo}\label{algo:construction-Pcon}\normalfont
Let $\tilde{\mathcal{G}}_m(\vp,a)$ denote a random graph sampled from $\PR_{\mathrm{con}}(\cdot)$. 
This algorithm gives a construction of $\tilde{\mathcal{G}}_m(\vp,a)$, proved in \cite{BHS15}:
\begin{enumeratei}
	\item[(S1)] {\bf Tilted $\vp$-tree:}
	Generate a tilted ordered $\vp$-tree $\mathscr{T}^{\vp,\star}_m$ with distribution \eqref{eqn:tilt-ord-dist-def}. Now consider the (random) objects $\fP(v,\mathscr{T}^{\vp,\star}_m)$ for $v\in [m]$ and the corresponding (random) functions $\dA_{\sss(m)}(\cdot)$ on $[m]$ and $A_{\sss(m)}(\cdot)$ on $[0,1]$.
		\item[(S2)] {\bf Poisson number of possible surplus edges:} Let $\cP$ denote a rate-one Poisson process on $\bR_+^2$ and define
	\begin{equation}
	\label{eqn:poisson-pp}
		\bar{A}_{\sss(m)}\cap {\cP}:= \set{(s,t)\in \cP: s\in [0,1], t\leq \bar{A}_{\sss(m)}(s)}.
	\end{equation}
	 Write $\bar{A}_{\sss(m)}\cap {\cP}:= \{(s_j,t_j):1\leq j\leq N_{\sss(m)}^\star\}$ where $N_{\sss(m)}^\star = |\bar{A}_{\sss(m)}\cap {\cP}|$.
We next use the set $\{(s_j, t_j):1\leq j\leq N_{\sss(m)}^\star\}$ to generate pairs of points $\set{(\cL_j,\cR_j): 1\leq j\leq N_{\sss(m)}^\star}$ in the tree that will be joined to form the surplus edges.
	 \item[(S3)] {\bf First endpoints:} Fix $j$ and suppose $s_j \in (y^*(i-1), y^*(i)]$ for some $i\geq 1$, where $y^*(i)$ is as given right above \eqref{eqn:ant-def-new}. Then the \emph{first endpoint} of the surplus edge corresponding to $(s_j, t_j)$ is $\cL_j:= v(i)$.
	 \item[(S4)] {\bf Second endpoints:} Note that in the interval $(y^*(i-1), y^*(i)]$, the function $\bar{A}_{\sss(m)}$ is of constant value $a\dA_{\sss(m)}(v(i))$. We will view this value or height as being partitioned into sub-intervals of length $a p_u$ for each $u\in \fP(v(i),\mathscr{T}^{\vp,\star}_m)$, the collection of endpoints of permitted edges emanating from $\cL_k$. (Assume that this partitioning is done according to some preassigned rule, e.g., using the order of the vertices in $\fP(v(i),\mathscr{T}^{\vp,\star}_m)$). 
	 Suppose that $t_j$ belongs to the interval corresponding to $u$. Then the \emph{second endpoint} is $\cR_j = u$. Form an edge between $(\cL_j, \cR_j)$.
\item[(S5)] In this construction, it is possible that one creates more than one surplus edge between two vertices. Remove any multiple surplus edges. 
This has vanishing probability in our applications.
\end{enumeratei}
\end{algo}
\begin{defn}\label{defn:p-tree-graph}
Consider the connected random graph $\tilde{\mathcal{G}}_m(\mathbf{p},a)$, given by {\rm Algorithm~\ref{algo:construction-Pcon}}, viewed as a measured metric space via the graph distance and each vertex $v$ is assigned measure $p_v$.
\end{defn}

\begin{lemma}[{\cite[Lemma 4.10]{BHS15}}]
	\label{lem:lk-rk-equivalent}
	The random graph $\tilde{\cG}_m(\vp,a)$ generated by {\rm Algorithm~\ref{algo:construction-Pcon}} has the same law as $\PR_{\mathrm{con}}(\cdot)$. 
	Further, conditionally on $\mathscr{T}^{\vp,\star}_m$, the following hold:
	\begin{enumeratea}
		\item $N_{\sss(m)}^\star$ has Poisson distribution with mean $\Lambda_{\sss(m)}(\mathscr{T}_m^{\vp,\star})$, where $\Lambda_{\sss(m)}$ is as in \eqref{eqn:Lambda-tree}.
		\item Conditionally on $\mathscr{T}_m^{\vp,\star}$ and $N_{\sss(m)}^\star=k$, the first endpoints $(\cL_j)_{j\in [k]}$ can be generated in an i.i.d.~fashion by sampling from the vertex set $[m]$ with probability distribution
		$
		\cJ^{\sss(m)}(v) \propto p_v \dA_{\sss(m)}(v),$ $v\in [m].
		$
		\item Conditionally on $\mathscr{T}_m^{\vp,\star}$, $N_{\sss(m)}^\star=k$ and the first endpoints $(\cL_j)_{j\in [k]}$, generate the second endpoints in an i.i.d.~fashion where conditionally on $\cL_j = v$, the probability distribution of $\cR_j$ is given by
	\begin{equation}
	\label{eqn:right-end-pt-prob}
		Q_{v}^{\sss(m)}(y):= \begin{cases}
			\sum_{u} p_u \ind{u\in \RC(y,[\rho,v])}/\dA_{\sss(m)}(v) & \text{ if } y\in [\rho,v],\\
			0 & \text{ otherwise}.
		\end{cases}
	\end{equation}	
 Create an  edge between $\cL_j$ and $\cR_j$ for $1\leq j\leq k$.
	\end{enumeratea}
\end{lemma}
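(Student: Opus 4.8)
The plan is to realise $\PR_{\mathrm{con}}(\cdot;\vp,a,[m])$ as the law of a tilted ordered $\vp$-tree together with a conditionally independent family of surplus edges, and then to verify that the Poisson prescription (S2)--(S5) produces exactly that family while (a)--(c) are read off as its marginals; it suffices to take $\cV=[m]$. First I would simplify the target law: since $1-q_{ij}=\e^{-ap_ip_j}$, one has $\prod_{(i,j)\notin E(G)}(1-q_{ij})=\bigl(\prod_{i<j}\e^{-ap_ip_j}\bigr)/\bigl(\prod_{(i,j)\in E(G)}\e^{-ap_ip_j}\bigr)$, and $\prod_{i<j}\e^{-ap_ip_j}=\exp\!\bigl(-\tfrac a2\bigl(1-\sum_i p_i^2\bigr)\bigr)$ does not depend on $G$ because $\vp$ is a probability mass function; hence $\PR_{\mathrm{con}}(G)\propto\prod_{(i,j)\in E(G)}\bigl(\e^{ap_ip_j}-1\bigr)$ for $G\in\bG_m^{\con}$.

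Next I would set up the depth-first decomposition of \cite{ABG09,BHS15}: from a connected $G$ one extracts a rooted ordered spanning tree $\vt$ together with a surplus set $S$ satisfying $S\subseteq\fP(\vt)$, and conversely $\vt\cup S'$ is connected and decomposes this way for every $\vt\in\bT_m^{\ord}$ and every $S'\subseteq\fP(\vt)$. Reversing the exploration --- rooting at a $\vp$-distributed vertex and ordering the children revealed at each step uniformly --- assigns a pair $(\vt,S)$ to $G$, and the combinatorial heart of the argument is the identity $\sum_\vt p_{\rho(\vt)}/\prod_{v}d_v(\vt)!=\sum_v p_v=1$, the sum being over ordered rooted spanning trees $\vt$ of $G$ with $E(G)\setminus E(\vt)\subseteq\fP(\vt)$ --- a weighted Cayley-type identity (equivalently, the normalisation of the randomised exploration). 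Since $\prod_{(k,\ell)\in E(\vt)}p_kp_\ell=\bigl(\prod_v p_v^{d_v(\vt)}\bigr)\bigl(\prod_v p_v\bigr)/p_{\rho(\vt)}$, this rewrites as $\sum_\vt\PR_{\ord}(\vt)/\prod_{(k,\ell)\in E(\vt)}ap_kp_\ell=1/\bigl(a^{m-1}\prod_v p_v\bigr)$, independent of $G$.

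Combining the last two facts with the explicit form $L(\vt)=\prod_{(k,\ell)\in E(\vt)}\tfrac{\e^{ap_kp_\ell}-1}{ap_kp_\ell}\exp\!\bigl(\sum_{(k,\ell)\in\sP(\vt)}ap_kp_\ell\bigr)$ --- telescoping the exponential factors over $\sP(\vt)\setminus(E(G)\setminus E(\vt))$ against the non-edge weights $\e^{-ap_kp_\ell}$ --- I would conclude that, under $G\sim\PR_{\mathrm{con}}$, the tree $\vt$ of the decomposition has law $\PR_{\ord}^\star$ of \eqref{eqn:tilt-ord-dist-def} and, conditionally on $\vt$, each permitted edge $(k,\ell)\in\fP(\vt)$ lies in $S$ independently with probability $1-\e^{-ap_kp_\ell}$. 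It then remains to check that (S2)--(S5) applied to $\mathscr T^{\vp,\star}_m$ realise exactly this conditional family and yield (a)--(c): over $(y^*(i-1),y^*(i)]$, of width $p_{v(i)}$, the staircase $\bar A_{\sss(m)}$ is constant of height $a\,\dA_{\sss(m)}(v(i))=a\sum_{u\in\fP(v(i),\mathscr T^{\vp,\star}_m)}p_u$, which (S4) subdivides into blocks of length $ap_u$; the number of points of the rate-one process $\cP$ in the block of $(v(i),u)$ is $\mathrm{Poisson}(ap_{v(i)}p_u)$, independently across blocks, so after deleting multiplicities in (S5) the edge $(v(i),u)$ is present with probability $1-\e^{-ap_{v(i)}p_u}$, as needed. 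Then (a) $N_{\sss(m)}^\star$ is the number of points under the staircase, hence $\mathrm{Poisson}$ with mean equal to its area $a\sum_v p_v\,\dA_{\sss(m)}(v)=\Lambda_{\sss(m)}(\mathscr T^{\vp,\star}_m)$; (b) given $N_{\sss(m)}^\star=k$ the points are i.i.d.\ uniform under the staircase, so the first coordinate lies in the interval of $v(i)$ with probability $\propto p_{v(i)}\dA_{\sss(m)}(v(i))$, i.e.\ the $(\cL_j)$ are i.i.d.\ $\cJ^{\sss(m)}$; (c) given $\cL_j=v$ the second coordinate is uniform on $[0,a\,\dA_{\sss(m)}(v)]$, landing in the block of $u$ with probability $p_u/\dA_{\sss(m)}(v)$, which upon writing $\fP(v,\vt)=\cup_{i\in[\rho,v]}\RC(i,[\rho,v])$ is $Q_v^{\sss(m)}$.

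The step I expect to be the main obstacle is the combinatorial one in the second paragraph: showing that the tilt $L(\cdot)$ exactly compensates the Jacobian of the depth-first decomposition, i.e.\ the $G$-independence of $\sum_\vt\PR_{\ord}(\vt)/\prod_{(k,\ell)\in E(\vt)}ap_kp_\ell$. This relies both on the weighted Cayley identity and on a careful accounting of the $\prod_v d_v(\vt)!$ orderings of children generated by the randomised exploration; granting it, everything else is bookkeeping with Poisson processes.
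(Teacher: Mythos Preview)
The paper does not prove this lemma; it is quoted verbatim from \cite[Lemma~4.10]{BHS15} and used as a black box. So there is no in-paper proof to compare against.

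That said, your sketch is correct and is essentially the argument in \cite{BHS15} (building on the depth-first construction of \cite{ABG09}). A couple of remarks that may sharpen the write-up. First, the exploration you invoke should be spelled out precisely: one pops the top vertex $v$ from a stack, declares \emph{all} currently undiscovered neighbours of $v$ to be its children (ordering them uniformly), pushes them, and records every edge from $v$ to a vertex still on the stack as surplus. With this convention the on-stack set at the moment $v$ is processed is exactly $\fP(v,\vt)$, and the probability of producing a given ordered rooted spanning tree $\vt$ with $E(G)\setminus E(\vt)\subseteq\fP(\vt)$ is $p_{\rho(\vt)}/\prod_v d_v(\vt)!$ on the nose; summing gives $1$, which is your identity. (A standard DFS that visits neighbours one at a time does \emph{not} give this weight, so this point deserves a sentence.) Second, in the telescoping step it helps to record explicitly that $\e^{\sum_{\fP(\vt)}ap_kp_\ell}\cdot\prod_{\fP(\vt)}\e^{-ap_kp_\ell}=1$, after which the factorisation $\PR_{\ord}^\star(\vt)\,\PR(\text{surplus}=S\mid\vt)\propto\prod_{E(G)}(\e^{ap_kp_\ell}-1)\cdot p_{\rho(\vt)}/\bigl(a^{m-1}\prod_v p_v\prod_v d_v(\vt)!\bigr)$ is immediate and the $G$-independence is exactly your identity.

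Finally, note a small wrinkle in the lemma as stated: in part~(c), $Q_v^{\sss(m)}(\cdot)$ is supported on $[\rho,v]$, whereas by (S4) the second endpoint $\cR_j$ lies in $\fP(v,\vt)$. What $Q_v^{\sss(m)}(y)$ actually gives is $\PR(\cR_j\in\RC(y,[\rho,v]))$, i.e.\ the law of the \emph{parent} of $\cR_j$ on the path; this is how $Q_v^{\sss(m)}$ is used later as a root-to-leaf measure. Your description of the block decomposition in (S4) is consistent with this reading.
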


\subsection{Inhomogeneous continuum random trees}
\label{sec:inhom-cont-tree}
In a series of papers \cite{AMP04,AP99,AP00a} it was shown that $\vp$-trees, under various assumptions, converge to inhomogeneous continuum random trees (ICRTs) that we now describe. 
Recall from \cite{E06,LG05} that a real tree is a metric space $(\mathscr{T},d)$ that satisfies the following for every pair $a,b\in \mathscr{T}$:
\begin{enumeratea}
	\item There is a {\bf unique} isometric map $f_{a,b}\colon [0,d(a,b)]\to \mathscr{T}$ such that $f_{a,b}(0)=a$ and  $f_{a,b}(d(a,b)) =b$.
	\item For any continuous one-to-one map $g:[0,1]\to \mathscr{T}$ with $g(0)=a$ and $g(1)=b$, we have $g([0,1]) = f_{a,b}([0,d(a,b)])$.
\end{enumeratea}

\noindent {\bf Construction of the ICRT:}
Given $\bld{\beta}\in\ell^2_{\shortarrow}\setminus \ell^1_{\shortarrow}$ with $\sum_i\beta_i^2=1$, we will now define the inhomogeneous continuum random tree $\mathscr{T}^{\bld{\beta}}$. We mainly follow the notation in \cite{AP00a}. Assume that we are working on a probability space $(\Omega, \cF,\pr_{\bld{\beta}})$ rich enough to support the following:
\begin{enumeratea}
	\item For each $i\geq 1$, let $\cP_i:= (\xi_{i,1}, \xi_{i,2}, \ldots)$ be rate $\beta_i$ Poisson processes that are independent for different~$i$. The first point of each process $\xi_{i,1}$ is special and is called a \emph{joinpoint}, while the remaining points $\xi_{i,j}$ with $j\geq 2$ will be called \emph{$i$-cutpoints} \cite{AP00a}.
	\item Independently of the above, let $\mvU=(U_j^{\sss(i)})_{i,j\geq 1}$ be a collection of i.i.d\ uniform $(0,1)$ random variables. These are not required to construct the tree but will be used to define a certain function on the tree.
\end{enumeratea}
  The {\bf random} real tree (with marked vertices) $\icrt$ is then constructed as follows:
\begin{enumeratei}
	\item  Arrange the cutpoints $\set{\xi_{i,j}: i\geq 1, j\geq 2}$ in increasing order as $0< \eta_1 < \eta_2 < \cdots$. The assumption that $\sum_i \beta_i^2 <\infty$ implies that this is possible. For every cutpoint $\eta_k=\xi_{i,j}$, let $\eta_k^*:=\xi_{i,1}$ be the corresponding joinpoint.
	\item Next, build the tree inductively. Start with the branch $[0,\eta_1]$. Inductively assuming that we have completed step $k$, attach the branch $(\eta_k, \eta_{k+1}]$ to the joinpoint $\eta_k^*$ corresponding to $\eta_k$.
\end{enumeratei}
Write $\mathscr{T}_0^{\bld{\beta}}$ for the corresponding tree after one has used up all the branches $[0,\eta_1]$, $\set{(\eta_k, \eta_{k+1}]: k\geq 1}$.
Note that for every $i\geq 1$, the joinpoint $\xi_{i,1}$ corresponds to a vertex with infinite degree. Label this vertex $i$. The ICRT $\icrt$ is the completion of the marked metric tree $\mathscr{T}^{\bld{\beta}}_0$. As argued in \cite[Section~2]{AP00a}, this is a real-tree as defined above which can be viewed as rooted at the vertex corresponding to zero. We call the vertex corresponding to joinpoint $\xi_{i,1}$  {\bf hub} $i$. 
    \begin{figure}
        \centering \includegraphics[scale = .7]{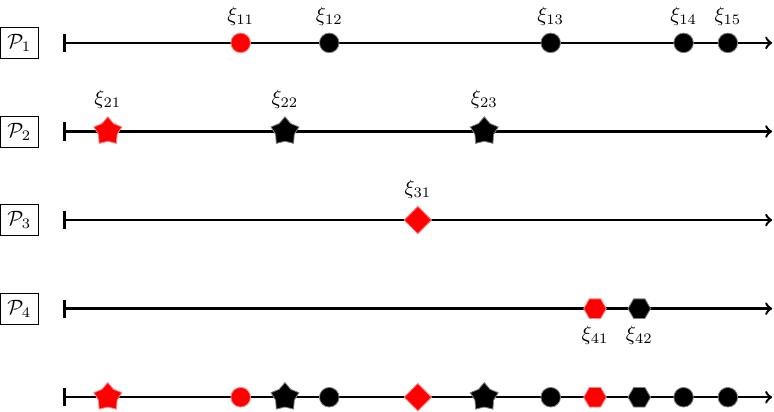}
        \caption{An illustration of the ICRT construction with four point process $\set{\cP_i:1\leq i\leq 4}$. The red points represent the \emph{joinpoint} of the corresponding point process and the blue points the corresponding cutpoints. The last line contains the union of the four point processes. See Figure \ref{fig:tinf-theta} for the corresponding tree.}
           \label{fig:point-process-icrt}
    \end{figure}
    \begin{figure}
        \centering
	\includegraphics[scale= 1]{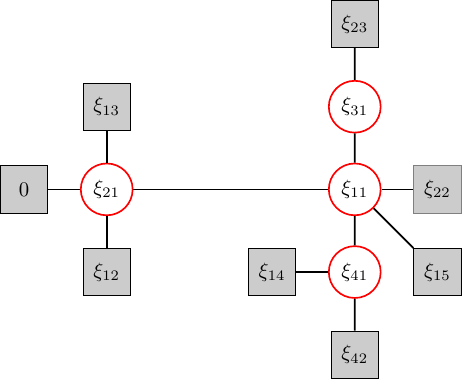}
        \caption{The tree constructed via the stick-breaking construction from Figure \ref{fig:point-process-icrt}.}
        \label{fig:tinf-theta}
    \end{figure}

The uniform random variables $(U_j^{\sss(i)})_{i,j\geq 1}$ give rise to a natural ordering on $\icrt$ (or a planar embedding of $\icrt$) as follows: 
For $i\geq 1$, let $(\mathscr{T}_j^{\sss(i)})_{j\geq 1}$ be the collection of subtrees hanging off the $i$-th hub. Associate $U_j^{\sss(i)}$ with the subtree $\mathscr{T}_j^{\sss(i)}$, and think of $\mathscr{T}_{j_1}^{\sss(i)}$ appearing "to the right of" $\mathscr{T}_{j_2}^{\sss(i)}$ if $U_{j_1}^{\sss(i)}< U_{j_2}^{\sss(i)}$. This is the natural ordering on $\icrt$ when it is being viewed as a limit of ordered $\vp$-trees. We can think of the pair $(\icrt, \mvU)$ as the {\bf ordered ICRT}.

\subsection{Continuum limits of components}
\label{sec:descp-limit}
The aim of this section is to give an explicit description of the limiting (random) metric spaces in Theorem \ref{thm:main}.  We start by constructing a specific metric space using the tilted version of the ICRT in Section~\ref{sec:tilt-icrt}. 
Then we describe the limits of maximal components in Section~\ref{sec:limit-component}.

\subsubsection{Tilted ICRTs and vertex identification}
\label{sec:tilt-icrt}
Let $(\Omega, \cF, \pr_{\beta})$ and $\icrt$ be as in Section \ref{sec:inhom-cont-tree}. 
In \cite{AP00a}, it was shown that one can associate a natural probability measure $\mu$, called the {\bf mass measure}, to $\icrt$, satisfying $\mu(\cL(\icrt))=1$. 
Here we recall that $\mathcal{L}(\cdot)$ denotes the set of leaves. 
Before moving to the desired construction of the random metric space, we will need to define some more quantities that describe the asymptotic analogues of the quantities appearing in Algorithm~\ref{algo:construction-Pcon}.
Similarly to \eqref{eqn:amv-def}, define
	$\dA_{\sss(\infty)}(y)=\sum_{i\geq 1}\beta_{i}\big(\sum_{j\geq 1}U_j^{\sss(i)}\times\mathbbm{1}\{y\in \mathscr{T}_j^{\sss(i)} \}\big).$
It was shown in \cite{BHS15} that $\dA_{\sss(\infty)}(y)$ is finite for almost every realization of $\icrt$ and for $\mu$-almost every $y\in\icrt$. For $y\in \icrt$, let $[\rho,y]$ denote the path from the root $\rho$ to $y$. For every $y$, define a probability measure on $[\rho,y]$ as
\begin{equation}
\label{eqn:right-end-prob-inft}
Q_{y}^{\sss(\infty)}(v):= \frac{\beta_i U_{j}^{\sss(i)}}{ \dA_{\sss(\infty)}(y)}, \quad\mbox{ if } v\mbox{ is the }i\mbox{-th hub and } y\in \mathscr{T}_j^{\sss(i)}\mbox{ for some }j. 	
\end{equation}
Thus, this probability measure is concentrated on the hubs on the path from $y$ to the root.
Let $\gamma >0$ be a constant. 
Informally, the construction goes as follows:
We will first tilt the distribution of the original ICRT $\icrt$ using the exponential functional
	\begin{equation}
	\label{eqn:ltheta-def}
		L_{\sss(\infty)}(\icrt, \mvU):= \exp\bigg(\gamma\int_{y\in \icrt} \dA_{\sss(\infty)}(y)\mu(dy)\bigg)
	\end{equation}
	to get a tilted tree $\tilicrt$. 
	We then generate a random but finite number $N_{\sss(\infty)}^\star$ of pairs of points $\{(x_k, y_k):1\leq k\leq N_{\sss(\infty)}^\star\}$ that will provide the surplus edges. 
	The final metric space is obtained by creating \emph{shortcuts} by identifying the points $x_k$ and $y_k$.
	The construction mimics that of Algorithm~\ref{algo:construction-Pcon}.
	Formally the construction proceeds in four steps:
\begin{enumeratea}
	\item {\bf Tilted ICRT:}  
Define ${\pr}_{\beta}^\star$ on $\Omega$ by
\begin{equation}
\frac{d {{\pr}}_{\beta}^\star}{d{{\pr}}_{\beta}}=\frac{\exp\big(\gamma\int_{y\in\mathscr{T}^{\bld{\beta}}_{(\infty)}}\dA_{(\infty)}(y)\mu(dy) \big)}{\E\Big[\exp\big(\gamma\int_{x\in \mathscr{T}^{\bld{\beta}}_{(\infty)}} \dA_{\sss(\infty)}(x)\mu(dx) \big)\Big]}. 
\end{equation}
The expectation in the denominator is with respect to the original measure ${\pr}_{\beta}$. 
Write $(\tilicrt, \mu^\star)$ and $\mvU^{\star}=(U_j^{(i), \star})_{i,j\geq 1}$ for the tree and the mass measure on it, and the associated random variables under this change of measure.
 \item {\bf Poisson number of identification points:} Conditionally on $((\tilicrt, \mu^\star), \mvU^{\star})$, generate $N_{\sss(\infty)}^\star$ having a $\mathrm{Poisson}(\Lambda_{\sss(\infty)}^\star)$ distribution, where
\begin{equation}
\Lambda_{\sss(\infty)}^\star:= \gamma\int_{y\in \tilicrt}\dA_{\sss(\infty)}(y)\mu^\star(dy)
=\gamma\sum_{i\geq 1}\beta_{i}\bigg[\sum_{j\geq 1}U_j^{(i), \star}\mu^\star(\mathscr{T}_j^{\sss(i), \star})\bigg].
\end{equation}
Here, $(\mathscr{T}_j^{\sss(i), \star})_{j\geq 1}$ denotes the collection of subtrees of hub $i$ in $\tilicrt$. 

\item {\bf First endpoints (of shortcuts): } Conditionally on (a) and (b), sample $x_k$ from $\tilicrt$  with density proportional to $\dA_{\sss(\infty)}(x)\mu^\star(dx)$ for $1\leq k\leq N_{\sss(\infty)}^\star$.
\item {\bf Second endpoints (of shortcuts) and identification:} Having chosen $x_k$, choose $y_k$ from the path $[\rho, x_k]$ joining the root $\rho$ and $x_k$ according to the probability measure $Q_{x_k}^{\sss (\infty)}$ as in \eqref{eqn:right-end-prob-inft} but with $U_j^{(i),\star}$ replacing $U_j^{\sss (i)}$.
(Note that $y_k$ is always a {\bf hub} on $[\rho, x_k]$.)  Identify $x_k$ and $y_k$, i.e., form the quotient space by introducing the equivalence relation $x_k\sim y_k$ for $1\leq k\leq N_{\sss(\infty)}^\star$.
\end{enumeratea}
\begin{defn}\label{def:limiting-space} Fix $\gamma\geq 0$ and $\bld{\beta}\in\ell^2_{\shortarrow}\setminus \ell^1_{\shortarrow}$ with $\sum_i\beta_i^2=1$. Let $\cG_{\sss (\infty)}(\bld{\beta},\gamma)$ be the metric measure space constructed via the four steps above equipped with the measure inherited from the mass measure on $\tilicrt$.	
\end{defn}
\subsubsection{Scaling limit for the component sizes and surplus edges}
\label{sec:comp-size-scaling}
Let us describe the scaling limit results for the component sizes and the surplus edges ($\#\text{edges}-\#\text{vertices}+1$) for the largest components of $\mathrm{CM}_n(\bld{d},p_n(\lambda))$ from~\cite{DHLS16}. 
Although we need to define the limiting object only for describing the limiting metric space, the convergence result will turn out to be crucial in Section~\ref{sec:proof-metric-mc} in the proof of Theorem~\ref{thm:main}, and therefore we state it here as well.
Consider a decreasing sequence $ \boldsymbol{\theta}\in \ell^3_{\shortarrow}\setminus \ell^2_{\shortarrow}$. 
Denote by  $\mathcal{I}_i(s):=\ind{\zeta_i\leq s }$ where $\zeta_i\sim \mathrm{Exp}(\theta_i)$ independently, and $\mathrm{Exp}(r)$ denotes the exponential distribution with rate $r$.  
 Consider the process 
 \begin{equation}\label{defn::limiting::process}
\bar{S}^\lambda_\infty(t) =  \sum_{i=1}^{\infty} \theta_i\left(\mathcal{I}_i(t)- \theta_it\right)+\lambda t,
\end{equation}for some $\lambda\in\mathbbm{R}$. 
Define the reflected version of $\bar{S}_\infty^{\lambda}(t)$ by
$ \mathrm{refl}\big( \bar{S}_\infty^{\lambda}(t)\big)= \bar{S}_\infty^{\lambda}(t) - \inf_{0 \leq u \leq t} \bar{S}_\infty^{\lambda}(u).$
The processes of the form \eqref{defn::limiting::process} were termed \emph{thinned} L\'evy processes in~\cite{BHL12} since the summands are thinned versions of Poisson processes.
Let $(\Xi_i(\bld{\theta},\lambda))_{i \geq 1}$, $(\xi_i(\bld{\theta},\lambda))_{i\geq 1}$, respectively, denote the vector of excursions and excursion-lengths of $\big(\mathrm{refl}( \bar{S}_\infty^{\lambda}(t))\big)_{t\geq 0}$, ordered according to the excursion lengths in a decreasing manner. 
Using \cite[Fact 1]{DHLS16}, there are no ties among the excursion lengths almost surely.
Denote the vector $(\xi_i(\bld{\theta},\lambda))_{i\geq 1}$ by $\bld{\xi}(\bld{\theta},\lambda)$.
The fact that $\bld{\xi}(\bld{\theta},\lambda)$ is always well defined follows from \cite[Lemma 1]{AL98}.
Also, define the counting process of marks $\mathbf{N}$ to be a Poisson process that has intensity $\refl{ \bar{S}_\infty^{\lambda}(t)}$ at time $t$ conditional on $( \refl{ \bar{S}_\infty^{\lambda}(u)} )_{u \leq t}$. 
We use  the notation $\mathscr{N}_i(\bld{\theta},\lambda)$ to denote the number of marks within $\Xi_i(\bld{\theta},\lambda)$.

For a connected graph $G$, let $\mathrm{SP}(G) = \#\text{edges}-\#\text{vertices}+1$ denote its surplus edges. 
In the context of this paper, we simply write $\xi_i$, $\bld{\xi}$ and $\mathscr{N}_i$ respectively for $\xi_i(\bld{\theta}/(\mu\nu),\lambda/\mu)$, $\bld{\xi}(\bld{\theta}/(\mu\nu),\lambda/\mu)$ and $\mathscr{N}_i(\bld{\theta}/(\mu\nu),\lambda/\mu)$.
\begin{prop}[{\cite[Theorem~4]{DHLS16}}]\label{prop:comp-size}
Under {\rm Assumption~\ref{assumption1}}, as $n\to\infty$,
\begin{equation}\label{eq:limit-objetct-comp-size}
\big(n^{-\rho}|\mathscr{C}_{\sss (i)}^p(\lambda)|,\surp{\mathscr{C}_{\sss (i)}^p(\lambda)}\big)_{i\geq 1} \dto \Big(\frac{1}{\nu}\xi_i,\mathscr{N}_i\Big)_{i\geq 1},
 \end{equation}with respect to the topology on the product space $\ell^2_\shortarrow\times \N^\N$.
\end{prop}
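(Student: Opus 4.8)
The plan is to encode the components of the percolated graph by the excursions of an exploration walk and to identify the scaling limit of that walk with the thinned L\'evy process \eqref{defn::limiting::process}, adapting to the configuration model the method developed for rank-one graphs in \cite{BHL12} (itself built on the excursion theory of \cite{AL98}). \emph{First, I would reduce percolation to a configuration model and set up the exploration.} By \cite{F07,J09}, percolation on the configuration model is again a configuration model: $\mathrm{CM}_n(\bld{d},p_n(\lambda))$ has, up to a negligible parity correction, the law of $\mathrm{CM}_n(\bld{d}^{\,p})$ with thinned degrees $d_i^{\,p}\sim\mathrm{Bin}(d_i,p_n(\lambda))$ independently over $i\in[n]$. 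One checks that $\bld{d}^{\,p}$ satisfies {\rm Assumption~\ref{assumption1}} with limiting high-degree vector a multiple of $\bld{\theta}$ and with $\nu_n(\bld{d}^{\,p})=1+\lambda' n^{-\eta}+o(n^{-\eta})$ for an explicit $\lambda'$ determined by $\lambda$, $\mu$ and $\nu$; this bookkeeping is exactly where the factors $\mu,\nu$ in $\xi_i(\bld{\theta}/(\mu\nu),\lambda/\mu)$ and in the prefactor $1/\nu$ come from. Then one explores $\mathrm{CM}_n(\bld{d}^{\,p})$ by the standard breadth-first half-edge exploration: repeatedly pick an active half-edge, pair it with a uniformly chosen unpaired half-edge, and let $S_n(k)$ be the number of active half-edges after $k$ pairings. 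The walk $S_n$ jumps up by $d_v-2$ when a fresh vertex $v$ is discovered and down by $2$ when the partner is already active; its excursions above the running minimum enumerate the components, an excursion's length encoding the component's size and its number of ``down-by-$2$'' steps encoding the surplus.

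\emph{Second, I would prove the scaling limit of the walk.} Under the time-scaling $k\mapsto tn^{\rho}$ and space-scaling $S_n\mapsto n^{-\alpha}S_n$, one shows $S_n$ converges in the Skorokhod $J_1$-topology to $\bar{S}^{\lambda/\mu}_\infty$ driven by the vector $\bld{\theta}/(\mu\nu)$. The mechanism: a vertex of degree $d_i\approx\theta_i n^{\alpha}$ is hit from outside at rate $\asymp d_i/\ell_n\asymp\theta_i n^{\alpha-1}$ per pairing, so --- since $1-\alpha=\rho$ and $\ell_n\sim\mu n$ --- its rescaled discovery time converges to an exponential $\zeta_i$ with rate a fixed multiple of $\theta_i$, at which instant it contributes an upward jump; the size-biased depletion of its half-edges before and after discovery yields the quadratic drift $-(\theta_i/(\mu\nu))^2t$, while the degree-$O(1)$ vertices contribute, after centering, the linear term $(\lambda/\mu)t$. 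The genuine analytic content here is the $J_1$-tightness, which one obtains by truncating to the finitely many hubs carrying almost all of the quadratic variation and controlling the $\ell^2_\shortarrow$-remainder.

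\emph{Third, I would transfer to component sizes and surplus.} From the walk convergence, \eqref{eq:limit-objetct-comp-size} follows via the almost-everywhere continuity of the map sending a path to the decreasing rearrangement of its excursion lengths, \emph{together with} an a priori estimate ruling out loss of mass into the tail, namely $\limsup_{n\to\infty}\sum_{i>K}\big(n^{-\rho}|\mathscr{C}_{\sss (i)}^p(\lambda)|\big)^2\to0$ as $K\to\infty$; this is a truncated-second-moment (susceptibility) estimate of the same flavour as {\rm Theorem~\ref{thm:susceptibility}}, proved by dominating the cluster of a size-biased vertex by a near-critical exploration. For the surplus, one conditions on the exploration: inside the $i$-th excursion each of the $O(n^{\rho})$ pairings is a ``down-by-$2$'' step with conditional probability $\approx S_n(k)/\ell_n=O(n^{\alpha-1})$, and since $\alpha+\rho=1$ the number of such steps stays $O(1)$ and converges, by a law-of-rare-events argument, to a Poisson variable whose random mean is the integral of $\refl{\bar{S}_\infty^{\lambda}(t)}$ over the $i$-th excursion interval --- the reflected process appearing because $S_n$ restricted to an excursion is the walk reflected at its running minimum. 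Joint convergence with the rescaled component sizes then holds because, in the limit, the surplus is a measurable functional of the pair (thinned-L\'evy path, Poisson marking).

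The main obstacle, as in all results of this type, is the combination of (i) upgrading from finite-dimensional to the strong $\ell^2_\shortarrow$-convergence --- the no-mass-escape estimate --- and (ii) the continuity of the excursion-length functional, which requires knowing a priori that the limit path has a well-separated excursion structure with no excursion endpoint tied to a running minimum; both reduce to sharp control of near-critical component sizes, which is precisely the kind of estimate the present paper develops in the barely subcritical regime.
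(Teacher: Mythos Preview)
Your proposal is a correct high-level outline of how the underlying result is proved, but it is not what the paper does here. In this paper Proposition~\ref{prop:comp-size} is quoted from \cite[Theorem~4]{DHLS16}; the only work carried out (in Appendix~\ref{sec:appendix-rescaling}) is a short rescaling computation showing that the limiting object as parametrized in \cite{DHLS16} agrees with the one written in \eqref{eq:limit-objetct-comp-size}: one rescales time by $\sqrt{\nu}$ and space by $\mu\sqrt{\nu}$ in the thinned L\'evy process, uses $r\,\mathrm{Exp}(r)\eqd\mathrm{Exp}(1)$, and reads off that the ordered excursion lengths and areas match, which is exactly the content of Lemma~\ref{lem:rescale}. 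So the paper's ``proof'' is two display equations of bookkeeping; all of the exploration-walk analysis you sketch lives in the cited reference.

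What you wrote is essentially a reconstruction of the argument of \cite{DHLS16}: reduce percolation on $\CM$ to a configuration model with thinned degrees, run the half-edge breadth-first exploration, prove $J_1$-convergence of the rescaled walk to $\bar{S}^{\lambda}_\infty$ by separating the hub contributions (producing the jumps and the quadratic compensators) from the bulk (producing the linear drift), and then pass to ordered excursion lengths plus Poisson marks for the surplus. That route is sound, and you correctly flag the two real difficulties (the $\ell^2_\shortarrow$ tail control and the continuity of the excursion map). One small caution: the thinned degrees $d_i^{\,p}$ after \emph{bond} percolation are not independent across $i$ (each surviving edge affects two vertices), so the reduction is not literally ``$d_i^{\,p}\sim\mathrm{Bin}(d_i,p_n)$ independently''; the clean way around this, used in \cite{F07,J09} and in \cite{DHLS16}, is the explosion construction (or equivalently conditioning on the realized percolated degree sequence, as in the paper's Section~\ref{sec:simple}). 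This does not affect the scaling-limit computation but should be stated correctly.
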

The limiting object in  \cite[Theorem 4]{DHLS16} is stated in a slightly different form compared to the right hand side of \eqref{eq:limit-objetct-comp-size}. 
However, the limiting objects are identical in distribution with suitable rescaling of time and space, and  by observing that $r\mathrm{Exp}(r) \stackrel{\sss d}{=} \mathrm{Exp}(1)$, where $\mathrm{Exp}(r)$ denotes an exponential random variable with rate $r$ (see Appendix~\ref{sec:appendix-rescaling}). In fact, the arguments in Appendix~\ref{sec:appendix-rescaling} establish the following lemma which will be used extensively in Section~\ref{sec:proof-metric-mc}:
\begin{lemma} \label{lem:rescale}  For $\eta_1,\eta_2>0$, $\bld{\theta}\in \ell^3_{\shortarrow}\setminus \ell^2_{\shortarrow}$ and $\lambda\in\R$,
$ \bld{\xi}(\eta_1\bld{\theta},\eta_2\lambda)\eqd \frac{1}{\eta_1}\bld{\xi}\big(\bld{\theta},\frac{\eta_2}{\eta_1^2}\lambda\big).
$
\end{lemma}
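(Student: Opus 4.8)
The plan is to prove the identity pathwise at the level of the thinned L\'evy process $\bar S^\lambda_\infty$ from \eqref{defn::limiting::process}, via an explicit coupling of the driving clocks, and then transfer it to the excursion-length vector. Throughout the argument I will temporarily write $\bar S^{(\bld c),\mu}_\infty$ for the process defined by the right-hand side of \eqref{defn::limiting::process} with decreasing sequence $\bld c\in\ell^3_{\shortarrow}\setminus\ell^2_{\shortarrow}$ in place of $\bld\theta$ and linear coefficient $\mu$ in place of $\lambda$; note that $\eta_1\bld\theta\in\ell^3_{\shortarrow}\setminus\ell^2_{\shortarrow}$ automatically, so all processes below are well defined.

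First I would couple the exponential clocks. Let $(\zeta_i)_{i\ge1}$, with $\zeta_i\sim\mathrm{Exp}(\theta_i)$ independent, be the exponentials underlying $\bar S^{(\bld\theta),\mu}_\infty$, and set $\mathcal I_i(s)=\ind{\zeta_i\le s}$. Since $\zeta_i/\eta_1\sim\mathrm{Exp}(\eta_1\theta_i)$ and the family $(\zeta_i/\eta_1)_{i\ge1}$ is again independent, it is a legitimate collection of clocks for $\bar S^{(\eta_1\bld\theta),\cdot}_\infty$, and $\ind{\zeta_i/\eta_1\le t}=\mathcal I_i(\eta_1 t)$. Substituting this into \eqref{defn::limiting::process} and collecting terms — the centering term transforms as $\eta_1\theta_i\cdot\eta_1\theta_i t=\eta_1\cdot\theta_i\cdot(\theta_i\eta_1 t)$ and the drift as $\eta_2\lambda t=\eta_1\cdot(\eta_2/\eta_1^2)\lambda\cdot(\eta_1 t)$ — yields the pathwise identity
\begin{equation*}
\bar S^{(\eta_1\bld\theta),\eta_2\lambda}_\infty(t)=\eta_1\,\bar S^{(\bld\theta),(\eta_2/\eta_1^2)\lambda}_\infty(\eta_1 t),\qquad t\ge0 .
\end{equation*}
Because $\mathrm{refl}(\cdot)$ commutes with positive spatial scaling and $\inf_{[0,t]}f(\eta_1\cdot)=\inf_{[0,\eta_1 t]}f$, the same relation holds after reflecting: $\mathrm{refl}\big(\bar S^{(\eta_1\bld\theta),\eta_2\lambda}_\infty\big)(t)=\eta_1\,\mathrm{refl}\big(\bar S^{(\bld\theta),(\eta_2/\eta_1^2)\lambda}_\infty\big)(\eta_1 t)$. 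Hence the zero set of the left-hand reflected process is exactly $\eta_1^{-1}$ times the zero set of the right-hand one, so the excursion intervals are in length-scaling bijection: every excursion of the $(\bld\theta,(\eta_2/\eta_1^2)\lambda)$-process of length $\ell$ corresponds to an excursion of the $(\eta_1\bld\theta,\eta_2\lambda)$-process of length $\ell/\eta_1$. Ranking these lengths in decreasing order, which is well posed by \cite[Lemma~1]{AL98}, gives $\bld\xi(\eta_1\bld\theta,\eta_2\lambda)=\tfrac1{\eta_1}\bld\xi\big(\bld\theta,\tfrac{\eta_2}{\eta_1^2}\lambda\big)$ as an almost sure identity under the coupling, and in particular $\bld\xi(\eta_1\bld\theta,\eta_2\lambda)\eqd\tfrac1{\eta_1}\bld\xi\big(\bld\theta,\tfrac{\eta_2}{\eta_1^2}\lambda\big)$.

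There is essentially no hard step here; the whole content is the bookkeeping of the simultaneous rescaling $t\mapsto\eta_1 t$, $x\mapsto\eta_1 x$, $\lambda\mapsto(\eta_2/\eta_1^2)\lambda$ and the self-similarity $\zeta\sim\mathrm{Exp}(r)\Rightarrow\zeta/\eta_1\sim\mathrm{Exp}(\eta_1 r)$ already noted in Section~\ref{sec:comp-size-scaling}. The only points needing a line of justification are the convergence of the defining series (so that the displayed identity is between genuine c\`adl\`ag processes) and the measurability/well-definedness of the ordered excursion-length vector, both of which are part of the setup and of \cite{AL98}. I would place this argument in the appendix referenced as \ref{sec:appendix-rescaling}, alongside the companion computation identifying the limit in Proposition~\ref{prop:comp-size}.
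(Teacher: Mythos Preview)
Your proof is correct and follows essentially the same approach as the paper: both arguments rest on the time rescaling $t\mapsto\eta_1 t$ together with the exponential scaling $\zeta_i/\eta_1\sim\mathrm{Exp}(\eta_1\theta_i)$, which produces the pathwise identity $\bar S^{(\eta_1\bld\theta),\eta_2\lambda}_\infty(t)=\eta_1\bar S^{(\bld\theta),(\eta_2/\eta_1^2)\lambda}_\infty(\eta_1 t)$ and hence the claimed relation between the ordered excursion-length vectors. Your presentation is slightly cleaner in that you carry out the general case directly via an explicit coupling, whereas the paper's Appendix~\ref{sec:appendix-rescaling} performs the specific computation needed for Proposition~\ref{prop:comp-size} and remarks that the same reasoning yields the lemma.
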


\subsubsection{Limiting component structures}
\label{sec:limit-component}
We are now all set to describe the metric space $M_i$ appearing in Theorem~\ref{thm:main}.
Recall the graph $\mathcal{G}_{\infty}(\bld{\beta},\gamma)$ from Definition~\ref{def:limiting-space}.
Using the notation of Section~\ref{sec:comp-size-scaling}, write $\xi_i^*$ for $\xi_i((\mu(\nu-1))^{-1}\bld{\theta}, (\mu(\nu-1)^2)^{-1}\nu^2\lambda)$ and $\Xi_i^*$ for the excursion corresponding to $\xi_i^*$.
Note that $\xi_i^*$ has the same distribution as $(\nu-1) \xi_i/\nu$, where $\xi_i$ is as in Proposition~\ref{prop:comp-size}. 
Then the limiting space $M_i$ is distributed as 
\begin{equation}
 M_i \eqd \frac{\nu}{\nu-1}\frac{\xi_i^*}{\big(\sum_{v\in \Xi_i^*}\theta_v^2\big)^{1/2}} \mathcal{G}_\infty ( \bld{\theta}^{\sss (i)}, \gamma^{\sss (i)}  ),
\end{equation}where $\bld{\theta}^{\sss (i)} = \big(\frac{\theta_j}{\sum_{v\in \Xi_i^*}\theta_v^2}: j\in \Xi_i^*\big)$ and $\gamma^{\sss (i)}=\frac{\xi_i^*}{\mu(\nu-1)}\big(\sum_{v\in \Xi_i^*}\theta_v^2\big)^{1/2}$.

\section{Universality theorem} \label{sec:univ-thm}
In this section, we develop universality principles that enable us to derive the scaling limits of the components for graphs that can be compared with the critical rank-one inhomogeneous random graph in a suitable sense. 
For the scaling limits in the basin of attraction of the Erd\H{o}s-R\'enyi random graphs, such a universality theorem was proved in \cite[Theorem~6.4]{BBSX14}, which was applied to deduce the scaling limits of the components for general inhomogeneous random graphs with a finite number of types and the configuration model with an exponential moment condition on the degrees. 
Here we focus on the universality class of the scaling limits in the heavy-tailed case.
We first state the relevant result from~\cite{BHS15} that was used in the context of rank-one inhomogeneous random graphs and then state our main result below. 
The convergence of metric spaces is with respect to the Gromov-weak topology, unless stated otherwise.
Recall the measured metric spaces $\tilde{\mathcal{G}}_m(\vp,a)$ and $\mathcal{G}_{\infty}(\bld{\beta},\gamma)$ defined in {\rm Definitions~\ref{defn:p-tree-graph}~and~\ref{def:limiting-space}}.
\begin{assumption} \label{assm:BHS15}  \normalfont \begin{enumerate}[(i)]
 \item  Let $\sigmap := \big(\sum_i p_i^2\big)^{1/2}$. As $m\to\infty$, $\sigmap\to 0 $, and $p_i/\sigmap \to \beta_i$ for each fixed $i\geq 1$, where $\bld{\beta}=(\beta_i)_{i\geq 1}\in\ell^2_{\shortarrow}\setminus\ell^1_{\shortarrow}$, $\sum_i\beta_i^2=1$.
 \item Recall $a$ from \eqref{eqn:qij-def-vp}. There exists a constant $\gamma >0$ such that $a\sigmap\to\gamma$. 
\end{enumerate}
\end{assumption}
 \begin{theorem}[{\cite[Theorem 4.5]{BHS15}}] \label{thm:BHS15}
  Under {\rm Assumption~\ref{assm:BHS15}},
$ \sigmap \tilde{\mathcal{G}}_m(\mathbf{p},a) \dto \mathcal{G}_{\sss (\infty)}(\bld{\beta},\gamma)$, as $m\to\infty$.
\end{theorem}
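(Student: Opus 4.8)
The statement is a finite-dimensional-convergence problem in disguise. Since $\mathscr{S}_*$ is Polish under the Gromov-weak topology and convergence there is exactly convergence of all polynomials \eqref{defn:polynomials} (see \cite{GPW09}), it suffices to fix $\ell\ge 1$, sample $\chi_1,\dots,\chi_\ell$ i.i.d.\ from the mass measure of $\sigmap\tilde{\mathcal{G}}_m(\vp,a)$ --- which, by Definition~\ref{defn:p-tree-graph}, is the same as choosing $\ell$ i.i.d.\ $\vp$-distributed vertices --- and show that the rescaled distance matrix $\big(\sigmap\,\dst_m(\chi_i,\chi_j)\big)_{i,j\le\ell}$ converges in distribution to the analogous matrix for $\mathcal{G}_{\sss(\infty)}(\bld\beta,\gamma)$. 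The decisive observation is that this matrix is a measurable, and generically continuous, function of only finitely much data: the subtree of the tilted ordered $\vp$-tree $\mathscr{T}^{\vp,\star}_m$ spanned by the root $\rho$, the vertices $\chi_1,\dots,\chi_\ell$, and the (a.s.\ finitely many) endpoints of the surplus edges, recorded as an element of $\overline{T}_{\sss IJ}^{*m}$ with edge lengths multiplied by $\sigmap$, leaf weights $\dA_{\sss(m)}(\cdot)$ as in \eqref{eqn:amv-def}, and root-to-leaf measures $Q_{\cdot}^{\sss(m)}$, together with the list of surplus edges (each of graph-length one, hence rescaled length $\sigmap\to0$) joining pairs among these points. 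So it is enough to prove joint convergence of this decorated-tree-plus-shortcuts data to the continuum object of Definition~\ref{def:limiting-space}.

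\noindent\textbf{Step 1: the decorated spanning tree of the untilted $\vp$-tree.} The subtree of $\mathscr{T}^{\vp}_m$ spanned by $\ell$ i.i.d.\ $\vp$-vertices, with edge lengths scaled by $\sigmap$, converges in $T_{\sss IJ}$ to the subtree of $\icrt$ spanned by $\ell$ i.i.d.\ $\mu$-leaves; this is the Camarri--Pitman / Aldous--Pitman convergence of $\vp$-trees to the ICRT under Assumption~\ref{assm:BHS15}(i), which can be set up through the birthday construction of Theorem~\ref{thm:pit-cam-birthday} and the sampling identity of Remark~\ref{rem:p-tree-dist}. One upgrades this to convergence in $T_{\sss IJ}^{*m}$ by checking that the leaf weights $\dA_{\sss(m)}(v)=\sum_{i\in[\rho,v]}\sum_j p_j\ind{j\in\RC(i,[\rho,v])}$ --- the total $\vp$-mass hanging to the right of the spine $[\rho,v]$ --- and the root-to-leaf measures $Q_v^{\sss(m)}$ are continuous functionals of the decorated tree, and that, along the coupling, they converge to $\dA_{\sss(\infty)}$ and $Q_{\cdot}^{\sss(\infty)}$; the $\mu$-a.e.\ finiteness of $\dA_{\sss(\infty)}$ is part of the construction.

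\noindent\textbf{Step 2: the tilt and the surplus edges.} Along the coupling one has $L_{\sss(m)}(\mathscr{T}^{\vp}_m)\to L_{\sss(\infty)}(\icrt,\mvU)$: in \eqref{eqn:ltpi-def} each edge factor $(\e^{ap_kp_\ell}-1)/(ap_kp_\ell)\to1$ since $ap_kp_\ell\to0$, while the permitted-edge exponential equals $\exp\big(\Lambda_{\sss(m)}(\mathscr{T}^{\vp}_m)\big)$ with $\Lambda_{\sss(m)}(\vt)=a\sum_v p_v\dA_{\sss(m)}(v)\to\Lambda_{\sss(\infty)}^\star=\gamma\int\dA_{\sss(\infty)}\dif\mu$ by Assumption~\ref{assm:BHS15}(ii) and Step~1. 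Combined with the uniform-integrability statement $\E_{\ord}[L_{\sss(m)}(\mathscr{T}^{\vp}_m)]\to\E[L_{\sss(\infty)}(\icrt,\mvU)]$, this transfers the convergence of Step~1 to the \emph{tilted} laws via the change of measure \eqref{eqn:tilt-ord-dist-def}. Then, conditionally on $\mathscr{T}^{\vp,\star}_m$, Lemma~\ref{lem:lk-rk-equivalent} describes the surplus edges exactly: $N_{\sss(m)}^\star\sim\mathrm{Poisson}(\Lambda_{\sss(m)}(\mathscr{T}^{\vp,\star}_m))$, the first endpoints are i.i.d.\ with law $\propto p_v\dA_{\sss(m)}(v)$, and the second endpoints are drawn from $Q_{\cL_j}^{\sss(m)}$. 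Since $\Lambda_{\sss(m)}\to\Lambda_{\sss(\infty)}^\star$, the first-endpoint law converges (under the identification of $p_v$ with mass) to the density $\propto\dA_{\sss(\infty)}(x)\mu^\star(\dif x)$ on $\tilicrt$, and $Q_{\cL_j}^{\sss(m)}\to Q_{x}^{\sss(\infty)}$, so the joint law of (tilted decorated spanning tree, $N_{\sss(m)}^\star$, surplus-edge endpoints) converges to its analogue in Definition~\ref{def:limiting-space}. Adjoining the finitely many surplus endpoints to the list of sampled points (legitimate by consistency of $\vp$-sampling) makes the full shortcut structure a continuous functional of convergent data.

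\noindent\textbf{Conclusion and the main obstacle.} The continuous mapping theorem now applies: the map sending a finite tree with edge lengths, a finite collection of vanishing-length edges joining prescribed pairs of its points (in the limit: identifications), and a marked finite set of points to the resulting distance matrix on the marked points is continuous at every configuration in which the identified pairs and the marked points are pairwise distinct --- which holds almost surely for the limit object, whose mass measure is non-atomic. This yields convergence of $\big(\sigmap\,\dst_m(\chi_i,\chi_j)\big)_{i,j\le\ell}$ for every $\ell$, hence $\sigmap\tilde{\mathcal{G}}_m(\vp,a)\dto\mathcal{G}_{\sss(\infty)}(\bld\beta,\gamma)$. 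The hard part is the analytic input to Step~2: proving the uniform integrability behind $\E_{\ord}[L_{\sss(m)}]\to\E[L_{\sss(\infty)}]$, which reduces to uniform-in-$m$ exponential-moment bounds on the total right-subtree mass $\Lambda_{\sss(m)}(\mathscr{T}^{\vp}_m)$; these come from estimates on the depth-first exploration walk of the $\vp$-tree and its thinned-L\'evy scaling limit in the heavy-tailed regime \cite{BHL12,AL98}. Once these bounds are available, the remaining work is bookkeeping around the Camarri--Pitman coupling.
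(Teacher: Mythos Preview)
The paper does not prove this statement: Theorem~\ref{thm:BHS15} is quoted verbatim from \cite[Theorem~4.5]{BHS15} and is used here only as a black box, as input to the proof of the universality theorem (Theorem~\ref{thm:univesalty}). So there is no ``paper's own proof'' to compare against.

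That said, your outline is a faithful high-level sketch of the argument in \cite{BHS15}. The present paper imports from that source exactly the machinery you invoke --- the construction of $\tilde{\mathcal G}_m(\vp,a)$ via Algorithm~\ref{algo:construction-Pcon}, the description of surplus edges in Lemma~\ref{lem:lk-rk-equivalent}, the tilt $L_{\sss(m)}$ in \eqref{eqn:ltpi-def}, the birthday construction of Section~\ref{sec:p-tree-birthday}, and the uniform integrability of $L_{\sss(m)}$ recorded in Lemma~\ref{lem:uni-int-tilt} --- and reuses it in the proof of Theorem~\ref{thm:univesalty} in precisely the way you describe (reduction to the function $\mathsf{g}_\phi^k$ of \eqref{defn:g-phi-1}, change of measure via $L_{\sss(m)}$, convergence of $\dA_{\sss(m)}$ and $Q_v^{\sss(m)}$). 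Your identification of the uniform-integrability / exponential-moment bound on $\Lambda_{\sss(m)}$ as the hard analytic step is correct and corresponds to \cite[Proposition~4.8 and Lemma~4.9]{BHS15}.
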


For each $m\geq 1$, fix a collection of blobs $\mathbf{M}_m:= \{(M_i, \dst_i, \mu_i):{i\in [m]} \}$. Recall the definition of super graphs from Section~\ref{defn:super-graph} and denote 
\begin{equation}\label{defn:blob-Gm}
\tilde{\mathcal{G}}_m^{\sss \mathrm{bl}}(\mathbf{p},a) = \Gamma(\tilde{\mathcal{G}}_m(\mathbf{p},a),\mathbf{p},\mathbf{M}_m,\mathbf{X}),
\end{equation}where $\mathbf{X} = (X_{ij})_{i,j\in [m]}$, $X_{ij}\sim \mu_i$ independently for each $i$. 
Moreover, $\mathbf{X}$ is independent of the graph $\tilde{\mathcal{G}}_m(\mathbf{p},a)$.
Let $u_i:=\E[\dst_i(X_i,X_i')]$ where $X_i,X_i'\sim \mu_i$ independently and $B_m:= \sum_{i\in [m]}p_iu_i$. 
Let $\Delta_i:=\diam(M_i) $ and  $\Delta_{\max}:=\max_{i\in [m]}\Delta_i$.

\begin{assumption}[Maximum inter-blob-distance] \label{assm:blob-diameter}
 \normalfont 
 $\lim_{m\to\infty}\frac{\sigma(\mathbf{p})\Delta_{\max}}{B_m+1}=0.$ 
\end{assumption}
\begin{theorem}[Universality theorem]\label{thm:univesalty}
Under {\rm Assumptions~\ref{assm:BHS15}}~and~{\rm \ref{assm:blob-diameter}}, 
as $m\to\infty$,
\begin{equation}
 \frac{\sigma(\mathbf{p})}{B_m+1}  \tilde{\mathcal{G}}_m^{\sss \mathrm{bl}}(\mathbf{p}, a) \dto \mathcal{G}_{\sss (\infty)}(\bld{\beta},\gamma).
 \end{equation}
\end{theorem}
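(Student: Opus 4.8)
The plan is to use the fact that, by definition, Gromov-weak convergence of $\tfrac{\sigmap}{B_m+1}\tilde{\mathcal{G}}_m^{\sss\mathrm{bl}}(\mathbf{p},a)$ to $\mathcal{G}_{\sss(\infty)}(\bld{\beta},\gamma)$ is equivalent to convergence in distribution, for every fixed $l\geq 2$, of the rescaled matrix of pairwise distances between $l$ i.i.d.\ points sampled from $\bar{\mu}$, and to reduce this to {\rm Theorem~\ref{thm:BHS15}} by a law-of-large-numbers control of the blob contributions. Since $\bar{\mu}(A)=\sum_{i\in[m]}p_i\mu_i(A\cap M_i)$, sampling from $\bar\mu$ amounts to first sampling a vertex $V\sim\mathbf{p}$ and then a point $x\sim\mu_V$; so if $V_1,\dots,V_l$ are i.i.d.\ $\mathbf{p}$ and $x_k\sim\mu_{V_k}$ independently, it suffices to prove that jointly over $1\le k<k'\le l$,
\begin{equation}
\frac{\sigmap}{B_m+1}\,\bar{\dst}(x_k,x_{k'})\;-\;\sigmap\,\dst_{\tilde{\mathcal{G}}_m}(V_k,V_{k'})\;\pto\;0,
\end{equation}
where $\dst_{\tilde{\mathcal{G}}_m}$ is the graph distance in $\tilde{\mathcal{G}}_m(\mathbf{p},a)$. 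Indeed {\rm Theorem~\ref{thm:BHS15}} asserts exactly that $\big(\sigmap\,\dst_{\tilde{\mathcal{G}}_m}(V_k,V_{k'})\big)_{k,k'}$ converges to the distance matrix of $l$ i.i.d.\ mass-measure points of $\mathcal{G}_{\sss(\infty)}(\bld{\beta},\gamma)$, and since $\tfrac{\sigmap}{B_m+1}\Delta_i\le\tfrac{\sigmap\Delta_{\max}}{B_m+1}\to0$ for every $i$, a $\bar\mu$-point located inside $M_{V_k}$ is, after rescaling, asymptotically indistinguishable from the mass-measure point $V_k$; Slutsky's theorem then yields the claim.

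For the displayed convergence I would analyse $\bar{\dst}(x_k,x_{k'})$ path by path via~\eqref{defn:distance-metric-blob}. Along a fixed path $P=(i_0,\dots,i_H)$ in $\tilde{\mathcal{G}}_m$, its blob-decorated length is $H$ plus the endpoint terms $\dst_{i_0}(x_k,X_{i_0,i_1})+\dst_{i_H}(X_{i_H,i_{H-1}},x_{k'})\le 2\Delta_{\max}$ (negligible after multiplication by $\sigmap/(B_m+1)$) plus $\sum_{l=1}^{H-1}\dst_{i_l}(X_{i_l,i_{l-1}},X_{i_l,i_{l+1}})$; conditionally on $P$ the latter summands are independent, bounded by $\Delta_{i_l}\le\Delta_{\max}$, with means $u_{i_l}$. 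Because $\mathbf{X}$ is independent of $\tilde{\mathcal{G}}_m$ and, in the regime of {\rm Theorem~\ref{thm:BHS15}}, the interior vertices of a (near-)geodesic are — via the birthday construction ({\rm Theorem~\ref{thm:pit-cam-birthday}}, {\rm Remark~\ref{rem:p-tree-dist}}) — asymptotically i.i.d.\ $\mathbf{p}$-samples, one gets $\sum_l u_{i_l}=(1+\oP(1))\,H\,B_m$; and the variance of the blob sum is at most $\sum_l\Delta_{i_l}u_{i_l}\le\Delta_{\max}\sum_l u_{i_l}=\OP(\Delta_{\max}B_m/\sigmap)$, using $H=\dst_{\tilde{\mathcal{G}}_m}(V_k,V_{k'})=\OP(1/\sigmap)$ from {\rm Theorem~\ref{thm:BHS15}}. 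Since {\rm Assumption~\ref{assm:blob-diameter}} gives $\sigmap\Delta_{\max}=\oP(B_m+1)$, hence $\sigmap\Delta_{\max}B_m=\oP((B_m+1)^2)$, this variance is $\oP\big((B_m+1)^2/\sigmap^2\big)$, so Chebyshev shows the blob-decorated length of $P$ equals $H(1+B_m)+\oP\big((B_m+1)/\sigmap\big)$. Taking $P$ to be the $\tilde{\mathcal{G}}_m$-geodesic between $V_k$ and $V_{k'}$ gives the upper bound; for the matching lower bound, note that every competitor path has at least $\dst_{\tilde{\mathcal{G}}_m}(V_k,V_{k'})$ edges, and in the critical window $\tilde{\mathcal{G}}_m$ is a tree with $\OP(1)$ surplus edges, so between two vertices there are only $\OP(1)$ combinatorially distinct (near-)geodesic routes, each a concatenation of $\OP(1)$ tree segments on which the same concentration applies. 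Thus $\bar{\dst}(x_k,x_{k'})=\dst_{\tilde{\mathcal{G}}_m}(V_k,V_{k'})(1+B_m)+\oP\big((B_m+1)/\sigmap\big)$; dividing by $B_m+1$, multiplying by $\sigmap$, and running the argument simultaneously over the finitely many edges of the reduced spanning tree of $\{V_1,\dots,V_l\}$ gives the joint statement.

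The main obstacle I anticipate is the ``$\mathbf{p}$-typicality'' of the vertices met along geodesics: one must show that the empirical average of $u_{i_l}$ along a path of length $\OP(1/\sigmap)$ is $(1+\oP(1))B_m$, which requires transferring the birthday-construction description of $\mathbf{p}$-trees past the tilting in {\rm Algorithm~\ref{algo:construction-Pcon}} and handling the finitely many hub vertices (whose blobs may be comparatively large but number only $\OP(1)$ and are killed by the factor $\sigmap/(B_m+1)$) separately. A secondary point is ruling out that the $\bar{\dst}$-minimising path gains by detouring through atypically small blobs, for which one uses the tree-plus-$\OP(1)$-surplus structure of the critical graph together with the independence of the (deterministic) blob collection from $\tilde{\mathcal{G}}_m$, so that any detour still visits blob-typical vertices. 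The quantitative input $\sigmap\Delta_{\max}B_m=\oP((B_m+1)^2)$ is precisely where {\rm Assumption~\ref{assm:blob-diameter}} enters, and it is used sharply.
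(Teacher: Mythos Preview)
Your proposal is correct and follows essentially the same route as the paper: reduce Gromov-weak convergence to comparing distance matrices of $l$ i.i.d.\ $\mathbf{p}$-samples, decompose along tree paths, use the birthday construction to get i.i.d.\ $\mathbf{p}$-structure of path vertices, and control fluctuations of the blob contribution via a martingale/Chebyshev bound whose variance $\sigmap^2 k\Delta_{\max}B_m/(B_m+1)^2$ vanishes precisely under Assumption~\ref{assm:blob-diameter} (this is the paper's \eqref{eqn:supbound:Q}). The obstacle you anticipate---transferring the birthday description past the tilting---is resolved in the paper not by working directly with the tilted tree but by first conditioning on $N_{\sss(m)}^\star=k$, then untilting via the uniform integrability of $L(\mathscr{T}_m^{\mathbf{p}})$ (Lemma~\ref{lem:uni-int-tilt} and Fact~\ref{fact:unif-int}), so that the concentration estimate (their Lemma~\ref{lem:path:GHP-conv}) is proved on the \emph{ordinary} $\mathbf{p}$-tree where the birthday construction applies verbatim; the surplus edges are then handled by the $\mathsf{g}_\phi^k$ bookkeeping of \eqref{defn:g-phi-1}--\eqref{defn:g-phi-2}, which formalises your ``$\OP(1)$ tree segments'' picture.
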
 

\begin{remark}\label{rem:about-assumption} \normalfont Assumption~\ref{assm:blob-diameter} only assumes that the diameter of the blobs are negligible compared to the graph distances in $ \tilde{\mathcal{G}}_m^{\sss \mathrm{bl}}(\mathbf{p}, a)$. 
This, in a way, is a necessary condition to ensure that the inherent structure of the blobs does not affect the limit. 
Theorem~\ref{thm:univesalty} shows that only Assumption~\ref{assm:blob-diameter} is also sufficient and additional assumptions as in \cite[Assumption 3.3]{BBSX14} are not required to prove universality in the Gromov-weak topology.
\end{remark}
The rest of this section is devoted to the proof of Theorem~\ref{thm:univesalty}. 

\subsection{Completing the proof of the universality theorem in Theorem~\ref{thm:univesalty}}
To simplify notation, we write $\tilde{\mathcal{G}}_m$, $\tilde{\mathcal{G}}_m^{\sss \mathrm{bl}}$, respectively, instead of $\tilde{\mathcal{G}}_m(\mathbf{p}, a)$ and $\tilde{\mathcal{G}}_m^{\sss \mathrm{bl}}(\mathbf{p}, a)$.
\begin{lemma}[{\cite[Lemma 4.11]{BHS15}}]\label{lem:spls-tight-p} Recall the definition of $N_{\sss(m)}^\star$ from {\rm Algorithm~\ref{algo:construction-Pcon}}.
The sequence of random variables $(N_{\sss(m)}^\star)_{m\geq 1}$ is tight. 
\end{lemma}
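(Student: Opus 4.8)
The plan is to control $N_{\sss(m)}^\star$ in expectation, uniformly in $m$, and then apply Markov's inequality. By Lemma~\ref{lem:lk-rk-equivalent}(a), conditionally on the tilted ordered $\vp$-tree $\mathscr{T}_m^{\vp,\star}$ the variable $N_{\sss(m)}^\star$ is Poisson with mean $\Lambda_{\sss(m)}(\mathscr{T}_m^{\vp,\star})$, where $\Lambda_{\sss(m)}$ is the functional in \eqref{eqn:Lambda-tree}; hence $\E[N_{\sss(m)}^\star]=\E[\Lambda_{\sss(m)}(\mathscr{T}_m^{\vp,\star})]$, and it suffices to show $\sup_{m\ge 1}\E[\Lambda_{\sss(m)}(\mathscr{T}_m^{\vp,\star})]<\infty$, since then $\PR(N_{\sss(m)}^\star>K)\le K^{-1}\sup_m\E[\Lambda_{\sss(m)}(\mathscr{T}_m^{\vp,\star})]$ uniformly in $m$.

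The next step is to remove the tilting. Using the definition \eqref{eqn:tilt-ord-dist-def} of $\pr_{\ord}^\star$,
\[
 \E[\Lambda_{\sss(m)}(\mathscr{T}_m^{\vp,\star})]=\frac{\E_{\ord}[\Lambda_{\sss(m)}(\mathscr{T}_m^{\vp})\,L(\mathscr{T}_m^{\vp})]}{\E_{\ord}[L(\mathscr{T}_m^{\vp})]}.
\]
For the denominator, since $(e^{x}-1)/x\ge 1$ for $x>0$ every factor in \eqref{eqn:ltpi-def} is at least $1$ and the exponential factor is at least $1$, so $L\ge 1$ pointwise and $\E_{\ord}[L(\mathscr{T}_m^{\vp})]\ge 1$. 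For the numerator, Cauchy--Schwarz gives
\[
 \E_{\ord}[\Lambda_{\sss(m)}(\mathscr{T}_m^{\vp})L(\mathscr{T}_m^{\vp})]\le \E_{\ord}[\Lambda_{\sss(m)}(\mathscr{T}_m^{\vp})^2]^{1/2}\,\E_{\ord}[L(\mathscr{T}_m^{\vp})^2]^{1/2},
\]
so the task reduces to uniform-in-$m$ second moment bounds on $\Lambda_{\sss(m)}(\mathscr{T}_m^{\vp})$ and $L(\mathscr{T}_m^{\vp})$ under the un-tilted ordered $\vp$-tree law. Moreover, one checks from \eqref{eqn:amv-def} and \eqref{eqn:Lambda-tree} that the exponent $\sum_{(k,\ell)\in\sP(\vt)}ap_kp_\ell$ appearing in \eqref{eqn:ltpi-def} equals $\Lambda_{\sss(m)}(\vt)$, and since $(e^{x}-1)/x\le e^{x}$, this yields the pointwise bound $L(\vt)\le\exp(\Lambda_{\sss(m)}(\vt)+a\sum_{(k,\ell)\in\mathrm{E}(\vt)}p_kp_\ell)$; hence the bound on $\E_{\ord}[L(\mathscr{T}_m^{\vp})^2]$ in turn follows from uniform exponential moment estimates for the two nonnegative $\vp$-tree functionals $\Lambda_{\sss(m)}(\mathscr{T}_m^{\vp})$ and $a\sum_{(k,\ell)\in\mathrm{E}(\mathscr{T}_m^{\vp})}p_kp_\ell$.

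I expect this last step -- the uniform moment bounds -- to be the main obstacle. To carry it out I would use the birthday construction of $\vp$-trees (Theorem~\ref{thm:pit-cam-birthday}): it represents the $\vp$-tree via an i.i.d.\ sequence $\vY$ with law $\vp$, and along the path from the root to a vertex it lets one rewrite $\dA_{\sss(m)}(\cdot)$ and the edge sum $\sum_{\mathrm{E}}p_kp_\ell$ as sums over (essentially) i.i.d.\ terms, with the depth of a typical vertex governed by the repeat-times $R_l$. Plugging in the scalings of Assumption~\ref{assm:BHS15} -- $\sigmap\to 0$, $a\sigmap\to\gamma$, $p_i/\sigmap\to\beta_i$ with $\bld{\beta}\in\ell^2_{\shortarrow}$ -- one obtains $\E_{\ord}[\Lambda_{\sss(m)}(\mathscr{T}_m^{\vp})]=O(1)$ and $\E_{\ord}[a\sum_{\mathrm{E}(\mathscr{T}_m^{\vp})}p_kp_\ell]=O(1)$, and, with a little more care on the combinatorics of the repeat-times, that the corresponding second and exponential moments stay bounded uniformly in $m$; these are exactly the type of $\vp$-tree estimates developed in \cite{BHS15}. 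Combining this with the reductions above gives $\sup_m\E[\Lambda_{\sss(m)}(\mathscr{T}_m^{\vp,\star})]<\infty$, whence $(N_{\sss(m)}^\star)_{m\ge1}$ is tight.
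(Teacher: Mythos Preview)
The paper does not prove this lemma at all; it is imported verbatim from \cite[Lemma~4.11]{BHS15}, as the citation in the lemma heading indicates. So there is no in-paper argument to compare your proposal against.

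That said, here is an assessment of your sketch. The algebraic reductions are correct: $N_{\sss(m)}^\star$ is conditionally Poisson with mean $\Lambda_{\sss(m)}(\mathscr{T}_m^{\vp,\star})$; the tilting formula gives $\E^\star[\Lambda_{\sss(m)}]=\E[\Lambda_{\sss(m)}L]/\E[L]$; the observation $L\ge 1$ and the identification of the exponent in \eqref{eqn:ltpi-def} with $\Lambda_{\sss(m)}$ are both right. The weak point is that the Cauchy--Schwarz route forces you into a uniform bound on $\E[L(\mathscr{T}_m^{\vp})^2]$, which is strictly stronger than the uniform integrability of $L$ recorded in Lemma~\ref{lem:uni-int-tilt}. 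You then need uniform \emph{exponential} moments for $\Lambda_{\sss(m)}(\mathscr{T}_m^{\vp})$ under the untilted law, which you do not actually establish but defer to ``\cite{BHS15}-type estimates''. Since the lemma itself comes from \cite{BHS15}, this makes the argument close to circular.

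A lighter route, using only what the present paper already quotes, is the following. It suffices to show tightness of $\Lambda_{\sss(m)}(\mathscr{T}_m^{\vp,\star})$, since a Poisson variable with tight random mean is tight. Write $\Lambda_{\sss(m)}(\vt)=a\sum_{v}p_v\dA_{\sss(m)}(v)=a\sigma(\vp)\cdot\E_{\vp}\big[\dA_{\sss(m)}(V_1)/\sigma(\vp)\big]$. By Assumption~\ref{assm:BHS15}, $a\sigma(\vp)\to\gamma$, and by the second part of Lemma~\ref{lem:uni-int-tilt} (with $k=0$), the random variable $\E_{\vp}[\dA_{\sss(m)}(V_1)/\sigma(\vp)]$ converges in distribution; hence $\Lambda_{\sss(m)}(\mathscr{T}_m^{\vp})$ is tight under the \emph{untilted} law. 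To pass to the tilted law, use $L\ge 1$ together with the uniform integrability of $L$ (first part of Lemma~\ref{lem:uni-int-tilt}):
\[
\pr_{\ord}^\star\big(\Lambda_{\sss(m)}>K\big)=\frac{\E_{\ord}\big[L\,\ind{\Lambda_{\sss(m)}>K}\big]}{\E_{\ord}[L]}\le \E_{\ord}\big[L\,\ind{\Lambda_{\sss(m)}>K}\big],
\]
and the right-hand side tends to $0$ uniformly in $m$ as $K\to\infty$ because $\pr_{\ord}(\Lambda_{\sss(m)}>K)\to 0$ uniformly and $(L(\mathscr{T}_m^{\vp}))_{m\ge1}$ is uniformly integrable. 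This gives tightness of $\Lambda_{\sss(m)}(\mathscr{T}_m^{\vp,\star})$ and hence of $N_{\sss(m)}^\star$, without any second or exponential moment control on $L$.
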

Recall the definition of Gromov-weak topology from Section~\ref{sec:defn:GHP-weak}. 
Fix some $l\geq 1$ and take any bounded continuous function $\phi: \R^{\sss l^2}\mapsto\R$. 
We simply write $\Phi(X)$ for $\Phi((X,\dst,\mu))$. 
\paragraph*{{\bf Key step 1}}
Let us write the scaled metric spaces as $\tilde{\cG}_m^{\sss \mathrm{s}} = \sigmap \tilde{\mathcal{G}}_m$ and $\tilde{\cG}_m^{\sss \mathrm{bl},\mathrm{s}} = \frac{\sigmap}{B_m+1} \tilde{\mathcal{G}}_m^{\sss \mathrm{bl}}$.
Using Theorem~\ref{thm:BHS15} it is enough to show that
\begin{equation}\label{eq:uni-th:red1}
 \lim_{m\to\infty} \big|\E\big[\Phi\big(\tilde{\cG}_m^{\sss\mathrm{bl}, \mathrm{s}}\big)\big] - \E\big[\Phi(\tilde{\cG}_m^{\sss \mathrm{s}})\big] \big|=0.
\end{equation}
The above step, together with Theorem~\ref{thm:BHS15}, completes the proof of  Theorem~\ref{thm:univesalty}. 
\paragraph*{{\bf Key step 2}}For any $K\geq 1$, the difference 
$ \big|\E\big[\Phi(\tilde{\cG}_m^{\sss \mathrm{s}})\big]- \sum_{k=0}^{K}\E\big[\Phi(\tilde{\cG}_m^{\sss \mathrm{s}})\1\{N_{\sss(m)}^\star = k\}\big]\big|$ is  at most $\|\phi\|_{\infty}\prob{N_{\sss(m)}^\star \geq K+1},$ and the same inequality also holds for $\tilde{\cG}_m^{\sss\mathrm{bl}, \mathrm{s}}$.
Thus, using Lemma~\ref{lem:spls-tight-p}, 
the proof of \eqref{eq:uni-th:red1} reduces to showing that, for each fixed $k\geq 1$, 
\begin{equation}\label{eq:blobvsG-reduction1}
 \lim_{m\to\infty}\Big|\E\big[\Phi\big(\tilde{\cG}_m^{\sss \mathrm{bl},\mathrm{s}}\big)\1\{N_{\sss(m)}^\star = k\}\big]-\E\big[\Phi(\tilde{\cG}_m^{\sss \mathrm{s}})\1\{N_{\sss(m)}^\star = k\}\big]\Big|=0.
\end{equation}
\paragraph*{{\bf Main aim of this section.}} 
Below, we define a function $\mathsf{g}_{\phi}^k(\cdot)$ on the space $\overline{T}_{IJ}^*$ which captures the behavior of pairwise distances after creating $k$ surplus edges. 
Under Assumption~\ref{assm:blob-diameter}, we show that the introduction of blobs changes the distances within the tilted $\vp$-trees and the $\mathsf{g}_\phi^k$ values negligibly. This completes the proof of \eqref{eq:blobvsG-reduction1}. 

For any fixed $k\geq 0$, consider $\mathbf{t}\in T^*_{\sss I,(k+l)}$ with root $0+$, leaves $\bld{i}=(1+,\dots,(k+l)+)$ and root-to-leaf measures $\nu_{\vt,i}$ on the path $[0+,i+]$ for all $1\leq i\leq k+l$. 
We create a graph $G(\mathbf{t})$ by sampling, for each $1\leq i\leq k$, points $i_s$ on $[0+,i+]$ according $\nu_{\vt,i}$ and connecting $i+$ with $i_s$. 
Let $\dst_{\sss G(\mathbf{t})}$ denote the distance on $G(\mathbf{t})$ given by the sum of edge lengths in the shortest path. 
Then, the function $\mathsf{g}_{\phi}^k:T^*_{\sss I,(k+l)}\mapsto\R$ is defined as
\begin{subequations}
\begin{equation}\label{defn:g-phi-1}
 \mathsf{g}_{\phi}^k(\mathbf{t}) = \E\big[\phi\big(\dst_{\sss G(\mathbf{t})}(i+,j+):k+1\leq i,j\leq k+l\big)\big]\ind{\mathbf{t}\neq \partial},
\end{equation}where $\partial$ is a forbidden state defined as follows: 
Given any $\mathbf{t}\in T_{\sss IJ}^*$,  and a set of vertices $\bld{v}=(v_1,\dots,v_r)$, we denote by $\mathbf{t}(\bld{v})$, the subtree of $\mathbf{t}$ spanned by $\bld{v}$, i.e., $\mathbf{t}(\bld{v})$ is the subtree of $\mathbf{t}$ containing all vertices in $\bld{v}$ with minimal number of edges.
We declare $\mathbf{t}(\bld{v})=\partial$ if 
$[\rho,v_i] \subset [\rho,v_j]$ for some $i\neq j$, where $\rho$ is the root of $\mathbf{t}$.
Thus, if $\mathbf{t}(\bld{v})\neq \partial$, the tree $\mathbf{t}(\bld{v})$ necessarily has $r$ leaves.
Notice that the expectation in \eqref{defn:g-phi-1} is over the choices of $i_s$-values only.
In our context, $\vt$ is always considered as a subgraph of the graph on the vertex set $[m]$ and thus we assume that $\vt$ has inherited the labels from the corresponding graph.
Thus $\vt\in T^{*m}_{\sss I,(k+l)}$.
There is a natural way to extend $\mathsf{g}_{\phi}^k(\cdot)$ to $\overline{T}_{\sss I,(k+l)}^{*m}$ as follows:
Consider $\bar{\mathbf{t}}\in \overline{T}_{\sss I,(k+l)}^{*m}$ and the corresponding $\mathbf{t}\in T_{\sss I,(k+l)}^{*m}$ (see Section~\ref{sec:T-space-extended}). 
Let $0+$, $\bld{i}$, $(\nu_{\vt,i})_{i\in [k+l]}$  and $(i_s)_{i\in [k+l]}$ be as defined above. 
Let $\bar{G}(\bar{\mathbf{t}})$ denote the metric space obtained by introducing an edge of length one between $X_{i+i_s}$ and $X_{i_si+}$, where $X_{ij}$ has distribution  $\mu_i$ for all $j\geq 1$, independently of each other and other shortcuts. 
For $k+1\leq i\leq k+l$, $X_i\in M_{x_i}$ have distribution $\mu_{x_i}$ independently for all $i\geq 1$. 
Let $\bar{\dst}_{\sss \bar{G}(\mathbf{\bar{t}})}$ denote the distance on $\bar{G}(\mathbf{\bar{t}})$. Then, let
\begin{equation}\label{defn:g-phi-2}
 \mathsf{g}_{\phi}^k(\bar{\mathbf{t}}) = \E\big[\phi\big(\bar{\dst}_{\sss \bar{G}(\mathbf{\bar{t}})}(X_i,X_j):k+1\leq i,j\leq k+l\big)\big]\ind{\mathbf{t}\neq \partial},
\end{equation}
\end{subequations}where the expectation is taken over the collection of random variables $X_{i+i_s}$ and $X_{i_si+}$.  
At this moment, we urge the reader to recall the construction in Algorithm~\ref{algo:construction-Pcon}, Lemma~\ref{lem:lk-rk-equivalent} and all the associated notations.
Now, conditionally on $\mathscr{T}_m^{ \mathbf{p},\star}$, we can construct the tree $\mathscr{T}_m^{ \mathbf{p},\star}(\tilde{\mathbf{V}}_m^{k,k+l})$, where 
\begin{enumerate}[(a)]
 \item $\tilde{\mathbf{V}}_m^{k,k+l}= (\tilde{V}_1,\dots,\tilde{V}_k, V_{k+1},\dots , V_{k+l})$ is an independent collection of vertices from the vertex set of $\mathscr{T}_m^{ \mathbf{p},\star}$;
 \item $\tilde{V}_i$ is distributed as $\cJ^{\sss (m)}(\cdot)$, for $1\leq i\leq k$ and $V_i$ is distributed as $\mathbf{p}$, for $k+1 \leq i \leq k+l$.
\end{enumerate} Note that, by \cite[(4.25)]{BHS15}, $ \lim_{m\to\infty} \PR(\mathscr{T}_m^{ \mathbf{p},\star}(\tilde{\mathbf{V}}_m^{k,k+l})= \partial)=0.$
Whenever $\mathscr{T}_m^{ \mathbf{p},\star}(\tilde{\mathbf{V}}_m^{k,k+l})\neq \partial$, $\mathscr{T}_m^{ \mathbf{p},\star}(\tilde{\mathbf{V}}_m^{k,k+l})$ can be considered as an element of $T^{*m}_{\sss I,k+l}$  using the leaf-weights $(\mathfrak{G}_{\sss (m)}(\tilde{V}_i))_{i=1}^k$, $(\mathfrak{G}_{\sss (m)}(V_i))_{i=k+1}^{k+l}$ and root-to-leaf measures given by $(Q_{\tilde{V}_i}^m(\cdot))_{i=1}^k$, $(Q_{V_i}^m(\cdot))_{i=k+1}^{k+l}$.
 Let $\bar{\mathscr{T}}_m^{ \mathbf{p},\star}(\tilde{\mathbf{V}}_m^{k,k+l})$ denote the element corresponding to $\mathscr{T}_m^{ \mathbf{p},\star}(\tilde{\mathbf{V}}_m^{k,k+l})$ with blobs. 
 Thus, $\bar{\mathscr{T}}_m^{ \mathbf{p},\star}(\tilde{\mathbf{V}}_m^{k,k+l})$ is viewed as an element of $\overline{T}_{\sss I,(k+l)}^{*m}$. 
Let $\mathbf{V}_m=(V_1,\dots,V_{k+l})$ be an i.i.d.~collection of random variables with distribution $\mathbf{p}$. Let $\E_{\mathbf{p},\star}$ denote the expectation conditionally on $\mathscr{T}_m^{\mathbf{p},\star}$ and~$N_{\sss(m)}^\star$. The proof of \eqref{eq:blobvsG-reduction1} now reduces to
\begin{eq} \label{eq:blobvsG-reduction2}
&\Big|\E\big[\Phi\big(\tilde{\cG}_m^{\sss \mathrm{bl},\mathrm{s}}\big)\1\{N_{\sss(m)}^\star = k\}\big]-\E\big[\Phi(\tilde{\cG}_m^{\sss\mathrm{s}})\1\{N_{\sss(m)}^\star = k\}\big]\Big|\\
&\hspace{1cm}= \bigg| \E\bigg[\E_{\mathbf{p},\star}\Big[\mathsf{g}_{\phi}^{k}\Big(\frac{\sigma(\mathbf{p})}{B_m+1}\bar{\mathscr{T}}_m^{\mathbf{p},\star}(\tilde{\mathbf{V}}_m^{k,k+l}) \Big)\Big]\ind{N_{\sss (m)}^{\star}=k}\bigg]\\
&\hspace{2cm}- \E\Big[\E_{\mathbf{p},\star}\big[\mathsf{g}_{\phi}^{k}\big(\sigma(\mathbf{p})\mathscr{T}_m^{\mathbf{p},\star}(\tilde{\mathbf{V}}_m^{k,k+l})\big)\big]\ind{N_{\sss (m)}^{\star}=k}\Big]\bigg| +o(1).
\end{eq}
Notice that the tilting does not affect the blobs themselves but only the superstructure. Recall also the definition of the tilting function $L(\cdot)$ from \eqref{eqn:ltpi-def}. 
Using the fact that $\cJ^{\sss (m)}(v) \propto p_v \mathfrak{G}_{\sss (m)}(v)$, 
\begin{equation}
\begin{split}
\E_{\mathbf{p},\star}\big[\mathsf{g}_{\phi}^{k}\big(\sigma(\mathbf{p})\mathscr{T}_m^{\mathbf{p},\star}(\tilde{\mathbf{V}}_m^{k,k+l})\big) \big]= \frac{\E_{\mathbf{p},\star}\big[\prod_{i=1}^k\mathfrak{G}_{\sss (m)}(V_i)\mathsf{g}_{\phi}^{k}\big(\sigma(\mathbf{p})\mathscr{T}_m^{\mathbf{p},\star}(\mathbf{V}_m)\big) \big]}{\big(\E_{\mathbf{p},\star}[\mathfrak{G}_{\sss (m)}(V_1)]\big)^k}.
\end{split}
\end{equation} and an identical expression holds by replacing $\sigma(\mathbf{p})\mathscr{T}_m^{\mathbf{p},\star}$ by $\frac{\sigma(\mathbf{p})}{B_m+1}\bar{\mathscr{T}}_m^{\mathbf{p}}$.
Denote the expectation conditionally on $\mathscr{T}_m^{\mathbf{p}}$ and $N_{\sss (m)}$ by $\E_{\mathbf{p}}$ and simply write $\bar{\mathscr{T}}_m^{\mathbf{p},{\sss \mathrm{s}}}$, $\mathscr{T}_m^{\mathbf{p},{\sss \mathrm{s}}}$ for $\frac{\sigma(\mathbf{p})}{B_m+1}\bar{\mathscr{T}}_m^{\mathbf{p}}(\mathbf{V}_m)$,  $\sigma(\mathbf{p})\mathscr{T}_m^{\mathbf{p}}(\mathbf{V}_m)$ respectively. Now, \eqref{eq:blobvsG-reduction2} simplifies to 
\begin{eq}\label{eq:blobvsG-reduction3}
 &\Big|\E\big[\Phi\big(\tilde{\cG}_m^{\sss \mathrm{bl},\mathrm{s}}\big)\1\{N_{\sss(m)}^\star = k\}\big]-\E\big[\Phi(\tilde{\cG}_m^{\sss\mathrm{s}})\1\{N_{\sss(m)}^\star = k\}\big]\Big|\\
 &\hspace{.6cm}\leq \frac{1}{\expt{L(\mathscr{T}_m^{\mathbf{p}})}}\bigg|\E\bigg[\frac{\E_{\mathbf{p}}\big[\prod_{i=1}^k\mathfrak{G}_{\sss (m)}(V_i)\mathsf{g}_{\phi}^{k}\big(\bar{\mathscr{T}}_m^{\mathbf{p},{\sss \mathrm{s}}} \big)\big]}{\big(\E_{\mathbf{p}}[\mathfrak{G}_{\sss (m)}(V_1)]\big)^k} L(\mathscr{T}_m^{\mathbf{p}})\ind{N_{\sss (m)}=k}\bigg] \\
 & \hspace{1cm} - \E\bigg[\frac{\E_{\mathbf{p}}\big[\prod_{i=1}^k\mathfrak{G}_{\sss (m)}(V_i)\mathsf{g}_{\phi}^{k}\big(\mathscr{T}_m^{\mathbf{p},{\sss \mathrm{s}}}\big) \big]}{\big(\E_{\mathbf{p}}[\mathfrak{G}_{\sss (m)}(V_1)]\big)^k}L(\mathscr{T}_m^{\mathbf{p}})\ind{N_{\sss (m)}=k}\bigg] \bigg|.
\end{eq}
\begin{proposition}\label{prop:g-blob-close} As $m\to\infty$,
 $\big|\mathsf{g}_{\phi}^{k}\big(\bar{\mathscr{T}}_m^{\mathbf{p},{\sss \mathrm{s}}}\big)-\mathsf{g}_{\phi}^{k}\big(\mathscr{T}_m^{\mathbf{p},{\sss \mathrm{s}}} \big) \big|\pto 0.$
\end{proposition}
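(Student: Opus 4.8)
The strategy is to compare the pairwise graph distances that feed into $\mathsf{g}_\phi^k$ for the blob tree and the plain tree, show they are asymptotically equal under the two rescalings, and pass to the bounded continuous $\phi$. Since $0\le\big|\mathsf{g}_\phi^k(\bar{\mathscr{T}}_m^{\mathbf{p},{\sss\mathrm s}})-\mathsf{g}_\phi^k(\mathscr{T}_m^{\mathbf{p},{\sss\mathrm s}})\big|\le 2\|\phi\|_\infty$, convergence in probability is equivalent to $L^1$ convergence; so it is enough to couple the two constructions --- the same spanned subtree $\mathbf{t}=\mathscr{T}_m^{\mathbf{p}}(\mathbf{V}_m)$, the same surplus attachment points $i(s)$, and in addition the junction points $\mathbf{X}$ --- and to show that, over the joint randomness, the matrices $D=\big(\sigmap\,\dst_{G(\mathbf{t})}(i+,j+)\big)_{k+1\le i,j\le k+l}$ and $\bar D=\big(\tfrac{\sigmap}{B_m+1}\bar\dst_{\bar G(\bar{\mathbf{t}})}(X_{i+},X_{j+})\big)_{k+1\le i,j\le k+l}$ satisfy $\bar D-D\pto 0$ entrywise. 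The law of $D$ is tight, its entries being at most $\sigmap\,\dst_{\mathbf{t}}(i+,j+)\le\sigmap R_{k+l}=\OP(1)$ by the birthday estimate $\E[R_{k+l}]=O(1/\sigmap)$ (cf.\ \cite{CP99}); combined with $\bar D-D\pto 0$ and $\phi$ bounded continuous this yields $\phi(\bar D)-\phi(D)\pto 0$, which boundedness upgrades to $L^1$. Throughout we work on $\{\mathscr{T}_m^{\mathbf{p}}(\mathbf{V}_m)\neq\partial\}$, which has probability $\to 1$ (cf.\ \cite{CP99,BHS15}).

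\emph{Route-by-route comparison.} Fix leaves $u=i+,\ v=j+$. As $G(\mathbf{t})$ is a tree plus $k$ extra edges with $k$ fixed, $\dst_{G(\mathbf{t})}(u,v)$ is the minimum over a bounded number of simple routes $R$ of the edge-count $\ell(R)$, while $\bar\dst_{\bar G(\bar{\mathbf{t}})}(X_u,X_v)$ is the minimum over the \emph{same} routes of the corresponding blob-graph lengths, so $\|\bar D-D\|_\infty\le\max_{u,v,R}\big|\tfrac{\sigmap}{B_m+1}\bar\ell(R)-\sigmap\,\ell(R)\big|$ and it suffices to fix one $R$. Writing $u=w_0,w_1,\dots,w_L=v$ for the consecutive $\vp$-tree vertices on $R$ (so $\ell(R)=L$) and $Z_r:=\dst_{w_r}(X_{w_r,w_{r-1}},X_{w_r,w_{r+1}})$ for the blob traversals, one has $\bar\ell(R)=L+\sum_{r=1}^{L-1}Z_r+\varrho$, where the error $\varrho$ collects the two endpoint traversals inside $M_u,M_v$ and the $O(1)$ extra traversals created by surplus edges on $R$, so $|\varrho|\le C\Delta_{\max}$ with $C$ absolute. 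Since $\tfrac{\sigmap}{B_m+1}\Delta_{\max}\to 0$ by Assumption~\ref{assm:blob-diameter}, the contribution of $\varrho$ vanishes, and the claim reduces to
\[
 \frac{\sigmap}{B_m+1}\,\big|{\textstyle\sum_{r=1}^{L-1}Z_r-B_mL}\big|\pto 0 .
\]

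\emph{Concentration.} Write $\sum_rZ_r-B_mL=\big(\sum_rZ_r-\sum_ru_{w_r}\big)+\big(\sum_ru_{w_r}-B_mL\big)$. Conditionally on $\mathbf{t}$ the $Z_r$ are independent with $\E[Z_r\mid\mathbf{t}]=u_{w_r}$ and $0\le Z_r\le\Delta_{w_r}\le\Delta_{\max}$, so $\mathrm{Var}\big(\sum_rZ_r\mid\mathbf{t}\big)\le\Delta_{\max}\sum_ru_{w_r}$; identifying $\mathbf{t}$ with the first $R_{k+l}$ steps of the birthday construction via \eqref{eqn:p-tree-joint-sample} and applying Wald's identity --- using $\E[R_{k+l}]=O(1/\sigmap)$, $\E[u_{Y_j}]=B_m$ and $u_i^2\le\Delta_{\max}u_i$ --- yields $\E\big[\sum_ru_{w_r}\big]\le\E\big[\sum_{j<R_{k+l}}u_{Y_j}\big]=O\big((B_m+1)/\sigmap\big)$. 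For the second term, conditionally on the shape and edge-lengths of $\mathbf{t}$ the interior vertices along each of its $O(1)$ edges are i.i.d.\ $\vp$-samples, so the conditional mean of $\sum_ru_{w_r}$ is $B_mL$ (up to an $O(B_m)$ discrepancy from the $O(1)$ special vertices) and its conditional variance is $O(\Delta_{\max}B_mL)$. Combining, $\E\big[(\sum_rZ_r-B_mL)^2\big]=O\big(\Delta_{\max}(B_m+1)/\sigmap\big)$, and Chebyshev gives
\[
 \PR\Big(\tfrac{\sigmap}{B_m+1}\big|{\textstyle\sum_rZ_r-B_mL}\big|>\varepsilon\Big)=O\Big(\frac{\sigmap^{2}\,\Delta_{\max}(B_m+1)}{\varepsilon^{2}(B_m+1)^{2}\,\sigmap}\Big)=O\Big(\frac{1}{\varepsilon^{2}}\cdot\frac{\sigmap\Delta_{\max}}{B_m+1}\Big)\longrightarrow 0
\]
by Assumption~\ref{assm:blob-diameter}. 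This establishes $\bar D-D\pto 0$ and finishes the proof.

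\emph{Main obstacle.} The step needing the most care is the distributional input in the concentration paragraph: extracting from the birthday construction of \cite{CP99} (and the $\vp$-tree/ICRT analysis in \cite{BHS15}) that, conditionally on the shape and edge-lengths of the spanned subtree, the interior vertices along each edge are i.i.d.\ $\vp$-samples --- or at least that $\sum_ru_{w_r}$ then has conditional mean $B_mL$ and variance $O(\Delta_{\max}B_mL)$ --- together with the bookkeeping that turns a geodesic of the blob graph into one of the $O(1)$ simple routes of $G(\mathbf{t})$ carrying only a bounded number of endpoint and surplus-edge blob traversals. The remaining ingredients are soft (tightness of $D$, boundedness of $\phi$, the $\partial$-event) or the elementary algebra built on $\sigmap\Delta_{\max}/(B_m+1)\to 0$.
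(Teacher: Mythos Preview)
Your approach is essentially the same as the paper's: couple the two constructions on the same $\vp$-tree, the same sampled leaves and surplus targets $i(s)$, and the same junction points; reduce to finitely many routes (a tree plus $k$ shortcuts); and show for each route that the rescaled blob-length matches the rescaled edge-count via a concentration/martingale bound driven by the birthday construction, with the leftover terms controlled by $\sigmap\Delta_{\max}/(B_m+1)\to 0$. The paper organizes this through an intermediate Lemma~\ref{lem:path:GHP-conv} (GHP closeness of each root-to-leaf path, with its measure), and then composes routes out of pieces of such paths; you skip the GHP packaging and compare route lengths directly. Both arguments land on the same estimate $\PR\big(\sup_k|\hat Q_k|>\varepsilon\big)\le C\,\sigmap\Delta_{\max}/(B_m+1)$ via Doob/Chebyshev.

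One correction to your ``main obstacle'' paragraph: the assertion that, conditionally on the shape and edge-lengths of the spanned subtree, the interior vertices along each edge are i.i.d.\ $\vp$-samples is not literally true --- the birthday construction gives i.i.d.\ $\vp$-samples $Y_0,Y_1,\dots$, but conditioning on a prescribed shape and edge-lengths is conditioning on a distinctness pattern (no repeats along segments, a repeat at each branchpoint), and this tilts the law. The paper avoids this by never conditioning: along a single root-to-leaf path it works with the \emph{unconditioned} process $\hat Q_k=\tfrac{\sigmap}{B_m+1}\sum_{i\le k}(B_m-\xi_{J_i})$ with $J_i$ i.i.d.\ $\vp$, applies Doob's maximal inequality to $\sup_{k\le T/\sigmap}|\hat Q_k|$, and then uses tightness of $\sigmap R^*$. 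This also makes your two-term split $\sum_r Z_r-B_mL=(\sum_r Z_r-\sum_r u_{w_r})+(\sum_r u_{w_r}-B_mL)$ unnecessary: since $\xi_{J_i}$ already averages over both the vertex identity and the two junction points, the paper handles the whole difference in one martingale. To make your route-by-route argument rigorous, decompose every route into $O(1)$ pieces of root-to-leaf paths (plus $O(1)$ shortcut edges), apply the one-path maximal bound to each piece (each piece contributes an increment $\hat Q_b-\hat Q_a$ controlled by $2\sup_k|\hat Q_k|$), and sum. With this replacement your proof goes through and matches the paper's.
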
 
We first show that it is enough to prove Proposition~\ref{prop:g-blob-close} to complete the proof of~\eqref{eq:blobvsG-reduction3}, but before that we first need to state some results. 
The proofs of Facts~\ref{fact:unif-int} and \ref{fact:2} below  are elementary and we omit the proof here.
The proof of Proposition~\ref{prop:g-blob-close} is deferred to Section~\ref{sec:proof-prop-blob-G}.
\begin{lemma}[{\cite[Proposition 4.8, Theorem 4.15]{BHS15}}] \label{lem:uni-int-tilt} $(L(\mathscr{T}_m^{\mathbf{p}}))_{m\geq 1}$ is uniformly integrable. Also, for each $k\geq 0$, the quantity
	\begin{align}\label{eqn:jt-convg}
&\bigg(\Ep\bigg[\frac{\dA_{\sss(m)}(V_1)}{\sigma(\vp)}\bigg],  ~\Ep\bigg[\bigg(\prod_{i=1}^k \frac{\dA_{\sss(m)}(V_i)}{\sigma(\vp)}\bigg) \mathsf{g}_\phi^{k}\big(\mathscr{T}_m^{\mathbf{p},{\sss \mathrm{s}}}\big)  \bigg]\bigg)
	\end{align} converges in distribution to some random variable.
\end{lemma}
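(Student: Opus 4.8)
The plan is to observe that both assertions are, up to the translation of notation, restatements of results in \cite{BHS15}: the uniform integrability of $(L(\mathscr{T}^{\mathbf p}_m))_{m\ge 1}$ is \cite[Proposition~4.8]{BHS15}, and the joint convergence in distribution of the pair in \eqref{eqn:jt-convg} is established in the course of proving \cite[Theorem~4.15]{BHS15}. Both are proved there under Assumption~\ref{assm:BHS15} alone, which we assume throughout Section~\ref{sec:univ-thm}, so the proof reduces to this observation together with the remark that the functional $\mathsf g^k_\phi$ associated with a general bounded continuous $\phi$ is handled in exactly the same way as the pairwise-distance functional of \cite[Theorem~4.15]{BHS15}. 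For completeness I would record the structure of the two arguments.

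For the uniform integrability, I would factor $L(\vt)=L_1(\vt)\,L_2(\vt)$, where $L_1(\vt)=\prod_{(k,\ell)\in\mathrm E(\vt)}\big[(\mathrm e^{ap_kp_\ell}-1)/(ap_kp_\ell)\big]$ and $L_2(\vt)=\exp\big(\sum_{(k,\ell)\in\sP(\vt)}ap_kp_\ell\big)$. Using $(\mathrm e^x-1)/x\le \mathrm e^x$ for $x>0$ and, after rooting $\vt$, the elementary bound $\sum_{(k,\ell)\in\mathrm E(\vt)}p_kp_\ell=\sum_{v\ne\rho}p_v\,p_{\mathrm{par}(v)}\le\max_ip_i\le\sigma(\mathbf p)$, one gets $L_1(\vt)\le\exp(a\sigma(\mathbf p))$, a bound that is deterministic and uniform in $m$ and $\vt$ since $a\sigma(\mathbf p)\to\gamma<\infty$ by Assumption~\ref{assm:BHS15}. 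For $L_2$, comparing \eqref{eqn:amv-def} and \eqref{eqn:Lambda-tree} shows $\sum_{(k,\ell)\in\sP(\vt)}ap_kp_\ell=\Lambda_{\sss(m)}(\vt)$, so the uniform integrability of $L(\mathscr{T}^{\mathbf p}_m)$ is equivalent to a uniform-in-$m$ bound on $\E_{\ord}\big[\mathrm e^{c\,\Lambda_{\sss(m)}(\mathscr{T}^{\mathbf p}_m)}\big]$ for some $c>1$; this exponential-moment estimate is the content of \cite[Proposition~4.8]{BHS15} and is obtained there via the birthday construction of $\mathscr{T}^{\mathbf p}_m$ (Theorem~\ref{thm:pit-cam-birthday}).

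For the convergence of \eqref{eqn:jt-convg}, the plan is: (i) invoke the convergence of the (ordered) $\vp$-tree $\mathscr{T}^{\mathbf p}_m$, rescaled by $\sigma(\mathbf p)$, to the ordered ICRT $(\icrt,\mvU)$ under Assumption~\ref{assm:BHS15} (\cite{AP00a,BHS15}); (ii) conclude that, jointly, the spanning subtree $\sigma(\mathbf p)\,\mathscr{T}^{\mathbf p}_m(\mathbf V_m)$, equipped with its leaf weights $\dA_{\sss(m)}(V^{\sss(m)}_i)/\sigma(\mathbf p)$ and its root-to-leaf measures, converges in $T^{*m}_{\sss I,k+l}$ to the spanning tree of the ICRT carrying the analogues $\dA_{\sss(\infty)}$ and the measures $Q^{\sss(\infty)}$ of \eqref{eqn:right-end-prob-inft}, using that $\mathbf V_m$ is i.i.d.\ $\mathbf p$ and that $\PR(\mathscr{T}^{\mathbf p}_m(\mathbf V_m)=\partial)\to0$ by \cite[(4.30)]{BHS15}; (iii) note that $\mathsf g^k_\phi$ is a bounded (by $\|\phi\|_\infty$) and continuous functional on $T^{*m}_{\sss I,k+l}$ — continuity because, once the shape is fixed, graph distances in $G(\vt)$ depend continuously on the edge lengths while the surplus endpoints $i(s)$ are sampled from the root-to-leaf measures, which converge weakly, and $\phi$ is bounded continuous; and (iv) pass the conditional expectation $\E_{\mathbf p}$ through the limit.

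Step (iv) is the main obstacle: $\dA_{\sss(m)}(V^{\sss(m)}_1)/\sigma(\mathbf p)$ is not uniformly bounded, so weak convergence of the trees does not by itself give convergence of the expectations in \eqref{eqn:jt-convg}; one needs the polynomial/exponential moment control on $\dA_{\sss(m)}$ (equivalently on $\Lambda_{\sss(m)}$) supplied by \cite[Proposition~4.8]{BHS15}, whose proof via the birthday construction is where the real work of this lemma lies. Since we are content to cite \cite[Proposition~4.8 and Theorem~4.15]{BHS15}, the proof here consists only of verifying Assumption~\ref{assm:BHS15} in our setting — already imposed throughout this section — together with the elementary remark that the distance functional there extends to $\mathsf g^k_\phi$.
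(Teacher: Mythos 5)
Your proposal is correct and takes essentially the same approach as the paper: the paper presents this lemma as a bare citation to \cite[Proposition~4.8, Theorem~4.15]{BHS15} with no further argument, and your proof likewise reduces to those citations, supplementing them with a (correct) sketch of the factorization $L=L_1 L_2$, the identification $L_2=\mathrm{e}^{\Lambda_{\sss(m)}}$, the resulting exponential-moment estimate needed for uniform integrability, and the tree-convergence route to \eqref{eqn:jt-convg}.
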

\begin{fact}\label{fact:unif-int} Consider three sequences of random variables  $(X_{m})_{m\geq 1}$, $(Y_m)_{m\geq 1}$ and $(Y_m')_{m\geq 1}$ such that (i) $(X_m)_{m\geq 1}$ is uniformly integrable, (ii) $(Y_m)_{m\geq 1}$ and $(Y_m')_{m\geq 1}$ are almost surely bounded and (iii) $Y_m-Y_m'\xrightarrow{\sss \PR} 0$. Then, as $m\to\infty$, $\expt{|X_mY_m-X_mY_m'|}\to 0$.
\end{fact}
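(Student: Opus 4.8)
The plan is a standard truncation argument that controls the contribution of large values of $X_m$, combined with the elementary fact that convergence in probability together with a uniform bound upgrades to $L^1$-convergence. Write $Z_m := Y_m - Y_m'$, so that $\expt{|X_mY_m - X_mY_m'|} = \E[|X_m|\,|Z_m|]$. I read hypothesis (ii) as furnishing a deterministic constant $C$ with $|Y_m|\leq C$ and $|Y_m'|\leq C$ almost surely for all $m$ (this is the form in which the fact is applied, where the bounded quantities are $\mathsf{g}_\phi^k$-values, bounded by $\|\phi\|_\infty$); hence $|Z_m|\leq 2C$ for all $m$.

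For a threshold $K>0$, I would split
\[
\E\big[|X_m|\,|Z_m|\big] = \E\big[|X_m|\,|Z_m|\,\ind{|X_m|\leq K}\big] + \E\big[|X_m|\,|Z_m|\,\ind{|X_m|>K}\big] =: A_m(K)+B_m(K).
\]
The tail term is handled by uniform integrability: $B_m(K)\leq 2C\,\E\big[|X_m|\ind{|X_m|>K}\big]$, and $\sup_{m\geq1}\E\big[|X_m|\ind{|X_m|>K}\big]\to 0$ as $K\to\infty$. So, given $\varepsilon>0$, I first fix $K$ large enough that $\sup_{m\geq1}B_m(K)<\varepsilon/2$. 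With this $K$ frozen, the truncated term satisfies $A_m(K)\leq K\,\E[|Z_m|]$, and since $|Z_m|\leq 2C$ and $Z_m\pto 0$ we have $\E[|Z_m|]\to 0$: for any $\delta>0$, $\E[|Z_m|]\leq \delta + 2C\,\PR(|Z_m|>\delta)$, so letting $m\to\infty$ and then $\delta\downarrow0$ gives the conclusion. Hence there is $m_0$ with $A_m(K)<\varepsilon/2$ for all $m\geq m_0$, and combining the two estimates yields $\expt{|X_mY_m-X_mY_m'|}<\varepsilon$ for $m\geq m_0$. Since $\varepsilon>0$ is arbitrary, the fact follows.

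There is no substantive obstacle here; the statement is purely measure-theoretic. The two points meriting a line of care are that ``almost surely bounded'' must be invoked as a bound uniform in $m$ — a bound degrading with $m$ would spoil the estimate $A_m(K)\leq K\,\E[|Z_m|]$ once $K$ has been chosen — and that hypothesis (iii) enters only through the dominated/bounded-convergence step $\E[|Z_m|]\to 0$, for which the uniform bound from (ii) is exactly what is required.
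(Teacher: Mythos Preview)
Your proof is correct; the paper itself omits the proof as elementary, so there is nothing to compare against beyond noting that your truncation argument is exactly the standard route one would expect. Your remark that hypothesis~(ii) must be read as a bound uniform in $m$ is well taken and matches how the fact is actually applied in the paper (with $\|\phi\|_\infty$ as the bound).
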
 

\begin{fact}\label{fact:2} Suppose that $(X_m)_{m\geq 1}$ is a sequence of random variables such that for every $m\geq 1$, there exists a further sequence $(X_{m,r})_{r\geq 1}$ satisfying (i) for each fixed $r\geq 1$, $X_{m,r}\xrightarrow{\sss \PR} 0$ as $m\to\infty$, and (ii) $\lim_{r\to\infty}\limsup_{m\to\infty}\PR(|X_m-X_{m,r}|>\varepsilon)=0$ for any $\varepsilon>0$. Then $X_m \xrightarrow{\sss \PR} 0$ as $m\to\infty$.
\end{fact}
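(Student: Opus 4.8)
The plan is to prove this by the usual interchange‑of‑limits (triangle‑inequality) argument, made precise. Fix $\varepsilon>0$. For every $m$ and every $r$, the elementary bound $|X_m|\le |X_m-X_{m,r}|+|X_{m,r}|$ and a union bound give
\[
 \PR\big(|X_m|>\varepsilon\big)\ \le\ \PR\big(|X_m-X_{m,r}|>\varepsilon/2\big)+\PR\big(|X_{m,r}|>\varepsilon/2\big).
\]

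First I would keep $r$ fixed and send $m\to\infty$. By hypothesis (i) the second term on the right tends to $0$ (for each fixed truncation level $r$), so that
\[
 \limsup_{m\to\infty}\PR\big(|X_m|>\varepsilon\big)\ \le\ \limsup_{m\to\infty}\PR\big(|X_m-X_{m,r}|>\varepsilon/2\big).
\]
The left‑hand side does not depend on $r$, so I would then let $r\to\infty$ and invoke hypothesis (ii) (applied with $\varepsilon/2$ in place of $\varepsilon$) to get
\[
 \limsup_{m\to\infty}\PR\big(|X_m|>\varepsilon\big)\ \le\ \lim_{r\to\infty}\limsup_{m\to\infty}\PR\big(|X_m-X_{m,r}|>\varepsilon/2\big)=0.
\]
Since $\varepsilon>0$ was arbitrary, $\PR(|X_m|>\varepsilon)\to 0$ for every $\varepsilon>0$, i.e. $X_m\pto 0$ as $m\to\infty$, which is the claim.

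There is no real obstacle here; the single point that requires care is the \emph{order} in which the two limits are taken. One must first send $m\to\infty$ at a fixed truncation level $r$, so that (i) can be used to discard the $X_{m,r}$ term, and only afterwards send $r\to\infty$, exploiting the fact that the control in (ii) is uniform in $m$; taking the limits in the other order would be useless. In the application to Proposition~\ref{prop:g-blob-close} the $X_{m,r}$ are versions of $X_m$ in which the relevant $\mathbf p$‑tree functionals are truncated at level $r$, and hypotheses (i) and (ii) are verified for that specific approximation at the place where the Fact is used.
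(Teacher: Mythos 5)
Your argument is the right one---in fact, the interchange-of-limits bound
\[
 \PR\big(|X_m|>\varepsilon\big)\ \le\ \PR\big(|X_m-X_{m,r}|>\varepsilon/2\big)+\PR\big(|X_{m,r}|>\varepsilon/2\big)
\]
is the only sensible way to prove a statement of this form---but it quietly replaces the printed hypothesis~(i) with a different one. As written, (i) says: \emph{for each fixed $m$, $X_{m,r}\pto 0$ as $r\to\infty$.} What you actually invoke, in order to discard the second term after taking $\limsup_{m\to\infty}$ at a fixed truncation level $r$, is: \emph{for each fixed $r$, $X_{m,r}\pto 0$ as $m\to\infty$.} These are not equivalent, and the printed version is too weak to support that step. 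Indeed, with (i) as printed the Fact is false: take $X_m\equiv 1$ deterministically and $X_{m,r}=\ind{r\le m}$. For each fixed $m$ one has $X_{m,r}=0$ as soon as $r>m$, so (i) holds; and $|X_m-X_{m,r}|=\ind{r>m}$ vanishes for all $m\ge r$, so $\limsup_{m\to\infty}\PR(|X_m-X_{m,r}|>\varepsilon)=0$ for every fixed $r$ and (ii) holds as well; yet $X_m\equiv 1$ certainly does not tend to $0$ in probability.

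So the printed hypothesis (i) has the roles of $m$ and $r$ interchanged; the intended condition is clearly ``for each fixed $r$, $X_{m,r}\pto 0$ as $m\to\infty$,'' and this is confirmed by how the Fact is used: the paper verifies (i) by citing Proposition~\ref{prop:g-blob-close}, which is a limit in $m$ (not in $r$) for the quantities whose empirical average defines $X_{m,r}$. Under that corrected reading your proof is complete and correct. In a blind reading, though, you should flag the discrepancy explicitly rather than silently apply hypothesis (i) as if it said something other than what is written: your sentence ``by hypothesis (i) the second term on the right tends to $0$ (for each fixed truncation level $r$)'' is not a consequence of (i) as literally stated, and a reader checking each step against the stated hypotheses would be stuck there.
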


\begin{proof}[Proof of \eqref{eq:blobvsG-reduction3} from Proposition~\ref{prop:g-blob-close}]
We apply Fact~\ref{fact:unif-int} with $X_m =L(\mathscr{T}_m^{\mathbf{p}}) \ind{N_{\sss (m)} = k}$, which is uniformly integrable by  Lemma~\ref{lem:uni-int-tilt}.
Thus it is enough to show that 
\begin{eq}
\bigg| \frac{\E_{\mathbf{p}}\big[\prod_{i=1}^k\mathfrak{G}_{\sss (m)}(V_i)\mathsf{g}_{\phi}^{k}\big(\bar{\mathscr{T}}_m^{\mathbf{p},{\sss \mathrm{s}}} \big)\big]}{\big(\E_{\mathbf{p}}[\mathfrak{G}_{\sss (m)}(V_1)]\big)^k}-\frac{\E_{\mathbf{p}}\big[\prod_{i=1}^k\mathfrak{G}_{\sss (m)}(V_i)\mathsf{g}_{\phi}^{k}\big(\mathscr{T}_m^{\mathbf{p},{\sss \mathrm{s}}}\big) \big]}{\big(\E_{\mathbf{p}}[\mathfrak{G}_{\sss (m)}(V_1)]\big)^k}\bigg| \pto 0.
\end{eq}
Applying Lemma~\ref{lem:uni-int-tilt} again, the above reduces to showing
\begin{equation}\label{eq:blobvsG-reduction4}
\Ep\bigg[\bigg(\prod_{i=1}^k \frac{\dA_{\sss(m)}(V_i)}{\sigma(\vp)}\bigg) \big(\mathsf{g}_{\phi}^{k}\big(\bar{\mathscr{T}}_m^{\mathbf{p},{\sss \mathrm{s}}}\big)-\mathsf{g}_{\phi}^{k}\big(\mathscr{T}_m^{\mathbf{p},{\sss \mathrm{s}}} \big) \big)  \bigg] \pto 0.
\end{equation}
We now apply Fact~\ref{fact:2}. Let $Y_m$ denote the term inside the expectation in \eqref{eq:blobvsG-reduction4}.
Further, sample the set of leaves $\mathbf{V}_m$ independently $r$ times on the same tree $\mathscr{T}_m^\vp$ and let $Y_{m}^i$ denote the observed value of $Y_m$ in the $i$-th sample. 
Now, let $X_m = \E_{\mathbf{p}} [Y_m]$ and $X_{m,r} = r^{-1} \sum_{i=1}^r Y_m^i $. 
First, to verify condition (ii), note that $\Ep(X_{m,r}) = X_m$ and therefore Chebyshev's inequality yields
\begin{eq}\label{chebyshev-X-m}
\prob{|X_m-X_{m,r}|>\varepsilon}&\leq \frac{\E[X_m^2]}{\varepsilon^2r} \leq \frac{4\|\phi\|_{\infty}^2}{\varepsilon^2r}\frac{\expt{\dA_{\sss (m)}(V_1)^{2k}}}{\sigmap^{2k}}.
\end{eq}
By \eqref{eqn:ant-def-new}, $\|\dA_{\sss (m)}\|_{\infty}\leq \|A_{\sss (m)}\|_{\infty}$, and thus an application of  \cite[Lemma 4.9, (4.12)]{BHS15} yields that for any $x\geq \e$ and $m\geq 1$,
\begin{eq}\label{g-m-infty-bound}
\PR(\|\dA_{\sss (m)}\|_{\infty}\geq x \sigmap) \leq \e^{-Cx \log(\log x)},
\end{eq}
 where $C>0$ is a constant.
Combining \eqref{chebyshev-X-m} and \eqref{g-m-infty-bound}, the condition (ii) is verified. 
Next, condition (i) in Fact~\ref{fact:2} is satisfied by Proposition~\ref{prop:g-blob-close} and \eqref{g-m-infty-bound}. 
An application of Fact~\ref{fact:2} 
concludes the proof of \eqref{eq:blobvsG-reduction4}, and hence the proof of \eqref{eq:blobvsG-reduction3} follows. 
\end{proof}

\subsection{Comparing distances with and without blobs: Proof of Proposition~\ref{prop:g-blob-close}}\label{sec:proof-prop-blob-G}
In this section, we will use the notion of Gromov-Hausdorff-Prokhorov topology on the collection of measured metric spaces $(X,\dst,\mu)$, where $(X,\dst)$ is a compact metric space and $\mu$ is a probability measure on corresponding Borel sigma algebra. 
Without re-defining all the required notions, we refer the reader to \cite[Section~2.1.1]{BHS15}.
Let $\mathrm{d}_{\sss \mathrm{GHP}}$ denote the distances in this topology.
 We further recall the notation $\mathrm{dis}$ for distortion and $D(\mu;\mu_1,\mu_2)$ for discrepancy of measures as defined in \cite[Section~2.1.1]{BHS15}.
Denote the root of $\mathscr{T}_m^{\vp}(\mathbf{V}_m)$ by $0+$ and the $j$th leaf by $j+$.
Let $\mathscr{M}_j^m:= \big([0+,j+],\dst,\nu_j\big)$ be the (random) measured metric space, where $\nu_j$ is any probability measure on the Borel sigma-algebra of $\{[0+,j+],\dst\}$.
 In particular, we can take $\nu_j$'s to be the corresponding  root-to-leaf measures. 
 Let $\vmbar{12}{\mathscr{M}}_j^m:= \{ \bar{M}_j, \bar{\dst}, \bar{\nu}_j \}$ be the measured metric space   with $\bar{M}_j:=\sqcup_{i\in [0+,j+]}M_i$ and the induced root-to-leaf measure $\bar{\nu}_j(A)=\sum_{i\in [0+,j+]}\nu_j(\{i\})\mu_i(A\cap M_i)$. 
For convenience, we have suppressed the dependence on $\mathscr{T}_m^{\vp}(\mathbf{V}_m)$ in the notation. Note that 
 $\mathscr{M}_j^m$ is coupled to $\vmbar{12}{\mathscr{M}}_j^m$ in the obvious way that the superstructure of $\vmbar{12}{\mathscr{M}}_j^m$ is given by $\mathscr{M}_j^m$.
 We need the following lemma to prove Proposition~\ref{prop:g-blob-close}:
\begin{lemma}\label{lem:path:GHP-conv}For $j\geq 1$, as $m\to\infty$, $\dGHP \big(\sigma(\mathbf{p})\mathscr{M}_j^m, \frac{\sigma(\mathbf{p})}{B_m+1}\vmbar{12}{\mathscr{M}}_j^m \big) \pto 0.$
\end{lemma}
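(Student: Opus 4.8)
The plan is to prove the convergence by constructing, on the common probability space of the lemma, a single correspondence and a single coupling of the two root-to-leaf measures, and checking that the correspondence has distortion tending to $0$ in probability while the coupling is supported on the correspondence and has zero measure discrepancy; by the standard estimate for $\dGHP$ in terms of correspondences (see \cite[Section~2.1.1]{BHS15}) this bounds $\dGHP$ by half the distortion. Conditioning on the $\vp$-tree $\mathscr{T}_m^{\vp}(\mathbf{V}_m)$, the path $[0+,j+]$ becomes a fixed sequence of vertices $0+=w_0,w_1,\dots,w_{L_m}=j+$ and $\nu_j$ a fixed probability measure on these vertices, while the junction points $(X_{ik})$ remain independent with $X_{ik}\sim\mu_i$. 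Take the correspondence $R=\{(x,z):x\in[0+,j+],\ z\in M_{w(x)}\}$, where $w(x)$ is the nearest path vertex to $x$, and the coupling obtained by sampling $\ell\sim\nu_j$ and then $z\sim\mu_\ell$. Since $\bar\nu_j(\,\cdot\cap M_\ell)=\nu_j(\{\ell\})\mu_\ell(\,\cdot\cap M_\ell)$, this coupling has second marginal $\bar\nu_j$, is supported on $R$, and has zero discrepancy, so it suffices to show $\dis(R)\pto0$.

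Fix path vertices $w_a=i$ and $w_b=i'$ with $a\le b$. Because $\vmbar{12}{\mathscr{M}}_j^m$ is built on a tree, the distance in it from any $z\in M_i$ to any $z'\in M_{i'}$ equals $(b-a)+\sum_{a<\ell<b}\xi_\ell+E$, where $\xi_\ell$ is the distance in $M_{w_\ell}$ between the two junction points the path uses there and $0\le E\le2\Delta_{\max}$ accounts for the partial traversals of $M_i$ and $M_{i'}$. Hence
\[
 \sigma(\mathbf{p})\,\dst(i,i')-\tfrac{\sigma(\mathbf{p})}{B_m+1}\,\bar\dst(z,z')
 =\tfrac{\sigma(\mathbf{p})}{B_m+1}\Big(B_m(b-a)-\sum_{a<\ell<b}\xi_\ell-E\Big),
\]
and, absorbing a harmless $O(\sigma(\mathbf{p}))$ term for points interior to edges, $\dis(R)\pto0$ reduces to
\[
 \frac{\sigma(\mathbf{p})\Delta_{\max}}{B_m+1}\to0
 \qquad\text{and}\qquad
 \frac{\sigma(\mathbf{p})}{B_m+1}\max_{0\le a\le b\le L_m}\Big|B_m(b-a)-\sum_{a<\ell<b}\xi_\ell\Big|\pto0.
\]
The first limit is precisely Assumption~\ref{assm:blob-diameter}.

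For the second limit, put $u_i=\E[\dst_i(X_i,X_i')]$ and write $B_m(b-a)-\sum_{a<\ell<b}\xi_\ell=B_m+\sum_{a<\ell<b}(B_m-u_{w_\ell})+\sum_{a<\ell<b}(u_{w_\ell}-\xi_\ell)$; since $\sigma(\mathbf{p})B_m/(B_m+1)\le\sigma(\mathbf{p})\to0$, it is enough to bound the running maxima of $M_k:=\sum_{\ell<k}(B_m-u_{w_\ell})$ and $N_k:=\sum_{\ell<k}(\xi_\ell-u_{w_\ell})$, each multiplied by $\sigma(\mathbf{p})/(B_m+1)$. By Theorem~\ref{thm:pit-cam-birthday} and Remark~\ref{rem:p-tree-dist}, $[0+,j+]$ is distributed as the simple path $Y_0-Y_1-\dots-Y_{R_1-1}$ determined by i.i.d.\ $\vp$-variables $(Y_\ell)$ and the first collision time $R_1$; in particular $L_m\eqd R_1-1$, and the standard birthday estimate for $\vp$-trees under Assumption~\ref{assm:BHS15}(i) gives $\sigma(\mathbf{p})\E[R_1]=O(1)$ (so $\sigma(\mathbf{p})L_m$ is tight). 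Since $\E[B_m-u_{Y_0}]=B_m-\sum_ip_iu_i=0$, $M_{k\wedge R_1}$ is a mean-zero martingale, and Doob's $L^2$ inequality with optional stopping gives $\E[\max_{k\le R_1}M_k^2]\le C\,\E[R_1]\sum_ip_iu_i^2\le C\,\E[R_1]\Delta_{\max}B_m$, using $\sum_ip_iu_i^2\le\Delta_{\max}\sum_ip_iu_i$. Conditionally on the tree, the $(\xi_\ell)$ are independent with $\E[\xi_\ell]=u_{w_\ell}$ and $\E[\xi_\ell^2]\le\Delta_{w_\ell}u_{w_\ell}\le\Delta_{\max}u_{w_\ell}$, so Doob gives $\E[\max_kN_k^2]\le C\,\Delta_{\max}\,\E\!\big[\textstyle\sum_{\ell<R_1}u_{Y_\ell}\big]\le C\,\Delta_{\max}B_m\,\E[R_1]$. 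Multiplying by $(\sigma(\mathbf{p})/(B_m+1))^2$, both bounds are at most $C\,\big(\sigma(\mathbf{p})\E[R_1]\big)\cdot\tfrac{\sigma(\mathbf{p})\Delta_{\max}}{B_m+1}\to0$, which proves the second limit; hence $\dis(R)\pto0$ and the lemma follows.

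The heart of the argument --- and the only place the blob hypothesis is genuinely used --- is the second displayed limit: after rescaling by $\sigma(\mathbf{p})/(B_m+1)$, the inter-blob length of every sub-path of $[0+,j+]$ has to agree with $B_m$ times its graph length. The two points that make this delicate are that one needs a uniform (maximal) bound over all sub-paths, not a single central-limit statement, which forces the martingale/Doob step, and that one needs to identify the path-average of $u_{w_\ell}$ with $B_m=\sum_ip_iu_i$, for which the exact birthday description of the $\vp$-tree path is the convenient tool; the $O(1)$ heavy (``hub'') vertices of the path, where $u_{w_\ell}$ can be of order $\Delta_{\max}$, contribute only $O(\Delta_{\max})$ in total and are absorbed once more by Assumption~\ref{assm:blob-diameter}.
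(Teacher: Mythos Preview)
Your proof is correct and follows essentially the same route as the paper: you build the natural correspondence $\{(i,z):z\in M_i\}$ together with the obvious coupling of $\nu_j$ and $\bar\nu_j$ (which has zero discrepancy and full mass on the correspondence), reduce to a distortion bound, invoke the birthday construction of the path so that the path labels are an initial segment of an i.i.d.\ $\vp$-sequence, and finish with a martingale maximal inequality together with the bound $\mathrm{Var}(\xi)\le\Delta_{\max}B_m$.

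Two minor points where you diverge from the paper. First, you split the centred sum into $M_k=\sum_{\ell<k}(B_m-u_{w_\ell})$ and $N_k=\sum_{\ell<k}(\xi_\ell-u_{w_\ell})$; the paper instead treats $\sum_{\ell\le k}(B_m-\xi_{J_\ell})$ directly as a single mean-zero martingale in the i.i.d.\ variables $(J_\ell,\xi_{J_\ell})$, which avoids the two separate Doob/Wald arguments. Second, you appeal to $\sigma(\vp)\E[R_1]=O(1)$ and to a Wald-type identity $\E[\sum_{\ell<R_1}u_{Y_\ell}]\le C B_m\E[R_1]$; the paper sidesteps both by using only tightness of $\sigma(\vp)R^\ast$ from \cite[Theorem~4]{CP99}, truncating at a deterministic level $T/\sigma(\vp)$, and applying Doob over that horizon. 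Your versions are true (the Wald step is not literal Wald because the path vertices are constrained to be distinct, but the one-sided bound $\E[u_{Y_\ell}\mathbb{1}\{R_1>\ell\}]\le B_m\PR(R_1>\ell-1)$ gives it), yet the paper's deterministic-truncation device is cleaner and needs less justification.
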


\begin{proof}
 We prove this for $j=1$ only. The proof for $j\geq 2$ is identical. 
 For $x\in \bar{M}_1$, we denote its corresponding vertex label by $i(x)$, i.e., $i(x)=k$ if and only if $x\in M_k$. Consider the correspondence $C_m$ and the measure $\mathfrak{m}$ on the product space $[0+,1+]\times \bar{M}_1$ defined as
 \begin{equation}\label{corr-blob-vs-path}
 C_m:=\{(i,x): i\in [0+,1+], x\in M_i\}, \quad \mathfrak{m}(\{i\}\times A)=\nu_1(\{i\})\mu_i(A\cap M_i).
 \end{equation} 
%
  Note that the discrepancy of $\mathfrak{m}$ satisfies $D(\mathfrak{m}; \nu_1,\bar{\nu}_1)=0$, since the marginals are exactly equal to $\nu_1$ and $\bar{\nu}_1$. Further, $\mathfrak{m}(C_m^c)=0.$ Therefore, Lemma~\ref{lem:path:GHP-conv} follows if we can prove that
 \begin{equation}\label{distortion:expression}
  \dis(C_m):= \sup_{x,y\in \bar{M}_1} \Big\{ \sigma(\mathbf{p})\dst(i(x),i(y))-\frac{\sigma(\mathbf{p})}{B_m+1}\bar{\dst}(x,y) \Big\} \pto 0.
 \end{equation}
To simplify the expression for $\dis(C_m)$, suppose that $i(x)$ is an ancestor of $i(y)$ on the path from $0+$ to $1+$. Then, 
\begin{gather*}
 \dst(i(x),i(y))=\dst(0+,i(y))-\dst(0+,i(x)),\\
 \bar{\dst}(x_0,y)-\bar{\dst}(x_0,x) \leq \bar{\dst}(x,y)\leq \bar{\dst}(x_0,y)-\bar{\dst}(x_0,x)+2\Delta_{\max},
\end{gather*}for any $x_0\in M_{\sss 0+}$. 
This implies that
\begin{equation}\label{eq:distortion-reduction}
\begin{split}
 &\sup_{x,y\in \bar{M}_1} \Big\{ \sigma(\mathbf{p})\dst(i(x),i(y))-\frac{\sigma(\mathbf{p})}{B_m+1}\bar{\dst}(x,y) \Big\}\\
 &\hspace{.8cm}\leq 2 \sup_{y \in \bar{M}_1}\Big\{ \sigma(\mathbf{p})\dst(0+,i(y))-\frac{\sigma(\mathbf{p})}{B_m+1}\bar{\dst}(x_0,y) \Big\}+\frac{2 \sigmap \Delta_{\max}}{B_m+1}.
 \end{split}
\end{equation}
Further, replacing $y$ by any other point $y'$ in the right hand side in \eqref{eq:distortion-reduction} incurs an error of at most $\sigmap \Delta_{\max}/(B_m+1)$. Now, write the path $[0+,1+]$ as $0+=i_0\to i_1\to \dots \to i_{R^*-2} \to i_{R^*-1}=1+.$ Then
\begin{equation}\label{eqn:estimate-dis}
 \dis(C_m)\leq 2\sup_{k\leq R^*-1} \Big| \sigmap \dst(i_0,i_k)-\frac{\sigmap}{B_m+1}\bar{\dst}(X_{\sss i_0,i_1}, X_{\sss i_k,i_{k+1}}) \Big|+\frac{6\sigmap \Delta_{\max}}{B_m+1},
\end{equation}where $(X_{i,j})_{i,j\in [m]}$ are the junction-points.
Using Assumption~\ref{assm:blob-diameter} and \eqref{eqn:estimate-dis}, it is now enough to show that for any $\varepsilon >0$, 
\begin{equation}\label{prob:dst:paths-mg}
 \lim_{m\to\infty} \PR\bigg(\sup_{k\leq R^*-1}\Big| \sigmap \dst(i_0,i_k)-\frac{\sigmap}{B_m+1}\bar{\dst}(X_{\sss i_0,i_1}, X_{\sss i_k,i_{k+1}}) \Big| >\varepsilon\bigg) = 0.
\end{equation}
Denote the term inside $\sup$ above by $Q_k$. Then,
\begin{equation}\label{Qk-simplif}
\begin{split}
 Q_k&:=\bigg[ \sigmap \dst(i_0,i_k)-\frac{\sigmap}{B_m+1}\bar{\dst}\big(X_{\sss i_0,i_1}, X_{\sss i_k,i_{k+1}}\big) \bigg]\\
 &=\bigg[ \sigmap k - \frac{ \sigmap}{B_m+1}\bigg(k+\sum_{j=1}^k\dst_{i_j}\big(X_{\sss i_j,i_{j-1}}, X_{\sss i_j, i_{j+1}}\big)\bigg)\bigg]\\
 &=\frac{\sigmap}{B_m+1} \bigg[\sum_{j=1}^k\Big( B_m-\dst_{i_j}\big(X_{\sss i_j,i_{j-1}}, X_{\sss i_j, i_{j+1}}\big)  \Big) \bigg].
 \end{split}
\end{equation}
 Recall the construction of the path $[0+,1+]$ via the birthday problem from Section~\ref{sec:p-tree-birthday}. Take $\mathbf{J}:=(J_i)_{i\geq 1}$ such that $J_i$ are an i.i.d.~sample from $\mathbf{p}$. 
 Further let $\boldsymbol{\xi}:= (\xi_i)_{i\in [m]}$ be an independent sequence such that $\xi_i$ is the distance between two points, chosen randomly from $M_i$ according to $\mu_i$. 
 Further, let $\mathbf{J}$ and $\boldsymbol{\xi}$ be independent. Then $R^*$ can be thought of as the first repeat time of the sequence $\mathbf{J}$. Thus, $(Q_k)_{k=1}^{R^*-1}$ in \eqref{Qk-simplif} has the same distribution as $(\hat{Q}_k)_{k=1}^{R^*-1}$, where 
 \begin{equation}
  \begin{split}
    \hat{Q}_k:= \frac{\sigmap}{B_m+1} \sum_{i=1}^k\big(   B_m -\xi_{J_i} \big).
  \end{split}
 \end{equation}
From the birthday construction $\expt{\xi_{J_1}}= \sum_{i\in [m]}p_iu_i=B_m$ and $(\xi_{J_i})_{i \geq 1}$ is an independent sequence. Therefore, $(\hat{Q}_k)_{k\geq 0}$ is a martingale. Further, 
\begin{equation}
\mathrm{Var}(\hat{Q}_k)\leq \bigg(\frac{\sigmap}{B_m+1}\bigg)^2k\Delta_{\max}\sum_{i\in [m]}p_iu_i=\frac{\sigmap^2k\Delta_{\max}B_m}{(B_m+1)^2}.
\end{equation} Thus, by Doob's inequality (for example applying \cite[Chapter II, Lemma 54.5]{RW94} to $\hat{Q}_k^2$), it follows that, for any $\varepsilon>0$ and $T>0$,
\begin{equation}\label{eqn:supbound:Q}
 \PR\bigg(\sup_{k\leq T}|\hat{Q}_k|>\varepsilon \bigg)\leq \frac{T\sigmap^2\Delta_{\max}B_m}{(B_m+1)^2\varepsilon^2}.
\end{equation}Recall from \cite[Theorem 4]{CP99} that $(\sigmap R^*)_{m\geq 1}$ is a tight sequence of random variables. 
The proof now follows using Assumption~\ref{assm:blob-diameter}.
%
%
%
%
%
\end{proof}
\begin{proof}[Proof of Proposition~\ref{prop:g-blob-close} using Lemma~\ref{lem:path:GHP-conv}] 
We use the objects defined in \eqref{corr-blob-vs-path}, \eqref{distortion:expression} in the proof of Lemma~\ref{lem:path:GHP-conv} for all the path metric spaces with $j\leq k$. We assume that we are working on a probability space such that the convergence \eqref{distortion:expression} holds almost surely for all $j\leq k$. 
To summarize, for fixed $\varepsilon>0$ and for each $j\leq k$, we can choose a correspondence $C_m^j$ and a measure $\mathfrak{m}_j$ of $[0+,j+]\times \bar{M}_j$ satisfying (i) $(i,X_{ik})\in C_m^j$, for all $i,k\in [0+,j+]$, (ii) $\dis(C_m^j)<\varepsilon/2k$ almost surely, and (iii) $D(\mathfrak{m}_j;\nu_j,\bar{\nu}_j)=0$ and $\mathfrak{m}_j((C^j_m)^c)=0$. 
Recall the definitions of the function $\mathsf{g}_{\phi}^k$ from \eqref{defn:g-phi-1}, \eqref{defn:g-phi-2} and the associated graphs $G(\cdot)$, $\bar{G}(\cdot)$. We simply write $G$ and $\bar{G}$ for $G(\sigma(\mathbf{p})\mathscr{T}_m^{\mathbf{p}}(\mathbf{V}_m))$ and $\bar{G}(\frac{\sigma(\mathbf{p})}{B_m+1}\bar{\mathscr{T}}_m^{\mathbf{p}}(\mathbf{V}_m))$, respectively. 
Let $\mathfrak{m}^{\sss \otimes k}$ denote the $k$-fold product measure of $\mathfrak{m}_j$ for $j\leq k$. 
We denote the graph distance on a graph $H$ by $\dst_{\sss H}$.
Note that 
\begin{equation}\label{g-phi-compare}
\begin{split}
&\Big|\mathsf{g}_{\phi}^{k}\big(\sigma(\mathbf{p})\mathscr{T}_m^{\mathbf{p}}(\mathbf{V}_m)\big)-\mathsf{g}_{\phi}^{k}\Big(\frac{\sigma(\mathbf{p})}{B_m+1}\bar{\mathscr{T}}_m^{\mathbf{p}}(\mathbf{V}_m) \Big) \Big|\\
&\leq \E\big[\big|\phi\big((\dst_{\sss G}(i+,j+))_{i,j=k+1}^{k+l}\big)- \phi\big((\dst_{\sss \bar{G}}(X_i,X_j))_{i,j=k+1}^{k+l}\big)\big|\big],
\end{split}
\end{equation} where $X_i\sim \mu_i$ independently for $i\in [m]$, and the above expectation is with respect to the measure $\mathfrak{m}^{\sss \otimes k}$.
Recall the notation while defining $\mathsf{g}_\phi^k(\cdot)$ in \eqref{defn:g-phi-1}, \eqref{defn:g-phi-2}. Notice that for any point $k\in [0+,i+]$ and $x_k\in M_k$ and $x_{i_s}\in M_{i_s}$,
\begin{equation}\label{shortcut-contribute}
 | \dst_{\sss \mathbf{t}}(k,i_s)- \dst_{\sss\bar{\mathbf{t}}}(x_k,x_{i_s})|\leq \frac{\varepsilon}{2k}.
\end{equation}
Now, for any path $i+$ to $j+$ in $G$, we can essentially take the same path from $X_i$ to $X_j$ in $\bar{G}$ and take the corresponding inter-blob paths on the way. 
The distance traversed in $\bar{G}$ in this way gives an upper bound on $\dst_{\sss \bar{G}}(X_i,X_j)$. Notice that, by \eqref{shortcut-contribute}, taking a shortcut contributes at most $\varepsilon / 2k$ to the difference of the distance traveled in $G$ and $\bar{G}$. Also, traversing a shortcut edge contributes $\sigmap B_m/(B_m+1)$ and there are at most $k$ shortcuts on the path. 
Furthermore, it may be required to reach the relevant junction points from $X_i$ and $X_j$ and that contributes at most $2 \sigmap \Delta_{\max}/(B_m+1)$. Thus, for $k+1\leq i,j\leq k+l$, and sufficiently large $m$,
\begin{equation}
 \dst_{\sss \bar{G}}(X_i,X_j)\leq \dst_{\sss G}(i+,j+)+\frac{\varepsilon}{2}+\frac{k\sigmap B_m}{B_m+1}+\frac{2\sigmap \Delta_{\max}}{B_m+1}\leq \dst_{\sss G}(i+,j+)+\varepsilon. 
\end{equation}
By symmetry we can conclude the lower bound also, and the continuity of $\phi(\cdot)$ (see \cite[Theorem 4.18]{BHS15}) along with \eqref{g-phi-compare} completes the proof of Proposition~\ref{prop:g-blob-close}.
\end{proof}

\section{Mesoscopic properties: Proofs of Theorems~\ref{thm:susceptibility}~and~\ref{thm:diam-max}} 
\label{sec:entrance-bdd-proofs}
At this moment, we urge the reader to recall the definitions from \eqref{defn:barely-subcrit}, \eqref{defn:pCli}, \eqref{defn:susc-w}, and \eqref{defn:susc-dist}. 
The configuration model graphs considered in this section will be assumed to have degree sequence $\bld{d}'$ and the vertices have an associated weight sequence $\bld{w}$ such that  Assumption~\ref{assumption-w} is satisfied. 
We use the notation $C, C'$ to denote generic positive constants, whose values can be different in different lines. 
The rest of the section is organized as follows. 
In Section~\ref{sec:diameter}, we start by proving the required bound on the diameter in Theorem~\ref{thm:diam-max}.
In order to deal with  the different terms in Theorem~\ref{thm:susceptibility}, we first obtain some moment estimates in Section~\ref{sec:moment-esimates-V}, and these estimates are then used to prove asymptotics of $s_2^\star$ in Section~\ref{sec:s2}.
The individual component weights are estimated in Section~\ref{sec:barely-subcritical-mass}. 
In Section~\ref{sec:s3}, we prove asymptotics of $s_3^\star$, and finally the mesoscopic typical distance is computed in Section~\ref{sec:meso-dist}.



\subsection{Maximum diameter: Proof of Theorem~\ref{thm:diam-max}} \label{sec:diameter}
Let $\ell_n' = \sum_{i\in [n]} d_i'$.
We will use path-counting estimates for the configuration model from \cite[Lemma 5.1]{J09b}. 
Let $P_l$ denote the number of paths of length $l$ in $\mathrm{CM}_n(\bld{d}')$. 
Then \cite[Lemma 5.1]{J09b} shows that for any $l\geq 1$
\begin{eq}\label{eq:janson-estimate}
\E[P_l] \leq \ell_n' (\nu_n')^{l-1}.
\end{eq}
If the maximum diameter is at least $n^{\delta} (\log n)^2$, then there exists a path of length $n^{\delta} (\log n)^2$, and therefore 
\begin{eq}\label{eq:quasi-poly-bound}
\PR(\Delta_{\max} > n^{\delta} (\log n)^2)\leq \sum_{l\geq n^{\delta} (\log n)^2}\expt{P_l}\leq \frac{\ell_n' (\nu_n')^{n^{\delta}(\log(n))^2}}{1-\nu_n'}\leq C n^{1+\delta}\e^{-C'(\log n)^2},
\end{eq}where the second step follows using \eqref{eq:janson-estimate}. Thus the proof of Theorem~\ref{thm:diam-max} follows. \qed

\subsection{Moment bounds for total weights}\label{sec:moment-esimates-V}
Consider the size-biased distribution on the vertex set $[n]$ with sizes $(w_i)_{i\in [n]}$.
 Let $V_n$ and $V_n^{*}$, respectively, denote a vertex chosen  uniformly at random and according to the size-biased distribution with respect to the sizes $(w_i)_{i\in [n]}$, independently of the underlying graph $\rCM_n(\bld{d}')$. 
Let $D_n'$, $W_n$ (respectively $D_n^*$, $W_n^*$) denote the degree and weight of $V_n$ (respectively $V_n^*$). 
For a vertex $v\in [n]$, let $\mathscr{W}(v):=\sum_{k\in\mathscr{C}'(v)}w_k$, where $\mathscr{C}'(v)$ denotes the component of $\rCM_n(\bld{d}')$ containing $v$.
The reader should note the difference in notation that terms such as $\Wli$, $\pCli$ with $i$ in the subscript refer to the quantities defined in \eqref{defn:pCli}.


In this section, we prove the following moment bounds for $\mathscr{W}(V_n^*)$, which will help us compute the expectation and variance of $s_2^\star$: 
\begin{lemma}\label{lem:expt-weight-random} Under {\rm Assumption~\ref{assumption-w}}, the following holds: 
\begin{enumerate}[(i)]
    \item $
 \expt{\mathscr{W}(V_n^*)}= \frac{\expt{D_n^*}\expt{D_n' W_n}}{\expt{D_n'}(1-\nu_n')}(1+o(1))$,
 \item $\E\big[\big(\mathscr{W}(V_n^*)\big)^2\big] \leq   \frac{\expt{D_n^*}(\expt{D_n'W_n})^2\sigma_3(n)}{(\expt{D_n'})^2(1-\nu_n')^3}(1+o(1))$, where $\sigma_3(n) = \frac{1}{\ell_n'} \sum_{k\in [n]} d_k'(d_k'-1)(d_k'-2)$,
 \item $\E\big[\big(\mathscr{W}(V_n^*)\big)^3\big]= o(n^{1+2\delta})$.
\end{enumerate}
\end{lemma}

\begin{proof}[Proof of Lemma~\ref{lem:expt-weight-random}~(i)]  
We use path-counting techniques for configuration models from \cite[Lemma 5.1]{J09b}. 
Let $\fI_l(v,k)$ denote the collection of $\xx = (x_i)_{0\leq i\leq l}$ such that $x_0 = v$, $x_l = k$ and $x_i$'s are distinct.
Then, an identical argument to the proof of \cite[Lemma 5.1]{J09b} shows that, for any $l\geq 1$, the expected number of paths of length exactly $l$ starting from vertex $v$ and ending at $k$ is given by
\begin{equation}\label{eq:ineq-path}
 \sum_{\xx \in \fI_l(v,k)}\frac{d'_{x_0}d'_{x_l} \prod_{i=1}^{l-1} d_{x_i}'(d_{x_i}'-1) }{(\ell_n'-1)\cdots (\ell_n'-2l+1)}\leq \frac{d'_v \ell'_n}{\ell'_n-2l+3}\nu_n'^{l-1} = \Big(1+O\Big(\frac{l}{n}\Big)\Big)d'_v\nu_n'^{l-1},
\end{equation}where the last step holds for $l = o(n)$.
  Let $\mathcal{A}_l(v,k)$ denote the event that there exists a path of length $l$ from $v$ to $k$ and let $\mathcal{A}'_l(v,k)$ denote the event that there exist two different paths from $v$ to $k$, one of length $l$ and another one of length at most $l-1$. Notice that 
\begin{subequations}
\begin{equation}\label{sus1-expr-1}
 \begin{split}
  \expt{\mathscr{W}(V_n^*)\vert V_n^* = v} &= \E\bigg[\sum_{k\in [n]}w_k\ind{v\leadsto k}\bigg]\leq w_v +  \sum_{l\geq 1}\sum_{k\in [n]}w_{k}\prob{\mathcal{A}_l(v,k)},
 \end{split}
 \end{equation}
\begin{equation}\label{sus1-expr-2}
 \expt{\mathscr{W}(V_n^*)\vert V_n^* = v} \geq \sum_{l\geq 1}\sum_{k\in [n]}w_{k}\prob{\mathcal{A}_l(v,k)}-\sum_{l\geq 1}\sum_{k\in [n]}w_{k}\prob{\mathcal{A}'_l(v,k)}.
\end{equation}
\end{subequations}
Now, using \eqref{eq:ineq-path} and  Assumption~\ref{assumption-w}, \eqref{sus1-expr-1} yields
\begin{eq}\label{sus-simpl-1}
  &\expt{\mathscr{W}(V_n^*)\vert V_n^* = v} \\
  &\leq w_v+ \sum_{l = 1}^{n^{\delta} (\log n)^2}\sum_{k\in [n]}w_k\sum_{\xx \in \fI_l(v,k)}\frac{d'_{x_{0}}d_{x_k}' \prod_{i=1}^{l-1}d_{x_i}'(d_{x_i}'-1)}{(\ell_n'-1)\cdots (\ell_n'-2l+1)}+C\ell_n^w n^{1+\delta} \e^{-C'(\log n)^2} \\
  &\leq w_v+(1+o(1))\frac{d'_v\expt{D_n' W_n}}{\expt{D_n'}}\sum_{l=1}^{\infty}\nu_n'^{l-1} +o(1),
 \end{eq}
 where in the second step we have used \eqref{eq:quasi-poly-bound} and \eqref{eq:ineq-path}, and in the last step, we have used the facts that  $\ell_n' = n \E[D_n']$ and $\sum_{k\in [n]} d_k' w_k = n\E[D_n'W_n]$.
 Thus, 
 \begin{eq} \label{sus-simpl-12}
 \expt{\mathscr{W}(V_n^*)} \leq \frac{\expt{D_n^*}\expt{D_n' W_n}}{\expt{D_n'}(1-\nu_n')}(1+o(1)),
 \end{eq}
 where the multiplicative $(1+o(1))$ term in the final expression comes observing that $(1-\nu_n')^{-1} = \Theta(n^{\delta})$, and $\lim_{n\to\infty}\E[W_n^*]<\infty$. 
 For the computation of the lower bound in \eqref{sus1-expr-2}, we note that 
\begin{eq}
\PR(\cA_{l}(v,k)) &\geq \PR(\exists \text{ a unique path of length } l  \text{ from }v \text{ to }k ) \\
&= \sum_{\xx \in \fI_l(v,k)} \PR( \xx \text{ is the unique path between }v \text{ and } k)\\
&\geq \sum_{\xx \in \fI_l(v,k)}\PR( \xx \text{ is a path from }v \text{ and } k) \\
& - \sum_{\xx \in \fI_l(v,k)}\PR( \exists \yy\in\fI_l(v,k)\setminus\{\xx\}:  \xx, \yy \text{ both create paths from }v \text{ and } k),  
\end{eq} 
where the final step follows from using the inclusion-exclusion principle.
Also, the first term inside the sum in \eqref{eq:ineq-path} is the probability that $\xx = (x_0,\dots,x_l)$ creates a path for some $\xx$. 
Thus,
 \begin{eq} \label{eq:path-inclusion-excusion}
 \PR(\cA_l(v,k)) &\geq \sum_{\xx \in \fI_l(v,k)}\frac{d'_{x_{0}}d_{x_k}' \prod_{i=1}^{l-1}d_{x_i}'(d_{x_i}'-1)}{(\ell_n'-1)\cdots (\ell_n'-2l+1)} \\
 &- \sum_{\xx\in \fI_l(v,k)}\PR( \exists \yy\in\fI_l(v,k)\setminus\{\xx\} :\xx, \yy \text{ both create paths from }v \text{ and } k). 
 \end{eq}
If we have two distinct paths, one being $\xx$, and another of length at most $l$ (say $\yy$), then there must be two distinct vertices $a,b$ in $\xx$ such that the path between $a,b$ on $\xx$ is disjoint from that of $\yy$. 
 Depending on the choices of $a,b$, one of the structures in Figure~\ref{fig:multi-paths} occurs.
\begin{figure}
\centering
\begin{subfigure}{.2\textwidth}
\centering
	\includegraphics[width=.5\textwidth]{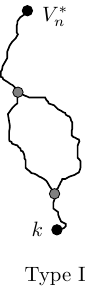}
\end{subfigure}
\begin{subfigure}{.2\textwidth}
\centering
	\includegraphics[width=.45\textwidth]{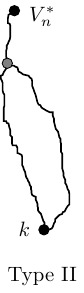}
\end{subfigure}
\begin{subfigure}{.2\textwidth}
\centering
	\includegraphics[width=.5\textwidth]{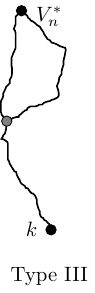}
\end{subfigure}
\begin{subfigure}{.2\textwidth}
\centering
	\includegraphics[width=.5\textwidth]{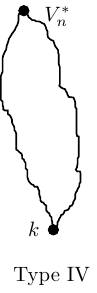}
\end{subfigure}
\caption{Possible structures for two distinct paths from $V_n^*$ to $k$.}\label{fig:multi-paths}
\end{figure}
 Let $r$ be the length of the path between $a,b$ that is disjoint of $\xx$. 
 Denote by $\mathcal{A}_l'(v,k,\xx,i)$ the event that the structure of type $i$ ($i$=I, II, III, IV) in Figure~\ref{fig:multi-paths} appears, where $\xx \in \fI_l(v,k)$.
Using an argument identical to \eqref{sus-simpl-1}, and applying Assumption~\ref{assumption-w}, it follows that
\begin{subequations}
\begin{equation}
\label{eqn:paths1}
\begin{split}
  &\sum_{l= 1}^{n^{\delta} (\log n)^2}\sum_{k\in [n]}w_{k}\sum_{\xx \in \fI_l(v,k)}(l-1)(l-2)\prob{\mathcal{A}'_l(v,k,\xx,\mathrm{I})}\\
  &\leq Cd_v'\expt{D_n'W_n}\sum_{l\geq 1}\sum_{r\geq 1} \frac{\sigma_3(n)^2}{\ell_n'}(l-1)(l-2)\nu_n'^{l+r-4}\\
&\leq C \frac{d_v'n^{6\alpha-3}}{1-\nu_n'} \sum_{l\geq 3}(l-1)(l-2)\nu_n'^{l-3}\leq C\frac{d_v'n^{6\alpha-3}}{(1-\nu_n')^4} =o(d_v' n^{\delta}),
\end{split}
\end{equation}where the $(l-1)(l-2)$ factor is due to the possible choices of $a,b$, $\sigma_3(n)^2$ is due to the two branch points for which three half-edges needs to be paired, 
and in the last step we have used the fact $6\alpha - 3 +3\delta <6\alpha - 3 +3\eta =0$ since $\delta<\eta$. 
Similarly, with $b = k$, we get the Type-II structures in Figure~\ref{fig:multi-paths}, and thus 
\begin{eq}\label{eqn:paths2}
&\sum_{l= 1}^{n^{\delta} (\log n)^2}\sum_{k\in [n]}w_{k}\sum_{\xx \in \fI_l(v,k)}(l-1)\prob{\mathcal{A}'_l(v,k,\xx,\mathrm{II})}\\
&\leq Cd_v'\sigma_3(n) \bigg(\frac{1}{\ell_n'^2}\sum_{k\in [n]} w_kd_k'(d_k'-1) \bigg)\sum_{l\geq 1} \sum_{r\geq 1} (l-1) (\nu_n')^{l+r -3} \\
&\leq C\frac{d'_vn^{6\alpha -3} }{(1-\nu_n')^3} = o(d_v'n^{\delta}),
\end{eq}
Again, 
\begin{eq}\label{eqn:paths3}
&\sum_{l= 1}^{n^{\delta} (\log n)^2}\sum_{k\in [n]}w_{k}\sum_{\xx \in \fI_l(v,k)}(l-1)\prob{\mathcal{A}'_l(v,k,\xx,\mathrm{III})}\\
&\leq Cd_v'(d_v'-1)\sigma_3(n) \bigg(\frac{1}{\ell_n'^2}\sum_{k\in [n]} w_kd_k' \bigg)\sum_{l\geq 1} \sum_{r\geq 1} (l-1) (\nu_n')^{l+r -3} \\
&\leq C\frac{d_v'^2 n^{6\alpha -3} }{n^{3\alpha-1}(1-\nu_n')^3} = o(d_v'^2 n^{\delta+1-3\alpha}),
\end{eq}
and 
\begin{eq}\label{eqn:paths4}
&\sum_{l= 1}^{n^{\delta} (\log n)^2}\sum_{k\in [n]}w_{k}\sum_{\xx \in \fI_l(v,k)}\prob{\mathcal{A}'_l(v,k,\xx,\mathrm{IV})}\\
&\leq Cd_v'(d_v'-1) \bigg(\frac{1}{\ell_n'^2}\sum_{k\in [n]} w_kd_k' (d_k'-1)\bigg)\sum_{l\geq 1} \sum_{r\geq 1}  (\nu_n')^{l+r -2} \\
&\leq C\frac{d_v'^2 n^{6\alpha -3} }{n^{3\alpha-1}(1-\nu_n')^2} = o(d_v'^2 n^{\delta+1-3\alpha}).
\end{eq}
\end{subequations}
Taking expectations with respect to $V_n^*$, all the terms in \eqref{eqn:paths1}, \eqref{eqn:paths2}, \eqref{eqn:paths3} and \eqref{eqn:paths4} are $o(n^{\delta})$, where we use that $\E[(D_n^*)^2] = O(n^{3\alpha -1})$.
To compute the leading contribution to \eqref{sus1-expr-2}, using \eqref{eq:quasi-poly-bound} and \eqref{eq:path-inclusion-excusion}, we lower bound
 \begin{equation}\label{sus-lb}
  \begin{split}
   &d_v' \sum_{l= 1}^{n^{\delta}(\log n)^2}\sum_{k\in [n]} w_k\sum_{\xx \in \fI_l(v,k)}\frac{1}{\ell_n'^{l-1}}\prod_{i=1}^{l-1}d_{x_i}'(d_{x_i}'-1)d_{k}' +o(1)\\
    &\geq \frac{d_v'\expt{D_n'W_n}}{\expt{D_n'}}\sum_{l = 1}^{n^{\delta}(\log n)^2}\bigg(\nu_n'^{l-1}-\frac{d_1'n^{\delta} (\log n)^2}{\ell_n'^{l-1}}\bigg(\sum_{i\in [n]}d_i'(d_i'-1)\bigg)^{l-2}\bigg)+o(1)  \\
    & = \frac{d_v'\expt{D_n' W_n}(1-(\nu_n')^{n^{\delta} (\log n)^2})}{\expt{D_n'}(1-\nu_n')}(1+o(1))=\frac{d_v'\expt{D_n' W_n}}{\expt{D_n'}(1-\nu_n')}(1+o(1)),
  \end{split}
 \end{equation}where we have used the fact that $d_1'l\leq d_1'n^{\delta} (\log n)^2$ and inclusion-exclusion to obtain the third step, and 
 \eqref{defn:barely-subcrit}, $d_1'n^{\eta}/\ell_n'= c_1/\mu_d(1+o(1))$ and the fact $(\nu_n')^{n^{\delta} (\log n)^2} \leq  \e^{-C(\log n)^2} = o(1)$ in the last step.
Thus, it follows that
\begin{equation}\label{susc-lb2}
 \expt{\mathscr{W}(V_n^*)}\geq  \frac{\expt{D_n^*}\expt{D_n' W_n}}{\expt{D_n'}(1-\nu_n')}(1+o(1)),
\end{equation}and the proof of Lemma~\ref{lem:expt-weight-random}~(i) is now complete using \eqref{sus-simpl-12}.
\end{proof}
\begin{remark}\label{rem:ub-susc} \normalfont It may be worthwhile to point out that the  upper bound  \eqref{sus-simpl-12} holds for any configuration model satisfying $\nu_n'<1-n^{-\varepsilon_0}$ for some $\varepsilon_0>0$ and $\sum_{i\in [n]} w_i^2 = O(n)$. The rest of Assumption~\ref{assumption-w} is not required in the proof of this upper bound. 
\end{remark}

\begin{proof}[Proof of Lemma~\ref{lem:expt-weight-random}~(ii)]
Note that 
\begin{eq}\label{moment-identity-path}
\big(\mathscr{W}(V_n^*)\big)^r &= \sum_{k_1,\dots,k_r\in [n]}w_{k_1}\cdots w_{k_r}\ind{V_n^*\leadsto k_1,\dots, V_n^*\leadsto k_r} = D_n(V_n^*,r)+E_n(V_n^*,r),
\end{eq}
where 
\begin{eq}
D_n(V_n^*,r) := \sum_{\substack{k_1,\dots,k_r\in [n] \\V_n^*, k_1,\dots,k_r \text{ distinct}}}w_{k_1}\cdots w_{k_r}\ind{V_n^*\leadsto k_1,\dots, V_n^*\leadsto k_r}.
\end{eq}
Let us formulate a general upper bound on $\E[D_n(V_n^*,r) ]$ using similar computations as in \eqref{sus-simpl-12}. 
Note that if $V_n^* \leadsto k_i$ for all $i\in [r]$, then we must have a tree $T$ with $V_n^*$ as root and $(k_i)_{i\in [r]}$ as leaves. 
Let us ``collapse'' all the degree-two vertices in $T$ except $V_n^*$. 
More precisely, we sequentially take a degree-two vertex (except $V_n^*$), delete it, and create an edge between its neighbors.
Denote the obtained tree by $T' = (\rV(T'), \rE(T'))$.  
Thus, $T'$ can be thought of as a rooted tree with $V_n^*$ being its root, and $(k_i)_{i\in [r]}$ being its leaves.
Also, $T'$ does not have any degree two vertices except possibly $V_n^*$.
Further, note that $r+1\leq |\rV(T')| \leq 2r$, and thus $r\leq |\rE(T')| \leq  2r - 1$. 
Let $m_i(T')$ denote the number of degree-$i$ vertices in $T'$ and let $d_0(T')$ be the degree of $V_n^*$ in $T'$.

Let $l_e$ be the number of edges that are  collapsed to create $e\in \rE(T')$. In that case, exactly $l_e-1$ degree-two vertices get collapsed in $T'$.
Using~\eqref{eq:quasi-poly-bound}, we can restrict ourselves to the case  $l_e\leq n^{\delta} (\log n)^2$, and the error due to such a restriction is given by $(\ell_n^w)^r n^{1+\delta} \e^{-C'(\log n)^2}$.
For each $T'$ described above, $i$ half-edges of $V_n^* = v$ are being paired, for which there are $ \prod_{i=1}^{d_0(T')}(d_v' - i+1)$ possible ways.
Moreover, the number of choices of the $r$ distinct leaves $(k_i)_{i\in [r]}$ gives rise to the factor $\big(\sum_{k\in [n]} d_k'w_k\big)^r$ in the path counting. 
Define 
\begin{eq}\label{defn-gen-R-path}
 Q_n(T'):= \prod_{j\geq 3}(\sigma_j(n))^{m_j(T') - \1\{j=d_0(T')\}} , \quad R_n(T') := \sum_{\substack{(l_1,\dots,l_{\sss |\rE(T')|}): \\ 1\leq l_e \leq n^{\delta}(\log n)^2 }} (\nu_n') ^{\sum_{e= 1}^{|\rE(T')|}(l_e-1)}.
\end{eq}
Note that $Q_n(T')$ gives the contribution due to the pairing of the half-edges of the vertices in  $\rV(T') \setminus \{V_n^*,k_1,\dots,k_r\}$, and  $R_n(T')$ is the total contribution due to degree-two vertices of possible trees $T$ that could give rise to $T'$ after collapsing. 
Thus, 
\begin{eq}\label{susc-general-ub}
\E\big[D_n(V_n^*,r)\vert V_n^* = v\big]& \leq  (1+o(1))\sum_{T'} \prod_{i=1}^{d_0(T')} (d_v' - i+1)   \\
&\  \times\bigg(\frac{1}{\ell_n'}\sum_{k\in [n]} d_k'w_k\bigg)^r 
Q_n(T') R_{n}(T') + C(\ell_n^w)^r n^{1+\delta} \e^{-C'(\log n)^2},
\end{eq}
for constants $C, C'>0$.

Let us now apply \eqref{moment-identity-path} and \eqref{susc-general-ub} for the special case $r=2$.
\begin{figure}\centering
\begin{subfigure}{.4\textwidth}
\centering
	\includegraphics[width=.5\textwidth]{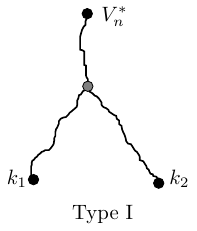}
\end{subfigure}
\begin{subfigure}{.4\textwidth}
\centering
	\includegraphics[width=.5\textwidth]{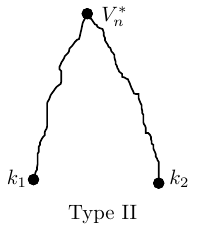}
\end{subfigure}
\caption{Possible paths when $V_n^*\leadsto k_1$, and $V_n^*\leadsto k_2$.}\label{fig:paths2}
\end{figure}
Figure~\ref{fig:paths2} describes the possible structures $T'$.
Application of  \eqref{susc-general-ub} yields  
\begin{equation}\label{W-moment2-ub}
  \begin{split}
  \E\big[D_n(V_n^*,2)\big]&\leq 
 \frac{\expt{D_n^*}(\sum_{k\in [n]} d_k'w_k)^2\sigma_3(n)}{\ell_n'^2(1-\nu_n')^3}+\frac{\expt{D_n^*(D_n^*-1)}(\sum_{k\in [n]} d_k'w_k)^2}{\ell_n'^2(1-\nu_n')^2}.
  \end{split}
\end{equation}
The two terms are $O(n^{3\alpha+3\delta - 1})$ and $O(n^{3\alpha+2\delta - 1})$ respectively.
Also, 
\begin{eq}\label{E-n-estimates}
\E\big[E_n(V_n^*,2)\big] \leq \expt{(W_n^*)^2}+ 2 \E[W_n^*\sW(V_n^*)] + \E\bigg[\sum_{k\in [n]} w_k^2 \ind{V_n^* \leadsto k}\bigg],
\end{eq}
where the first term is due to $V_n^* = k_1 = k_2$, the second term is due to $V_n^* = k_1$ but $V_n^* \neq k_2$ or $V_n^* = k_2$ but $V_n^* \neq k_1$, while the third term due to $k_1= k_2$ but $V_n^* \neq k_1$.
The three terms are respectively $O(n^{3\alpha-1})$, $O(n^{3\alpha+\delta - 1})$ and $O(n^{3\alpha+\delta - 1})$, where we have used \eqref{sus-simpl-1} to compute the second term, and an analogous computation as in \eqref{sus-simpl-1} to compute the final term (replacing $w_k$ by $w_k^2$ in \eqref{sus-simpl-1}). 
This proves our required upper bound that 
\begin{eq}\label{upper-second-moment}
\E\big[\big(\mathscr{W}(V_n^*)\big)^2\big] \leq (1+o(1))\frac{\expt{D_n^*}(\expt{D_n'W_n})^2\sigma_3(n)}{(\expt{D_n'})^2(1-\nu_n')^3}.
\end{eq}

\end{proof}

\begin{proof}[Proof of Lemma~\ref{lem:expt-weight-random}~(iii)]
We again use \eqref{moment-identity-path} and  \eqref{susc-general-ub}. 
For the third moment, the leading contributions to $\E[D_n(V_n^*,3)]$ arise from one of the structures given in Figure~\ref{fig:paths3}. 
\begin{figure} \centering
\begin{subfigure}{.2\textwidth}
 \centering
	\includegraphics[width=.8\textwidth]{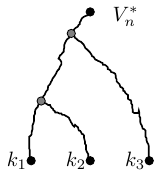}
\end{subfigure}
\begin{subfigure}{.2\textwidth}
\centering
	\includegraphics[width=.8\textwidth]{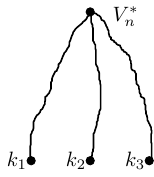}
\end{subfigure}
\begin{subfigure}{.2\textwidth}
\centering
	\includegraphics[width=.8\textwidth]{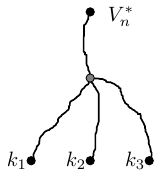}
\end{subfigure}
\begin{subfigure}{.2\textwidth}
\centering
	\includegraphics[width=.8\textwidth]{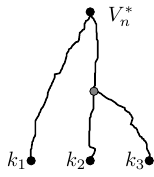}
\end{subfigure}
\caption{Possible paths when $V_n^*\leadsto k_1$, $V_n^*\leadsto k_2$, and $V_n^*\leadsto k_3$.}\label{fig:paths3}
\end{figure}
We will use the fact that $\E[(D_n^*)^{r-1}], \sigma_r(n) = O(n^{r\alpha -1})$ for $r\geq 3$.
The contributions on $\E[D_n(V_n^*,3)]$ due to the first type of tree in Figure~\ref{fig:paths3} are upper bounded  by 
\begin{eq}
 &\frac{C\sigma_3(n)^2 }{(1-\nu_n')^5}  = O(n^{6\alpha - 2+5\delta}) = O(n^{1+2\delta + (6\alpha -3+3\delta)}) = o(n^{1+2\delta}),
\end{eq}
where in the last step we have used the fact that $6\alpha - 3+\delta < 6\alpha - 3+1 - 2\alpha = 2(2\alpha - 1)<0$. 
The contributions due to the other three types of trees are respectively upper bounded  by 
\begin{gather*}
    \frac{C \E[(D_n^*)^3]}{(1-\nu_n')^3} = O(n^{4\alpha - 1 +3\delta}), \quad \frac{C\sigma_4(n)}{(1-\nu_n')^4} = O(n^{4\alpha -1+4\delta}), \quad \frac{C \E[(D_n^*)^2]\sigma_3(n)}{(1-\nu_n')^4} = O(n^{6\alpha - 2+4\delta}),
\end{gather*}
all of which are $o(n^{1+2\delta})$. 
Also, 
\begin{eq}
\E[E_n(V_n^*,3)] &\leq \E[(W_n^*)^3] \\
&+ \E\bigg[\sum_{k\in [n]}w_k^3 \ind{V_n^* \leadsto k}\bigg]+ 3 \E\bigg[W_n^* \sum_{k\in [n]}w_k^2 \ind{V_n^*\leadsto k}\bigg]+3 \E\big[(W_n^*)^2 \sW(V_n^*)\big] \\
&+ 3 \E[W_n^* (\sW(V_n^*))^2] + 3 \E\bigg[\sum_{k_1,k_2\in [n]}w_{k_1}^2 w_{k_2} \ind{V_n^*\leadsto k_1, V_n^* \leadsto k_2}\bigg],
\end{eq}
where the first term is due to $|\{V_n^*,k_1,k_2,k_3\}| = 1$, the three cases in the second line are due to $|\{V_n^*,k_1,k_2,k_3\}| = 2$ 
and the final two cases are due to $|\{V_n^*,k_1,k_2,k_3\}| = 3$.
Using the fact that $\max_{i\in [n]} w_i = O(n^{\alpha})$, we can use the estimates in \eqref{W-moment2-ub} and \eqref{E-n-estimates} to show that the first term is $O(n^{4\alpha - 1})$, the next three terms are $O(n^{4\alpha + \delta - 1})$, and the last two terms are  $O(n^{4\alpha + 3\delta - 1})$.
All these contributions are $o(n^{1+2\delta})$ and hence we conclude that $\E[(\sW(V_n^*))^3] = o(n^{1+2\delta})$. 
\end{proof}

\subsection{Analysis of the susceptibility function \texorpdfstring{$s_2^\star$}{TEXT}}\label{sec:s2} 

 \begin{proof}[Asymptotics of $s_2^\star$] The asymptotics of $s_2^\star$ is a   consequence of the Chebyshev inequality. Denote $\ell_n^w=\sum_{i\in [n]}w_i$. First, if $\E_{\bld{d}'}$ denotes the conditional expectation given $\mathrm{CM}_n(\bld{d}')$, then for any $r\geq 1$,
\begin{equation}\label{eq:identity-moment}
\begin{split}
\E_{\bld{d}'}\big[\big(\mathscr{W}(V_n^*)\big)^{r-1}\big] 
= \sum_{i\geq 1}\sum_{k\in \pCli}\frac{w_k}{\ell_n^w}\bigg(\sum_{l\in \pCli}w_l\bigg)^{r-1} = \frac{1}{\ell_n^w}\sum_{i\geq 1}(\Wli)^r.
 \end{split}
\end{equation} Therefore, using Lemma~\ref{lem:expt-weight-random} and \eqref{defn:barely-subcrit}, it follows from Assumption~\ref{assumption-w} that
\begin{equation} \label{expt:s2}
 n^{-\delta}\expt{s_2^\star}= \frac{\ell_n^w}{n} n^{-\delta}\expt{\mathscr{W}(V_n^*)}\to \frac{\mu_{d,w}^2}{\mu_d\lambda_0},
\end{equation}where we have used the fact that $\expt{D_n^*}\to \mu_{d,w}/\mu_w$. It remains to compute the variance. Let $U_n^*$ denote another vertex chosen in a size-biased way with the sizes being $(w_i)_{i\in [n]}$, independently of the graph $\mathrm{CM}_n(\bld{d}')$ and $V_n^*$.  
Then \eqref{eq:identity-moment} yields
 \begin{equation}\label{eq:moment2-s2}
 \begin{split}
  &\expt{(s_2^{\star})^2}
  =\frac{1}{n^2}\E\bigg[\sum_{i,j\geq 1}\Wli^2\Wlj^2\bigg]
  =\frac{1}{n^2}\E\bigg[\sum_{i\geq 1}\Wli^4\bigg]+\frac{1}{n^2}\E\bigg[\sum_{i\neq j}\Wli^2\Wlj^2\bigg]\\
  & = \frac{\ell_n^w}{n}\frac{1}{n}\expt{\big(\mathscr{W}(V_n^*)\big)^3}+\bigg(\frac{\ell_n^w}{n}\bigg)^2\expt{\mathscr{W}(U_n^*)\mathscr{W}(V_n^*)\ind{U_n^*\notin \mathscr{C}'(V_n^*)}},
 \end{split}
\end{equation}
where the equality of the second term in the third equality follows using similar arguments as in \eqref{eq:identity-moment}. 
Denote the last two terms of \eqref{eq:moment2-s2} by $(\mathbf{I})$ and $(\mathbf{II})$ respectively.
To estimate~$(\mathbf{II})$, observe that, conditionally on the graph $\mathscr{C}'(V_n^*)$, the graph obtained by removing $\mathscr{C}'(V_n^*)$ from $\rCM_n(\bld{d}')$ is again a configuration model with the induced degree sequence $\Mtilde{\boldsymbol{d}}$ and number of vertices $\tilde{n}$. Let $\tilde{\nu}_n$ denote the corresponding criticality parameter. 
 In the proof of Lemma~\ref{lem:expt-weight-random}~(i), we observed  that the upper bound holds whenever $\tilde{\nu}_n<1 - n^{-\varepsilon}$ with $\varepsilon \in (0,1)$ (see Remark~\ref{rem:ub-susc}). To this end, let us show that there exists $\varepsilon_0\in (0,1)$ and $c_1>0$ such that  for all sufficiently large $n$,
\begin{equation}\label{fact:graphs}
 \prob{\tilde{\nu}_n<1-n^{-\varepsilon_0}\mid \mathscr{C}'(V_n^*)}=1, \text{ with probability at least }1-\e^{-n^{c_1}}.
\end{equation}
Denote $\ell_n' = \sum_{i\in [n]}d_i'$. To see \eqref{fact:graphs}, first notice that 
 \begin{equation}
\begin{split}
 &\tilde{\nu}_n-1 = \nu_n'-1-\frac{\sum_{j \in \mathscr{C}'(V_n^*)} d_{j}'(d_{j}'-2)}{\sum_{j \in [n]} d_{j}'}+(\tilde{\nu}_n-1)\frac{\sum_{j \in \mathscr{C}'(V_n^*)} d_{j}'}{\ell_n'}.
%
 \end{split}
\end{equation}
Moreover, for any connected graph $\mathcal{G}$, $\sum_{i\in \mathcal{G}}d_i'(d_i'-2)\geq -2$ (this can be proved by induction) 
so that
\begin{eq}\label{nu-tilde-prime-compare}
(\tilde{\nu}_n-1)\bigg(1-\sum_{j \in \mathscr{C}'(V_n^*)} \frac{d_{j}'}{\ell_n'}\bigg)\leq \nu_n'-1+\frac{2}{\ell_n'}. 
\end{eq}
Next we use the following: 
\begin{fact}\label{fact:total-edges-random-comp}
There exists $c_0,c_1>0$ (sufficiently small), and $n_0\geq 1$ such that for all $n\geq n_0$, $\PR(\sum_{j \in \mathscr{C}'(V_n^*)} d_{j}' \geq n^{\alpha+\delta+c_0})\leq \e^{-n^{c_1}}$. 
\end{fact}
The proof of Fact~\ref{fact:total-edges-random-comp} follows using the exploration process in  Section~\ref{sec:barely-subcritical-mass}, and martingale concentration inequalities such as \cite{Fre75} (see Appendix~\ref{sec:appendix-barely-subcrit-large-deviation} for a detailed proof). 
The proof of \eqref{fact:graphs} now follows using $\ell_n' = \Theta(n)$, \eqref{defn:barely-subcrit},  and Fact~\ref{fact:total-edges-random-comp}.

As mentioned in Remark~\ref{rem:ub-susc}, now we can apply the upper bound from  \eqref{sus-simpl-1}. 
Therefore,
 \begin{equation}\label{weight-conditioned-comp}
\begin{split}
 &\expt{\mathscr{W}(U_n^*)\ind{U_n^* \notin \mathscr{C}'(V_n^*)} \big\vert \mathscr{C}'(V_n^*)}\\
 &= \frac{\sum_{i\notin \mathscr{C}'(V_n^*)}w_i}{\ell_n^w}\expt{\mathscr{W}(U_n^*) \big\vert \mathscr{C}'(V_n^*), U_n^* \notin \mathscr{C}'(V_n^*)}\\
 & \leq \frac{\sum_{i\in [n]}w_i^2}{\ell_n^w}+ \frac{\sum_{i\notin \mathscr{C}'(V_n^*)}w_i}{\ell_n^w} \times \frac{\big(\sum_{i\notin \mathscr{C}'(V_n^*)}d_i'w_i\big)^2\sum_{i\notin\mathscr{C}'(V_n^*)}d_i'}{\sum_{i\notin \mathscr{C}'(V_n^*)} w_i\sum_{i\notin \mathscr{C}'(V_n^*)} d_i' (-\sum_{i\notin\mathscr{C}'(V_n^*)}d_i'(d_i'-2)) }\\
 &\leq \frac{\big(\sum_{i\in [n]}d_i'w_i\big)^2}{\ell_n^w\ell_n'(1-\nu_n'+\oP(1))}+O(1)= \expt{\mathscr{W}(V_n^*)}\big(1+\oP(1)\big),
 \end{split}
\end{equation}where the penultimate step again follows from \eqref{nu-tilde-prime-compare}.
Thus,
\begin{equation}\label{cross-prod-var-s2}
 \expt{\mathscr{W}(U_n^*)\mathscr{W}(V_n^*)\ind{U_n^* \notin \mathscr{C}'(V_n^*)} }\leq \big(\expt{\mathscr{W}(V_n^*)}\big)^2\big(1+o(1)\big).
\end{equation} 
Now, \eqref{eq:moment2-s2}, \eqref{cross-prod-var-s2} together with Lemma~\ref{lem:expt-weight-random} implies that $\var{s_2^\star}=o(n^{2\delta})$.
We can use the Chebyshev inequality and \eqref{expt:s2} to conclude that $n^{-\delta}s_2^\star \xrightarrow{\sss \PR} \mu_{d,w}^2/(\mu_d\lambda_0).$ 
 \end{proof} 

\begin{remark}\label{rem:spr-asymp} \normalfont The method used to obtain the asymptotics of $s_2^\star$ can also be followed verbatim to obtain the asymptotics of $s_{pr}^\star$. Indeed, notice that 
\begin{equation}
 \expt{s_{pr}^\star} = \frac{1}{n}\E\bigg[\sum_{i\geq 1}\Wli|\pCli|\bigg] = \expt{\mathscr{W}(V_n)}.
\end{equation} 
A similar identity for the second moment of $s_{pr}^\star$ also holds.
\end{remark}

\subsection{Barely subcritical masses}\label{sec:barely-subcritical-mass}
We now prove the asymptotics of $\Wlj$ in Theorem~\ref{thm:susceptibility}.
The idea is to obtain the asymptotics for $\mathscr{W}(j)$ for each fixed $j$, and then show that 
$\Wlj=\mathscr{W}(j)$ with high probability. Consider the breadth-first exploration of the graph  starting from vertex $j$ as follows:
 \begin{algo} \label{algo:explor-barely-sub}\normalfont
 The algorithm carries along three disjoint sets of half-edges: \emph{active, neutral, dead}. 
\begin{itemize}
\item[(S0)] At stage $i=0$, the half-edges incident to $j$ are active and all the other half-edges are neutral. Order the initially active half-edges arbitrarily.
\item[(S1)]  At each stage, take the \emph{smallest} half-edge $e$ and pair it with another half-edge $f$, chosen uniformly at random from the set of half-edges that are either active or neutral. If $f$ is neutral, then the vertex $v$ to which $f$ is incident, is not discovered yet. 
Declare the half-edges incident to $v$ to be active and \emph{larger} than all other active vertices (choose any order between the half-edges incident to $v$). 
Declare $e,f$ to be dead.
\item[(S2)] Repeat from (S1) until the set of active half-edges is empty.
\end{itemize}
\end{algo}
\noindent Define  the process $\mathbf{S}_n^j$ by
$S_n^j(l)= S_n^j(l-1)+d_{\sss (l)}'J_l-2,$ and $S_n^j(0)= d_j'$,
where $J_l$ is the indicator that a new vertex is discovered at time $l$ and $d_{\sss (l)}'$ is the degree of the discovered vertex, if any. 
Thus, when the exploration starts from vertex $j$, then $S_n^j$ tracks the number of active half-edges.
Let $L:=\inf\{l\geq 1:S_n^j(l)=0\}$. 
 By convention, we assume that $S_n^j(l)=0$ for $l>L$. 
 Let $\mathscr{V}_l$ denote the vertex set discovered up to time $l$ excluding $j$ and $\mathcal{I}_i^n(l):=\ind{i\in\mathscr{V}_l}$. Define $\mathcal{I}_j^n(l)\equiv 0$. 
 Also, let $\mathscr{F}_l$ denote the sigma-field containing all the information upto time $l$ in Algorithm~\ref{algo:explor-barely-sub}. 
Note that 
   \begin{equation}
   \begin{split}
    S_n^j(l)&= d_j'+\sum_{i\in [n]} d_i' \mathcal{I}_i^n(l)-2l=d_j'+\sum_{i\in [n]} d_i' \bigg( \mathcal{I}_i^n(l)-\frac{d_i'}{\ell_n'}l\bigg)+\left( \nu_n'-1\right)l.
    \end{split}
   \end{equation}  
    Consider the re-scaled process $\bar{\mathbf{S}}^j_n$ defined as $\bar{S}^j_n(t)= n^{-\alpha}S_n^j(\floor{tn^{\alpha+\delta} })$. Then, using Assumption~\ref{assumption-w},
   \begin{equation} \label{eqn::scaled_process_j}
    \bar{S}_n^j(t)= c_j+n^{-\alpha} \sum_{i\in [n]}d_i'\bigg( \mathcal{I}_i^n(tn^{\alpha+\delta})-\frac{d_i'}{\ell_n'}tn^{\alpha+\delta} \bigg)- \lambda_0 t +o(1).
   \end{equation}The following three lemmas determine the asymptotics of $\Wli$ and $s_3^\star$: 
\begin{lemma} \label{lem:exploration::subcritical-j}
 Let $L_j$ be the function with  $L_j(t)=c_j-\lambda_0t$ for $t\in [0,c_j\lambda_0^{-1}]$ and $L_j(t)=0$ for $t>c_j\lambda_0^{-1}$. Then, under {\rm Assumption~\ref{assumption-w}}, as $n\to\infty$,
  \begin{eq}
  \sup_{t\leq c_j\lambda_0^{-1}}\big| \bar{S}^j_n(t) - L_j(t)\big| \pto 0.
  \end{eq}
\end{lemma}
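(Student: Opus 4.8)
The plan is to show that the rescaled exploration walk $\bar{\mathbf{S}}^j_n$ converges in probability to the deterministic function $L_j$ by controlling the fluctuation term in~\eqref{eqn::scaled_process_j}. Writing $\bar S_n^j(t)=c_j-\lambda_0 t+\bar M_n^j(t)+o(1)$, where $\bar M_n^j(t):=n^{-\alpha}\sum_{i\in[n]}d_i'\big(\mathcal I_i^n(tn^{\alpha+\delta})-(d_i'/\ell_n')\,tn^{\alpha+\delta}\big)$ (up to the reflection-at-zero that kicks in once the walk hits $0$), the heart of the matter is to prove that $\sup_{t\le T}|\bar M_n^j(t)|\pto 0$ for every fixed $T>0$. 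Since $L_j$ is continuous and the process is monotone once absorbed at $0$, uniform control of the fluctuations on compacts upgrades to $J_1$-convergence in probability on $[0,\infty)$ in the usual way. First I would fix $T$ and truncate: on the event that the exploration has not died by time $Tn^{\alpha+\delta}$, at most $Tn^{\alpha+\delta}$ vertices have been discovered, so $\sum_{i\in[n]}d_i'\mathcal I_i^n(tn^{\alpha+\delta})\le \ell_n'$ trivially, but more usefully the contribution of the \emph{high-degree} hubs $\{1,\dots,K\}$ and of the \emph{bulk} must be handled separately.

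For the bulk term, I would use a martingale decomposition of $\mathcal I_i^n(l)$: at each step of Algorithm~\ref{algo:explor-barely-sub} a neutral half-edge is selected with probability proportional to $d_i'$ among the roughly $\ell_n'-O(l)$ remaining half-edges, so $\mathcal I_i^n(l)-\sum_{k\le l}\mathbf 1\{i\text{ neutral at }k\}\,d_i'/(\ell_n'-O(k))$ is a martingale with increments bounded by $1$. Summing $d_i'$ times these over $i$ and using Doob's $L^2$-inequality (as in the proof of Lemma~\ref{lem:path:GHP-conv}), the variance of the centered sum at time $Tn^{\alpha+\delta}$ is at most $n^{-2\alpha}\cdot Tn^{\alpha+\delta}\cdot \frac{1}{\ell_n'}\sum_i d_i'^3$-type quantities; by Assumption~\ref{assumption-w}(ii), $\sum_i d_i'^3=O(n^{3\alpha})$, which gives a bound of order $n^{-2\alpha+\alpha+\delta+3\alpha-1}=n^{2\alpha+\delta-1}$. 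Since $2\alpha=2/(\tau-1)<1$ and $\delta<\eta=(\tau-3)/(\tau-1)$, one checks $2\alpha+\delta-1<2\alpha+\eta-1=(\tau-3+2)/(\tau-1)-1=0$, so this vanishes; the drift-correction discrepancy (replacing $\ell_n'-O(k)$ by $\ell_n'$) contributes a term of order $n^{-\alpha}\cdot n^{\alpha+\delta}\cdot n^{\alpha+\delta}/\ell_n'\cdot \sum_i d_i'^2/n=O(n^{2\delta-\eta})\cdot$const, again $o(1)$ since $\delta<\eta$.

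For the hub term, i.e.\ the indices $i\le K$, each $\mathcal I_i^n$ is a single $\{0,1\}$-variable, so $n^{-\alpha}\sum_{i\le K}d_i'\mathcal I_i^n(tn^{\alpha+\delta})$ is at most $n^{-\alpha}\sum_{i\le K}d_i'\to\sum_{i\le K}c_i$, which does \emph{not} vanish. This is the main obstacle: the lemma as stated asserts $\bar{\mathbf S}^j_n\to L_j$ with no hubs appearing in the limit, so one must argue that on the barely-subcritical time scale the exploration started from the fixed vertex $j$ \emph{does not reach any of the hubs} with probability tending to $1$. I would prove this via a first-moment bound: the expected number of hubs among $\{1,\dots,K\}$ discovered by time $Tn^{\alpha+\delta}$ is, by the path-counting estimates of Lemma~\ref{lem:expt-weight-random}(i) (each hub $i$ is reached from $j$ through some path, contributing $\lesssim d_i' \sum_{l}\nu_n'^{\,l-1}/\ell_n' \cdot$const $=O(n^{\alpha-1}/(1-\nu_n'))=O(n^{\alpha-1+\delta})=O(n^{-\eta+\delta})\cdot$const), which is $o(1)$ since $\alpha-1+\delta<\alpha-1+\eta=\alpha-\rho<0$; summing over the finitely many $i\le K$ keeps it $o(1)$. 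Hence whp no hub is touched, the hub term is identically $0$, and the bulk analysis above finishes the proof. Finally, once $\bar S_n^j$ hits $0$ (which happens near $t=c_j\lambda_0^{-1}$ by the above), the convention $S_n^j(l)=0$ for $l>L$ matches $L_j(t)=0$ for $t>c_j\lambda_0^{-1}$, so the two processes agree on all of $[0,\infty)$ up to the vanishing fluctuation, giving the claimed $J_1$-convergence in probability. The main delicacy throughout is tracking that every error exponent is strictly negative, which rests on the single inequality $\delta<\eta$ together with $2\alpha<1$ from $\tau\in(3,4)$.
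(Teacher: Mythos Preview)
Your argument is correct, but it takes a different and slightly more elaborate route than the paper's. The paper does \emph{not} split into hub and bulk contributions. Instead it treats all indices $i\in[n]$ at once: it replaces the centering $d_i'l/\ell_n'$ by $d_i'l/\ell_n'(T)$ with $\ell_n'(T)=\ell_n'-2Tn^{\alpha+\delta}-1$, so that the resulting process $M_n'(l)=\sum_{i\in[n]}d_i'\big(\mathcal I_i^n(l)-(d_i'/\ell_n'(T))l\big)$ is a genuine \emph{super}martingale; it then bounds $|\E M_n'(l)|$ and $\mathrm{Var}(M_n'(l))$ directly (the latter via the negative correlation $\mathrm{Cov}(\mathcal I_i^n(l),\mathcal I_k^n(l))\le 0$ together with $\mathrm{Var}(\mathcal I_i^n(l))\le \PR(\mathcal I_i^n(l)=1)\le ld_i'/\ell_n'(T)$), and concludes by the supermartingale maximal inequality rather than Doob's $L^2$. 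The point is that the crude bound $\PR(\mathcal I_i^n(l)=1)\le ld_i'/\ell_n'$ is already $O(n^{2\alpha+\delta-1})=o(1)$ for hubs, so the non-discovery of hubs is automatically absorbed into the variance estimate---your separate path-counting step is valid but superfluous.

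One small slip: your drift-correction discrepancy is of order $n^{-\alpha}\cdot (n^{\alpha+\delta})^2\cdot \ell_n'^{-2}\cdot\sum_i d_i'^2 = O(n^{\alpha+2\delta-1})=O(n^{2\delta-\rho})$, not $O(n^{2\delta-\eta})$ as you wrote. This matters because $2\delta-\eta<0$ would require $\delta<\eta/2$, which is not assumed, whereas $2\delta-\rho<0$ follows from $2\eta<\rho$ (equivalent to $\tau<4$), so the conclusion stands. Similarly, in your hub path-count the factor $d_j'\sim n^{\alpha}$ must be retained (your ``const'' is not constant), giving the correct exponent $n^{2\alpha+\delta-1}=n^{\delta-\eta}=o(1)$.
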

\begin{lemma}\label{lem:weight-prop-l} For any $T>0$,
$\sup_{l\leq Tn^{\alpha+\delta}}\Big|\sum_{i\in [n]}w_i\mathcal{I}_i^n(l)- \frac{\sum_{i\in [n]}d_i'w_i}{\sum_{i\in [n]}d_i'}l\Big|=\oP(n^{\alpha+\delta}).$
\end{lemma}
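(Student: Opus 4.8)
The plan is to show that the weighted count $\sum_{i\in[n]}w_i\mathcal I_i^n(l)$ concentrates around its ``fluid'' value $\frac{\mu_{d,w}}{\mu_d}\,l$ uniformly over $l\le Tn^{\alpha+\delta}$, exactly as in the proof of Lemma~\ref{lem:exploration::subcritical-j} but now tracking the weight $w_i$ attached to a vertex rather than its degree $d_i'$. First I would recall the one-step dynamics of Algorithm~\ref{algo:explor-barely-sub}: at step $l$, conditionally on $\mathscr F_{l-1}$, the half-edge $e$ being explored is paired with a uniformly chosen half-edge among the active or neutral ones; the chance that this pairing discovers a fresh vertex $v$ is $\tfrac{d'_v}{\ell_n'-(2l-1)}$ up to the correction coming from already-explored half-edges, and this correction is negligible on the time scale $l\le Tn^{\alpha+\delta}=O(n^{2\alpha})=o(\ell_n')$ since $\ell_n'\asymp n$ and $\alpha<1/2$. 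Hence, writing $J_l$ for the indicator that step $l$ discovers a new vertex and $v(l)$ for that vertex,
\[
\E\big[w_{v(l)}J_l\mid\mathscr F_{l-1}\big]
=\frac{\sum_{i\notin \mathscr V_{l-1}}d_i'w_i}{\ell_n'}\big(1+o(1)\big)
=\frac{\sum_{i\in[n]}d_i'w_i}{\ell_n'}+E_{l-1},
\]
where the error $E_{l-1}$ absorbs both the $(1+o(1))$ factor and the contribution $\tfrac1{\ell_n'}\sum_{i\in\mathscr V_{l-1}}d_i'w_i$ of the already-discovered vertices. Summing, the ``drift'' part produces $\frac{\sum_i d_i'w_i}{\ell_n'}\,l=\frac{\mu_{d,w}}{\mu_d}l(1+o(1))$, which matches the claimed centering.

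Next I would control the two sources of fluctuation. For the martingale part, set
\[
M_l:=\sum_{k=1}^{l}\Big(w_{v(k)}J_k-\E[w_{v(k)}J_k\mid\mathscr F_{k-1}]\Big),
\]
a martingale with respect to $(\mathscr F_l)$ whose increments are bounded by $\max_i w_i=O(n^\alpha)$ (by Assumption~\ref{assumption-w}(ii), $\sum_i w_i^3=O(n^{3\alpha})$ forces $w_i=O(n^\alpha)$). Its predictable quadratic variation satisfies
\[
\langle M\rangle_l\le \sum_{k=1}^{l}\E[w_{v(k)}^2 J_k\mid\mathscr F_{k-1}]
\le l\cdot\frac{\sum_i d_i'w_i^2}{\ell_n'}\,(1+o(1))
=O\!\big(n^{\alpha+\delta}\cdot n^{3\alpha-1}\big)=O(n^{4\alpha+\delta-1}),
\]
using $\sum_i d_i'w_i^2=O(n^{3\alpha})$ and $\ell_n'\asymp n$. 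Since $4\alpha+\delta-1<2(\alpha+\delta)$ (equivalently $2\alpha-1<\delta$, which holds as $\delta>0>2\alpha-1$ because $\alpha=1/(\tau-1)<1/2$ for $\tau>3$), Doob's $L^2$-inequality gives $\PR(\sup_{l\le Tn^{\alpha+\delta}}|M_l|>\varepsilon n^{\alpha+\delta})\to 0$. For the accumulated drift error, $\sum_{k\le l}|E_{k-1}|$ is bounded by $\frac1{\ell_n'}\sum_{k\le l}\sum_{i\in\mathscr V_{k-1}}d_i'w_i$ plus an $o(1)\cdot\frac{\mu_{d,w}}{\mu_d}l$ term; on the event $\{L_j$-type bound holds$\}$ the number of discovered vertices up to step $l$ is $O(n^{\alpha+\delta})$ and $\sum_{i\in\mathscr V_{k-1}}d_i'w_i\le \sup_{l}\sum_i d_i'w_i\mathcal I_i^n(l)$, which one bootstraps: a crude a priori bound (e.g. the total over all of $[n]$, or a first-moment estimate via path counting as in Lemma~\ref{lem:expt-weight-random} applied with the product weights $d_i'w_i$) shows this sum is $o(n^{\alpha+\delta})\cdot$(something), and feeding it back closes the estimate. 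Combining the martingale bound and the drift-error bound yields the uniform $\oP(n^{\alpha+\delta})$ conclusion.

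I expect the main obstacle to be the drift-error term $\frac1{\ell_n'}\sum_{k\le l}\sum_{i\in\mathscr V_{k-1}}d_i'w_i$: because the high-weight (equivalently high-degree, through Assumption~\ref{assumption-w}) vertices carry weight and degree of order $n^\alpha$, if one of the top vertices is discovered early it contributes $\Theta(n^\alpha)$ per subsequent step and over $n^{\alpha+\delta}$ steps that is $\Theta(n^{2\alpha+\delta})$, which is \emph{not} $o(n^{\alpha+\delta})$. The resolution is the same barely-subcritical phenomenon used throughout Section~\ref{sec:barely-subcritical-mass}: in the subcritical regime the exploration from a fixed vertex $j$ dies after exploring only $O(n^{\alpha+\delta})$ vertices and, crucially, whp does \emph{not} hit any of the finitely many hubs $\{1,\dots,K\}$ except possibly $j$ itself — this is precisely the content behind Proposition~\ref{prop:tail3-bare-subcrit} and the $\mathcal G^{\sss K}$ truncation argument. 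So I would run the argument on the truncated graph $\mathcal G^{\sss K}$ (degrees of hubs removed), where all degrees and weights are $o(n^\alpha)$ for the relevant vertices after letting $K\to\infty$, obtain the estimate there with error $o_K(1)\cdot n^{\alpha+\delta}$ uniformly in $n$, and then transfer back to $\mathrm{CM}_n(\bld d')$ using that the explorations agree whp and that the discrepancy involves only the $K$ hubs whose total weighted degree is $O(Kn^{2\alpha})=o(n^{\alpha+\delta})$ once divided appropriately — letting $K\to\infty$ after $n\to\infty$ finishes the proof.
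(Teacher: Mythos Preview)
Your martingale decomposition is a valid route and is close in spirit to the paper's argument, but you overcomplicate the drift part. The paper works directly with the centered process $\tilde W(l)=W(l)-\big(\sum_i d_i'w_i/\ell_n'(T)\big)l$, observes that it is a \emph{super}-martingale (since the conditional one-step increment of $W$ is at most $\sum_i d_i'w_i/\ell_n'(T)$ uniformly), and then applies the super-martingale maximal inequality \eqref{eqn:supmg:ineq}, which reduces the sup bound to controlling $|\E[\tilde W(l)]|$ and $\var{\tilde W(l)}$ at the endpoint. The expectation bound follows from $\PR(\mathcal I_i^n(l)=0)\le(1-d_i'/\ell_n')^l$, and the variance is handled via the negative correlation of the indicators $\mathcal I_i^n(l)$, giving $\var{\tilde W(l)}\le\sum_i w_i^2\,\mathrm{Var}(\mathcal I_i^n(l))\le\frac{l}{\ell_n'(T)}\sum_i d_i'w_i^2=O(n^{4\alpha+\delta-1})=o(n^{2(\alpha+\delta)})$. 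No step-by-step tracking of a drift error is needed.

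Your ``main obstacle'' is a phantom, coming from an arithmetic slip. A single hub contributes $d_i'w_i/\ell_n'=O(n^{2\alpha-1})$ per step to your $E_{k-1}$, not $\Theta(n^\alpha)$; over $Tn^{\alpha+\delta}$ steps that is $O(n^{3\alpha+\delta-1})=o(n^{\alpha+\delta})$ since $\alpha<1/2$. More systematically, a first-moment bound using $\PR(\mathcal I_i^n(k)=1)\le k d_i'/\ell_n'(T)$ gives
\[
\E\Big[\tfrac{1}{\ell_n'}\sum_{k\le l}\sum_{i\in\mathscr V_{k-1}}d_i'w_i\Big]
\le \frac{C\,l^2}{\ell_n'^2}\sum_i d_i'^2 w_i
=O\big(n^{5\alpha+2\delta-2}\big)=o(n^{\alpha+\delta}),
\]
since $\delta<\eta=1-2\alpha$ forces $4\alpha+\delta<2$. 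So the drift error is already negligible; the truncation argument via $\mathcal G^{\sss K}$ is unnecessary (and the claim that the exploration from $j$ whp avoids all hubs is not something you can invoke here---it is essentially a consequence of results at this level, not an input).
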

\begin{lemma}\label{lem:hdeg-hweight}
 Fix any $j\geq 1$. Then with high probability $\mathscr{W}(j) = \Wlj$.
\end{lemma}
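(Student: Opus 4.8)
The plan is to prove Lemma~\ref{lem:hdeg-hweight} by first pinning down the exact order of the weight of the component of each fixed high-degree vertex, and then using the deletion trick from the proof of Proposition~\ref{prop:tail3-bare-subcrit} to control all of the remaining (infinitely many) components simultaneously. Write $\kappa:=\mu_{d,w}/(\mu_d\lambda_0)$. First I would deduce from Lemmas~\ref{lem:exploration::subcritical-j}~and~\ref{lem:weight-prop-l} that, for each fixed $i\ge 1$,
\[
 n^{-(\alpha+\delta)}\,\mathscr{W}(i)\ \pto\ \kappa\, c_i .
\]
Indeed, Lemma~\ref{lem:exploration::subcritical-j} identifies the scaled exploration from $i$ with $L_i(t)=(c_i-\lambda_0t)^+$, which is strictly decreasing until it hits $0$, so its length $L=\inf\{l:S_n^i(l)=0\}$ satisfies $n^{-(\alpha+\delta)}L\pto c_i/\lambda_0$; feeding the random time $L$ into the uniform estimate $\sum_{k}w_k\mathcal{I}_k^n(l)=\tfrac{\mu_{d,w}}{\mu_d}\,l+\oP(n^{\alpha+\delta})$ of Lemma~\ref{lem:weight-prop-l}, together with $w_i=O(n^{\alpha})=o(n^{\alpha+\delta})$ from Assumption~\ref{assumption-w}, yields the display. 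Since $\bld{c}\in\ell^3_{\shortarrow}\setminus\ell^2_{\shortarrow}$ is strictly decreasing, an immediate consequence is that for every fixed $m$ the events $\{\mathscr{C}'(i)=\mathscr{C}'(i')\}$, $1\le i<i'\le m$, have vanishing probability and, with probability tending to $1$, $\mathscr{W}(1)>\mathscr{W}(2)>\cdots>\mathscr{W}(m)$ with $i$-th term $(1+o(1))\kappa c_i n^{\alpha+\delta}$.

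Next I would invoke \eqref{eq:delete-K-susc} from the proof of Proposition~\ref{prop:tail3-bare-subcrit}: with $\mathcal{G}^{\sss K}$ the graph obtained by deleting all edges incident to $\{1,\dots,K\}$, for every $\varepsilon>0$,
\[
 \lim_{K\to\infty}\ \limsup_{n\to\infty}\ \PR\Big(\textstyle\sum_{i\ge 1}\big(\mathscr{W}_{\sss (i)}^{\sss K}\big)^3>\varepsilon\, n^{3(\alpha+\delta)}\Big)=0,
\]
so off an event of probability $<\varepsilon+o(1)$ every component of $\mathcal{G}^{\sss K}$ has weight at most $\varepsilon^{1/3}n^{\alpha+\delta}$. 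The key observation is that a component of $\rCM_n(\bld{d}')$ disjoint from $\{1,\dots,K\}$ is untouched by the deletion, hence is a component of $\mathcal{G}^{\sss K}$, and therefore has weight at most $\varepsilon^{1/3}n^{\alpha+\delta}$ on that event.

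To assemble: fix $j$, choose $\varepsilon>0$ with $\varepsilon^{1/3}<\tfrac12\kappa c_j$, and then $K\ge j+1$ as above. On the intersection of the good events (applying the first step with $m=K$ and with accuracy small enough that $\mathscr{W}(1)>\cdots>\mathscr{W}(K)$ and $\mathscr{W}(j)>\tfrac12\kappa c_j n^{\alpha+\delta}$), every component of $\rCM_n(\bld{d}')$ is either one of the pairwise distinct $\mathscr{C}'(1),\dots,\mathscr{C}'(K)$ or disjoint from $\{1,\dots,K\}$; a component has weight $\ge\mathscr{W}(j)$ only if it is one of $\mathscr{C}'(1),\dots,\mathscr{C}'(j)$, since the $\mathscr{C}'(i)$ with $j<i\le K$ have weight $\mathscr{W}(i)<\mathscr{W}(j)$ while every component disjoint from $\{1,\dots,K\}$ has weight $\le\varepsilon^{1/3}n^{\alpha+\delta}<\mathscr{W}(j)$. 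Hence the $j$-th largest component weight equals $\mathscr{W}(j)$ on this event, which has probability at least $1-\varepsilon-o(1)$; letting $\varepsilon\downarrow 0$ completes the proof.

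The argument is largely bookkeeping once Lemmas~\ref{lem:exploration::subcritical-j},~\ref{lem:weight-prop-l} and Proposition~\ref{prop:tail3-bare-subcrit} are available. The one genuinely delicate point is the first step — the passage from the path-level scaling limits to the joint statement $(n^{-(\alpha+\delta)}L,\ n^{-(\alpha+\delta)}\mathscr{W}(i))\pto(c_i/\lambda_0,\ \kappa c_i)$, i.e.\ legitimately substituting the random exploration length $L$ into the uniform weight estimate of Lemma~\ref{lem:weight-prop-l} — while the conceptual crux is recognizing that the deletion trick behind \eqref{eq:delete-K-susc} is exactly the device that dominates the infinitely many components not meeting $\{1,\dots,K\}$ all at once.
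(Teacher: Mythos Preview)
Your proposal is correct and follows essentially the same approach as the paper. Both arguments rest on the two ingredients you identify: the convergence $n^{-(\alpha+\delta)}\mathscr{W}(i)\pto\kappa c_i$ for each fixed $i$ (which the paper records as \eqref{eq:weight-j-subcrit-1}--\eqref{eq:weight-j-subcrit-2}), and the deletion estimate \eqref{eq:delete-K-susc} to uniformly dominate all components avoiding $\{1,\dots,K\}$. The paper writes out only the case $j=1$, splitting $\{\mathscr{W}(1)\neq\mathscr{W}_{\sss(1)}\}$ into the events $\mathrm{A}_i\cap\mathrm{B}$ and $\mathrm{A}_{>K}\cap\mathrm{B}$; your version handles general $j$ directly by first ordering $\mathscr{W}(1)>\cdots>\mathscr{W}(K)$ and then showing no other component can compete, which is exactly the ``similarly'' the paper defers.
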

\begin{proof}[Asymptotics of $\Wlj$]Note that, since the exploration process explores one edge at each time, Lemma~\ref{lem:exploration::subcritical-j} implies that (see e.g. \cite[Theorem 13.6.4]{W02})
\begin{equation}\label{eq:weight-j-subcrit-1}
 \frac{1}{2n^{\alpha+\delta}}\sum_{k\in \mathscr{C}'(j)}d_k'\pto \frac{c_j}{\lambda_0}.
\end{equation}Moreover, Lemma~\ref{lem:weight-prop-l} yields that 
\begin{equation}\label{eq:weight-j-subcrit-2}
\frac{1}{n^{\alpha+\delta}}\mathscr{W}(j)= \frac{1}{n^{\alpha+\delta}}\sum_{k\in \mathscr{C}'(j)}w_k = \frac{\sum_{i\in [n]}d_i'w_i}{\ell_n'n^{\alpha+\delta}}\frac{1}{2}\sum_{k\in \mathscr{C}'(j)}d_k'+\oP(1) \pto \frac{ \mu_{d,w}}{\mu_d \lambda_0}c_j.
\end{equation}Now the asymptotics of $\Wlj$ in Theorem~\ref{thm:susceptibility} follows by an application of Lemma~\ref{lem:hdeg-hweight}.
\end{proof}
Next we provide a proof for Lemma~\ref{lem:hdeg-hweight}. 
The proofs of Lemmas~\ref{lem:exploration::subcritical-j}~and~\ref{lem:weight-prop-l} follow using similar techniques as in \cite{DHLS16}, and thus are provided in Appendix~\ref{sec:appendix-barely-subcrit}.
\begin{proof}[Proof of Lemma~\ref{lem:hdeg-hweight}]
Recall the definition of $\pClj,\Wlj$  from  \eqref{defn:pCli}.
For fixed $K\geq 1$, if all the components $(\pClj)_{j\in [K]}$ are disjoint, then $j=\min\{k:k\in\pClj\}$, i.e., $j$ is the minimum index among the vertices in $\pClj$.
In that case, $\sW(j) = \Wlj$.
Thus, it is enough to show that, for each fixed $i,j\geq 1$, 
\begin{eq}\label{eq:i-j-diff-comp-whp}
\PR(i \text{ and } j \text{ are in same connected component}) \to 0.
\end{eq}
If $i,j$ are in the same connected component, then the $\bar{\mathbf{S}}^j_n$ will have a jump of size $d_i' = (1+o(1)) c_i n^{\alpha}$.
By Lemma~\ref{lem:exploration::subcritical-j}, and the fact that $L_j$ is continuous, it follows that the probability of $\bar{\mathbf{S}}^j_n$ having a jump of size at least $\varepsilon n^{\alpha}$ tends to zero for any fixed $\varepsilon >0$. 
Thus we conclude \eqref{eq:i-j-diff-comp-whp} and the proof follows. 
\end{proof}

\subsection{Analysis of the susceptibility function \texorpdfstring{$s_3^{\star}$}{TEXT}} \label{sec:s3}The  aim of this section is to prove the following proposition which estimates the contribution on $s_3^\star$ due to components $(\pCli)_{i>K}$:
\begin{proposition}\label{prop:tail3-bare-subcrit}
Suppose that {\rm Assumption~\ref{assumption-w}} holds. 
For any $\varepsilon > 0$,
\begin{equation}
 \lim_{K\to\infty}\limsup_{n\to\infty}\PR\bigg(\sum_{i>K}\big(\Wli\big)^3>\varepsilon n^{3(\alpha+\delta)}\bigg)=0.
\end{equation}
\end{proposition}
\begin{proof}
 Let $\mathcal{G}^{\sss K}$ denote the graph obtained by deleting all the edges incident to the vertices in $[K]$. In this proof, a superscript $K$ to any previously defined object will correspond to the object in $\mathcal{G}^K$. Note that $\mathcal{G}^{\sss K}$ is again distributed as a configuration model conditioned on the new degree sequence $\bld{d}^{\sss K} = (d_i^{\sss K})_{i\in [n]}$. 
 We also augment a previously defined notation with $K$ in the superscript to denote the corresponding quantity for $\mathcal{G}^K$.
 Note that $\sum_{i\in [n]} d_i^{\sss K} \geq  \ell_n' - 2\sum_{i\in [K]}d_i' = \ell_n' (1+ O(n^{\alpha - 1}))$. 
 Also, $d_i^{\sss K} = 0$ for all $i\in [K]$, and $d_i^{\sss K} \leq d_i'$ for all $i\in [n]\setminus [K]$.
Recall the definition of $c_i$'s from Assumption~\ref{assumption-w}.
 First, for each fixed $K\geq 1$, 
  \begin{equation}\label{eqn:nu-K}
  \begin{split}
  \nu^{\sss K}_n&:= \frac{\sum_{i\in [n]} d_i^{\sss K}(d^{\sss K}_i-1)}{\sum_{i\in [n]} d_i^{\sss K}}\leq \frac{\sum_{i>K}d_i'(d_i'-1)}{\ell_n' (1+ O(n^{\alpha - 1}))}\\
  &=\frac{\sum_{i\in [n]} d_i'(d_i'-1)}{\ell_n'} - \frac{\sum_{i\in [K]} d_i'(d_i'-1)}{\ell_n'} + \frac{\sum_{i>K} d_i'(d_i'-1) \sum_{i\in [K]}d_i'}{\ell_n'\sum_{i>K}d_i} + O(n^{\alpha-1})\\
  & = \nu_n' +O(n^{2\alpha - 1} ) + O(n^{\alpha - 1}) = 1- \lambda_0 n^{-\delta} +o(n^{-\delta}),
  \end{split}
  \end{equation}
  where we have used the fact that $\delta<\eta = 1-2\alpha<1-\alpha$ in the last step. 
  We aim to apply the upper bound \eqref{upper-second-moment}. 
  Since we have only deleted $K =O(1)$ many vertices and $\Theta(n^{\alpha})$ many half-edges to obtain $\cG^{\sss K}$, it follows that $(d_i^{\sss K})_{i\in [n]}$ also satisfies Assumption~\ref{assumption-w}.
  We can apply the upper bound in \eqref{upper-second-moment}, and thus
  \begin{equation}\label{est:3rd-susc-order}
  \begin{split}
  \frac{1}{n}\E\bigg[\sum_{i}\big(\Wli^{\sss K}\big)^3\bigg]&= \frac{\ell_n^w}{n}\expt{\big(\mathscr{W}^{\sss K}(V_n^*)\big)^2}
  \leq C\frac{\sum_{i>K} d_i'(d_i'-1)(d_i'-2)}{\sum_{i>K} d_i'(1-\nu_n^{\sss >K})^3} \\
  &\leq C n^{3\alpha +3\delta - 1} \bigg(n^{-3\alpha} \sum_{i>K} d_i'^3\bigg),
  \end{split}
  \end{equation}
  which tends to zero in the iterated limit $\lim_{K\to\infty}\limsup_{n\to\infty}$.
  Therefore, using the Markov inequality and the fact that $\bld{c}\in \ell^3_{\shortarrow}\setminus \ell^2_{\shortarrow}$, it follows that, for any $\varepsilon>0$,
\begin{equation}\label{eq:delete-K-susc}
\lim_{K\to\infty}\limsup_{n\to\infty}\PR\bigg(\sum_{i\geq 1}\big(\Wli^{\sss K}\big)^3>\varepsilon n^{3(\alpha+\delta)}\bigg) = 0.
\end{equation}  
    Now, the proof is complete by  observing that $
  \sum_{i>K}\Wli^3\leq \sum_{i\geq 1}(\Wli^{\sss K})^3.$
  \end{proof}
\begin{remark}\label{rem:3rd-suscep-K}\normalfont Notice that the proof of Proposition~\ref{prop:tail3-bare-subcrit} can be modified to conclude the similar results for  $\sum_{i>K}\big(\Wli\big)^2|\pCli|$ and  $\sum_{i>K}\Wli|\pCli|^2$.
%
Indeed, an analogue of \eqref{est:3rd-susc-order} can be computed by observing that
$ \E[\sum_{i\geq 1}(\Wli^{\sss K})^2|\pCli^{\sss K}|] = n  \E[\mathscr{W}^{\sss K}(V_n)],$ and $\E[\sum_{i\geq 1}\Wli^{\sss K}(|\pCli^{\sss K}|)^2] = \ell_n^w \E[|\mathscr{C}'^{\sss K}(V_n^*)|^2].$
\end{remark}
Finally we prove the asymptotics of $s_3^\star$ stated in Theorem~\ref{thm:susceptibility}:
\begin{proof}[Asymptotics of $s_3^\star$]
The proof follows by combining the asymptotics of $\Wlj$ and Proposition~\ref{prop:tail3-bare-subcrit}.
\end{proof}

\begin{remark}\label{rem:3rd-suscep}\normalfont 
The argument for $s_3^\star$ can be followed verbatim to also conclude that 
\begin{eq}
n^{-3\alpha-3\delta}\sum_{i=1}^\infty\big(\Wli\big)^2|\pCli| \pto \frac{\mu_{d,w}^2}{\mu_d^2\lambda_0^3} \sum_{i=1}^\infty c_i^3, \quad  n^{-3\alpha-3\delta}\sum_{i=1}^\infty\Wli|\pCli|^2 \pto \frac{\mu_{d,w}}{\mu_d\lambda_0^3} \sum_{i=1}^\infty c_i^3.
\end{eq}
\end{remark}

\subsection{Mesoscopic typical distances}\label{sec:meso-dist}
In this section, we obtain the asymptotics of $\mathcal{D}_n^\star$ in Theorem~\ref{thm:susceptibility} using a similar analysis as in Section~\ref{sec:s2}. 
Again the proof involves the Chebyshev inequality where the moments are estimated using path counting. 
We sketch the computation of $\E[\mathcal{D}_n^\star]$.
%
Recall the notations $U_n^*$, $V_n^*$, 
$\mathcal{A}_l(v,k)$ 
and $\mathcal{A}'_l(v,k)$ 
from Section~\ref{sec:s2}. 
Note that
\begin{align*}
 \expt{\mathcal{D}_n^\star} &= \frac{1}{n}\E\bigg[\sum_{i,k\in [n]}w_iw_k\dst(i,k)\ind{k\in\mathscr{C}'(i)}\bigg] = \frac{\ell_n^w}{n}\E\bigg[\sum_{k\in [n]}w_k\dst(V_n^*,k)\ind{k\in\mathscr{C}'(V_n^*)}\bigg]
 \\
 &\leq  \frac{\ell_n^w}{n}\sum_{k\in [n]}w_k\sum_{l\geq 1}l\prob{\mathcal{A}_l(V_n^*,k)}=   \frac{\ell_n^w}{n} \sum_{l\geq 1} l\sum_{k\in [n]}w_k\prob{\mathcal{A}_l(V_n^*,k)},
\end{align*}
and also
\begin{equation}
\expt{\mathcal{D}_n^\star} \geq \frac{\ell_n^w}{n} \sum_{l\geq 1} l \bigg(\sum_{k\in [n]}w_k\big(\prob{\mathcal{A}_l(V_n^*,k)}-\prob{\mathcal{A}'_l(V_n^*,k)}\big)\bigg).
\end{equation}
Now compare the terms above  to \eqref{sus1-expr-1}, \eqref{sus1-expr-2}. The only difference is that there is an extra multiplicative $l$ here. 
Thus we can follows identical arguments as in the proof of  \eqref{sus-simpl-1}, \eqref{susc-lb2}, and at the final step, we can use that $\sum_{l\geq 1} l (\nu_n')^{l-1} = (1-\nu_n')^{-2}$. Thus, 
\begin{equation}
 \expt{\mathcal{D}^\star_n} = \frac{\expt{W_n}\expt{D_n^*}\expt{D_n'W_n}}{\expt{D_n'}(1-\nu_n')^2}(1+o(1))= \frac{(\expt{D_n'W_n})^2}{\expt{D_n'}(1-\nu_n')^2}(1+o(1)).
\end{equation}
The variance terms can also be computed similarly. Due to the presence of $l^2$ in the second moment, we can use $\sum_{l\geq 1} l(l-1) (\nu_n')^{l-2} = (1-\nu_n')^3$.
This gives rise to an additional fact $1/(1-\nu_n')^2 = O(n^{2\delta})$. Again, the identical arguments as \eqref{weight-conditioned-comp} can be applied to show that $\var{\mathcal{D}_n^{\star}}=o(n^{4\delta})$. 
Thus the proof of the asymptotics of $\mathcal{D}_n^{\star}$ follows. \qed

\section{Metric space limit for critical  percolation clusters}
\label{sec:proof-metric-mc}
The aim of this section is to complete the proof of Theorem~\ref{thm:main}. 
We start by defining the multiplicative coalescent process \cite{A97,AL98} that will play a pivotal role in this section.
\begin{defn}[Multiplicative coalescent]\label{defn:mul-coalescent} \normalfont
Consider a (possibly infinite) collection  of particles and let $\mathbf{X}(s)=(X_i(s))_{i\geq 1}$ denote the collection of masses of those particles at time $s$. 
Thus the $i$-th particle has mass $X_i(s)$ at time~$s$. 
The evolution of the system takes place according to the following rule at time $s$: At rate $X_i(s)X_j(s)$,  particles $i$ and $j$ merge into a new particle of mass $X_i(s)+X_j(s)$.
\end{defn}
Before going into the details, let us describe the general idea and the organization of this section. 
The proof combines many ingredients and ideas from \cite{BBSX14} and \cite{DHLS16}.
In Section~\ref{sec:dyn-cons} we  consider a dynamically growing process of graphs that approximates the percolation clusters in the critical window. 
Now, the graphs generated by this dynamic evolution satisfy the following properties: 
(i) In the critical window, the components merge \emph{approximately} as the multiplicative coalescent where the \emph{mass} of each component is approximately proportional to the component size; 
(ii) The masses of the barely  subcritical clusters satisfy \emph{nice} properties due to Theorem~\ref{thm:susceptibility}. 
In Section~\ref{sec:entrance-boundary-open-he}, we derive the required properties in the barely subcritical regime for the dynamically growing graph process using Theorems~\ref{thm:susceptibility}~and~\ref{thm:diam-max}.
Section~\ref{sec:struct-compare} is devoted to deriving scaling limits of functionals of $\mathcal{G}_n(t_c(\lambda))$.
In Section~\ref{sec:coupling-mul-coal}, we modify the dynamic process in such a way  that the components merge exactly as multiplicative coalescent.
Since the exact multiplicative coalescent corresponds to the rank-one inhomogeneous random graphs, thinking of these barely subcritical clusters as blobs, we use the universality theorem (Theorem~\ref{thm:univesalty}) in Section~\ref{sec:modified-graph} to determine the metric space limits of the largest components of the modified graph (Theorem~\ref{thm:mspace-limit-modified}). 
We finally complete the proof of Theorem~\ref{thm:main}
 in Section~\ref{sec:proof-thm1}.
  The proof of Theorem~\ref{thm:main-simple} is given in Section~\ref{sec:simple}.

\subsection{The dynamic construction and its properties}\label{sec:dyn-cons}
\begin{algo}[The dynamic construction]\label{algo:dyn-cons} \normalfont Let $\mathcal{G}_n(t)$ be the graph obtained up to  time $t$ by the following dynamic construction:
\begin{itemize}
 \item[(S0)]  Initially, any vertex $i$ has $d_i$ incident half-edges and all the half-edges are alive. During the construction, a half-edge can be in one of the following two sets: alive or dead. All the half-edges have an independent unit rate  exponential clock attached to them.
 \item[(S1)] Whenever a clock rings, we take the corresponding half-edge, kill it and pair it with a half-edge chosen uniformly at random among the alive half-edges. The paired half-edge is also killed and the exponential clocks associated with killed half-edges are discarded.
\end{itemize} 
\end{algo}  Since a half-edge is paired with another unpaired half-edge, chosen uniformly at random from the set of all unpaired half-edges, the final graph $\mathcal{G}_n(\infty)$ is distributed as $\mathrm{CM}_n(\boldsymbol{d})$. 
Define
\begin{equation}\label{eq:def-crit-time}
 t_c(\lambda)=\frac{1}{2}\log\bigg(\frac{\nu_n}{\nu_n-1}\bigg)+\frac{\nu_n}{2(\nu_n-1)}\frac{\lambda}{n^\eta}.
\end{equation}
We denote the $i$-th largest component of $\mathcal{G}_n(t)$ by $\mathscr{C}_{\sss (i)}(t)$.
In the subsequent part of this paper, we will derive the metric space limit of $(\mathscr{C}_{\sss (i)}(t_c(\lambda)))_{i\geq 1}$. 
The following lemma enables us to switch to the conclusions for the largest clusters of $\rCM_n(\bld{d},p_n(\lambda))$:
\begin{lemma}[{\cite[Proposition 24]{DHLS16}}]\label{lem:coupling-whp} There exists $\varepsilon_{n}=o(n^{-\eta})$ and a coupling such that, with high probability,
\begin{gather}
 \mathcal{G}_n(t_c(\lambda)-\varepsilon_{n})\subset \mathrm{CM}_n(\bld{d},p_n(\lambda)) \subset\mathcal{G}_n(t_c(\lambda)+\varepsilon_n),\\
  \mathrm{CM}_n(\bld{d},p_n(\lambda)-\varepsilon_n) \subset \mathcal{G}_n(t_c(\lambda)\subset \mathrm{CM}_n(\bld{d},p_n(\lambda)+\varepsilon_n).
  \end{gather}
\end{lemma}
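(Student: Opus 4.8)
This is \cite[Proposition 24]{DHLS16}; here is how I would argue it. The plan is to realise $\mathcal{G}_n(t)$ and all the percolated graphs $\mathrm{CM}_n(\bld{d},p)$ on one probability space so that they are all \emph{initial segments}, in one common random order, of the edge set of $\mathrm{CM}_n(\bld{d})=\mathcal{G}_n(\infty)$. Once that is set up, each of the four containments reduces to an inequality between two integer-valued processes, and both of those concentrate around explicit deterministic curves; a first/second moment computation then closes the proof.

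The structural input is the following fact about the dynamic construction. Write $m=\ell_n/2$ and list the edges of $\mathcal{G}_n(\infty)$ in their order of creation as $(f_1,\dots,f_m)$. A short computation shows that $(f_1,\dots,f_m)$ is uniform over all orderings of all perfect matchings of the $\ell_n$ half-edges: at the $j$-th pairing the ringing half-edge is uniform among the $\ell_n-2(j-1)$ alive half-edges (by exchangeability of the rate-one clocks) and is matched to a uniform alive partner, so the probability of producing any fixed ordered matching equals $\prod_{j=1}^{m} 2/[(\ell_n-2j+2)(\ell_n-2j+1)]$, independently of both the matching and the order. In particular $(f_i)_{i=1}^m$ is an exchangeable sequence whose underlying unordered set is $\mathrm{CM}_n(\bld{d})$. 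Hence, if $(U_i)_{i\ge 1}$ are i.i.d.\ $\mathrm{Uniform}[0,1]$ independent of the clocks and $N(p):=\#\{i\le m: U_i\le p\}$, then $\{f_1,\dots,f_{N(p)}\}$ has exactly the law of $\mathrm{CM}_n(\bld{d},p)$ for every fixed $p$ (conditionally on $N(p)=k$ it is a uniform $k$-subset of a uniform perfect matching, and $N(p)\sim\mathrm{Binomial}(m,p)$), and it is monotone in $p$. Since $\mathcal{G}_n(t)=\{f_1,\dots,f_{K(t)}\}$ by definition, where $K(t)$ is the number of edges created by time $t$, under this coupling the first displayed line of the lemma holds once $K(t_c(\lambda)-\varepsilon_n)\le N(p_n(\lambda))\le K(t_c(\lambda)+\varepsilon_n)$, and the second line once $N(p_n(\lambda)-\varepsilon_n)\le K(t_c(\lambda))\le N(p_n(\lambda)+\varepsilon_n)$.

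It remains to show these inequalities hold with high probability for a suitable $\varepsilon_n$. Let $A(t)=\ell_n-2K(t)$ be the number of alive half-edges; $A$ drops by $2$ at rate $A$, so $e^{2t}A(t)$ is a martingale with mean $\ell_n$, and computing its predictable quadratic variation gives $\mathrm{Var}(A(t))=O(\ell_n)$. Thus $K(t)=m(1-e^{-2t})+\OP(\sqrt{n})$ uniformly on compact time sets, while trivially $N(p)=mp+\OP(\sqrt{n})$. Taylor-expanding \eqref{eq:def-crit-time} yields $1-e^{-2t_c(\lambda)}=\tfrac1{\nu_n}+\lambda n^{-\eta}+O(n^{-2\eta})=p_n(\lambda)+o(n^{-\eta})$, and $e^{-2t_c(\lambda)}\to(\nu-1)/\nu$ is bounded away from $0$, so shifting time by $\varepsilon_n$ changes $m(1-e^{-2t})$ by $(1+o(1))\,2e^{-2t_c(\lambda)}m\varepsilon_n\asymp m\varepsilon_n$. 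Consequently all four inequalities hold with high probability as soon as $m\varepsilon_n$ dominates both the $O(\sqrt{n})$ fluctuations of $K$ and $N$ and the $o(n^{-\eta})\cdot m$ discrepancy between $1-e^{-2t_c(\lambda)}$ and $p_n(\lambda)$; it therefore suffices to choose $\varepsilon_n=o(n^{-\eta})$ with $\varepsilon_n\gg n^{-1/2}$ and $\varepsilon_n$ larger than the relevant $o(n^{-\eta})$ error terms. Such a choice exists because $\eta=(\tau-3)/(\tau-1)<\tfrac12$ for $\tau\in(3,4)$, so $n^{-1/2}=o(n^{-\eta})$; e.g.\ one may take $\varepsilon_n$ to be the geometric mean of $n^{-\eta}$ and the maximum of $n^{-1/2}$ and those error terms.

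The main obstacle is the first part: one must arrange the coupling so that $\mathcal{G}_n(t)$ and $\mathrm{CM}_n(\bld{d},p)$ are genuinely \emph{nested} (prefixes of the same random ordering of the full matching) rather than merely equal in law, and this is precisely what the exchangeability of the edge-creation order provides. After that the remaining steps — the martingale variance bound for $A(t)$, the binomial concentration, and the bookkeeping on $\varepsilon_n$ — are routine.
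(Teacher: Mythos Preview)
Your argument is correct. The paper does not give its own proof of this lemma---it simply cites \cite[Proposition~24]{DHLS16}---but the mechanism it alludes to elsewhere (see the remark around \eqref{eq:blob-graphs}, which points to \cite[Section~8.2]{DHLS15} and invokes ``standard concentration inequalities for binomial random variables'') is exactly the one you implement: realise both processes as prefixes of a common uniformly ordered matching, then compare the edge counts $K(t)$ and $N(p)$ via their deterministic centres $m(1-\e^{-2t})$ and $mp$ and $O_{\PR}(\sqrt n)$ fluctuations.
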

Let $\omega_i(t)$ denote the number of unpaired/open half-edges incident to  vertex~$i$ at time $t$ in Algorithm~\ref{algo:dyn-cons}. 
We end this section by understanding the evolution of some functionals of the degrees and the open half-edges in the graph $\mathcal{G}_n(t)$.
Let $s_1(t)$ denote the total number of unpaired half-edges at time $t$. Denote also $s_2(t)=\sum_{i\in [n]} \omega_i(t)^2$, $s_{d,\omega}(t)=\sum_{i\in [n]}d_i\omega_i(t)$. 
Further, we write $\mu_n=\ell_n/n$.
\begin{lemma}\label{lem:total-open-he}  Under {\rm Assumption~\ref{assumption1}}, the quantities 
 $\sup_{t\leq T}|\frac{1}{n}s_1(t)-\mu_n\e^{-2t}|$, $\sup_{t\leq T}|\frac{1}{n} s_2(t) - \mu_n\e^{-4t}(\nu_n+\e^{2t})|,$ $\sup_{t\leq T}|\frac{1}{n}s_{d,\omega}(t) - \mu_n(1+\nu_n)\e^{-2t}|$  are all  $\OP(n^{-1/2})$, for any $T>0$.
\end{lemma}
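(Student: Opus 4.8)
The plan is to track each of the three functionals $s_1(t)$, $s_2(t)$, $s_{d,\omega}(t)$ via its Doob-Meyer decomposition in the dynamic construction of Algorithm~\ref{algo:dyn-cons}, and to show that the drift terms reproduce the claimed deterministic limits while the martingale parts are $\OP(n^{-1/2})$ uniformly on $[0,T]$. First I would set up the jump rates: at time $t$ there are $s_1(t)$ open half-edges, each with a rate-one clock, so the total rate of pairing events is $s_1(t)$, and when a clock at a half-edge of vertex $i$ rings it is matched to a uniformly chosen open half-edge among the remaining $s_1(t)-1$, which belongs to vertex $j$ with probability $\omega_j(t)/(s_1(t)-1)$ (with the usual correction for $i=j$ and for double half-edges at the same vertex). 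From this, each functional is a pure-jump process and one writes $F(\omega(t)) = F(\omega(0)) + \int_0^t \mathcal{A}F(\omega(s))\,ds + M_F(t)$, where $\mathcal{A}$ is the generator and $M_F$ is a martingale with predictable quadratic variation $\langle M_F\rangle_t = \int_0^t \big(\mathcal{A}F^2 - 2F\,\mathcal{A}F\big)(\omega(s))\,ds$.

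Next I would compute the drifts. For $s_1$: each pairing event removes exactly two open half-edges, so $\mathcal{A}s_1 = -2 s_1$, giving the ODE $\dot{f}_1 = -2 f_1$ with $f_1(0) = \mu_n$ after dividing by $n$, hence $n^{-1}s_1(t) \approx \mu_n e^{-2t}$. For $s_{d,\omega} = \sum_i d_i \omega_i$: a pairing event picks a half-edge at $i$ (with probability $\omega_i/s_1$) and a partner at $j$ (with probability $\approx \omega_j/s_1$), decreasing $\omega_i$ and $\omega_j$ each by one, so $\mathcal{A}s_{d,\omega} = s_1 \cdot \big(-\sum_i d_i \omega_i/s_1 - \sum_j d_j \omega_j/s_1\big) + O(\text{lower order}) = -2 s_{d,\omega} + (\text{correction})$; more care is needed because $d_i$ is the full degree, not the open degree, and the correct linearization using $s_1 = \sum_i \omega_i$ and $\sum_i d_i \omega_i$ yields the system whose solution is $\mu_n(1+\nu_n)e^{-2t}$. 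For $s_2 = \sum_i \omega_i^2$: here $\mathcal{A}s_2$ picks up both a linear-in-$s_2$ term from decrementing $\omega_i$ at the chosen endpoints and a term proportional to $\sum_i \omega_i d_i / s_1$ coming from the fact that when a \emph{neutral} pairing first reduces a vertex to having open half-edges... — rather, the cross term $-2\omega_i$ per decrement produces $\mathcal{A}s_2 = -4 s_2 + 2 s_1 + (\text{negligible})$ after using $(\omega_i-1)^2 - \omega_i^2 = -2\omega_i + 1$; dividing by $n$ and writing $g(t) = n^{-1}s_2(t)$, $\dot g = -4g + 2\mu_n e^{-2t}$ with $g(0) = n^{-1}\sum_i d_i^2 \to \mu_n \nu_n$ (note $\sum_i d_i^2 = \ell_n \nu_n + \ell_n$), whose solution is exactly $\mu_n e^{-4t}(\nu_n + e^{2t})$. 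I would verify the initial conditions carefully against the normalizations $\mu_n = \ell_n/n$ and $\nu_n = \sum_i d_i(d_i-1)/\ell_n$.

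For the error bounds, I would (i) control the deterministic discrepancy between the exact ODE drift and the actual conditional drift — the corrections are $O(1/s_1(t))$ per event and $O(d_{\max}^2/s_1) = o(n^{3\alpha-1}\cdot n) $-type terms which, using $d_1 = O(n^\alpha)$ from Assumption~\ref{assumption1}, are $\OP(n^{-1/2})$ after integration up to the point where $s_1(t)$ stays of order $n$ (true for $t \le T$ fixed, since $s_1(t) \sim \mu_n e^{-2T} n$); and (ii) bound the martingale part via Doob's $L^2$-maximal inequality, $\E[\sup_{t\le T} M_F(t)^2] \le 4\,\E[\langle M_F\rangle_T]$, and check that $\langle M_F\rangle_T$ is $O(n)$ for each functional. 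The quadratic-variation computation is where the heavy-tailed degrees enter most dangerously: for $s_{d,\omega}$ and $s_2$ each jump changes the functional by $O(d_{\max}) = O(n^\alpha)$, and there are $O(n)$ jumps, so a naive bound gives $\langle M\rangle_T = O(n \cdot n^{2\alpha}) = O(n^{1+2\alpha})$, which is far too large. The resolution — and the main obstacle — is that the \emph{large} increments come only from discovering high-degree vertices, which happens $O(1)$ times each, so one must split the martingale into a part from the finitely many vertices $i \le K$ (handled pathwise, contributing $o_{\PR}(\sqrt n)$ after rescaling, using $n^{-\alpha}d_i \to \theta_i$) and a part from vertices $i > K$ where $\sum_{i>K} d_i^2 = o(n^{3\alpha})$ wait — rather $\sum_{i>K}d_i^3 = o(n^{3\alpha})$ and one uses Assumption~\ref{assumption1}\eqref{assumption1-2} to get $\sum_{i>K} d_i^2 \omega_i$-type quantities under control, giving a quadratic variation of order $n$ for the tail part. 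This is precisely the kind of truncation argument carried out in \cite[Section 4]{DHLS16}, and I would import those estimates; once both pieces are $\OP(\sqrt n)$, dividing by $n$ gives the stated $\OP(n^{-1/2})$ uniformly on $[0,T]$, completing the proof.
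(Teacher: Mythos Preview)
Your overall approach---Doob--Meyer decomposition, identifying the drift with a linear ODE, bounding the martingale via its quadratic variation, then Gr\"onwall---is exactly what the paper does, and your drift computations for all three functionals match the paper's.

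The gap is in your martingale bound for $s_2$ and $s_{d,\omega}$. You correctly note that a crude bound on the quadratic variation is too large, but the truncation you propose does not repair it: for the finitely many high-degree vertices $i\le K$, the fluctuation of $d_i\omega_i(t)$ about its mean is of order $d_i^{3/2}=\Theta(n^{3\alpha/2})$, which is \emph{not} $\oP(\sqrt n)$ since $3\alpha/2>1/2$ for every $\tau\in(3,4)$; and for the tail $i>K$, Assumption~\ref{assumption1}\eqref{assumption1-2} only gives $\sum_{i>K}d_i^3=o(n^{3\alpha})$, still $\gg n$, so the tail quadratic variation is not $O(n)$ either. The paper itself simply asserts $\langle M\rangle_T=O(n)$ for all three martingales; for $s_1$ this is correct (the jumps are bounded), but for $s_2$ and $s_{d,\omega}$ a direct computation of the predictable quadratic variation gives only $\langle M\rangle_T=O\big(\sum_i d_i^3\big)=O(n^{3\alpha})$. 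The honest rate that either argument yields is therefore $\OP(n^{3\alpha/2-1})$ after dividing by $n$, weaker than the stated $\OP(n^{-1/2})$. This still implies $\oP(n^{-\delta})$ for every $\delta<\eta$ (since $\eta=1-2\alpha<1-3\alpha/2$), and that is all that is used downstream in computing $\nu_n(t_n)$ in Section~\ref{sec:entrance-boundary-open-he}. So your plan---minus the truncation step, which buys nothing here---gives a correct proof of what is actually needed; only the sharp $\OP(n^{-1/2})$ rate for $s_2$ and $s_{d,\omega}$ is overclaimed.
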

\begin{proof}
 The proof uses the differential equation method \cite{wormald1995differential}. 
 Notice that, after each ring of an exponential clock in Algorithm~\ref{algo:dyn-cons}, $s_1(t)$ decreases by two. Let $Y$ denote a unit-rate Poisson process. Using the random time change representation \cite{EK86},
 \begin{equation}\label{eq:diff-eqn}
  s_1(t) = \ell_n - 2 Y\bigg(\int_{0}^t s_1(u)\mathrm{d} u\bigg) = \ell_n +M_n(t)-2\int_{0}^t s_1(u)\mathrm{d} u,
 \end{equation}where $\bld{M}_n$ is a martingale. 
 Now, the quadratic variation of $\bld{M}_n$ satisfies $ \langle M_n \rangle (t)\leq 4t\ell_n = O(n),$ which implies that $\sup_{t\leq T}|M_n(t)|= \OP(\sqrt{n}).$ 
 Moreover, notice that the function $f(t)=\mu_n\e^{-2t}$ satisfies $f(t)=\mu_n-2\int_0^tf(u)\mathrm{d}u$. 
  Therefore,
 \begin{equation}
 \begin{split}
  \sup_{t\leq T}\bigg|\frac{1}{n}s_1(t)- \mu_n\e^{-2t}\bigg| &\leq \sup_{t\leq T}\frac{|M_n(t)|}{n}+2\int_0^T\sup_{t\leq u}\bigg|\frac{1}{n}s_1(t)- \mu_n\e^{-2t}\bigg| \mathrm{d}u.
 \end{split}
 \end{equation} Using Gr\H{o}nwall's inequality \cite[Proposition 1.4, page 204]{M86}, it follows that
 \begin{equation}\label{eq:diff-eqn-gronwall}
   \sup_{t\leq T}\bigg|\frac{1}{n}s_1(t)- \mu_n\e^{-2t}\bigg|\leq \e^{2T} \sup_{t\leq T}\frac{|M_n(t)|}{n} =\OP(n^{-1/2}),
 \end{equation}as required. 
For $s_2(t)$, note that if half-edges corresponding to vertices $i$ and $j$ are paired, $s_2$ changes by $-2\omega_i-2\omega_j+2$ and if two half-edges corresponding to $i$ are paired, $s_2$ changes by $-4\omega_i+4$. Thus,
\begin{equation}
\begin{split}
\sum_{i\in [n]}\omega_i(t)^2&=\sum_{i\in [n]}d_i^2+M_n'(t)+\int_0^t \sum_{i\neq j}\frac{\omega_i(u)\omega_j(u)(-2\omega_i(u)-2\omega_j(u)+2)}{s_1(u)-1} \dif u\\
 &\hspace{4cm}+\int_0^t\sum_{i\in [n]}\frac{\omega_i(u)(\omega_i(u)-1)(-4\omega_i(u)+4)}{s_1(u)-1}\dif u\\
 & = n\mu_n(1+\nu_n)+M_n'(t)+\int_0^t(-4s_2(u)+2s_1(u))\mathrm{d}u+O(1),
 \end{split}
\end{equation}where $\bld{M}_n'$ is a martingale with quadratic variation given by $\langle M_n'\rangle (t) =O(n)$. 
Again, an estimate equivalent to \eqref{eq:diff-eqn-gronwall} follows using Gr\H{o}nwall's inequality.
Notice also that when a clock corresponding to vertex $i$ rings and it is paired to vertex $j$, then $s_{d,\omega}$ decreases by $d_i+d_j$. 
Thus,
 \begin{equation}
  \begin{split}
   s_{d,\omega}(t)&=\sum_{i\in [n]}d_i^2+M_n''(t)-\int_0^t \sum_{i\neq j}\frac{\omega_i(u)\omega_j(u)(d_i+d_j)}{s_1(u)-1} \mathrm{d}u-\int_0^t \sum_{i\in [n]}\frac{\omega_i(u)(\omega_i(u)-1)2d_i}{s_1(u)-1}\mathrm{d}u\\
   & = n\mu_n(1+\nu_n)+M_n''(t)- 2 \int_0^ts_{d,\omega}(u)\mathrm{d}u,
  \end{split}
 \end{equation}where $\bld{M}_n''$ is a martingale with quadratic variation given by $\langle M_n''\rangle (t) \leq 2t \sum_{i\in [n]}d_i^2=O(n)$. 
 We can now apply Gr\H{o}nwall's inequality as before. 
 The proof of Lemma~\ref{lem:total-open-he} is  complete. 
\end{proof}

\subsection{Entrance boundary for open half-edges}\label{sec:entrance-boundary-open-he}
Define
\begin{equation}\label{eq:def-subcrit-time}
 t_n = \frac{1}{2}\log\bigg(\frac{\nu_n}{\nu_n-1}\bigg)-\frac{\nu_n}{2(\nu_n-1)}\frac{1}{n^{\delta}}, \quad 0< \delta < \eta.
\end{equation} The goal is to show that the open half-edges satisfy the entrance boundary conditions. 
Let $\bld{d}(t)=(d_i(t))_{i\in [n]}$ denote the degree sequence of $\mathcal{G}_n(t)$ constructed by Algorithm~\ref{algo:dyn-cons}. Recall that $\mathcal{G}_n(t)$ is a configuration model conditionally on~$\bld{d}(t)$. 
Let us first derive the asymptotics of $\nu_n(t_n)$.
Recall that $\omega_i(t)$ denotes the number of open half-edges adjacent to vertex $i$ in $\mathcal{G}_n(t)$. Notice that 
\begin{equation}
 \nu_n(t_n)=\frac{\sum_{i\in [n]}(d_i-\omega_i(t_n))^2}{\ell_n-s_1(t_n)} -1 = \frac{\sum_{i\in [n]}d_i^2-2s_{d,\omega}(t_n)+s_2(t_n)}{\ell_n-s_1(t_n)}-1.
\end{equation}
Using Lemma~\ref{lem:total-open-he} and Assumption~\ref{assumption1}, 
\begin{equation}\label{nu-calc-1}
 \begin{split}
  &\frac{1}{n}(\ell_n-s_1(t_n))=\mu_n (1-\e^{-2t_n}) +\oP(n^{-\delta})
    = \frac{\mu_n}{\nu_n}\Big(1-\frac{\nu_n}{n^{\delta}}\Big)+\oP(n^{-\delta}),
 \end{split}
\end{equation}
\begin{equation}\label{nu-calc-2}
\begin{split}
 \frac{1}{n}\bigg(\sum_{i\in [n]}d_i^2-2s_{d,\omega}(t_n)+s_2(t_n)\bigg) 
 & = \frac{\mu_n}{\nu_n}\Big(2-\frac{3\nu_n}{n^{\delta}}\Big)+\oP(n^{-\delta}).
\end{split}
\end{equation}
Thus, \eqref{nu-calc-1} and \eqref{nu-calc-2} yields that 
$\nu_n(t_n) 
= 1- \nu_n n^{-\delta} +\oP(n^{-\delta}).$
Further, using the differential equation method again, the evolution of $(\omega_i(t))_{t\geq 0}$ is given by 
\begin{eq}
\omega_i(t) &= d_i + M_n(t) + \int_0^t \bigg[\frac{2\omega_i(u) (s_1(u) -\omega_i(u))}{s_1(u)-1} + \frac{2 \omega_i(u)(\omega_i(u)-1)}{s_1(u)-1}\bigg] \dif u \\ & = d_i + M_n(t) + 2\int_0^t \omega_i(u) \dif u,
\end{eq}
and Assumption~\ref{assumption1} yields that, for all $T>0$,
\begin{eq}\label{open-he-high-degree-asymp}
\sup_{t\leq T} |n^{-\alpha} \omega_i(t) - \theta_i \e^{-2t}| \pto  0.
\end{eq}

We aim to apply the results for the barely subcritical regime in Theorem~\ref{thm:susceptibility} to the number of open half-edges $\bld{\omega}(t_n)=(\omega_i(t_n))_{i\in [n]}$. 
Notice that, by  Lemma~\ref{lem:total-open-he}, \eqref{open-he-high-degree-asymp} and Assumption~\ref{assumption1}, $\bld{\omega}(t_n)$ and $\bld{d}(t_n)$ satisfy Assumption~\ref{assumption-w} with
\begin{equation}\label{eq:mu-mudw-open-he}
 \mu_{\omega} = \frac{\mu(\nu-1)}{\nu},\quad \mu_d = \frac{\mu}{\nu}, \quad \mu_{d,\omega}  = \frac{\mu(\nu-1)}{\nu} ,\quad c_i = \frac{\theta_i}{\nu}.
\end{equation} 
Let $\Cli (t)$ be defined analogously as \eqref{defn:pCli} for the graph $\cG_n(t)$.
Denote $f_i(t)= \sum_{k\in \Cli(t)}\omega_k(t)$ and $\bld{f}(t) = (f_i(t))_{i\geq 1}$.
The following theorem summarizes the entrance boundary conditions for $\bld{f}(t)$. 
Let $s_2^\omega$, $s_3^\omega$, $\mathcal{D}_n^\omega$ respectively denote the quantities $s_2^\star$, $s_3^\star$, $\mathcal{D}_n^\star$ respectively with the weights being the number of open half-edges, and the underlying graph being $\mathcal{G}_n(t_n)$.
\begin{theorem}\label{th:open-he-entrance}Under {\rm Assumption~\ref{assumption1}},  as $n\to\infty$,
\begin{gather}
 n^{-\delta}s_2^\omega \xrightarrow{\sss \PR} \frac{\mu (\nu-1)^2}{\nu^2},\quad n^{-\delta}s_{pr}^\omega \xrightarrow{\sss \PR}\frac{\mu (\nu-1)}{\nu^2}, \quad  n^{-(\alpha+\delta)}f_i(t_n) \xrightarrow{\sss \PR} \bigg(\frac{\nu-1}{\nu^2}\bigg)\theta_i\\
 n^{-3\alpha-3\delta+1}s_3^\omega \xrightarrow{\sss \PR} \bigg(\frac{\nu-1}{\nu^2}\bigg)^3 \sum_{i=1}^\infty \theta_i^3,\qquad n^{-2\delta} \mathcal{D}_n^\omega \xrightarrow{\sss \PR} \frac{\mu (\nu-1)^2}{\nu^3}.
 \end{gather}
\end{theorem}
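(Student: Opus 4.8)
The plan is to read Theorem~\ref{th:open-he-entrance} as the specialisation of Theorem~\ref{thm:susceptibility} to the configuration model $\mathcal{G}_n(t_n)$ with vertex weights $w_i=\omega_i(t_n)$. The structural input is that, conditionally on the vector $\bld{\omega}(t_n)=(\omega_i(t_n))_{i\in[n]}$ of open half-edge counts, the paired graph $\mathcal{G}_n(t_n)$ is distributed as $\mathrm{CM}_n(\bld{d}(t_n))$ with $\bld{d}(t_n)=\bld{d}-\bld{\omega}(t_n)$ (immediate from Algorithm~\ref{algo:dyn-cons}; see also \cite{DHLS16}). Under this identification, $s_2^\omega$, $s_{pr}^\omega$, $f_i(t_n)$, $s_3^\omega$, $\mathcal{D}_n^\omega$ are exactly the functionals $s_2^\star$, $s_{pr}^\star$, $\mathscr{W}_{\sss (i)}$, $s_3^\star$, $\mathcal{D}_n^\star$ of that (conditional) model, so the proof reduces to (1) verifying that $(\bld{d}(t_n),\bld{\omega}(t_n))$ satisfies Assumption~\ref{assumption-w} with high probability, with the parameters of \eqref{eq:mu-mudw-open-he} and $\lambda_0=\nu$; (2) invoking Theorem~\ref{thm:susceptibility} conditionally on $\bld{\omega}(t_n)$; (3) substituting parameters.

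For (1): the average-moment limits $\tfrac1n\sum_i d_i(t_n)\to\mu/\nu=\mu_d$, $\tfrac1n\sum_i\omega_i(t_n)\to\mu(\nu-1)/\nu=\mu_\omega$ and $\tfrac1n\sum_i d_i(t_n)\omega_i(t_n)\to\mu(\nu-1)/\nu=\mu_{d,\omega}$ follow from Lemma~\ref{lem:total-open-he}, using $\sum_i d_i(t_n)=\ell_n-s_1(t_n)$, $\sum_i\omega_i(t_n)=s_1(t_n)$, $\sum_i d_i(t_n)\omega_i(t_n)=s_{d,\omega}(t_n)-s_2(t_n)$ and that $t_n\to\tfrac12\log\tfrac{\nu}{\nu-1}$ (recall \eqref{eq:def-subcrit-time}), so $\e^{-2t_n}\to(\nu-1)/\nu$; the convergence of $\tfrac1n\sum_i d_i(t_n)^2$ (hence of $\nu_n(t_n)$) and the tail-of-cubes bound for $\bld{d}(t_n)$ are inherited similarly, the latter simply because $d_i(t_n)\le d_i$ together with the tail condition in Assumption~\ref{assumption1}. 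For the high-degree vertices one shows $n^{-\alpha}\omega_i(t_n)\pto\theta_i(\nu-1)/\nu$ for each fixed $i$, whence $n^{-\alpha}d_i(t_n)\pto\theta_i/\nu=:c_i$ and $\bld{c}=\bld{\theta}/\nu\in\ell^3_{\shortarrow}\setminus\ell^2_{\shortarrow}$: the point is that in Algorithm~\ref{algo:dyn-cons} every alive half-edge is killed at rate exactly $2$ (rate $1$ from its own clock, rate $1$ from being chosen as a partner), so $\E[\omega_i(t_n)\mid\bld{d}]=d_i\e^{-2t_n}$, and a routine second-moment estimate (the pairwise dependence among the $d_i$ half-edges of a fixed vertex being $O(n^{-1})$) gives concentration. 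Assumption~\ref{assumption-w}(ii) is free since $\omega_i(t_n)\le d_i$ and $d_i(t_n)\le d_i$ deterministically, so each of $\sum_i\omega_i(t_n)^3$, $\sum_i d_i(t_n)^2\omega_i(t_n)$, $\sum_i d_i(t_n)\omega_i(t_n)^2$ is $\le\sum_i d_i^3=O(n^{3\alpha})$ by Assumption~\ref{assumption1}; and the barely subcritical condition (iii) with $\lambda_0=\nu$ is precisely the computation $\nu_n(t_n)=1-\nu_n n^{-\delta}+\oP(n^{-\delta})$ already carried out in \eqref{nu-calc-1}--\eqref{nu-calc-2}, valid for $0<\delta<\eta$.

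For (2)--(3): conditioning on $\bld{\omega}(t_n)$ turns $\mathcal{G}_n(t_n)$ into a genuine configuration model whose degree sequence, on an event of probability tending to one, satisfies Assumption~\ref{assumption-w} with parameters arbitrarily close to the target values; since the mean and Chebyshev variance bounds produced inside the proof of Theorem~\ref{thm:susceptibility} depend only on those parameters and on the $O(n^{3\alpha})$ constants, a conditional application followed by the tower property yields convergence in probability. Substituting $\mu_{d,w}=\mu(\nu-1)/\nu$, $\mu_d=\mu/\nu$, $\lambda_0=\nu$, $c_i=\theta_i/\nu$ into the five limits of Theorem~\ref{thm:susceptibility} then gives $\mu_{d,w}^2/(\mu_d\lambda_0)=\mu(\nu-1)^2/\nu^2$, $\mu_{d,w}/\lambda_0=\mu(\nu-1)/\nu^2$, $\mu_{d,w}c_i/(\mu_d\lambda_0)=(\nu-1)\theta_i/\nu^2$, $\big(\mu_{d,w}/(\mu_d\lambda_0)\big)^3\sum_i c_i^3=\big((\nu-1)/\nu^2\big)^3\sum_i\theta_i^3$, and $\mu_{d,w}^2/(\mu_d\lambda_0^2)=\mu(\nu-1)^2/\nu^3$, which are exactly the claimed limits.

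The main obstacle, and essentially the only step with genuinely new content, is the tagged-vertex estimate $n^{-\alpha}d_i(t_n)\pto\theta_i/\nu$ for fixed high-degree $i$: Lemma~\ref{lem:total-open-he} controls only aggregates, so one must track a fixed vertex's open half-edges through the dynamic construction. The rate-$2$ killing identity makes the first moment transparent and the concentration is standard, so this is brief; the only other point requiring (mild) care is the passage from the deterministic-sequence statement of Theorem~\ref{thm:susceptibility} to the random pair $(\bld{d}(t_n),\bld{\omega}(t_n))$, handled by the conditioning argument described above.
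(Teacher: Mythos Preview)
Your proposal is correct and follows essentially the same approach as the paper: the paper's proof of Theorem~\ref{th:open-he-entrance} consists of the single sentence that, by Lemma~\ref{lem:total-open-he} and Assumption~\ref{assumption1}, the pair $(\bld{d}(t_n),\bld{\omega}(t_n))$ satisfies Assumption~\ref{assumption-w} with the parameters \eqref{eq:mu-mudw-open-he}, after which Theorem~\ref{thm:susceptibility} applies directly. You have in fact supplied more detail than the paper does, correctly isolating the tagged-vertex estimate $n^{-\alpha}\omega_i(t_n)\pto\theta_i(\nu-1)/\nu$ (not covered by the aggregate bounds of Lemma~\ref{lem:total-open-he}) and the passage from deterministic to random degree/weight sequences via conditioning; both of these are left implicit in the paper.
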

\begin{remark}\label{rem:entrance-comp-size} \normalfont Setting $w_i =1$ for all $i$, we get the entrance boundary conditions for the component sizes also. In this case $\mu_d=\mu_{d,w}=\mu/\nu$. Replacing $\omega$ by $c$ in the above notation to denote the component susceptibilities, it follows that 
\begin{align*}
 n^{-\delta}s_2^c \pto \frac{\mu}{\nu^2},\quad n^{-(\alpha+\delta)}|\Cli(t_n)|\pto \frac{\theta_i}{\nu^2},\quad n^{-3\alpha-3\delta+1}s_3^c\pto \frac{1}{\nu^6}\sum_{i=1}^\infty \theta_i^3.
\end{align*}
\end{remark}
\subsection{Components of the dynamically constructed graph}
\label{sec:struct-compare}
The idea is to regard $(\Cli(t_n))_{i\geq 1}$, the connected components at time $t_n$, as blobs.
For $t\geq t_n$, the graph $\mathcal{G}_n(t)$ should be viewed as a super-graph with the superstructure being determined by the edges appearing after time $t_n$.  
  Thus, the components of $\mathcal{G}_n(t)$ can be regarded as a union of the blobs. 
  For a component $\mathscr{C}$, we use the notation $\bl(\mathscr{C})$ to denote the collection of indices corresponding to the blobs within $\mathscr{C}$ given by $\{b: \Clb(t_n)\subset\mathscr{C}, \Clb(t_n) \neq \varnothing\}$.
  Denote 
 $$ \mathcal{F}_{i}(t) = \sum_{b\in \bl(\mathscr{C}_{\sss (i)}(t))}f_{b}(t_n).$$
Let us denote the ordered components and the  $\mathcal{F}$-values of $\mathcal{G}_n(t_c(\lambda))$ simply by $(\mathscr{C}_{\sss (i)}(\lambda))_{i\geq 1}$ and $(\mathcal{F}_{i}(\lambda))_{i\geq 1}$ respectively. 
The goal in this section is to obtain the scaling of these component functionals, and also understand structural properties related to the surplus edges.
Recall that $\mathrm{SP}(\sC)$ denotes the number of surplus edges in the component $\sC$, i.e. $\mathrm{SP}(\sC) = \#\text{edges in }\sC - |\sC|+1$.
The following result gives the scaling limits of the rescaled component sizes and surplus edges of $\mathcal{G}_n(t_c(\lambda))$:
\begin{proposition}\label{thm:comp-functionals-original} Let $(\mathscr{C}_{\sss (i)}(\lambda))_{i\geq 1}$ denote the ordered vector of components sizes of the graph $\mathcal{G}_n(t_c(\lambda))$. Then, 
$\big(n^{-\rho}|\mathscr{C}_{\sss (i)}(\lambda)|,\surp{\mathscr{C}_{\sss (i)}(\lambda)}\big)_{i\geq 1} \xrightarrow{\sss d} (\frac{1}{\nu}\xi_i,\mathscr{N}_i)_{i\geq 1}$
as $n\to\infty$,
with respect to the topology on $\ell^2_\shortarrow\times \N^\N$, where the limiting objects are defined in Proposition~\ref{prop:comp-size}.
\end{proposition}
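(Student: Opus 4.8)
The plan is to deduce Proposition~\ref{thm:comp-functionals-original} directly from Lemma~\ref{lem:coupling-whp} and Proposition~\ref{prop:comp-size} by a sandwiching argument. Proposition~\ref{prop:comp-size} already gives the joint convergence
$\big(n^{-\rho}|\mathscr{C}_{\sss (i)}^p(\lambda)|,\surp{\mathscr{C}_{\sss (i)}^p(\lambda)}\big)_{i\geq 1}\dto (\tfrac1\nu\xi_i,\mathscr{N}_i)_{i\geq 1}$ for the percolated configuration model $\rCM_n(\bld d,p_n(\lambda))$, so the only task is to transfer this to $\mathcal{G}_n(t_c(\lambda))$, which is the graph actually used in the rest of Section~\ref{sec:proof-metric-mc}. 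First I would recall that the map $\lambda\mapsto p_n(\lambda)$ in \eqref{eq:critical-window-defn} and $\lambda\mapsto t_c(\lambda)$ in \eqref{eq:def-crit-time} are related so that $p_n(\lambda)$ and the percolation parameter associated to running Algorithm~\ref{algo:dyn-cons} up to time $t_c(\lambda)$ match up to $o(n^{-\eta})$ corrections; this is exactly the content behind Lemma~\ref{lem:coupling-whp}.

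The key step is to run the sandwich from Lemma~\ref{lem:coupling-whp}: on an event of probability $1-o(1)$ there is an $\varepsilon_n=o(n^{-\eta})$ with
\[
 \mathrm{CM}_n(\bld d,p_n(\lambda)-\varepsilon_n)\subset \mathcal{G}_n(t_c(\lambda))\subset \mathrm{CM}_n(\bld d,p_n(\lambda)+\varepsilon_n).
\]
Since $\varepsilon_n=o(n^{-\eta})$, both $p_n(\lambda)\pm\varepsilon_n$ correspond to the same critical-window parameter $\lambda$ in \eqref{eq:critical-window-defn} (the $o(n^{-\eta})$ slack is absorbed), so Proposition~\ref{prop:comp-size} applies verbatim to both the lower and upper graphs and yields the \emph{same} limit $(\tfrac1\nu\xi_i,\mathscr{N}_i)_{i\geq 1}$. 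The monotonicity of edge addition then forces $n^{-\rho}|\mathscr{C}_{\sss(i)}(\lambda)|$ and $\surp{\mathscr{C}_{\sss(i)}(\lambda)}$ to be squeezed between two sequences with the same distributional limit. To make the squeeze rigorous for the ordered vectors (component sizes are monotone under edge addition, but the identity of the $i$-th largest component can change), I would argue as in \cite{DHLS16}: use that the limit $\bld\xi$ has a.s.\ distinct coordinates and no ties, so for each fixed $i$ and $\varepsilon>0$, with high probability the $i$-th largest components of the three graphs in the sandwich coincide as vertex sets, and hence their sizes and surpluses agree; alternatively one invokes continuity of the ordering map on the set of sequences with distinct entries together with the $\ell^2_\shortarrow$-convergence of Proposition~\ref{prop:comp-size}. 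Either way one concludes joint convergence in $\ell^2_\shortarrow\times\N^\N$.

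The main obstacle is the bookkeeping in the last paragraph: edge addition is monotone for component \emph{sizes} but not automatically for the labelled ordered sequence, and surplus is only monotone along a fixed component, not along the $i$-th largest. The clean way around this is to note that on the coupling event the symmetric difference between $\mathscr{C}_{\sss(i)}(\lambda)$ and the corresponding components of $\mathrm{CM}_n(\bld d,p_n(\lambda)\pm\varepsilon_n)$ is empty with high probability for each fixed $i$ — this is precisely the kind of statement established in \cite[Proposition~24]{DHLS16} and its surrounding analysis, which shows that adding $o(n^{1-\eta})$ edges a.a.s.\ does not change the identity of any fixed-index maximal component nor its surplus. Granting that, the proposition follows immediately: the finite-dimensional vectors $\big(n^{-\rho}|\mathscr{C}_{\sss(i)}(\lambda)|,\surp{\mathscr{C}_{\sss(i)}(\lambda)}\big)_{i\le K}$ agree w.h.p.\ with those of $\mathrm{CM}_n(\bld d,p_n(\lambda))$, to which Proposition~\ref{prop:comp-size} applies, and letting $K\to\infty$ gives the stated convergence in the product topology on $\ell^2_\shortarrow\times\N^\N$.
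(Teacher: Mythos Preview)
Your proposal is correct and follows exactly the approach the paper takes: the paper simply states that this proposition is ``a direct consequence of Lemma~\ref{lem:coupling-whp} and Proposition~\ref{prop:comp-size}'' and gives no further argument. Your sandwiching argument and the bookkeeping about ordered components merely spell out what the paper leaves implicit.
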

The proof is a direct consequence of Lemma~\ref{lem:coupling-whp} and Proposition~\ref{prop:comp-size}. See for example \cite[Proposition 25]{DHLS16}.
 The components consist of surplus edges within the blobs and the surplus edges in the superstructure. 
Next, let $\mathrm{SP}'(\mathscr{C}_{\sss (i)}(\lambda))$ denote the number of surplus edges in the superstructure of $\mathscr{C}_{\sss (i)}(\lambda)$. 
Thus $\mathrm{SP}'(\mathscr{C}_{\sss (i)}(\lambda))$ denotes the macroscopic surplus edges which are not inside some blob. 
The next result proves that all the surplus edges in the critical components are macroscopic. Further, it relates the component sizes and the  $\mathcal{F}$-values of $\mathcal{G}_n(t_c(\lambda))$:
\begin{proposition}\label{thm:comp-functional-original}
Assume that $\eta/2<\delta<\eta$. 
Then, for each $1\leq i\leq K$, the following hold:
\begin{enumerate}[(a)]
\item With high probability, $\mathrm{SP}'(\mathscr{C}_{\sss (i)}(\lambda)) = \mathrm{SP}(\mathscr{C}_{\sss (i)}(\lambda))$. Consequently, there are no surplus edges within blobs in $\mathscr{C}_{\sss (i)}(\lambda)$ with high probability;
\item $\mathcal{F}_i(\lambda)/|\mathscr{C}_{\sss (i)}(\lambda)|\pto \nu-1$. Consequently, $\big(n^{-\rho}\mathcal{F}_i(\lambda)\big)_{i\geq 1} \dto \frac{\nu-1}{\nu}\bld{\xi}$ with respect to the product topology.
\end{enumerate}
\end{proposition}
\noindent 
Since $\mathrm{SP}'(\mathscr{C}_{\sss (i)}(\lambda))\leq \mathrm{SP}(\mathscr{C}_{\sss (i)}(\lambda))$ almost surely, for Part (a) it suffices to show that 
 \begin{equation}\label{eq:surp-sam-dist-limit}
\mathrm{SP}'(\mathscr{C}_{\sss (i)}(\lambda))\text{ and }\mathrm{SP}(\mathscr{C}_{\sss (i)}(\lambda))\text{ have the same distributional limit}.
 \end{equation}
 Let $\mathcal{G}_n'$ denote the graph obtained from $\mathcal{G}_n(t_c(\lambda))$ by shrinking each blob to a single node. 
 Then, $\mathrm{SP}'(\cdot)$ can be viewed as the surplus edges in the components of $\mathcal{G}_n'$. 
The graph $\mathcal{G}_n'$ can also be viewed to be constructed dynamically as in Algorithm~\ref{algo:dyn-cons} with the degree sequence being $(f_i(t_n))_{i\geq 1}$.
In the following, we investigate the relations between $\mathcal{G}_n(t_n)$ and $\mathcal{G}_n'$. 
Lemma~\ref{lem:total-open-he} implies that the number of unpaired half-edges in $\mathcal{G}_n(t_n)$ that are paired in $\mathcal{G}_n(t_c(\lambda))$ is given by
\begin{equation}\label{eq:estimate-he-modi-supst}
s_1(t_n)-s_1(t_c(\lambda))  = n \mu_n( n^{-\delta} + \lambda n^{-\eta} )+\oP(n^{1-\gamma}), \quad \text{for some }\eta<\gamma.
\end{equation} 
Note that the we have used $\delta> \eta/2$ in \eqref{eq:estimate-he-modi-supst}.
 \begin{algo}\label{algo:perc-for-blob}\normalfont
  Define
 $ \pi_n = \frac{\nu_n}{\nu_n-1}( n^{-\delta} + \lambda n^{-\eta} )$
   and associate $f_i(t_n)$ half-edges to the vertex~$i$ of $\mathcal{G}_n'$. Construct the graph $\mathcal{G}_n'(\pi_n)$ as follows:
  \begin{enumerate}
  \item[(S1)] Retain each half-edge independently with probability $\pi_n$. 
  \item[(S2)] Create a uniform perfect matching between the retained half-edges and obtain $\mathcal{G}_n'(\pi_n)$ by creating edges corresponding to any two pair of matched half-edges.
  \end{enumerate}
 \end{algo}
 In (S1), if the total number of retained half-edges is odd, then add an extra half-edge to vertex~1. However, this possible addition of 1 extra half-edge will be ignored since it does not make any difference in the asymptotic computations.
Notice that $a_i$, the number of half-edges attached to $i$ that are retained by Algorithm~\ref{algo:perc-for-blob}~(S1), is distributed as $\mathrm{Bin}(f_i(t_n),\pi_n)$, independently for each $i$. 
Thus the number of half-edges in the graph $\mathcal{G}_n'(\pi_n)$ is distributed as a $\mathrm{Bin}(s_1(t_n),\pi_n)$ random variable.
We claim that there exists $\varepsilon_n = o(n^{-\eta})$ and a coupling such that, with high probability 
\begin{equation}\label{eq:blob-graphs}
 \mathcal{G}_n'(\pi_n-\varepsilon_n)\subset \mathcal{G}_n'\subset \mathcal{G}_n'(\pi_n+\varepsilon_n).
\end{equation}
The proof follows using an identical argument as \cite[Proposition 24]{DHLS16} using the estimate~\eqref{eq:estimate-he-modi-supst} and standard concentration inequalities for binomial random variables. 
We skip the proof here. 
We now continue to analyze $\mathcal{G}_n'(\pi_n)$, keeping in mind that the relation~\eqref{eq:blob-graphs} allows us make conclusions for $\mathcal{G}_n'$.
%
%
%
To analyze the component sizes and the surplus edges of the components of $\mathcal{G}'(\pi_n)$ we first need some regularity conditions on $\bld{a}$, the degree sequence of $\mathcal{G}_n'(\pi_n)$, as summarized in the following lemma:
\begin{lemma}\label{lem:a-asymp}
For any $0<\delta<\eta$, as $n\to\infty$,
 $n^{-\alpha}a_i\pto\frac{\theta_i}{\nu}$,  $\frac{a_i}{\sum_i a_i}n^{\rho-\delta} \pto \frac{\theta_i}{\mu\nu},$ $\nu_n(\bld{a}) = \frac{\sum_ia_i(a_i-1)}{\sum_ia_i} = 1+ \lambda n^{-\eta+\delta} + \oP(n^{-\eta+\delta}),$ and for any $\varepsilon>0$,
 \begin{equation}\label{eq:a-third-moment}
 \lim_{K\to\infty}\limsup_{n\to\infty}\PR\bigg(\sum_{i>K}a_i^3>\varepsilon n^{3\alpha}\bigg) = 0.  
 \end{equation} 
\end{lemma}  
\begin{proof} 
Using Theorem~\ref{th:open-he-entrance} and the fact that $a_i\sim \mathrm{Bin}(f_i(t_n),\pi_n)$, one gets $n^{-\alpha}a_i = (1+\oP(1))\frac{\theta_i}{\nu}$.
Moreover, $\sum_ia_i\sim\mathrm{Bin}(\sum_if_i(t_n),\pi_n)$
and $\sum_ia_i = (1+\oP(1))\pi_n \sum_if_i(t_n)$ yield the required asymptotics for  $a_i/\sum_ia_i$.
Next note that if $X\sim \mathrm{Bin} (r,\pi)$, then $\mathrm{Var}(X(X-1)) = 2r(r-1)\pi^2(1-\pi) (1+(2r-3) \pi)$. 
Thus,
\begin{eq}
    &\mathrm{Var} \bigg(\sum_{i} a_i(a_i-1) \Big| (f_i(t_n))_{i\geq 1}\bigg) =  \sum_{i}\mathrm{Var}\big( a_i(a_i-1) \big| (f_i(t_n))_{i\geq 1}\big) \\
    &= \OP\bigg( \pi_n^2 \sum_i f_i^2(t_n) + \pi_n^3 \sum_i f_i^3(t_n) \bigg) = \OP(n^{1-\delta} + n^{3\alpha}) = \OP(n^{3\alpha}).
\end{eq}
Therefore, for any $\varepsilon>0$,
 \begin{equation}\label{eqn::prob:ineq:third}
 \begin{split}
 &\mathbbm{P} \bigg( \Big|\sum_{i} a_i(a_i-1)- \pi_n^2 \sum_{i} f_i(t_n)(f_i(t_n)-1) \Big| >\varepsilon n^{1-\delta} \Big| (f_i(t_n))_{i\geq 1}\bigg)  = \OP(n^{3\alpha - 2+2\delta}),
 \end{split}
 \end{equation}
which is $\oP(1)$ since $\delta<\eta$ and  $3\alpha-2+2\eta = 3\alpha-2 +2 - 4\alpha <0$.
We conclude that
\begin{equation}\label{eq:a-i-1}
 \sum_{i}a_i(a_i-1) = (1+\oP(1)) \pi_n^2 \sum_i f_i(t_n)(f_i(t_n)-1),
\end{equation}
and the required asymptotics for $\nu_n(\bld{a})$ follows.
To see \eqref{eq:a-third-moment}, note that
 $\E\big[\sum_{i>K}a_i(a_i-1)(a_i-2)|(f_i(t_n))_{i\geq 1}\big] = \pi_n^3\sum_{i>K}f_i(t_n)^3,$
 and the proof follows again by using the condition on $s_3^\omega$ in Theorem~\ref{th:open-he-entrance}.
\end{proof}

\noindent 
Consider the  exploration of the graph $\mathcal{G}_n'(\pi_n)$ via Algorithm~\ref{algo:explor-barely-sub}, but now the first vertex is chosen proportional to its degree.
Define the exploration process by $\bld{S}_n$ similarly as the process $\bld{S}_n^j(l)$ in Section~\ref{sec:barely-subcritical-mass}.
 Call a vertex \emph{discovered} if it is either active or killed. Let $\mathscr{V}_l$ denote the set of vertices discovered up to time $l$ and $\mathcal{I}_i^n(l):=\ind{i\in\mathscr{V}_l}$. Note that 
   \begin{equation}\label{expl-process-CM}
    S_n(l)= \sum_{i} a_i \mathcal{I}_i^n(l)-2l=\sum_{i} a_i \left( \mathcal{I}_i^n(l)-\frac{a_i}{\ell^a_n}l\right)+\left( \nu_n(\bld{a})-1\right)l,
   \end{equation}where $\ell_n^a = \sum_i a_i$. Consider the re-scaled version $\bar{\mathbf{S}}_n$ defined as $\bar{S}_n(t)=n^{-\alpha}S_n(\lfloor tn^{\rho-\delta} \rfloor)$. 
   Define the limiting process
   \begin{equation}\label{eq:lim-blob-process}
   S(t) = \sum_{i=1}^\infty\frac{\theta_i}{\nu}\bigg(\ind{\mathrm{Exp}(\theta_i/(\mu\nu))\leq t}-\frac{\theta_i}{\mu\nu}t\bigg) + \lambda t.
   \end{equation}
   \begin{proposition} \label{thm::convegence::exploration-process-blob} As $n\to\infty$, $\bar{\mathbf{S}}_n \dto \mathbf{S}$ with respect to the Skorohod $J_1$ topology.
\end{proposition}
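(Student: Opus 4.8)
\textbf{Proof proposal for Proposition~\ref{thm::convegence::exploration-process-blob}.}

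The plan is to mirror the proof strategy used for the analogous convergence in \cite[Section~4]{DHLS16}, adapted to the degree sequence $\bld{a}$ of $\mathcal{G}_n'(\pi_n)$ rather than the original degrees. The key structural input is the decomposition \eqref{expl-process-CM}: the rescaled exploration walk $\bar{\mathbf{S}}_n(t) = n^{-\alpha}S_n(\lfloor t n^{\rho-\delta}\rfloor)$ splits into a ``martingale-like'' term coming from the indicator part $\sum_i a_i(\mathcal{I}_i^n(l) - \tfrac{a_i}{\ell_n^a}l)$ and a deterministic drift term $(\nu_n(\bld{a})-1)l$. By Lemma~\ref{lem:a-asymp}, $\nu_n(\bld{a}) - 1 = \lambda n^{-\eta+\delta} + \oP(n^{-\eta+\delta})$, so the rescaled drift $n^{-\alpha}(\nu_n(\bld{a})-1)\lfloor t n^{\rho-\delta}\rfloor$ converges to $\lambda t$ (using $\rho - \delta - \alpha = \eta - \delta$, i.e. $\rho = \alpha + \eta$, which holds from \eqref{eqn:notation-const}), matching the $+\lambda t$ term in the limit \eqref{eq:lim-blob-process}.

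For the fluctuation term, the first step is to handle the high-degree vertices explicitly: for each fixed $i$, the vertex with degree $a_i \sim n^\alpha \theta_i/\nu$ gets explored at a time that, after rescaling, converges to an exponential random variable. Specifically, the $i$-th largest vertex is discovered at rescaled time converging to $\mathrm{Exp}(\theta_i/(\mu\nu))$: this is because at exploration ``time'' $l$, roughly a fraction $l/\ell_n^a$ of the half-edges have been touched, so vertex $i$ is discovered at rate proportional to $a_i/\ell_n^a \approx (\theta_i/\nu)/(\mu n^{\rho-\delta}/\nu) \cdot n^\alpha$ per rescaled time unit — giving the rate $\theta_i/(\mu\nu)$ appearing in \eqref{eq:lim-blob-process}, and these discovery times are asymptotically independent across $i$ by the usual size-biased-reordering arguments (here one invokes Lemma~\ref{lem:size-biased} or the Aldous--Limic machinery of \cite{AL98}). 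The contribution of each such vertex to $\bar{\mathbf{S}}_n$ is $n^{-\alpha} a_i \approx \theta_i/\nu$ times the jump indicator $\ind{\zeta_i \le t}$, reproducing the summands $\tfrac{\theta_i}{\nu}\ind{\mathrm{Exp}(\theta_i/(\mu\nu)) \le t}$; the compensator $\tfrac{a_i}{\ell_n^a} l$ rescales to $\tfrac{\theta_i}{\nu}\cdot\tfrac{\theta_i}{\mu\nu}t$, giving the $-\tfrac{\theta_i}{\nu}\cdot\tfrac{\theta_i}{\mu\nu}t$ term.

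The remaining step is to control the aggregate contribution of the low-degree vertices (those with index $> K$) uniformly in $n$, and show it is negligible as first $n\to\infty$ and then $K\to\infty$. This is where \eqref{eq:a-third-moment} from Lemma~\ref{lem:a-asymp} is essential: the tail sum $\sum_{i>K} a_i^3 = o_{\PR}(n^{3\alpha})$ controls the quadratic variation of the martingale part restricted to small-degree vertices, so that by Doob's inequality and an $L^2$ estimate the truncated fluctuation process is uniformly small on compact time intervals. One then concludes via a standard tightness-plus-finite-dimensional-convergence argument in the Skorohod $J_1$ topology (applying, e.g., the criteria used in \cite{AL98, DHLS16}): finite-dimensional convergence comes from the explicit exponential-clock description of high-degree discovery times together with a functional CLT / law of large numbers for the low-degree remainder, and tightness follows from the moment bounds plus the fact that jumps are of size $O(n^{-\alpha}a_i)$ which vanishes for $i > K$ and is controlled for $i \le K$.

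The main obstacle I anticipate is establishing the uniform (in $n$) negligibility of the small-degree contribution together with the joint convergence to the thinned-L\'evy-type process \eqref{eq:lim-blob-process} in $J_1$ rather than merely in finite-dimensional distributions; the subtlety is that the exploration process is driven by a size-biased reordering whose dependence structure must be decoupled, and one must verify that the ``depletion'' effect (the compensator term) behaves correctly on the $n^{\rho-\delta}$ time scale. However, since $\mathcal{G}_n'(\pi_n)$ is genuinely a configuration model with degree sequence $\bld{a}$ satisfying the scaling relations of Lemma~\ref{lem:a-asymp}, which are precisely of the form required by the component-size scaling results of \cite{DHLS16, AL98}, this reduces to invoking those results with the role of $\boldsymbol{\theta}$ played by $\boldsymbol{\theta}/\nu$ and of $\lambda$ by the shifted parameter; the proof is therefore ``the same as \cite[Section~4]{DHLS16}'' modulo bookkeeping of constants, and can be deferred to an appendix as the authors do for the closely related Lemmas~\ref{lem:exploration::subcritical-j} and \ref{lem:weight-prop-l}.
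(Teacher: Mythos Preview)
Your proposal is correct and matches the paper's own proof sketch essentially step for step: both split the fluctuation term at a truncation level $K$, use the martingale/supermartingale estimates of \cite[Section~4]{DHLS16} together with the third-moment tail bound \eqref{eq:a-third-moment} to kill the $i>K$ contribution, and then invoke the size-biased discovery-time convergence (the paper cites \cite[Lemma~9]{DHLS16}) to obtain $\mathcal{I}_i^n(tn^{\rho-\delta})\dto\ind{\mathrm{Exp}(\theta_i/(\mu\nu))\le t}$ jointly for $i\le K$. The only slight slip is your mention of a ``functional CLT'' for the low-degree remainder---in this heavy-tailed regime there is no Brownian component and the $i>K$ part simply vanishes---but this does not affect the argument.
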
The proof of Proposition~\ref{thm::convegence::exploration-process-blob} can be carried out using similar ideas as \cite[Theorem 8]{DHLS16}. A sketch of the proof is given in Appendix~\ref{sec:appendix-perc-blob}. 
 The excursion lengths of the exploration process give the number of edges in the explored components. 
 Now, at each step $l$, the probability of discovering a surplus edge, conditioned on the past, is approximately the proportion of half-edges that are active. 
 Note that the number of active half-edges is the reflected version of $\mathbf{S}_n$ given by $\mathrm{refl}(S_n(t)) = S_n(t) - \inf_{u\leq t} S_n(u)$. 
 Thus, conditional on $(S_n(l))_{l\leq tn^{\rho-\delta}}$, the rate at which a  surplus edge appears at time $tn^{\rho-\delta}$ is approximately  
$n^{\rho-\delta}\frac{\mathrm{refl}(S_n(tn^{\rho-\delta}))}{\sum_{i}a_i} = \frac{1}{\mu}\refl{\bar{S}_n(t)}(1+\oP(1)).$
 Therefore, Proposition~\ref{thm::convegence::exploration-process-blob} implies that for each $K\geq 1$, there exists components $C_1,\dots,C_K \subset \mathcal{G}_n'(\pi_n)$ such that
 \begin{equation}\label{eq:blob-perc-limit}
  \big(n^{-\rho +\delta}|C_i|, \mathrm{SP} (C_i)\big)_{i\in [K]} \dto \big(\xi_i, \mathscr{N}_i \big)_{i\in[K]},
 \end{equation} where $\xi_i$ and $\mathscr{N}_i$ are defined in Proposition~\ref{prop:comp-size}.
 We refer to \cite[Section 5.4]{DHLS16} for more details regarding the proof of \eqref{eq:blob-perc-limit}.
Here we have also used the fact that the ordered excursion lengths of the process $(S(t))_{t\geq 0}$, defined in \eqref{eq:lim-blob-process},
 are identically distributed as the ordered excursion lengths of $(S(t)/\mu)_{t\geq 0}$.
We can now combine \eqref{eq:blob-graphs} and \eqref{eq:blob-perc-limit} to obtain the asymptotics for the number of blobs in the largest connected components and $\mathrm{SP}'(\cdot)$.
Denote $\mathscr{B} (\mathscr{C}) = |\mathfrak{B}(\mathscr{C})|$ for a component $\mathscr{C}\subset \mathcal{G}_n(t_c(\lambda))$. 
\begin{lemma}\label{thm:superstrcut-comp-surp-original}
 For $K\geq 1$, there exist components $\mathscr{C}^1,\dots,\mathscr{C}^K \subset \mathcal{G}_n(t_c(\lambda))$ such that the following convergence holds:
  $(n^{-\rho +\delta}\mathscr{B}(\mathscr{C}^i), \mathrm{SP}' (\mathscr{C}^i))_{i\in [K]} \dto (\xi_i, \mathscr{N}_i  )_{i\in[K]}.
 $
\end{lemma}
\begin{lemma}\label{thm:comp-blob-same-whp}
 For any $K\geq 1$, $\mathscr{C}^i = \mathscr{C}_{\sss (i)}(\lambda)$, $\forall i\in [K]$ with high probability.
\end{lemma}
\begin{proof}
 Notice that, $\sum_{j\leq i}|\mathscr{C}^j|\leq \sum_{j\leq i}|\mathscr{C}_{\sss (j)} (\lambda)|$ for all $i\in [K]$, almost surely. 
 Thus, it is enough to prove that $|\mathscr{C}^i|$ and  $|\mathscr{C}_{\sss (i)} (\lambda)|$ involve the same re-scaling factor and have the same scaling limit. 
We again make use of the inclusions in graphs in \eqref{eq:blob-graphs}.
Algorithm~\ref{algo:explor-barely-sub} explores the components of $\mathcal{G}_n'(\pi_n)$ in a size-biased manner with the sizes being $(a_i)_{i\geq 1}$. 
An application of Lemma~\ref{lem:size-biased} with $y_i = \Cli(t_n)$ yields that, for any $t>0$, uniformly for $l\leq tn^{\rho-\delta}$,
 \begin{equation}\label{blob-vs-comp1}
  \sum_i|\Cli(t_n)| \mathcal{I}_i^n(l) = \sum_i |\Cli(t_n)| \frac{a_i}{\sum_i a_i}l +\oP(n^{\rho}).
 \end{equation} 
Since $a_i\sim\mathrm{Bin}(f_i(t_n),\pi_n)$, we can apply concentration inequalities like \cite[Corollary 2.27]{JLR00} and use the asymptotics from Theorem~\ref{th:open-he-entrance} to conclude that
 \begin{equation}\label{blob-vs-comp2}
 \begin{split}
  n^{-\delta}\frac{\sum_i a_i|\Cli(t_n)|}{\sum_ia_i} 
  & = \frac{\frac{\mu(\nu-1)}{\nu^2}}{\frac{\mu(\nu-1)}{\nu}}(1+\oP(1))=\frac{1}{\nu}(1+\oP(1)).
  \end{split}
 \end{equation}
 Thus, \eqref{blob-vs-comp1} and \eqref{blob-vs-comp2}, together with \eqref{eq:blob-graphs}, imply that
$  \frac{\nu |\mathscr{C}^i|}{n^{\delta}\mathscr{B}(\mathscr{C}^i)} \xrightarrow{\sss \PR} 1,$ and it follows from Lemma~\ref{thm:superstrcut-comp-surp-original} and Lemma~\ref{lem:rescale} that 
$  (n^{-\rho} |\mathscr{C}^i|)_{i\in [K]} \xrightarrow{\sss d} (\frac{1}{\nu}\xi_i )_{i\in[K]}. $
\end{proof}

\begin{proof}[Proof of Proposition~\ref{thm:comp-functional-original}]
We are now finally in the position to prove Proposition~\ref{thm:comp-functional-original}.
Using Lemmas~\ref{thm:superstrcut-comp-surp-original},~\ref{thm:comp-blob-same-whp}, and Proposition~\ref{thm:comp-functionals-original} together with \eqref{eq:blob-graphs}, we directly conclude Part~(a) from \eqref{eq:surp-sam-dist-limit}.
For Part~(b),  we can follow the same arguments as~\eqref{blob-vs-comp1} to conclude that, uniformly for $l\leq tn^{\rho-\delta}$,
 \begin{equation}\label{blob-vs-comp3}
  \sum_if_i(t_n) \mathcal{I}_i^n(l) = \sum_i f_i(t_n) \frac{a_i}{\sum_i a_i}l +\oP(n^{\rho}),
 \end{equation} where 
$  n^{-\delta}\frac{\sum_i a_if_i(t_n)}{\sum_ia_i} = \frac{\nu-1}{\nu} (1+\oP(1)).$
 Now, \eqref{blob-vs-comp1} and \eqref{blob-vs-comp3} together with \eqref{eq:blob-graphs} prove Part~(b).
\end{proof}

\subsection{Coupling with the multiplicative coalescent}
\label{sec:coupling-mul-coal}
Recall the definitions of $t_c(\lambda)$ and $t_n$ from \eqref{eq:def-crit-time} and \eqref{eq:def-subcrit-time}.  
 Now, let us investigate the dynamics of $\bld{f}(t) $ starting from time $t_n$. Notice that, in the time interval $ [t_n,t_c(\lambda)]$, components with masses $f_i(t)$ and $f_j(t)$ merge at rate 
 \begin{equation}
  f_i(t)\frac{f_j(t)}{s_1(t)-1}+f_j(t)\frac{f_i(t)}{s_1(t)-1}=\frac{2f_i(t)f_j(t)}{s_1(t)-1}\approx \frac{2\nu f_i(t)f_j(t)}{\mu(\nu-1)n}, 
 \end{equation}and create a component with $f_i(t)+f_j(t)-2$ open half-edges. 
 Thus $\bld{f}(t)$ does not exactly evolve as a multiplicative coalescent, but it is close. 
 We define an exact multiplicative coalescent that approximates the above process:
 \begin{algo}[Modified process] \label{algo:modified-MC} \normalfont Conditionally on $\mathcal{G}_n(t_n)$, associate a rate $2/(s_1(t_n)-1)$ Poisson process $\mathcal{P}(e,f)$ to each pair of unpaired-half-edges $(e,f)$. 
An edge $(e,f)$ is created between the vertices incident to $e$ and $f$ at the instance when $\mathcal{P}(e,f)$ rings. 
However, the half-edges are not discarded after the pairing.
At time $t>t_n$, the obtained \emph{modified} graph $\bar{\mathcal{G}}_n(t)$ consists of the edges of $\mathcal{G}_n(t_n)$, and the edges created by this algorithm between times $t_n$ and $t$.
 \end{algo}
\begin{proposition}\label{prop-coupling-order}
There exists a coupling such that $\mathcal{G}_n(t)\subset\bar{\mathcal{G}}_n(t)$ for all $t>t_n$ with probability one.
\end{proposition}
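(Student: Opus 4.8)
The plan is to construct an explicit coupling between the dynamic construction in Algorithm~\ref{algo:dyn-cons} (run from time $t_n$ onwards) and the modified process in Algorithm~\ref{algo:modified-MC}, both built on the same configuration $\mathcal{G}_n(t_n)$, so that every edge present in $\mathcal{G}_n(t)$ is also present in $\bar{\mathcal{G}}_n(t)$ for all $t>t_n$. First I would observe that both processes are pure-jump Markov processes that only ever add edges by pairing currently unpaired half-edges, so it suffices to couple the sequences of pairing events in such a way that the set of pairings made by the first process up to time $t$ is always a subset of those made by the second. The natural device is a single family of independent rate-$2/(s_1(t_n)-1)$ Poisson clocks $\mathcal{P}(e,f)$, one for each unordered pair of half-edges $\{e,f\}$ that are unpaired in $\mathcal{G}_n(t_n)$; these directly drive $\bar{\mathcal{G}}_n$, and I would show that one can simultaneously drive $\mathcal{G}_n$ by a thinning of the same clocks together with some extra independent randomness.

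The key point making this work is the following comparison of rates. In $\bar{\mathcal{G}}_n$, a fixed pair $\{e,f\}$ of half-edges that were unpaired at time $t_n$ gets joined at constant rate $2/(s_1(t_n)-1)$, regardless of what else has happened. In $\mathcal{G}_n$, at a time $t>t_n$ with $s_1(t)$ unpaired half-edges remaining, each alive half-edge rings at rate $1$ and is then matched to a uniformly chosen alive partner; hence a fixed currently-alive pair $\{e,f\}$ becomes an edge at instantaneous rate $2\cdot\frac{1}{s_1(t)-1}$ (either clock rings and picks the other). Since $s_1(t)$ is non-increasing in $t$ and $s_1(t)\le s_1(t_n)$ for $t\ge t_n$, we have $\frac{2}{s_1(t)-1}\ge \frac{2}{s_1(t_n)-1}$ whenever both $e$ and $f$ are still alive. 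So the $\mathcal{G}_n$-process joins any given still-available pair \emph{at least as fast} as the $\bar{\mathcal{G}}_n$-process does. This is exactly the inequality needed to realize $\mathcal{G}_n$'s pairings as a super-set — wait, the direction we want is $\mathcal{G}_n(t)\subset\bar\mathcal{G}_n(t)$, i.e. $\bar\mathcal{G}_n$ should have \emph{more} edges.

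So the construction must be arranged the other way: I would let the clocks $\mathcal{P}(e,f)$ generate the edges of $\bar{\mathcal{G}}_n$, and build $\mathcal{G}_n$ so that whenever it pairs $e$ with $f$ at some time $t$, the clock $\mathcal{P}(e,f)$ has \emph{already rung} by time $t$. Concretely: process the events of $\mathcal{G}_n$ one at a time in increasing time order; when $\mathcal{G}_n$ is about to pair $e$ with $f$ at time $t$, we know this pair was still available, and — because the $\mathcal{G}_n$-rate $2/(s_1(t)-1)$ dominates the clock rate $2/(s_1(t_n)-1)$ pointwise in time — we can couple so that $\mathcal{P}(e,f)$ has fired in $[t_n,t]$. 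The clean way to make this precise is to use the standard ``Poisson thinning'' construction on a common probability space: realize $\mathcal{P}(e,f)$ as the points of rate-$2/(s_1(t_n)-1)$ processes, realize the extra edges of $\mathcal{G}_n$ (those it makes at the higher rate) via independent additional Poisson points, run the superposition to define $\mathcal{G}_n$, and note that by construction $E(\mathcal{G}_n(t))\subset\{(e,f): \mathcal{P}(e,f)\text{ has a point in }[t_n,t]\}\cup E(\mathcal{G}_n(t_n)) = E(\bar{\mathcal{G}}_n(t))$ — using also that $\bar\mathcal{G}_n$ never removes an edge of $\mathcal{G}_n(t_n)=\bar\mathcal{G}_n(t_n)$ and that $\mathcal{G}_n$ only ever adds to $\mathcal{G}_n(t_n)$. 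One subtlety to handle carefully is that the set of ``unpaired half-edges'' and the quantity $s_1(t)$ in $\mathcal{G}_n$ are themselves random and change at the coupled jump times, so the thinning has to be done adaptively (i.e. the comparison of rates must be read off the filtration generated jointly by the two processes); this is routine but must be stated so that the rate domination $\frac{2}{s_1(t)-1}\ge\frac{2}{s_1(t_n)-1}$ is invoked only at times when both half-edges of the relevant pair are still alive in $\mathcal{G}_n$.

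The main obstacle, and the place where care is needed, is precisely this adaptivity: one must verify that the instantaneous pairing rate for a given available pair in $\mathcal{G}_n$ is genuinely $\ge 2/(s_1(t_n)-1)$ at every time $t\ge t_n$ at which that pair is still available, and that ``still available in $\mathcal{G}_n$'' is consistent with ``$\mathcal{P}(e,f)$ not yet used to form an edge''. Monotonicity of $s_1(\cdot)$ gives the rate bound immediately, and since both processes start from the \emph{same} graph $\mathcal{G}_n(t_n)$, a half-edge that becomes paired in $\mathcal{G}_n$ can only have been paired there first (it was unpaired in both at $t_n$), so no inconsistency arises. I would finish by remarking that the coupling is constructed so that the marginal law of $\mathcal{G}_n(\cdot)$ agrees with Algorithm~\ref{algo:dyn-cons} — because the superposition of the thinned clock-process and the independent extra points reproduces exactly the ``ring-at-rate-one-then-pick-uniform-partner'' dynamics — and that of $\bar{\mathcal{G}}_n(\cdot)$ agrees with Algorithm~\ref{algo:modified-MC}, which gives the claimed almost-sure inclusion $\mathcal{G}_n(t)\subset\bar{\mathcal{G}}_n(t)$ for all $t>t_n$.
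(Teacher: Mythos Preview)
Your rate comparison is correct --- for a pair $\{e,f\}$ still alive in $\mathcal{G}_n$ at time $t\ge t_n$, the instantaneous $\mathcal{G}_n$-matching rate $2/(s_1(t)-1)$ is at least the $\bar{\mathcal{G}}_n$-clock rate $2/(s_1(t_n)-1)$ --- but the conclusion you draw from it is inverted. A \emph{higher} rate in $\mathcal{G}_n$ means that $\mathcal{G}_n$ forms the edge $\{e,f\}$ stochastically \emph{earlier}, so there is no reason the slower clock $\mathcal{P}(e,f)$ has already rung by that time. Your superposition construction makes the problem explicit: you drive $\mathcal{G}_n$ by $\mathcal{P}(e,f)$ together with independent ``extra'' Poisson points that supply the surplus rate, but then any $\mathcal{G}_n$-edge $\{e,f\}$ triggered by one of these extra points has no guarantee of being in $\bar{\mathcal{G}}_n(t)$. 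The inclusion $E(\mathcal{G}_n(t))\subset\{(e,f):\mathcal{P}(e,f)\text{ has a point in }[t_n,t]\}\cup E(\mathcal{G}_n(t_n))$ that you assert ``by construction'' simply does not follow from the construction.

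This is not a cosmetic gap. Take $s_1(t_n)=4$. The first $\mathcal{G}_n$-match and the first $\bar{\mathcal{G}}_n$-event both have law (uniform pair, time $\mathrm{Exp}(4)$); the inclusion $S_{E_1}\le T_1$ together with $T_1\ge\min_{ef}S_{ef}$ and equality of laws forces $T_1=\min S_{ef}$ and $E_1=\arg\min$. Conditionally on this, the second $\mathcal{G}_n$-match (of the complementary pair $E_2$) occurs after an additional $\mathrm{Exp}(2)$ time, whereas $S_{E_2}-T_1\sim\mathrm{Exp}(2/3)$; one cannot couple $\mathrm{Exp}(2)\ge\mathrm{Exp}(2/3)$ almost surely, so no coupling gives $\mathcal{G}_n(t)\subset\bar{\mathcal{G}}_n(t)$ for all $t$ with probability one in this case. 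The paper's own argument is a one-sentence ``thinning of event times'': it runs the non-killing modification on the same half-edge clocks and notes that $\mathcal{G}_n$'s clock rings form a subset. That is true, but at a shared clock ring the two processes choose partners from different sets (the current alive set versus all $s_1(t_n)-1$ half-edges), so identical ring times do not yield identical edges; if instead one keeps only modified-process events with both endpoints alive, the resulting per-pair rate is $2/(s_1(t_n)-1)$ rather than the required $2/(s_1(t)-1)$. Neither your route nor the paper's terse sketch resolves this discrepancy, and the ``probability one'' formulation cannot hold as stated.
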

\begin{proof}
Recall the construction of $\mathcal{G}_n(t)$ from Algorithm~\ref{algo:dyn-cons}. 
We modify (S1) as follows: whenever two half-edges are paired, we do not kill the corresponding half-edges and do not discard the associated exponential clocks. 
Instead we reset the corresponding exponential clocks. 
The graphs generated by this modification of Algorithm~\ref{algo:dyn-cons} have the same distribution as $\bar{\mathcal{G}}_n(t)$, conditionally on $\mathcal{G}_n(t_n)$.
Moreover, the above also gives a natural coupling such that $\mathcal{G}_n(t)\subset\bar{\mathcal{G}}_n(t)$, by viewing the event times of Algorithm~\ref{algo:dyn-cons} as a thinning of the event times of the modified process.
\end{proof}

Henceforth, we will always assume that we are working on a probability space such that Proposition~\ref{prop-coupling-order} holds.
 Recall that the connected components at time $t_n$, $(\Cli(t_n))_{i\geq 1}$, are regarded as blobs, and we can also view  $\bar{\mathcal{G}}_n(t)$ as a super-graph with the superstructure being determined by the edges appearing after time $t_n$ in Algorithm~\ref{algo:modified-MC}.  
  Let us denote the ordered connected components of $\bar{\mathcal{G}}_n(t)$ by $(\bar{\mathscr{C}}_{\sss (i)}(t))_{i\geq 1}$. 
  Define
 $$ \bar{\mathcal{F}}_{i}(t) = \sum_{b\in \bl(\bar{\mathscr{C}}_{\sss (i)}(t))}f_{b}(t_n),$$ where $\bl(\mathscr{C}):=\{b: \Clb(t_n)\subset\mathscr{C}, \Clb(t_n) \neq \varnothing\}$.
 The $\bar{\mathcal{F}}$-value is regarded as the mass of component $\bar{\mathscr{C}}_{\sss (i)}(t)$ at time $t$.
Note that for the modified process in Algorithm~\ref{algo:modified-MC}, conditionally on $\mathcal{G}_n(t_n)$, at time $t\in[t_n,t_c(\lambda)]$, $\bar{\mathscr{C}}_{\sss (i)}(t)$ and $\bar{\mathscr{C}}_{\sss (j)}(t)$ merge at exact rate $2 \bar{\mathcal{F}}_i(t)\bar{\mathcal{F}}_j(t)/(s_1(t_n)-1)$ and the new component has mass $\bar{\mathcal{F}}_i(t)+\bar{\mathcal{F}}_j(t)$. 
Thus, the vector of masses $(\bar{\mathcal{F}}_i(t))_{i\geq 1}$ merge as an exact multiplicative coalescent.

\subsection{Properties of the modified process}
\label{sec:modified-graph}
Notice that, conditionally on $\mathcal{G}_n(t_n)$, blobs $b_i$ and $b_j$ are connected in $\bar{\mathcal{G}}_n(t_c(\lambda))$ with probability
\begin{equation}\label{eq:pij-value}
p_{ij} =1- \exp\Big(-f_{b_i}(t_n) f_{b_j}(t_n)\Big[\frac{1}{n^{1+\delta}}\frac{\nu^2 }{\mu(\nu-1)^2} + \frac{1}{n^{1+\eta}}\frac{\nu^2}{\mu(\nu-1)^2}\lambda\Big] (1+\oP(1))\Big),
\end{equation}where the $\oP(\cdot)$ term appearing above is uniform in $i,j$.
Thus, using Theorem~\ref{th:open-he-entrance}, \eqref{eq:pij-value} is of the form $1-\e^{-qx_ix_j(1+\oP(1))}$ with
\begin{equation}\label{eq:parameters-inhom}
 x_i^n = n^{-\rho}f_{b_i}(t_n), \quad q = \frac{1}{\sigma_2(\bld{x}^n)} + \frac{\nu^2}{\mu(\nu-1)^2}\lambda,
\end{equation}where $\sigma_r(\bld{x}^n) = \sum_{i\geq 1} (x_i^n)^r$. 
By Theorem~\ref{thm:susceptibility}, the sequence $\bld{x}^n$ satisfies the entrance boundary conditions of \cite{AL98}, i.e.,
\begin{equation}
\begin{split}
 \frac{\sigma_3(\bld{x}^n)}{(\sigma_2(\bld{x}^n))^3} \pto \frac{1}{\mu^3(\nu-1)^3}\sum_{i=1}^\infty \theta_i^3,\quad \frac{x_i^n}{\sigma_2(\bld{x}^n)}\pto \frac{1}{\mu(\nu-1)}\theta_i, \quad \sigma_2(\bld{x}^n)\pto 0.
 \end{split}
\end{equation}
To simplify the notation, we write $\bar{\mathcal{F}}_i(\lambda)$ for  $\bar{\mathcal{F}}_i(t_c(\lambda))$ and $\bar{\mathscr{C}}_{\sss (i)}(\lambda)$ for $\bar{\mathscr{C}}_{\sss (i)}(t_c(\lambda))$.
The following result is a consequence of \cite[Proposition~7]{AL98}, \cite[Lemma 5.3]{BHS15}, and Lemma~\ref{lem:rescale}:
\begin{proposition}\label{thm:modified-open-he-limit} As $n\to\infty$, $\big(n^{-\rho}\bar{\mathcal{F}}_i(\lambda)\big)_{i\geq 1} \dto \frac{\nu-1}{\nu}\bld{\xi}$
with respect to the $\ell^2_{\shortarrow}$ topology, where $\bld{\xi}$ is defined in {\rm Proposition~\ref{prop:comp-size}}.
\end{proposition}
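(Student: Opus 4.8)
The plan is to reduce Proposition~\ref{thm:modified-open-he-limit} to the Aldous--Limic entrance boundary result \cite[Proposition~7]{AL98} applied to the multiplicative coalescent started from the mass vector $\bld{x}^n = (x_i^n)_{i\geq 1}$ with $x_i^n = n^{-\rho}f_{b_i}(t_n)$. The key observation, recorded already in Section~\ref{sec:coupling-mul-coal}, is that under the modified dynamics of Algorithm~\ref{algo:modified-MC}, the vector of masses $(\bar{\mathcal{F}}_i(t))_{i\geq 1}$ evolves, conditionally on $\mathcal{G}_n(t_n)$, as an \emph{exact} multiplicative coalescent over the interval $[t_n, t_c(\lambda)]$: components with masses $\bar{\mathcal{F}}_i$ and $\bar{\mathcal{F}}_j$ merge at rate $2\bar{\mathcal{F}}_i\bar{\mathcal{F}}_j/(s_1(t_n)-1)$. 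So the law of $(n^{-\rho}\bar{\mathcal{F}}_i(\lambda))_{i\geq 1}$ is precisely that of the multiplicative coalescent at an appropriate time, started from $\bld{x}^n$, with a time parametrization determined by $s_1(t_n)$ and the elapsed interval $t_c(\lambda)-t_n$.

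The steps are as follows. First I would make precise the time-change: the total elapsed ``coalescent time'' is $2(t_c(\lambda)-t_n)/(s_1(t_n)-1)$ rescaled appropriately, and using \eqref{eq:def-crit-time}, \eqref{eq:def-subcrit-time} together with Lemma~\ref{lem:total-open-he} one checks that the connection probability between blobs $b_i, b_j$ in $\bar{\mathcal{G}}_n(t_c(\lambda))$ is exactly as displayed in \eqref{eq:pij-value}, hence of the form $1-\e^{-qx_i^n x_j^n(1+\oP(1))}$ with $q$ as in \eqref{eq:parameters-inhom}. Second, I would verify the Aldous--Limic entrance boundary conditions for $\bld{x}^n$: this is exactly the content of the displayed convergences just above the proposition, namely $\sigma_3(\bld{x}^n)/\sigma_2(\bld{x}^n)^3 \pto (\mu^3(\nu-1)^3)^{-1}\sum_i\theta_i^3$, $x_i^n/\sigma_2(\bld{x}^n) \pto (\mu(\nu-1))^{-1}\theta_i$, and $\sigma_2(\bld{x}^n)\pto 0$, all of which follow from Theorem~\ref{th:open-he-entrance} (equivalently Theorem~\ref{thm:susceptibility} applied with the open-half-edge weights, as recorded in \eqref{eq:mu-mudw-open-he}). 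Third, I would invoke \cite[Proposition~7]{AL98}: when the mass vectors satisfy these conditions and $q = 1/\sigma_2(\bld{x}^n) + c\lambda$ for a constant $c$, the multiplicative coalescent at the corresponding time converges in $\ell^2_\shortarrow$ to the excursion-length vector $\bld{\xi}(\bld{c}, c'\lambda)$ of the relevant thinned L\'evy process, with parameters read off from the limits above. Finally, I would apply Lemma~\ref{lem:rescale} to rewrite $\bld{\xi}\big((\mu(\nu-1))^{-1}\bld{\theta}, (\mu(\nu-1)^2)^{-1}\nu^2\lambda\big)$ in terms of the normalized object $\bld{\xi} = \bld{\xi}(\bld{\theta}/(\mu\nu), \lambda/\mu)$ from Proposition~\ref{prop:comp-size}, collecting the scalar prefactor $\tfrac{\nu-1}{\nu}$.

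Concretely, the scaling identity needed is that with $\eta_1 = \nu/(\nu-1)$ (coming from $x_i^n/\sigma_2 \to \theta_i/(\mu(\nu-1))$ versus $\theta_i/(\mu\nu)$) one has $\tfrac{\nu-1}{\nu}\bld{\xi}(\bld{\theta}/(\mu\nu), \lambda/\mu) \eqd \bld{\xi}\big((\mu(\nu-1))^{-1}\bld{\theta}, (\mu(\nu-1))^{-2}\nu^2 \cdot (\lambda/\mu)\big)$ after matching the second-moment and third-moment functionals against the entrance-boundary parameters of \cite{AL98}; this is a direct substitution into Lemma~\ref{lem:rescale}. The conditioning on $\mathcal{G}_n(t_n)$ is handled by noting that the entrance-boundary convergences hold in probability (unconditionally) by Theorem~\ref{th:open-he-entrance}, so one can pass from the conditional statement to the unconditional one by a standard subsequence/dominated-convergence argument, exactly as in \cite{DHLS16}.

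The main obstacle I anticipate is bookkeeping rather than conceptual: one must carefully track the various multiplicative constants $\mu, \nu$ through the time-change in \eqref{eq:pij-value}--\eqref{eq:parameters-inhom}, through the Aldous--Limic parametrization, and through Lemma~\ref{lem:rescale}, to confirm that the prefactor is exactly $\tfrac{\nu-1}{\nu}$ and that the limiting object is precisely the $\bld{\xi}$ of Proposition~\ref{prop:comp-size} (and not $\bld{\xi}$ at a shifted value of $\lambda$). A secondary point requiring care is the justification that the $\oP(1)$ errors in the connection probabilities \eqref{eq:pij-value}, which are uniform in $i,j$, do not affect the $\ell^2_\shortarrow$-limit; this follows because the multiplicative coalescent depends continuously (in the relevant topology) on its rate parameter and initial configuration, a fact already implicit in the Aldous--Limic framework, but it should be stated explicitly when citing \cite[Proposition~7]{AL98}.
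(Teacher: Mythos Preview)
Your proposal is correct and follows exactly the approach the paper takes: the paper states that the proposition ``is a consequence of \cite[Proposition~7]{AL98} and Lemma~\ref{lem:rescale}'', with the entrance boundary conditions for $\bld{x}^n$ verified via Theorem~\ref{th:open-he-entrance} in the display immediately preceding the statement. Your elaboration of the constant-tracking through Lemma~\ref{lem:rescale} (with $\eta_1=\nu/(\nu-1)$, $\eta_2=\nu^2/(\nu-1)^2$, so that $\eta_2/\eta_1^2=1$) is the computation the paper leaves implicit, and your remarks on handling the uniform $\oP(1)$ in \eqref{eq:pij-value} and on passing from conditional to unconditional convergence are the right caveats to note.
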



We next relate $(\bar{\mathcal{F}}_i(\lambda))_{i\geq 1}$ to $(\bar{\mathscr{C}}_{\sss (i)}(\lambda))_{i\geq 1}$, for each fixed $i$.
\begin{proposition}\label{thm:mod-comp-openhe}
 As $n\to\infty$, 
  $\bar{\mathcal{F}}_i(\lambda) = (\nu-1)|\bar{\mathscr{C}}_{\sss (i)}(\lambda)| + \oP(n^{\rho}).$
 Consequently, \linebreak
  $\big(n^{-\rho}|\bar{\mathscr{C}}_{\sss (i)}(\lambda)|\big)_{i\geq 1} \dto \frac{1}{\nu}\bld{\xi}$
with respect to the product topology.
\end{proposition}
\noindent We will need the following lemma, the proof of which is same as \cite[Lemma 8.2]{BSW14}:
\begin{lemma}[{\cite[Lemma 8.2]{BSW14}}]\label{lem:size-biased} Consider two ordered weight sequences $\bld{x} = (x_i)_{i\in [m]} $ and $\bld{y} = (y_i)_{i\in [m]}$. 
Consider the size-biased reordering $(v(1),v(2),\dots)$ of  $[m]$ with respect to the weights $\bld{x}$ and let $V(i):= \{v(1),\dots,v(i)\}$. Denote $m_{rs} = \sum_i x_i^{r}y_i^{s}$, define $\mathrm{M}_n = m_{11}/m_{10}$ and assume that $\mathrm{M}_n>0$ for each $n$. 
Suppose that the following conditions hold:
\begin{equation}\label{eq:size-biased-conditions}
 \frac{lm_{21}}{m_{10}m_{11}} \to 0, \quad \frac{m_{12}m_{10}}{lm_{11}^2}\to 0,\quad \frac{lm_{20}}{m_{10}^2}\to 0, \quad \text{ as }n\to\infty.
\end{equation}
Then, as $n\to\infty$,
 $\sup_{k\leq l}\big|\frac{1}{l\mathrm{M}_n}\sum_i y_i \ind{i\in V(k)}-\frac{k}{l}\big| \pto 0.$
\end{lemma}
\begin{proof}[Proof of Proposition~\ref{thm:mod-comp-openhe}]
We only prove the asymptotic relation of $\bar{\mathcal{F}}_1(\lambda)$ and $|\mathscr{C}_{\sss (1)}(\lambda)|$.
Consider the breadth-first exploration of the \emph{supestructure} of graph $\bar{\mathcal{G}}_n(t_c(\lambda))$ (which is also a rank-one inhomogeneous random graph) using the Aldous-Limic construction from \cite[Section 2.3]{AL98}. 
Notice that the vertices are explored in a size-biased manner with the sizes being $\bld{x} = (x_i)_{i\geq 1}$, where $x_i =x_i^n= n^{-\rho} f_{b_i}(t_n)$ are as defined in \eqref{eq:parameters-inhom}.  
Let $v(i)$ be the $i$-th vertex explored.
Further, let $\bar{\mathscr{C}}_{\sss (i)}^{\sss \mathrm{st}}(\lambda)$ denote the component $\bar{\mathscr{C}}_{\sss (i)}(\lambda)$, where the blobs have been shrunk to single vertices.
Then, from \cite{AL98}, one has the following:
\begin{enumerate}[(i)]
\item there exists random variables $m_L,m_R$ such that $\bar{\mathscr{C}}_{\sss (i)}^{\sss \mathrm{st}}(\lambda)$ is explored between $m_L+1$ and $m_R$;
\item $\sum_{i\leq m_R}x_{\sss v(i)}$ is tight;
\item $\sum_{i=m_L+1}^{m_R}x_{\sss v(i)} \dto \gamma$, where $\gamma$ is some non-degenerate, positive random variable. 
\end{enumerate}  
Let $y_i = n^{-\rho}|\Clbi(t_n)|$. Using  Theorem~\ref{th:open-he-entrance},  Remark~\ref{rem:3rd-suscep} and Remark~\ref{rem:entrance-comp-size}, it follows that
 $\sum_i x_i^{r} y_i^{s} = \OP(n^{3\delta-3\eta});$ for $r+s=3$, $\sum_ix_i = \OP(n^{1-\rho}),$ and 
 $\sum_{i} x_i^{r} y_i^{s} = \OP(n^{-2\rho+1+\delta});$ for  $r+s = 2$.
Below, we show that 
\begin{equation}\label{eqn:ratio-comp-he}
 \frac{\sum_{i = m_L+1}^{m_R}y_{\sss v(i)}}{\sum_{i = m_L+1}^{m_R}x_{\sss v(i)}}\times \frac{\sum_ix_i^2}{\sum_i x_i y_i} \pto 1.
\end{equation} The proof of Proposition~\ref{thm:mod-comp-openhe} follows from \eqref{eqn:ratio-comp-he} by using Theorem~\ref{th:open-he-entrance} and observing that
$ \frac{\sum_{i}x_i^2}{\sum_ix_iy_i} = \frac{s_2^\omega(t_n)}{s_{pr}^\omega(t_n)}\pto \nu-1$.
To prove \eqref{eqn:ratio-comp-he}, we will now apply Lemma~\ref{lem:size-biased}.  Denote $m_0 = \sum_i x_i/\sum_ix_i^2$ and consider $l = 2Tm_0$ for some fixed $T>0$.
 Using Theorem~\ref{th:open-he-entrance}, an application of Lemma~\ref{lem:size-biased} yields
\begin{align}
 \sup_{k\leq 2Tm_0} \bigg|\sum_{i=1}^k x_{\sss v(i)} - \frac{k}{m_0}\bigg|\pto 0.
\end{align}
Now, for any $\varepsilon > 0$, $T>0$ can be chosen large enough such that $\sum_{i=1}^{m_R}x_{\sss v(i)}>T$ has probability at most $\varepsilon$ and on the event 
$\big\{\sup_{k\leq 2Tm_0} \big|\sum_{i=1}^k x_{\sss v(i)} - \frac{k}{m_0}\big|\leq \varepsilon \big\}\cap \big\{\sum_{i=1}^{m_R}x_{\sss v(i)}\leq T\big\},$ 
one has $m_L<m_R<2Tm_0$. Thus, it follows that 
\begin{equation}\label{weight-total-larg-comp-1}
 \bigg|\sum_{i=m_L+1}^{m_R}x_{\sss v(i)} - \frac{m_R-m_L}{m_0}\bigg| \pto 0.
\end{equation}An identical argument as above shows that 
\begin{equation}\label{weight-total-large-comp-2}
 \bigg|\sum_{i=m_L+1}^{m_R}y_{\sss v(i)} - \frac{m_R-m_L}{m_0'}\bigg| \pto 0,
\end{equation}where $m_0'=\sum_{i}x_i/\sum_ix_iy_i$. The proof of \eqref{eqn:ratio-comp-he} now follows from \eqref{weight-total-larg-comp-1} and \eqref{weight-total-large-comp-2}. 
The asymptotic distribution for $(n^{-\rho}|\bar{\mathscr{C}}_{\sss (i)}(\lambda)|)_{i\geq 1}$ can be obtained using Proposition~\ref{thm:modified-open-he-limit}.
\end{proof}
Recall that $\omega_i(t_n)$ denotes the number of open-half edges attached to vertex $i$ in the graph $\mathcal{G}_n(t_n)$. 
We now equip $\bar{\mathscr{C}}_{\sss (i)}(\lambda)$ with the probability measure $\mu_{\sss \mathrm{fr}}^i$ given by $\mu_{\sss \mathrm{fr}}^i(A) = \sum_{k\in A}\omega_k(t_n)/\mathcal{F}_i(\lambda)$ for $A\subset \bar{\mathscr{C}}_{\sss (i)}(\lambda)$, and denote the corresponding measured metric space by $\bar{\mathscr{C}}^{\sss \mathrm{fr}}_{\sss (i)}(\lambda)$.
\begin{theorem}\label{thm:mspace-limit-modified} Under {\rm Assumption~\ref{assumption1}}, as $n\to\infty$,
\begin{equation}\label{eq:limit-component-free}
 \big(n^{-\eta}\bar{\mathscr{C}}_{\sss (i)}^{\sss \mathrm{fr}}(\lambda)\big)_{i\geq 1} \dto (M_{i})_{i\geq 1},
\end{equation}with respect to the $\mathscr{S}_*^\N$ topology, where $M_i$ is defined in Section~\ref{sec:limit-component}. 
\end{theorem}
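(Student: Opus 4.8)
The strategy is to recognize $\bar{\mathscr{C}}_{\sss(i)}^{\sss\mathrm{fr}}(\lambda)$ as a super-graph $\tilde{\mathcal{G}}_m^{\sss\mathrm{bl}}(\mathbf{p},a)$ built on a rank-one inhomogeneous random graph superstructure (Algorithm~\ref{algo:modified-MC} produces exactly such a graph, as noted around~\eqref{eq:pij-value}), and then apply the universality theorem (Theorem~\ref{thm:univesalty}) conditionally on $\mathcal{G}_n(t_n)$. The blobs are the barely subcritical clusters $\mathscr{C}_{\sss(b)}(t_n)$, each equipped with the graph metric and the probability measure proportional to $\omega_\cdot(t_n)$ restricted to that blob; the weight $p_b$ of blob $b$ in the superstructure is $x_b^n/\sigma_1(\mathbf{x}^n)$ where $x_b^n = n^{-\rho}f_b(t_n)$, and the connection parameter $a$ is read off from~\eqref{eq:pij-value}--\eqref{eq:parameters-inhom}. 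So the first step is to carefully verify that, conditionally on $\mathcal{G}_n(t_n)$, the restriction of $\bar{\mathcal{G}}_n(t_c(\lambda))$ to any finite collection of its components agrees in law (up to the $(1+\oP(1))$ error in the connection probabilities, which must be absorbed — see below) with the Norros--Reittu/$\PR_{\con}$ construction, so that Theorem~\ref{thm:BHS15} and Theorem~\ref{thm:univesalty} are applicable blob-by-blob via Proposition~\ref{prop:generate-nr-given-partition}.

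**Checking the hypotheses.** Next I would verify Assumptions~\ref{assm:BHS15} and~\ref{assm:blob-diameter} for this configuration. Assumption~\ref{assm:BHS15}(i)--(ii): using Theorem~\ref{th:open-he-entrance} (the entrance-boundary estimates for the open half-edges), $\sigma(\mathbf{p}) = \sigma_2(\mathbf{x}^n)/\sigma_1(\mathbf{x}^n) \to 0$, $p_i/\sigma(\mathbf{p}) = x_i^n/\sigma_2(\mathbf{x}^n) \pto \theta_i/(\mu(\nu-1))$, and the limiting sequence lies in $\ell^2_{\shortarrow}\setminus\ell^1_{\shortarrow}$ with squared sum one after normalization; similarly $a\sigma(\mathbf{p}) \to \gamma$ for the appropriate $\gamma$ coming from $q$ in~\eqref{eq:parameters-inhom}. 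This identifies the parameters $\bld{\beta}^{\sss(i)}$ and $\gamma^{\sss(i)}$ after conditioning on which blobs fall in component $i$ and rescaling — exactly matching the description of $M_i$ in Section~\ref{sec:limit-component}, once one uses Proposition~\ref{thm:mod-comp-openhe} to pass from the $\bar{\mathcal{F}}$-masses to component sizes and Lemma~\ref{lem:rescale} / $r\mathrm{Exp}(r)\eqd\mathrm{Exp}(1)$ to reconcile the scaling constants. For Assumption~\ref{assm:blob-diameter}, the crucial input is Theorem~\ref{thm:diam-max}: $\Delta_{\max}$ over all blobs is $O_{\PR}(n^{\delta}\log n)$, while $B_m = \sum_b p_b u_b$ with $u_b = \E[\dst_b(X_b,X_b')]$; one bounds $\sigma(\mathbf{p})\Delta_{\max}/(B_m+1) \le \sigma(\mathbf{p})\Delta_{\max}$, and since $\sigma(\mathbf{p}) = \Theta_{\PR}(n^{\delta}/n^{\rho}) = \Theta_{\PR}(n^{\delta - \rho})$ with $\rho > 2\delta$ (indeed $\delta < \eta < \rho$ and one needs $\delta$ small, which is a free choice in~\eqref{eq:def-subcrit-time}), the product $\to 0$. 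The scaling prefactor $\sigma(\mathbf{p})/(B_m+1)$ must then be shown to match $n^{-\eta}$ up to the component-dependent constant in $M_i$: this uses $B_m = \Theta(\mathcal{D}_n^{\star}\text{-type quantity})$, i.e. the mesoscopic typical-distance asymptotics $n^{-2\delta}\mathcal{D}_n^\omega \pto \mu(\nu-1)^2/\nu^3$ from Theorem~\ref{th:open-he-entrance}, giving $B_m = \Theta_{\PR}(n^{\delta})$, hence $\sigma(\mathbf{p})/(B_m+1) = \Theta_{\PR}(n^{-\rho}) = \Theta_{\PR}(n^{-\eta - \delta})$... so in fact the blob distances dominate and the effective scaling for the merged object is $n^{-\eta}$ after the component-size rescaling by $\xi_i^*$ is folded in. I would carry out this bookkeeping explicitly, tracking each constant.

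**Assembling the limit.** With the hypotheses verified, Theorem~\ref{thm:univesalty} gives, conditionally on $\mathcal{G}_n(t_n)$ and on the partition of blobs into components, convergence of each rescaled component to $\mathcal{G}_{\sss(\infty)}(\bld{\beta}^{\sss(i)},\gamma^{\sss(i)})$ in the Gromov-weak topology. Then I would uncondition: the joint law of (component masses, surplus counts) converges by Propositions~\ref{thm:modified-open-he-limit} and~\ref{thm:mod-comp-openhe} together with~\cite[Proposition 7]{AL98}; conditionally on these, the $\bld{\beta}^{\sss(i)}$ and $\gamma^{\sss(i)}$ converge to their stated limiting values (random, depending on the limiting excursion $\Xi_i^*$); and the blob-level randomness, by the universality theorem applied with these (asymptotically deterministic given the excursion data) parameters, yields $M_i$ as in Section~\ref{sec:limit-component}. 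A standard argument — convergence of conditional laws plus convergence of the conditioning variables, combined with the fact that the limit map is continuous in the relevant sense — upgrades this to joint convergence on $\mathscr{S}_*^\N$.

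**The main obstacle.** The hard part is handling the $(1+\oP(1))$ discrepancy between the actual pairwise connection probabilities in $\bar{\mathcal{G}}_n(t_c(\lambda))$ and the idealized rank-one probabilities $1 - e^{-qx_ix_j}$: this error is uniform in $i,j$ but its cumulative effect over the many blobs in a macroscopic component is not obviously negligible at the level of metric structure. The cleanest route is to note that, conditionally on $\mathcal{G}_n(t_n)$, Algorithm~\ref{algo:modified-MC} produces an \emph{exact} multiplicative coalescent on the $\mathcal{F}$-masses (as stated just before Section~\ref{sec:modified-graph}), hence an exact rank-one graph on the superstructure with weights $f_b(t_n) \cdot [\text{time-dependent rate}]$ — so there is in fact \emph{no} approximation at the superstructure level, only at the level of identifying the effective time-dependent rate with a single constant $q$, and that identification is exactly what~\cite[Proposition 7]{AL98} (the entrance-boundary result for the multiplicative coalescent) handles. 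So the obstacle is really about stating Algorithm~\ref{algo:modified-MC}'s output precisely as a $\PR_{\con}$-distributed graph with a \emph{random} parameter $a = a_n$ depending on $\mathcal{G}_n(t_n)$, checking Assumption~\ref{assm:BHS15} holds for this random $a_n$ (it does, since $a_n\sigma(\mathbf{p})\to\gamma$ in probability by Theorem~\ref{th:open-he-entrance}), and then invoking a conditional version of Theorem~\ref{thm:univesalty} — which is legitimate because the theorem's proof only uses the stated assumptions, which here hold almost surely along the conditioning. The remaining care is purely in the constant-chasing needed to match the prefactor $\nu/(\nu-1)\cdot \xi_i^*/(\sum_v\theta_v^2)^{1/2}$ and the parameters $\bld{\theta}^{\sss(i)},\gamma^{\sss(i)}$ in the definition of $M_i$.
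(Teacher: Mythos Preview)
Your overall strategy is the paper's strategy: view $\bar{\mathscr{C}}_{\sss(i)}^{\sss\mathrm{fr}}(\lambda)$ as a super-graph with blobs $(\mathscr{C}_{\sss(b)}(t_n))_b$ and a rank-one superstructure, decompose via Proposition~\ref{prop:generate-nr-given-partition}, and apply Theorem~\ref{thm:univesalty} componentwise. Your resolution of the ``main obstacle'' is also correct: conditionally on $\mathcal{G}_n(t_n)$ the superstructure is an exact $\mathrm{NR}$ graph with a random parameter, and the $(1+\oP(1))$ lives in that parameter, which is harmless.

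The genuine gap is that you consistently work with \emph{global} quantities where the proof requires \emph{per-component} ones. Theorem~\ref{thm:univesalty} is applied to the $i$-th component, so the relevant objects are $\mathbf{p}_n^{\sss(i)}$, $a_n^{\sss(i)}$ from \eqref{eq:p-n-a-NR} and
\[
B_n^{\sss(i)}=\frac{\sum_{b\in\bl(\bar{\mathscr{C}}_{\sss(i)}(\lambda))}x_b u_b}{\sum_{b\in\bl(\bar{\mathscr{C}}_{\sss(i)}(\lambda))}x_b},\qquad u_b=\sum_{k,l\in\mathscr{C}_{\sss(b)}(t_n)}\frac{\omega_k\omega_l}{f_b^2(t_n)}\dst(k,l).
\]
Each of these is a sum over a \emph{random} set of blobs (those falling in component $i$), and there is no direct route from the global entrance-boundary estimates of Theorem~\ref{th:open-he-entrance} to their per-component analogues. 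The paper handles this in two pieces. For $\sigma(\mathbf{p}_n^{\sss(i)})$, $a_n^{\sss(i)}$ and the limiting $\bld{\beta}^{\sss(i)}$, it invokes \cite[Proposition~5.1, Lemma~5.4]{BHS15} (this is where the excursion data $\xi_i^*$ and $\sum_{v\in\Xi_i^*}\theta_v^2$ enter, see \eqref{con-com-parameters}). For $B_n^{\sss(i)}$, the paper uses the size-biased lemma (Lemma~\ref{lem:size-biased}) with $y_b=x_bu_b$ to prove $B_n^{\sss(i)}\cdot\sum_i x_i^2/\sum_i x_i^2u_i\pto 1$; verifying \eqref{eq:size-biased-conditions} here is where Theorem~\ref{thm:diam-max} and the $\mathcal{D}_n^\omega$ asymptotics from Theorem~\ref{th:open-he-entrance} are actually used (see the displays around \eqref{cond-check-dist-blob}). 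Only after this concentration step does the ratio $\sigma(\mathbf{p}_n^{\sss(i)})/(1+B_n^{\sss(i)})$ become computable, and the answer is $n^{-\eta}\cdot\frac{\nu-1}{\nu}\cdot(\sum_{v\in\Xi_i^*}\theta_v^2)^{1/2}/\xi_i^*$, which is how the random prefactor in $M_i$ arises. Your sketch invokes $\mathcal{D}_n^\omega$ directly for $B_m$, but that only gives the global average; the size-biased step is not optional.

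Two smaller points. First, your crude bound $\sigma(\mathbf{p})\Delta_{\max}/(B_m+1)\le\sigma(\mathbf{p})\Delta_{\max}$ is too crude: with the correct order $\sigma(\mathbf{p}_n^{\sss(i)})=\Theta_{\sss\PR}(n^{\delta-\eta})$ it would force $2\delta<\eta$, whereas Proposition~\ref{thm:comp-functional-original} later requires $\eta/2<\delta<\eta$. Keeping $B_n^{\sss(i)}=\Theta_{\sss\PR}(n^{\delta})$ in the denominator gives $O_{\sss\PR}(n^{\delta-\eta}\log n)\to 0$ for any $\delta<\eta$, avoiding the conflict. Second, $\rho=\eta+\alpha$, not $\eta+\delta$; this is why your scaling computation stalls.
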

\begin{proof}
We just consider the metric space limit of $\bar{\mathscr{C}}_{\sss (i)}^{\sss \mathrm{fr}}(\lambda)$ for each fixed $i\geq 1$ and the joint convergence in \eqref{eq:limit-component-free} follows using the joint convergence of different functionals used throughout the proof. 
Recall the notation $\bl(\mathscr{C}):=\{b: \Clb(t_n)\subset\mathscr{C}, \Clb(t_n) \neq \varnothing\}$ for a component $\mathscr{C}$.
Now, $\bar{\mathscr{C}}^{\sss \mathrm{fr}}_{\sss (i)}(\lambda)$ can be seen as a super-graph as defined in Section~\ref{defn:super-graph} with \begin{enumeratei}
\item the collection of blobs $\{\Clb(t_n):b\in \bl(\bar{\mathscr{C}}_{\sss (i)}(\lambda))\}$ and within blob measure $\mu_b$ given by $\mu_b(A) = \sum_{k\in A}\omega_k(t_n)/f_b(t_n)$, $A\subset \Clb(t_n)$,  $b\in  \bl(\bar{\mathscr{C}}_{\sss (i)}(\lambda))$;
\item the supersturcture consisting of the edges appearing during $[t_n,t_c(\lambda)]$ in Algorithm~\ref{algo:modified-MC} and weight sequence $(f_b(t_n)/\bar{\mathcal{F}}_i(\lambda):b\in  \bl(\bar{\mathscr{C}}_{\sss (i)}(\lambda)))$.
\end{enumeratei}  
Recall $x_i =x_i^n= n^{-\rho} f_{b_i}(t_n)$ defined in \eqref{eq:parameters-inhom}. 
Let $\dst(\cdot,\cdot)$ denote the graph distance on $\bar{\mathscr{C}}_{\sss (i)}(\lambda)$ and define 
 \begin{equation}
  u_i = \sum_{v_1,v_2\in \Clbi(t_n)} \frac{\omega_{v_1}\omega_{v_2}}{f_{b_i}^2(t_n)}\dst(v_1,v_2), \quad B_n^{\sss (i)}= \frac{\sum_{b_j \in \bl(\bar{\mathscr{C}}_{\sss (i)}(\lambda))}x_ju_j}{\sum_{b_j\in \bl(\bar{\mathscr{C}}_{\sss (i)}(\lambda))}x_j}.
 \end{equation}
 Here $u_i$ gives the average distance within blob $\Clbi(t_n)$.
Using Lemma~\ref{lem:size-biased}, we will show  
   \begin{equation}\label{eq:measure-limit}
    B_n^{\sss (i)} \times \frac{\sum_i x_i^2}{\sum_i x_i^2u_i}\pto 1.
   \end{equation}
The argument is same as the proof of \eqref{eqn:ratio-comp-he}. We only have to ensure that \eqref{eq:size-biased-conditions} holds with $y_i = x_iu_i$. 
   Thus, we need to show that 
   \begin{equation}\label{cond-check-dist-blob}
    \frac{n^{\rho-\delta}\sum_ix_i^3u_i}{\sum_ix_i\sum_ix_i^2u_i}\pto 0,\quad \text{and}\quad \frac{\sum_i x_i^3 u_i^2\sum_ix_i}{n^{\rho-\delta}\big(\sum_ix_i^2u_i\big)^2}\pto 0.
   \end{equation}First, notice that, by Lemma~\ref{lem:total-open-he} and Theorem~\ref{th:open-he-entrance},
   \begin{equation} \label{order:cn}
   \begin{split}
    \mathrm{M}_n &= \frac{\sum_i x_i^2u_i}{\sum_i x_i} = (1+\oP(1))\frac{\nu n^{-1+\rho}}{\mu(\nu-1)} n^{-2\rho} \sum_b f_b^2(t_n) \sum_{v_1,v_2\in \Clb(t_n)} \frac{\omega_{v_1}\omega_{v_2}}{f_b^2(t_n)}\dst(v_1,j) \\
    &=  \frac{\nu n^{-1+\rho}}{\mu(\nu-1)} n^{1-2\rho} \mathcal{D}_n^\omega  = n^{2\delta-\rho} \frac{\nu-1}{\nu^2}(1+\oP(1)). 
    \end{split} 
   \end{equation}
   Also, recall from Theorem~\ref{thm:diam-max} that $u_{\max}= \max_{b}u_b =\OP(n^{\delta}\log(n))$.  
   Now, 
   \begin{align*}
    &\frac{n^{\rho-\delta}\sum_ix_i^3u_i}{\sum_i x_i \sum_ix_i^2u_i} \leq \frac{n^{\rho-\delta}u_{\max}\sum_{i}x_i^3}{\sum_{i} x_i \sum_ix_i^2u_i}  \\
    &\qquad = \OP\bigg(\frac{n^{\rho-\delta}n^{\delta}\log^2(n)n^{-3\rho}n^{3\alpha+3\delta}}{n^{1-\rho}n^{2\delta-\rho}n^{1-\rho}}\bigg) = \OP(n^{\delta-\eta}\log^2(n))=\oP(1),\\
      &\frac{\sum_i x_i^3 u_i^2\sum_ix_i}{n^{\rho-\delta}\big(\sum_ix_i^2u_i\big)^2}\leq \frac{x_{\max}u_{\max}\sum_ix_i}{n^{\rho-\delta}\sum_ix_i^2u_i} \\
      &\qquad = \OP\bigg(\frac{n^{-\rho}n^{\alpha+\delta}n^{\delta}\log^2(n)}{n^{\rho-\delta}n^{2\delta-\rho}}\bigg) = \OP(n^{\delta-\eta}\log^2(n))=\oP(1),
   \end{align*}and \eqref{cond-check-dist-blob} follows, and hence the proof of \eqref{eq:measure-limit} also follows.    
   Recall that the superstructure of $\bar{\mathcal{G}}_n(t_c(\lambda))$ has the same distribution as a Norros-Reittu random graph $\mathrm{NR}_n(\bld{x}, q)$ with the parameters given by \eqref{eq:parameters-inhom}. 
   Thus, using Proposition~\ref{prop:generate-nr-given-partition}, we now aim to use Theorem~\ref{thm:univesalty} on $\mathscr{C}_{\sss (i)}^{\sss \mathrm{fr}}(\lambda)$ with the blobs being $(\Cli(t_n))_{i\geq 1}$, and $\mathbf{p}_n^{\sss (i)}$, $a_n^{\sss (i)}$ given by \eqref{eq:p-n-a-NR}. 
 Define $\Upsilon_n^{\sss (i)} = \big(p_b/\sigma(\mathbf{p}_n^{\sss (i)}):b\in \bl(\bar{\mathscr{C}}_{\sss (i)}(\lambda))\big)$. Let $\mathcal{N}(\R_+)$ denote the space of all counting measures equipped with the vague topology and let $\mathbbm{S} := \R_+^3\times \mathcal{N}(\R_+)$ denote the product space. Define 
   \begin{equation}
    \mathscr{P}_n = \Big(a_n^{\sss (i)}\sigma(\mathbf{p}_n^{\sss (i)}), \sum_{b\in \bl(\bar{\mathscr{C}}_{\sss (i)}(\lambda))}x_b, \frac{1}{\sigma_2^2(\bld{x}^n)}\sum_{b\in \bl(\bar{\mathscr{C}}_{\sss (i)}(\lambda))}x_b^2, \Upsilon_n^{\sss (i)}\Big)_{i\geq 1},
   \end{equation}  viewed as an element of $\mathbbm{S}^\N$.  
  Recall the definition of $\xi_i^*$ and $\Xi_i^*$ from Section~\ref{sec:limit-component}.
   Define
   \begin{equation}
    \mathscr{P}^\infty = \bigg( \frac{\xi_i^*}{\mu(\nu-1)}\bigg(\sum_{v\in \Xi_i^*}\theta_v^2\bigg)^{1/2},\ \xi_i^*,\ \frac{1}{\mu^2(\nu-1)^2}\sum_{v\in \Xi_i^*}\theta_v^2,\ \bigg(\frac{\theta_j}{\sum_{v\in \Xi_i^*}\theta_v^2}: j\in \Xi_i^*\bigg) \bigg)_{i\geq 1}
   \end{equation}
   The following is a consequence of \cite[Proposition 5.1, Lemma 5.4]{BHS15}:
   \begin{equation}\label{con-com-parameters}
    \sigma(\mathbf{p}_n^{\sss (i)})\pto 0,\quad  \text{and} \quad \mathscr{P}_n\dto \mathscr{P}_{\infty} \text{ on } \mathbbm{S}^\N.
   \end{equation}  Without loss of generality, we assume that the convergence in \eqref{con-com-parameters} holds almost surely. Now, using \eqref{eq:measure-limit}, it follows that
   \begin{align*}
    &\frac{\sigma(\mathbf{p}_n^{\sss (i)})}{1+B_n^{\sss (i)}} = \frac{\sigma_2(\bld{x}^n)\big( \sum_{v\in \Xi_i^*}\theta_v^2\big)^{1/2}}{\mu(\nu-1)\xi_i^*}\times\frac{\sum_i x_i^2}{\sum_i x_i^2u_i}(1+o(1))\\
  &= \frac{\sigma_2^2(\bld{x}^n)\big( \sum_{v\in \Xi_i^*}\theta_v^2\big)^{1/2}}{\mu(\nu-1)\xi_i^*\sum_i x_i^2u_i} (1+o(1))= n^{-\eta}\frac{\nu-1}{\nu}  \frac{1}{\xi_i^*}\bigg( \sum_{v\in \Xi_i^*}\theta_v^2\bigg)^{1/2}(1+o(1)),
   \end{align*}    
    where the last step follows from Theorem~\ref{th:open-he-entrance}, \eqref{order:cn} and \eqref{con-com-parameters}. The proof of Theorem~\ref{thm:mspace-limit-modified} is now complete using Theorem~\ref{thm:univesalty}.
\end{proof}

 In the final part of the proof, we will also need an estimate of the surplus edges in the components $\bar{\mathscr{C}}_{\sss (i)}(\lambda)$, that can be obtained by following the exact same argument as the proof outline of Lemma~\ref{thm:superstrcut-comp-surp-original}. Recall that the superstructure on the graph $\bar{\mathcal{G}}_n(t_c(\lambda))$ is a rank-one inhomogeneous random graph $\mathrm{NR}_n(\bld{x},q)$. 
 The connection probabilities given by \eqref{eq:pij-value} can be written as $1-\exp(-z_i(\lambda)z_j(\lambda)/\sum_kz_k(\lambda))$, where 
 $  z_i(\lambda) = \frac{f_i(t_n)\sum_jf_j(t_n)}{\sum_j f_j^2(t_n)}\big(1+\lambda n^{-\eta+\delta}+\oP(n^{-\eta+\delta})\big) .$
 Moreover, using Theorem~\ref{th:open-he-entrance}, it follows that 
 \begin{equation}
  n^{-\alpha}z_i(\lambda) \pto \frac{\theta_i}{\nu}, \ \frac{z_i(\lambda)}{\sum_jz_j(\lambda)} \pto \frac{\theta_i}{\mu\nu}, \ \nu_n(\bld{z}) = \frac{\sum_iz_i^2(\lambda)}{\sum_i z_i(\lambda)} = 1+\lambda n^{-\eta+\delta}+\oP(n^{-\eta+\delta}).
 \end{equation}
 Now, we may consider the breadth-first exploration of the above graph  and define the exploration process 
$  S_n^{\sss \mathrm{NR}}(l) = \sum_i z_i(\lambda)\mathcal{I}_i^n(l) - l,$ as in \eqref{expl-process-CM}. The only thing to note here is that the component sizes are not necessarily encoded by the excursion lengths above the past minima of $\mathbf{S}_n^{\sss \mathrm{NR}}$.
 However, if $  \tilde{S}_n^{\sss \mathrm{NR}}(l) = \sum_i \mathcal{I}_i^n(l) - l$, then it can be shown that (see \cite[Lemma 3.1]{BHL12}) $\tilde{\mathbf{S}}_n^{\sss \mathrm{NR}}$ and $\mathbf{S}_n^{\sss \mathrm{NR}}$ have the same distributional limit. 
 Thus, a conclusion identical to Proposition~\ref{thm::convegence::exploration-process-blob} follows for $\bar{\mathcal{G}}_n(t_c(\lambda))$.  
 Due to the size-biased exploration of the components one can also obtain analogues of Lemmas~\ref{thm:superstrcut-comp-surp-original}~and \ref{thm:comp-blob-same-whp} for $\bar{\mathcal{G}}_n(t_c(\lambda))$. 
\begin{proposition}\label{thm:surp-superstruc-modi}
 For fixed  $K\geq 1$,
$  (n^{-\rho+\delta}\mathscr{B}(\bar{\mathscr{C}}_{\sss (i)}(\lambda)), \mathrm{SP}' (\bar{\mathscr{C}}_{\sss (i)}(\lambda)))_{i\in [K]} \dto (\xi_i, \mathscr{N}_i  )_{i\in[K]},$ as $n\to\infty$.
\end{proposition}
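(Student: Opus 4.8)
The proof of Proposition~\ref{thm:surp-superstruc-modi} follows the exact same template as the proof of Lemma~\ref{thm:superstrcut-comp-surp-original}, so the plan is to explain which ingredients carry over verbatim and which require a minor modification. Recall that, conditionally on $\mathcal{G}_n(t_n)$, the superstructure of $\bar{\mathcal{G}}_n(t_c(\lambda))$ is the rank-one inhomogeneous random graph $\mathrm{NR}_n(\bld{x},q)$ with parameters given in \eqref{eq:parameters-inhom}, which (by a perfect-matching/percolation representation analogous to Algorithm~\ref{algo:perc-for-blob}) can be re-expressed via the connection probabilities $1-\exp(-z_i(\lambda)z_j(\lambda)/\sum_k z_k(\lambda))$ with $z_i(\lambda)$ as displayed just above the statement. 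The point is that, by Theorem~\ref{th:open-he-entrance}, the weight sequence $\bld{z}(\lambda)$ satisfies $n^{-\alpha}z_i(\lambda)\pto \theta_i/\nu$, $z_i(\lambda)/\sum_j z_j(\lambda)\pto \theta_i/(\mu\nu)$ and $\nu_n(\bld{z}) = 1+\lambda n^{-\eta+\delta}+\oP(n^{-\eta+\delta})$ --- i.e., exactly the same asymptotics as the sequence $\bld{a}$ in Lemma~\ref{lem:a-asymp}, together with the corresponding third-moment tightness (which follows from the $s_3^\omega$-asymptotics in Theorem~\ref{th:open-he-entrance} in the same way as \eqref{eq:a-third-moment}).

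The first step is to run the breadth-first exploration of $\bar{\mathcal{G}}_n(t_c(\lambda))$ (equivalently of $\mathrm{NR}_n(\bld{z}(\lambda),\cdot)$) in a size-biased fashion, set up the exploration walk $S_n^{\sss \mathrm{NR}}(l)=\sum_i z_i(\lambda)\mathcal{I}_i^n(l)-l$, and observe the one genuine discrepancy with the configuration-model case: for the inhomogeneous random graph the component sizes are \emph{not} directly encoded by the excursion lengths of $\mathbf{S}_n^{\sss \mathrm{NR}}$ above its running minimum. This is handled exactly as in \cite{BHL12}: introducing the auxiliary walk $\tilde{S}_n^{\sss \mathrm{NR}}(l)=\sum_i\mathcal{I}_i^n(l)-l$, whose excursion lengths do count component sizes, one shows via \cite[Lemma 3.1]{BHL12} that $\tilde{\mathbf{S}}_n^{\sss \mathrm{NR}}$ and $\mathbf{S}_n^{\sss \mathrm{NR}}$ have the same distributional scaling limit; combined with the entrance-boundary conditions just listed, this yields the analogue of Proposition~\ref{thm::convegence::exploration-process-blob} for $\bar{\mathcal{G}}_n(t_c(\lambda))$, namely $\bar{\mathbf{S}}_n^{\sss \mathrm{NR}}\dto\mathbf{S}$ with $\mathbf{S}$ as in \eqref{eq:lim-blob-process}.

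The second step reads off the joint limit of component sizes and surplus. As in the discussion preceding Lemma~\ref{thm:superstrcut-comp-surp-original}, at step $l$ the conditional probability of creating a surplus edge is asymptotically $n^{\rho-\delta}\refl{\bar{S}_n^{\sss \mathrm{NR}}(t)}/\sum_i z_i(\lambda)=\mu^{-1}\refl{\bar S(t)}(1+\oP(1))$, so the number of marks falling in the $i$-th longest excursion converges jointly with the excursion lengths to $(\xi_i,\mathscr{N}_i)$ (using that the excursion lengths of $(S(t))_{t\ge0}$ agree in law with those of $(S(t)/\mu)_{t\ge0}$, exactly as in the derivation of \eqref{eq:blob-perc-limit}). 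This gives $(n^{-\rho+\delta}|C_i^{\sss\mathrm{st}}|,\mathrm{SP}(C_i^{\sss\mathrm{st}}))_{i\in[K]}\dto(\xi_i,\mathscr{N}_i)_{i\in[K]}$ for the superstructure components, and $\mathrm{SP}'(\bar{\mathscr{C}}_{\sss(i)}(\lambda))$ is precisely $\mathrm{SP}(C_i^{\sss\mathrm{st}})$ while $\mathscr{B}(\bar{\mathscr{C}}_{\sss(i)}(\lambda))$ is the number of superstructure-vertices in it. Finally, a size-biased exploration argument via Lemma~\ref{lem:size-biased} (run with $y_i=|\mathscr{C}_{\sss(i)}(t_n)|$, checking \eqref{eq:size-biased-conditions} from the entrance-boundary estimates of Theorem~\ref{th:open-he-entrance} and Remark~\ref{rem:3rd-suscep}, exactly as in the proof of Lemma~\ref{thm:comp-blob-same-whp}) identifies the superstructure components $C_i^{\sss\mathrm{st}}$ with the shrunk versions of the true largest components $\bar{\mathscr{C}}_{\sss(i)}(\lambda)$, so that $\mathscr{B}(\bar{\mathscr{C}}_{\sss(i)}(\lambda))=|C_i^{\sss\mathrm{st}}|$ whp, completing the proof. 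The only mild obstacle is the bookkeeping needed to verify the hypotheses \eqref{eq:size-biased-conditions} and the tightness of $\sum_{i\le m_R}z_{v(i)}(\lambda)$ for $\bld z(\lambda)$ rather than $\bld a$, but since $\bld z(\lambda)$ obeys the identical entrance-boundary asymptotics this is a routine transcription of the configuration-model argument.
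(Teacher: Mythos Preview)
Your proposal is correct and mirrors the paper's own argument essentially verbatim: the paper sketches exactly this proof in the paragraph preceding the proposition, pointing to the same $\bld{z}(\lambda)$-parametrization of the $\mathrm{NR}$ superstructure, the same entrance-boundary asymptotics from Theorem~\ref{th:open-he-entrance}, the same exploration walk $S_n^{\sss\mathrm{NR}}$ with the auxiliary $\tilde{S}_n^{\sss\mathrm{NR}}$ via \cite[Lemma~3.1]{BHL12}, and then invokes analogues of Lemmas~\ref{thm:superstrcut-comp-surp-original} and~\ref{thm:comp-blob-same-whp}. The only superfluous remark is your parenthetical about a ``perfect-matching/percolation representation analogous to Algorithm~\ref{algo:perc-for-blob}'' --- the paper works directly with the $\mathrm{NR}$ connection probabilities and no such intermediate step is needed.
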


\subsection{Comparing modified and the original process: Completing the proof of Theorem~\ref{thm:main}}
\label{sec:proof-thm1}
In this section, we finally conclude the proof of Theorem~\ref{thm:main}. 
We start with the following:
\begin{lemma}\label{lem:sandwich-components}
For each fixed $i\geq 1$, $\mathscr{C}_{\sss (i)}(\lambda) \subset \bar{\mathscr{C}}_{\sss (i)}(\lambda)$ with high probability.
\end{lemma}
The proof is identical to  \cite[Lemma 31]{DHLS16}.
Recall Theorem~\ref{thm:mspace-limit-modified} and the terminologies therein.
Let $n^{-\eta}\mathscr{C}_{\sss (i)}^{\sss \mathrm{fr}}(\lambda)$ denote the measured metric space with measure $\mu_{\sss \mathrm{fr}}^i$ and the distances multiplied by $n^{-\eta}$.
At this moment, let us bring together the relevant properties $\mathscr{C}_{\sss (i)}(\lambda)$ and $\bar{\mathscr{C}}_{\sss (i)}(\lambda)$: 
\begin{enumerate}[(A)]
\item By Lemma~\ref{lem:sandwich-components}, 
it follows that with high probability $\mathscr{C}_{\sss (i)}(\lambda) \subset \bar{\mathscr{C}}_{\sss (i)}(\lambda)$ for any fixed $i\geq 1$. 
\item Part (A) implies $\bar{\mathcal{F}}_i(\lambda) \geq \mathcal{F}_i(\lambda) $ with high probability. 
The same scaling limits from  Propositions~\ref{thm:comp-functional-original}~(b)~and~\ref{thm:modified-open-he-limit} implies $\bar{\mathcal{F}}_i(\lambda) - \mathcal{F}_i(\lambda) \xrightarrow{\sss \PR} 0$ and consequently $\mu_{\sss \mathrm{fr}}^i(\bar{\mathscr{C}}_{\sss (i)}(\lambda)\setminus \mathscr{C}_{\sss (i)}(\lambda)) \xrightarrow{\sss \PR} 0$. 
\item By Propositions~\ref{thm:comp-functional-original}~(a)~and~\ref{thm:surp-superstruc-modi}, there are no surplus edges with one endpoint in  $\bar{\mathscr{C}}_{\sss (i)}(\lambda)\setminus \mathscr{C}_{\sss (i)}(\lambda)$ with high probability. 
Moreover, with high probability there is no surplus edge within the blobs by \eqref{eq:surp-sam-dist-limit}.
This implies that, for any pair of vertices $u,v\in \mathscr{C}_{\sss (i)}(\lambda)$, with high probability, the shortest path between them is exactly the same in $\mathscr{C}_{\sss (i)}(\lambda)$ and $\bar{\mathscr{C}}_{\sss (i)}(\lambda)$.
\end{enumerate}
Thus, from the definition of Gromov-weak convergence in Section~\ref{sec:defn:GHP-weak}, an application of Theorem~\ref{thm:mspace-limit-modified} yields that
 $\big(n^{-\eta}\mathscr{C}_{\sss (i)}^{\sss \mathrm{fr}}\big)_{i\geq 1} \xrightarrow{\sss d} (M_{i} )_{i\geq 1},$ 
The only thing remaining to show is that we can replace the measure $\mu_{\sss\mathrm{fr}}^i$ by $\mu_{\sss\mathrm{ct},i}$. Now, using Propositions~\ref{thm:comp-functionals-original}~and~\ref{thm:comp-functional-original}~(b), it is enough to show  that
\begin{equation}\label{counting-measure-switch}
 \sum_{b\in \bl( \mathscr{C}_{\sss (i)}(\lambda))}\big|f_{b}(t_n) - (\nu-1) |\Clb(t_n)|\big| = \oP(n^{\rho}).
\end{equation}
Indeed, during the breadth-first exploration of the superstructure of $\mathcal{G}_n(t_c(\lambda))$, the blobs are explored in a size-biased manner with the sizes being $(f_i(t_n))_{i\geq 1}$. 
Therefore, one can again use Lemma~\ref{lem:size-biased}. 
Recall that, by Lemma~\ref{thm:comp-blob-same-whp}, for any $\varepsilon>0$, one can choose $T>0$ so large that the probability of exploring $\mathscr{C}_{\sss (i)}(\lambda)$ within time $Tn^{\rho-\delta}$ is at least $1-\varepsilon$. 
Thus, if $\mathscr{V}^b_l$ denotes the set of blobs explored before time $l$, then, for any $T>0$,  
\begin{align*}
  &\sum_{b\in \mathscr{V}^b_{\sss Tn^{\rho-\delta}}}\big|f_{b}(t_n) - (\nu-1)|\Clb(t_n)|\big|\\
  &= (1+\oP(1)) Tn^{\rho-\delta} \sum_i \frac{f_i(t_n)}{\sum_if_i(t_n)}\big|f_{\sss i}(t_n) - (\nu-1)|\Cli(t_n)|\big|.
\end{align*}
Using the Cauchy-Schwarz inequality and Theorem~\ref{th:open-he-entrance} it now follows that the above term is $o(n^{\rho})$.
Therefore \eqref{counting-measure-switch} follows.
Finally, to conclude the result for percolated graphs, we  
use Lemma~\ref{lem:coupling-whp}. 
In fact, if $\mathscr{C}_{\sss (i)}^+(\lambda)$ and $\mathscr{C}_{\sss (i)}^-(\lambda)$ denote the $i$-th largest component of $\mathcal{G}_n(t_c(\lambda)+\varepsilon_{n})$ and $\mathcal{G}_n(t_c(\lambda)-\varepsilon_{n})$ respectively, then analogously to Lemma~\ref{lem:sandwich-components}, we can conclude that with high probability
\begin{eq}
\mathscr{C}_{\sss (i)}^-(\lambda) \subseteq \mathscr{C}_{\sss (i)}^p(\lambda) \subseteq   \mathscr{C}_{\sss (i)}^+(\lambda),
\end{eq}for any fixed $i\geq 1$.
This completes the proof of Theorem~\ref{thm:main}.
\qed

\begin{remark} \label{rem:switch-measure}
The fact that the measure can be changed from $\mu_{\sss \mathrm{fr}}^i$ to $\mu_{\sss\mathrm{ct},i}$ in $n^{-\eta}\mathscr{C}_{\sss (i)}^{\sss \mathrm{fr}}$ follows only from~\eqref{counting-measure-switch}, which again follows from the entrance boundary conditions. 
However, the entrance boundary conditions in Theorem~\ref{thm:susceptibility} hold for weight sequences $\bld{w}=(w_i)_{i\in [n]}$ under rather general assumptions (see Assumption~\ref{assumption-w}). 
Therefore, one could also replace the measure $\mu_{\sss\mathrm{ct},i}$ by $\mu_{w,i}$, where $\mu_{w,i} = \sum_{i\in A}w_i/\sum_{k\in \mathscr{C}_{\sss (i)}(\lambda)}w_i$ and $\bld{w}$ satisfies Assumption~\ref{assumption-w}.
\end{remark}

\subsection{Graphs conditioned on simplicity: Proof of Theorem~\ref{thm:main-simple}}
\label{sec:simple}
We will use the following joint construction of the $\mathrm{CM}_n(\bld{d},p_n(\lambda))$ and $\mathrm{CM}_n(\bld{d})$.
\begin{algo}\label{algo:perc-CM-joint-construction}\normalfont 
\begin{itemize}
    \item[(S0)] Let $\ell_n^p = 2X$, where $X\sim \mathrm{Bin}(\ell_n/2,p_n(\lambda))$. Pick $\ell_n^p$ many half-edges uniformly at random and color them blue. Color the rest of the half-edges red. 
    \item[(S1)] Pair the blue half-edges using a uniform perfect matching.
    \item[(S2)] Pair the red half-edges using another independent uniform perfect matching.
\end{itemize}
\end{algo}
If $G_I$ is the graph obtained after (SI) with I=1,2, then $(G_1,G_2)$ is jointly distributed as $(\mathrm{CM}_n(\bld{d},p_n(\lambda)),\mathrm{CM}_n(\bld{d}))$ \cite[Lemmas 8.1, 8.2]{DHLS15}. 
Let $d_i^p$ be the number of blue half-edges incident to $i$ and let $\bld{d}^p = (d_i^p)_{i\in [n]}$. 
Then, by construction, $G_1$, conditionally on  $\bld{d}^p$, is distributed as $\mathrm{CM}_n(\bld{d}^p)$.
To complete the proof of Theorem~\ref{thm:main-simple}, consider the exploration algorithm given by Algorithm~\ref{algo:explor-barely-sub}, now on the graph $G_1$, conditionally on the blue half-edges selected in Algorithm~\ref{algo:perc-CM-joint-construction}~(S0).
The starting vertex is chosen in a size biased manner with sizes proportional to the degrees $\bld{d}^p$. 
Let $\mathscr{F}_l$ denote the sigma-algebra generated by the exploration process up to time $l$.
Let $\mathcal{I}_i^n(l)$ denote the indicator that vertex $i$ is discovered upto time $l$ and note that Algorithm~\ref{algo:explor-barely-sub} will explore the vertices in a size-biased manner with sizes being $\bld{d}^p$.
For convenience, we denote $X = (\mathscr{C}_{\sss (i)}^p(\lambda))_{i\leq K}$ in this section. 
Consider a bounded continuous function $f:(\mathscr{S}_*)^K\mapsto \R$. 
Recall from \cite[Theorem 1.1]{J09c} that $$\liminf_{n\to\infty}\prob{G_2 \text{ is simple}}>0.$$
Thus, it is enough to show that 
\begin{equation}\label{simple-fact}
\expt{f(X)\ind{G_2 \text{ is simple}}} - \expt{f(X)} \prob{G_2 \text{ is simple}} \to 0.
\end{equation}
Now, for any $T>0$, let $\mathcal{A}_{n,T}$ denote the event that $X$ is explored before time $Tn^{\rho}$ by the exploration algorithm. 
Using \cite[Lemma 13]{DHLS16}, it follows that 
\begin{equation}
\lim_{T\to\infty}\limsup_{n\to\infty} \prob{\mathcal{A}_{n,T}^c} = 0.
\end{equation}
Let $X_{ T}$ denote the random vector consisting of $K$ largest ones among the components explored before time $Tn^{\rho}$.
 Thus,
\begin{align*}
&\lim_{T\to\infty}\limsup_{n\to\infty}\expt{f(X)\ind{G_2 \text{ is simple}}\1[\mathcal{A}_{n,T}^c]} \\
&\leq \|f\|_{\infty} \lim_{T\to\infty}\limsup_{n\to\infty}\prob{\mathcal{A}_{n,T}^c}=0, 
\end{align*}which implies that
\begin{equation}\label{simple-fact-3}
\lim_{T\to\infty}\limsup_{n\to\infty}\big|\expt{f(X)\ind{G_2 \text{ is simple}}} - \expt{f(X_{T})\ind{G_2 \text{ is simple}}}\big| = 0.
\end{equation}
Further, let $\mathcal{B}_{n,T}$ denote the event that a vertex $v$ is explored before time $Tn^{\rho}$ such that $v$ is involved in a self-loop or a multiple edge in $G_2$. 
Let $v_l$ denote the \emph{exploring} vertex in the exploration at time~$l$. 
Without loss of generality, we assume that during the sequential pairing of the half-edges in Algorithm~\ref{algo:perc-CM-joint-construction}~(S2), we first pair the red half-edges associated to $(v_l)_{l\leq Tb_n}$. 
Let $\ell_n':=\ell_n^p-2Tb_n-d_{1}^p+1$ and $\ell_n'':= (\ell_n-\ell_n^p) - \sum_{i\in [n]} d_{i}\mathcal{I}_i(Tn^{\rho})$. 
Note that, by Assumption~\ref{assumption1}~(ii), $\sum_{i\in V}d_i = o(n)$ whenever $|V| = o(n)$.
Also, using concentration inequalities for the  Binomial distribution, $\ell_n^p = p_n(\lambda) \ell_n (1+o(1))$ almost surely. 
Thus, we assume that $\ell_n', \ell_n'' \geq c_1\ell_n$ with probability 1 for some $0<c_1< 1$.
Note that, uniformly over $l\leq Tb_n$, any blue half-edge of $v_l$ creates a self-loop in $G_1$ with probability at most $d_{v_l}^p/\ell_n'$, and any red half-edge creates a self-loop with probability at most $(d_{v_l}-d_{v_l}^p)/\ell_n''$. 
Thus the expected number of self-loops incident to $v_l$ is at most $\E[d_{v_l}^2] /c_1\ell_n$.
Moreover, the expected number of blue-blue multiple edge attached to $v_l$ in $G_1$, is at most
\begin{eq}
\E[d_{v_l}^p(d_{v_l}^p-1)] \frac{ \sum_{i\in[n]} d_i(d_{i}-1)}{c_1\ell_n(c_1\ell_n-1)} \leq  \frac{C\E[d_{v_l}^2]}{\ell_n},
\end{eq}where we have used Assumption~\ref{assumption1}.
While counting the multiple edges incident to $v_l$ in $G_2$, we have to take care of (i) the creation of a red edge between two vertices having a blue edge and (ii) the creation of two red edges between two vertices.
Using identical arguments, the expected number of multiple edges incident to $v_l$ in $G_2$ is at most $C\E[d_{v_l}^2]/\ell_n$.
Therefore,
\begin{equation}
 \expt{\#\{\text{self-loops or multiple edges discovered while }v_l \text{ is exploring}\}}\leq \frac{C\E[d_{v_l}^2]}{\ell_n}.
\end{equation}
Thus, 
\begin{align*}
 &\PR(\mathcal{B}_{n,T})\leq \frac{C}{\ell_n}\E\bigg[\sum_{i\in [n]}d_i^2\mathcal{I}^n_i(Tn^{\rho})\bigg]=\frac{C}{\ell_n}\bigg(\E\bigg[\sum_{i=1}^K d_i^2\mathcal{I}^n_i(Tn^{\rho})\bigg]+\E\bigg[\sum_{i=K+1}^nd_i^2\mathcal{I}^n_i(Tn^{\rho})\bigg]\bigg).
 \end{align*} Now, using Assumption~\ref{assumption1}, for every fixed $K\geq 1$,
\begin{equation}
 \frac{1}{\ell_n}\E\bigg[\sum_{i=1}^K d_i^2\mathcal{I}^n_i(Tn^\rho)\bigg]\leq \frac{1}{\ell_n} \sum_{i=1}^K d_i^2 \pto 0.
\end{equation} 
Further, conditionally on Algorithm~\ref{algo:perc-CM-joint-construction}~(S0), the vertices are explored in a size-biased manner with sizes being $(d_i^p/\ell_n^p)_{i\in [n]}$.
Therefore, using that $\ell_n^p = p_n(\lambda)\ell_n(1+o(1))$,
\begin{equation}\label{simple-calc-1}
 \begin{split} 
  \frac{1}{\ell_n}\E_p\bigg[\sum_{i=K+1}^nd_i^2\mathcal{I}_i^n(Tn^{\rho})\bigg]\leq \frac{Tn^{\rho}}{\ell_n}\sum_{i=K+1}^nd_i^2 \E\Big[\frac{d_i^p}{\ell_n^p}\Big]
  \leq C\bigg(n^{-3\alpha}\sum_{i=K+1}^nd_i^3 \bigg).
 \end{split}
\end{equation}
Now, by Assumption~\ref{assumption1}, the final term in \eqref{simple-calc-1} tends to zero if we first take $\limsup_{n\to\infty}$ and then take $\lim_{K\to\infty}$. 
Consequently, for any fixed $T>0$, 
\begin{equation}\label{eq:B-n-t}
 \lim_{n\to\infty}\prob{\mathcal{B}_{n,T}}= 0.
\end{equation}
Let $\mathcal{E}_{n,T}$ denote the event that no self-loops or multiple edges are attached to the vertices in $G_2$ that are discovered after time $Tn^{\rho}$. Then \eqref{simple-fact-3} and \eqref{eq:B-n-t} implies that
\begin{eq}\label{simple-fact-2}
&\lim_{n\to\infty}\expt{f(X)\ind{\CM \text{ is simple}}} = \lim_{T\to\infty}\lim_{n\to\infty}\expt{f(X_T)\1[\mathcal{E}_{n,T}]}\\
&= \lim_{T\to\infty}\lim_{n\to\infty}\expt{f(X_T)\prob{\mathcal{E}_{n,T}\vert \mathscr{F}_{Tn^{\rho}}}} = \lim_{T\to\infty}\lim_{n\to\infty}\expt{f(X_T)\prob{\mathcal{E}_{n,T}\vert \mathscr{F}_{Tn^{\rho}}, \mathcal{B}_{n,T}}}.
\end{eq}
Let $\cG^*_{Tn^{\rho}}$ denote the graph obtained from $G_2$ after removing the vertices discovered upto time $Tn^{\rho}$.
Then, conditionally on $\mathscr{F}_{Tn^{\rho}}\cap \mathcal{B}_{n,T}$, $\mathcal{E}_{n,T}$ happens if and only if $\cG^*_{Tn^{\rho}}$ is simple.
Also, $\cG^*_{Tn^{\rho}}$ is distributed as a  configuration model conditional on its degree sequence, and since only $o(n)$ vertices have been removed, the corresponding $\nu_n$ in $\cG^*_{Tn^{\rho}}$ converges in probability to 1. Thus, \cite[Theorem 7.12]{RGCN1} implies that 
\begin{equation}
\prob{\cG^*_{Tn^{\rho}} \text{ is simple}\vert \mathscr{F}_{tn^{\rho}}} \pto \e^{-3/4},
\end{equation}
and also $\prob{G_2 \text{ is simple}}\to \e^{-3/4},$ 
so that 
\begin{eq}
\prob{\cG^*_{Tn^{\rho}} \text{ is simple}\vert \mathscr{F}_{tn^{\rho}}} - \prob{G_2 \text{ is simple}} \pto 0.
\end{eq}
Now, using \eqref{simple-fact-2},  \eqref{simple-fact} follows, and the proof of Theorem~\ref{thm:main-simple} is complete. \qed
\vspace{.5cm}

\bibliographystyle{apa}
\bibliography{project3}




\appendix
\section{Rescaling the excursions: Proof of Proposition~\ref{prop:comp-size}}\label{sec:appendix-rescaling}
Note that due to the difference in the choice of $p_n(\lambda)$ in \cite[Assumption 2]{DHLS16} and this paper, $\lambda$ must be replaced by $\lambda \nu$.
Let $\mathcal{E}(\cdot)$ denote the operator that maps a process to its ordered vector of excursion lengths, and $\mathcal{A}(\cdot)$  maps a process to the vector of areas under those excursions. 
Let us use $\mathrm{Exp}(b)$ as a generic notation to write an exponential random variable with rate $b$.
Now, 
 \begin{equation}\label{eq:excursion-param}
 \begin{split}
  &\frac{1}{\sqrt{\nu}}\mathcal{E}\bigg(\sum_{i\geq 1}\frac{\theta_i}{\sqrt{\nu}}\big(\ind{\mathrm{Exp}(\theta_i/(\mu\sqrt{\nu}))\leq t} - (\theta_i/(\mu\sqrt{\nu}))t\big)+\lambda\nu t\bigg)\\
  &\hspace{1cm} \eqd \frac{1}{\nu} \mathcal{E}\bigg(\sum_{i\geq 1}\frac{\theta_i}{\sqrt{\nu}}\big(\ind{\mathrm{Exp}(\theta_i/(\mu\nu))\leq u} - (\theta_i/(\mu\nu))u\big)+\lambda u\sqrt{\nu}\bigg)\\
  &\hspace{1cm} \eqd \frac{1}{\nu} \mathcal{E}\bigg(\sum_{i\geq 1}\frac{\theta_i}{\mu\nu}\big(\ind{\mathrm{Exp}(\theta_i/(\mu\nu))\leq u} - (\theta_i/(\mu\nu))u\big)+\frac{\lambda}{\mu} u\bigg),
   \end{split}
 \end{equation} where the last step follows by rescaling the space by $\mu\sqrt{\nu}$ and noting that the rescaling of space does not affect excursion lengths.
 Again, 
 \begin{eq}
  &\mathcal{A}\bigg(\sum_{i\geq 1}\frac{\theta_i}{\mu\sqrt{\nu}}\big(\ind{\mathrm{Exp}(\theta_i/(\mu\sqrt{\nu}))\leq t} - (\theta_i/(\mu\sqrt{\nu}))t\big)+\frac{\lambda\nu}{\mu} t\bigg)\\
  & \hspace{1cm}\eqd \mathcal{A}\bigg(\sum_{i\geq 1}\frac{\theta_i}{\mu\nu}\big(\ind{\mathrm{Exp}(\theta_i/(\mu\nu))\leq u } - (\theta_i/(\mu\nu))t\big)+\frac{\lambda}{\mu} u\bigg),
 \end{eq}which is obtained by rescaling both the space and time by $\sqrt{\nu}$.  
 Thus, the proof follows.
\section{Barely subcritical exploration process: Proofs of Lemmas \ref{lem:exploration::subcritical-j} and \ref{lem:weight-prop-l}}
\label{sec:appendix-barely-subcrit}

\begin{proof}[Proof of Lemma~6.3]
Recall the representation of $\bar{S}_n^j(t)$. It is enough to show that 
\begin{equation}
\sup_{t\in [0,T]} n^{-\alpha} \bigg|\sum_{i\in [n]}d_i'\bigg( \mathcal{I}_i^n(tn^{\alpha+\delta})-\frac{d_i'}{\ell_n'}tn^{\alpha+\delta} \bigg)\bigg|= \sup_{t\in [0,T]}n^{-\alpha}|M_n(tn^{\alpha+\delta})| \pto 0.
\end{equation}
Fix any $T>0$ and define $\ell_n'(T)=\ell_n'-2Tn^{\alpha+\delta}-1$, and $ M_n'(l) =\sum_{i\in [n]} d_i'(\mathcal{I}_i^n(l)- (d_i/\ell_n'(T))l)$. Note that 
\begin{equation}
 \sup _{t\in [0,T]}n^{-\alpha}|M_n(tn^{\alpha+\delta})-M_n'(tn^{\alpha+\delta})| \leq Tn^{\delta}\frac{(2Tn^{\alpha+\delta}-1)\sum_{i\in [n]}d_i'^2}{\ell_n'(T)^2} = \oP(1),
\end{equation}and thus the proof reduces to showing that 
\begin{equation} \label{tail::martingale}
\sup_{t\in [0,T]}n^{-\alpha}|M_n'(tn^{\alpha+\delta})| \pto 0.
\end{equation}
Note that, uniformly over $l\leq Tn^{\alpha+\delta}$, 
  \begin{equation}\label{eq:prob-ind}
  \prob{\mathcal{I}_i^n(l+1)=1\big| \mathscr{F}_l} \leq \frac{d_i'}{\ell_n'(T)}\quad\text{ on the set } \{\mathcal{I}_i^n(l)=0\}.
  \end{equation} 
 Therefore,
 \begin{align*}
  &\E\big[M_n'(l+1)-M_n'(l) \big| \mathscr{F}_l\big]\\
  &=\E\bigg[\sum_{i\in [n]} n^{-\alpha}d_i' \left(\mathcal{I}^n_i(l+1)-\mathcal{I}_i^n(l)-\frac{d_i'}{\ell_n'(T)}\right)\Big| \mathscr{F}_l\bigg]\\
  &= \sum_{i\in [n]} n^{-\alpha}d_i' \left(\E\big[\mathcal{I}^n_i(l+1)\big| \mathscr{F}_l\big]\ind{\mathcal{I}_i^n(l)=0} - \frac{d_i'}{\ell_n'(T)} \right)\leq 0.
  \end{align*} Thus $(M_n'(l))_{l= 1}^{Tn^{\alpha+\delta}}$ is a super-martingale.  Further, uniformly for all $l\leq Tn^{\alpha+\delta}$,
  \begin{equation}\label{prob-ind-lb}
   \prob{\mathcal{I}_i^n(l)=0} \leq \left(1-\frac{d_i'}{\ell_n'} \right)^l.
  \end{equation}
  Thus, Assumption~2 gives
  \begin{align*}
    n^{-\alpha}\big| \E[M_n'(l)]\big| &\leq n^{-\alpha} \sum_{i\in [n]} d_i'\left( 1-\left(1-\frac{d_i'}{\ell_n'} \right)^l-\frac{d_i'}{\ell_n'}l \right)+n^{-\alpha}l\sum_{i\in [n]}d_i'^2\left(\frac{1}{\ell_n'(T)}-\frac{1}{\ell_n'}\right)\\
    &\leq \frac{l^2}{2\ell_n'^2 n^{\alpha} } \sum_{i\in [n]} d_i'^3+o(1) = o(1),
  \end{align*} 
 where we have used the fact that 
  $n^{-\alpha}l\sum_{i\in [n]}d_i'^2(1/\ell_n'(T)-1/\ell_n')$ $=O(n^{2\rho+1-\alpha-2})$ $=O(n^{(\tau-4)/(\tau-1)}),$ uniformly for $l\leq Tn^{\alpha+\delta}$ and, in the last step, that fact that $\delta<\eta$. Therefore, uniformly over $l\leq Tn^{\alpha+\delta}$,
  \begin{equation}\label{expectation::M_n^K}
  \lim_{n\to\infty}\big| \E[M_n'(l)]\big|=0.
  \end{equation} 
  Now, note that for any $(x_1,x_2,\dots)$, $0\leq a+b \leq x_i$ and $a,b>0$ one has $\prod_{i=1}^R(1-a/x_i)(1-b/x_i)\geq \prod_{i=1}^R (1-(a+b)/x_i)$. Thus, for all $l\geq 1$ and $i\neq j$, 
  \begin{equation}\label{neg:correlation}
  \prob{\mathcal{I}_i^n(l)=0, \mathcal{I}_j^n(l)=0}\leq \prob{\mathcal{I}_i^n(l)=0}\prob{\mathcal{I}_j^n(l)=0}
  \end{equation} and therefore $\mathcal{I}_i^n(l)$ and $\mathcal{I}^n_j(l)$ are negatively correlated. Observe also that, uniformly over $l\leq Tb_n$, 
  \begin{equation}\label{var-ind-ub}
   \var{\mathcal{I}_i^n(l)}\leq  \prob{\mathcal{I}_i^n(l)=1} \leq \sum_{l_1=1}^l\prob{\text{vertex  }i \text{ is first discovered at stage }l_1 }\leq \frac{ld_i' }{\ell_n'(T)}.
  \end{equation}  
  Therefore, using the negative correlation in \eqref{neg:correlation}, uniformly over $l\leq Tn^{\alpha+\delta}$, 
  \begin{equation} \label{variance::M_n^k}
   \begin{split}
    n^{-2\alpha}\var{M_n'(l)}&
    \leq \frac{l}{\ell_n'(T)n^{2\alpha}}\sum_{i\in [n]} d_i'^3 =o(1).
   \end{split}
  \end{equation} Now we can use the super-martingale inequality \cite[Lemma 2.54.5]{RW94} stating that for any super-martingale $(M(t))_{t\geq 0}$, with $M(0)=0$, 
 \begin{equation}\label{eqn:supmg:ineq}
  \varepsilon \prob{\sup_{s\leq t}|M(s)|>3\varepsilon}\leq 3\expt{|M(t)|}\leq 3\left(|\expt{M(t)}|+\sqrt{\var{M(t)}}\right).
 \end{equation}
  Thus \eqref{tail::martingale} follows using \eqref{expectation::M_n^K}, \eqref{variance::M_n^k}, and \eqref{eqn:supmg:ineq}.
\end{proof}
\begin{proof}[Proof of Lemma~6.4]
Fix any $T>0$ and recall that $\ell_n(T)=\ell_n'-2Tn^{\alpha+\delta}-1$. Denote $W(l)=\sum_{i\in [n]}w_i\mathcal{I}_i^n(l)$. Firstly, observe that
\begin{align*}
  \E[W(l+1)-W(l) | \mathscr{F}_l]= \sum_{i\in [n]} w_i\E\big[\mathcal{I}^n_i(l+1)\big| \mathscr{F}_l\big]\ind{\mathcal{I}_i^n(l)=0} \leq \frac{\sum_{i\in [n]}d_i'w_i}{\ell_n'(T)},
  \end{align*} 
uniformly over $l\leq Tn^{\alpha+\delta}$. Therefore, $(\tilde{W}(l))_{l=1}^{Tn^{\alpha+\delta}}$ is a super-martingale, where $\tilde{W}(l)=W(l)-(\sum_{i\in [n]}d_i'w_i/\ell_n')l$. Again, the goal is to use \eqref{eqn:supmg:ineq}. Using \eqref{prob-ind-lb}, we can show that
$ \big|\E[\tilde{W}(l)]\big|
 =o(n^{\alpha+\delta}),$
uniformly over $l\leq Tn^{\alpha+\delta}$. Also, using \eqref{neg:correlation} and \eqref{var-ind-ub} and Assumption~\ref{assumption-w},
$\mathrm{var}(\tilde{W}(l))\leq \sum_{i\in [n]}w_i^2 \mathrm{var}(\mathcal{I}_i^n(l))= o(n^{2(\alpha+\delta)}),$ uniformly over $l\leq Tn^{\alpha+\delta}$. Finally, using \eqref{eqn:supmg:ineq},  we conclude the proof.
\end{proof}
\section{Barely subcritical exploration process: Proof of Fact~\ref{fact:total-edges-random-comp}}
\label{sec:appendix-barely-subcrit-large-deviation}
Consider exploring $\rCM_n(\bld{d}')$ by Algorithm~\ref{algo:explor-barely-sub} with $V_n^*$ being the starting vertex. 
Let us denote  the degree of the vertex found at step $l$ by $d_{\sss (l)}'$. 
If no new vertex is found at step $l$, then $d_{\sss (l)}' = 0$.
Also, let $\mathscr{F}_l$ denote the sigma-algebra containing all the information revealed by the exploration process upto time $l$.
Thus, 
\begin{eq}
S_n(0) = d_{V_n^*}', \quad \text{and}\quad  S_n(l) = S_n(l-1) + (d_{\sss (l)}'-2),
\end{eq}and when $\bld{S}_n$ hits zero, then $\sC'(V_n^*)$ has been explored. 
Using the Doob-Meyer decomposition, one can write
\begin{equation}
S_n(l) = S_n(0)+M_n(l) + A_n(l), 
\end{equation}where $M_n$ is a martingale with respect to $(\mathscr{F}_l)_{l\geq 1}$. 
The drift $A_n$ and the quadratic variation $\langle M_n \rangle$ of $M_n$ are given by 
\begin{equation}
   A_{n}(l)= \sum_{j=1}^{l} \mathbbm{E}\big[d_{\sss(j)}'-2 \vert \mathscr{F}_{j-1} \big], \qquad 
   \langle M_n \rangle(l)= \sum_{j=1}^{l} \var{d_{\sss(j)}'\vert \mathscr{F}_{j-1}} .
  \end{equation}
Let $t_n = n^{\alpha + \delta + c_0}$,
where we choose $c_0>0$ (sufficiently small) such that $\alpha + \delta + c_0 < 1-\delta$. Such a choice of $c_0$ is always possible since $\alpha +2\delta < \alpha +2\eta = 2 - 3\alpha <1$ as $3\alpha>1$.
We will show that, for all sufficiently large $n$,
\begin{eq}\label{fact-drift-bound}
A_n(t_n) \leq - \frac{\lambda_0}{2}  t_n n^{-\delta} \quad \text{almost surely}, 
\end{eq}
where $\lambda_0$ is given by \eqref{defn:barely-subcrit}, and for any $\varepsilon>0$, 
\begin{eq}\label{quadratic-variation-bound}
\PR(M_n(t_n) > \varepsilon t_n n^{-\delta}) \leq C'\e^{-C''\varepsilon^2 n^{\varepsilon_0}},
\end{eq}
where $C',C''>0$ are constants. 
Note that $S_n(0) = d_{V_n^*} \leq Cn^{\alpha} = o( t_n n^{-\delta})$. 
Thus, if $M_n(t_n) \leq \varepsilon t_n n^{-\delta}$ and \eqref{fact-drift-bound} holds, then $S_n(t_n) <0$ and hence $\sC'(V_n^*)$ is explored before time $t_n$. 
This in turn implies that $\sum_{i\in \sC'(V_n^*)} d_i' \leq 2t_n$, since one edge is explored per step. 
Therefore it is enough to prove \eqref{fact-drift-bound} and \eqref{quadratic-variation-bound}. 
Let $\mathscr{V}_j$ denote the set of vertices explored up to time $j$.
Recall that, by the definition of $\nu_n'$ in \eqref{defn:barely-subcrit}, $\frac{1}{\ell_n'}\sum_{i\in [n]} d_i'^2 -2= \lambda_0 n^{-\delta} +o(n^{-\delta})$.
Then, uniformly over $j\leq t_n$,
\begin{eq}
\mathbbm{E}\big[d_{\sss(j)}'-2 \vert \mathscr{F}_{j-1} \big] &= \mathbbm{E}\big[d_{\sss(j)}' \vert \mathscr{F}_{j-1} \big] - 2 \leq \frac{\sum_{i\notin \mathscr{V}_{j-1}} d_i'^2}{\ell_n' - 2t_n+1}-2 \\
&\leq \frac{\sum_{i\in [n]} d_i'^2}{\ell_n' - 2t_n+1} -2 = \frac{\ell_n (2-\lambda_0 n^{-\delta} +o(n^{-\delta}))}{\ell_n' - 2t_n+1} -2 \\
&= - \lambda_0 n^{-\delta} + O(t_n/n) +o(n^{-\delta}) \leq - \frac{\lambda_0}{2} n^{-\delta},
\end{eq} for all sufficiently large $n$, where in the final step we have used the fact that $t_n = o(n^{1-\delta})$. 
Thus \eqref{fact-drift-bound} follows. 

To prove \eqref{quadratic-variation-bound}, we  use Freedman's inequality \cite[Proposition 2.1]{Fre75} which says that if $Y(k) = \sum_{j\leq k} X_j$ with $\E[X_j\vert \mathcal{F}_{j-1}] =0$ (for some filtration $(\mathcal{F}_j)_{j\geq 1}$) and $\PR(|X_j|\leq R, \ \forall j\geq 1)=1$, then, for any $a,b >0$,
\begin{eq}\label{eq:freedman}
\PR(Y(k) \geq a, \text{ and } \langle Y\rangle(k)  \leq b ) \leq \exp\bigg(-\frac{a^2}{2(Ra+b)}\bigg).
\end{eq}
Note that, uniformly over $j\leq t_n$, 
\begin{equation}
\var{d_{\sss (j)}' \vert \mathscr{F}_{j-1}} \leq \E[d_{\sss (j)}'^2 \vert \mathscr{F}_{j-1}]  = \frac{\sum_{j\notin \mathscr{V}_{j-1}} d_j'^3}{\ell_n' - 2t_n +1} \leq \frac{\sum_{j\in [n]} d_j'^3}{\ell_n' - 2t_n+1}\leq Cn^{3\alpha - 1},
\end{equation}so that, almost surely,
\begin{equation}\label{eq:bound-QV}
\langle M_n\rangle (t_n) \leq C t_n n^{3\alpha-1}.
\end{equation}
Also, $d_{\sss (j)} \leq C n^{\alpha}$ almost surely.
Thus, applying \eqref{eq:freedman} with $a= \varepsilon t_n n^{-\delta}$, $b=Ct_n n^{3\alpha-1}$ and $R=Cn^{\alpha}$, and also using \eqref{eq:bound-QV}, it follows that 
\begin{eq}
\PR(M_n(t_n) > \varepsilon t_n n^{-\delta}) \leq \exp \bigg(- C' \frac{\varepsilon^2 t_n^2 n^{-2\delta}}{2  (t_n n^{3\alpha - 1} + \varepsilon n^\alpha t_n n^{-\delta})}\bigg) \leq C'\e^{-C'' \varepsilon n^{\varepsilon_0}},
\end{eq} 
where in the last step we have used the fact that $\alpha - \delta > \alpha - \eta = 3\alpha -1$ and $t_n = n^{\alpha+\delta+\varepsilon_0}$.
Thus the proof of \eqref{quadratic-variation-bound} follows.
\qed

\section{Limit of exploration process: Proof sketch for Proposition~\ref{thm::convegence::exploration-process-blob}}
\label{sec:appendix-perc-blob}
The proof of Proposition~\ref{thm::convegence::exploration-process-blob} can be carried out using similar ideas as \cite[Theorem 8]{DHLS16}. 
The key idea to prove Proposition~\ref{thm::convegence::exploration-process-blob} is that the scaling limit is governed by the vertices having large degrees only. More precisely, for any $\varepsilon > 0$ and $T>0$,
\begin{equation}
 \lim_{K\to\infty}\limsup_{n\to\infty}\PR\bigg(\sup_{t\leq T}n^{-\alpha}\bigg|\sum_{i>K}a_i\Big( \mathcal{I}_i^n(tn^{\rho-\delta})-\frac{a_i}{\ell_n^a}tn^{\rho-\delta} \Big)\bigg| > \varepsilon \bigg) = 0.
\end{equation} This can be proved using martingale estimates, see \cite[Section 4]{DHLS16}. Thus, if one considers the truncated sum 
\begin{align*}
\sum_{i\leq K} a_i \left( \mathcal{I}_i^n(l)-\frac{a_i}{\ell^a_n}l\right)+\left( \nu_n(\bld{a})-1\right)l,
\end{align*}with the first $K$ (fixed) terms
it is enough to show that the iterated limit of the truncated process (first taking $\lim_{n\to\infty}$ and then $\lim_{K\to\infty}$) converges to $\mathbf{S}$ with respect to the Skorohod $J_1$ topology. Now, using the fact that $a_i/\sum_ia_i\xrightarrow{\sss \PR} \theta_i/(\mu\nu)$, and the fact that the vertices are explored in a size-biased manner with sizes being $(a_i)_{i\geq 1}$, it follows that (see \cite[Lemma 9]{DHLS16}), for each fixed $K\geq 1$,
\begin{equation}
 \big(\mathcal{I}_i^n(tn^{\rho-\delta})\big)_{i\in [K],t\geq 0} \dto \big(\ind{\mathrm{Exp}(\theta_i/(\mu\nu))\leq t}\big)_{i\in [K],t\geq 0}.
\end{equation}
This concludes the proof of Proposition~\ref{thm::convegence::exploration-process-blob}. 


\end{document}